\date{}
\numberwithin{equation}{section}
\numberwithin{figure}{section}
\theoremstyle{plain} 
\newtheorem{theorem}{Theorem}[section]
\newtheorem*{theorem*}{Theorem}
\newtheorem{lemma}[theorem]{Lemma}
\newtheorem*{lemma*}{Lemma}
\newtheorem{assumption}[theorem]{Assumption}
\newtheorem{corollary}[theorem]{Corollary}
\newtheorem*{corollary*}{Corollary}
\newtheorem{proposition}[theorem]{Proposition}
\newtheorem*{proposition*}{Proposition}
\newtheorem{definition}[theorem]{Definition}
\newtheorem*{definition*}{Definition}
\newtheorem*{conjecture*}{Conjecture}
\theoremstyle{definition} 
\newtheorem*{example*}{Example}
\newtheorem{remark}[theorem]{Remark}
\newtheorem*{remark*}{Remark}
\DeclareMathOperator{\OO}{O}
\DeclareMathOperator{\oo}{o}
\DeclareMathOperator{\tr}{Tr}
\renewcommand{\b}[1]{\boldsymbol{\mathrm{#1}}} 
\renewcommand{\cal}{\mathcal}
\newcommand{\ol}[1]{\overline{#1} \!\,} 
\newcommand{\wh}{\widehat}
\newcommand{\wt}{\widetilde}
\newcommand{\txt}[1]{\text{\rm{#1}}}
\newcommand{\sigG}{\mathsf{G}}
\newcommand{\sigH}{\mathsf{H}}
\newcommand{\fa}{\mathfrak a}
\newcommand{\fb}{\mathfrak b}
\newcommand{\fc}{\mathfrak c}
\newcommand{\fd}{\mathfrak d}
\newcommand{\sm}{\mathsf m}
\newcommand{\sfK}{\mathsf K}
\definecolor{darkred}{rgb}{0.9,0,0.3}
\definecolor{darkblue}{rgb}{0,0.3,0.9}
\newcommand{\cor}{\color{darkred}}
\newcommand{\cob}{\color{darkblue}}
\def\comment#1{\ifthenelse{\isodd{\value{page}}}{\marginpar{\raggedright\scriptsize{\textcolor{darkred}{#1}}}}{\marginpar{\raggedleft\scriptsize{\textcolor{darkred}{#1}}}}}
\renewcommand{\P}{\mathbb{P}}
\newcommand{\E}{\mathbb{E}}
\newcommand{\R}{\mathbb{R}}
\newcommand{\C}{\mathbb{C}}
\newcommand{\N}{\mathbb{N}}
\newcommand{\Z}{\mathbb{Z}}
\newcommand{\ri}{\mathrm{i}}
\newcommand{\sk}{\mathsf{k}}
\newcommand{\st}{\mathsf{t}}
\newcommand{\ft}{\mathfrak{t}}
\newcommand{\ii}{\mathrm{i}}
\newcommand{\dd}{\mathrm{d}}
\newcommand{\col}{\mathrel{\vcenter{\baselineskip0.75ex \lineskiplimit0pt \hbox{.}\hbox{.}}}}
\newcommand*{\deq}{\mathrel{\vcenter{\baselineskip0.65ex \lineskiplimit0pt \hbox{.}\hbox{.}}}=}
\renewcommand{\leq}{\leqslant}
\renewcommand{\geq}{\geqslant}
\renewcommand{\epsilon}{\varepsilon}
\newcommand{\qq}[1]{[\![{#1}]\!]}
\newcommand{\ind}[1]{\b 1 (#1)}
\newcommand{\indb}[1]{\b 1 \pb{#1}}
\newcommand{\pb}[1]{\bigl({#1}\bigr)}
\newcommand{\pB}[1]{\Bigl({#1}\Bigr)}
\newcommand{\pbb}[1]{\biggl({#1}\biggr)}
\newcommand{\pBB}[1]{\Biggl({#1}\Biggr)}
\newcommand{\qB}[1]{\Bigl[{#1}\Bigr]}
\newcommand{\hb}[1]{\bigl\{{#1}\bigr\}}
\newcommand{\hbb}[1]{\biggl\{{#1}\biggr\}}
\newcommand{\abs}[1]{\lvert #1 \rvert}
\newcommand{\avg}[1]{\langle #1 \rangle}
\newcommand{\scalar}[2]{\langle{#1} \mspace{2mu}, {#2}\rangle}
\DeclareMathOperator{\supp}{supp}
\DeclareMathOperator{\re}{Re}
\DeclareMathOperator{\im}{Im}
\DeclareMathOperator{\dist}{dist}
\newcommand{\bu}{{\bf{u}}}
\newcommand{\eb}{{\bf{e}}}
\newcommand{\bv}{{\bf{v}}}
\newcommand{\bw}{{\bf{w}}}
\newcommand{\sW}{{\mathcal{W}}}
\newcommand{\al}{\alpha}
\newcommand{\be}{\begin{equation}}
\newcommand{\ee}{\end{equation}}
\newcommand{\e}{{\varepsilon}}
\newcommand{\rd}{\mathrm{d}}
\title{Eigenvector distributions and optimal shrinkage estimators for large covariance and precision matrices}
\author[1]{Xiucai Ding \thanks{E-mail: xcading@ucdavis.edu.}}
\author[2]{ Yun Li \thanks{E-mail: liyun1723@mail.tsinghua.edu.cn}}
\author[2, 3]{Fan Yang  \thanks{E-mail: fyangmath@mail.tsinghua.edu.cn}}
\affil[1]{Department of Statistics, University of California, Davis}
\affil[2]{Yau Mathematical Sciences Center, Tsinghua University}
\affil[3]{Beijing Institute of Mathematical Sciences and Applications}
\begin{document}

\maketitle

\begin{abstract}

This paper focuses on investigating Stein’s invariant shrinkage estimators for large sample covariance matrices and precision matrices in high-dimensional settings. We consider models that have nearly arbitrary population covariance matrices, including those with potential spikes. By imposing mild technical assumptions, we establish the asymptotic limits of the shrinkers for a wide range of loss functions. A key contribution of this work, enabling the derivation of the limits of the shrinkers, is a novel result concerning the asymptotic distributions of the non-spiked eigenvectors of the sample covariance matrices, which can be of independent interest.
\end{abstract}


\section{Introduction}
Estimating large covariance matrices and their inverses, the precision matrices, is fundamental in modern data analysis. It is well-known that in the high-dimensional regime when the data dimension $p$ is comparable to or even larger than the sample size $n,$ the sample covariance matrices and their inverses are poor estimators \cite{yao2015large}. To obtain consistent estimators, many structural assumptions have been imposed, such as sparse or banded structures. Based on these assumptions, many regularization methods have been developed to obtain better estimators. We refer the readers to \cite{EJSMCRZ,fan2016overview,pourahmadi2013high} for a more comprehensive review.

Although these structural assumptions are useful for many applications, they may not be applicable in general scenarios. In this paper, we consider the estimation of the population covariance matrix $\Sigma$ and its inverse, denoted by $\Omega$, through Stein's (orthogonally or rotationally) invariant estimators \cite{steinlecture, Stein1986LecturesOT} without imposing (almost) any specific structural assumption. Given a $p \times n$ data matrix $Y$ and its sample covariance matrix $\mathcal{Q}=YY^\top/n$, we say $\widetilde{\Sigma} \equiv \widetilde{\Sigma}(\mathcal{Q})$ and $\widetilde{\Omega} \equiv \widetilde{\Omega}(\mathcal{Q})$ are invariant estimators for $\Sigma$ and $\Omega,$ respectively, if they satisfy that
\begin{equation}\label{eq_steincondition}
\mathsf{U}\widetilde{\Sigma}(\mathcal{Q}) \mathsf{U}^\top=\widetilde{\Sigma}(\mathsf{U} \mathcal{Q} \mathsf{U}^\top), \ \  \mathsf{U}\widetilde{\Omega}(\mathcal{Q}) \mathsf{U}^\top=\widetilde{\Omega}(\mathsf{U} \mathcal{Q} \mathsf{U}^\top),
\end{equation}   
for any $p \times p$ orthogonal matrix $\mathsf{U}.$ For an illustration, we take the covariance matrix estimation as an example. Denote the eigenvalues and corresponding eigenvectors of  $\mathcal{Q}$ by $\lambda_1\ge \lambda_2\ge \cdots \ge \lambda_p \ge 0$ and $\{\bm{u}_i\}_{i=1}^p$. Stein \cite{steinlecture, Stein1986LecturesOT} showed that the optimal invariant estimator for $\Sigma$ satisfy
\begin{equation}\label{eq_steincovarianceestimator}
\widetilde{\Sigma}=\sum_{i=1}^p \varphi_i \bm{u}_i \bm{u}_i^\top,
\end{equation}  
where $\varphi_i \equiv \varphi_i(\mathcal{Q}, \Sigma, \mathcal{L}), \ i=1,\ldots,p,$ commonly referred to as \emph{shrinkers}, are some nonlinear functionals depending on the choice of the loss function $\mathcal{L},$ the sample covariance matrix $\mathcal{Q}$, and the population covariance matrix $\Sigma.$ For many loss functions $\cal L$, the random quantities $\varphi_i$'s have closed forms (see Appendix \ref{appendix_lossfunctionsummary}) so that the estimation problem reduces to finding efficient estimators or estimable convergent limits for $\varphi_i$'s. As demonstrated in Appendix \ref{appendix_lossfunctionsummary}, $\varphi_i$ usually takes the following form:
\begin{equation}\label{eq_varphigeneralform}
\varphi_i=\bm{u}_i^\top \ell(\Sigma) \bm{u}_i,\quad 1\le i \le \min\{p,n\},
\end{equation}
where $\ell(x)$ is a function depending on $\mathcal{L}$, or the following form when $p>n$:
\begin{equation}\label{eq_varphigeneralform0}
\varphi_i= \frac{1}{p-n}\tr\left[U_0^\top \ell(\Sigma) U_0\right],\quad n+1\le i \le p. 
\end{equation}
Here, $U_0$ represents the eigenmatrix associated with the zero eigenvalues of $\cal Q$. 
In the classical setting when $p$ is fixed, Stein derived consistent estimators for \eqref{eq_varphigeneralform} under various loss functions; we refer the readers to \cite{steinsummary} in this regard. 

In the current paper, we investigate this problem in the high-dimensional regime when $p$ is comparable to $n$ in the sense that there exists a constant $\tau \in (0,1)$ such that 
\begin{equation}\label{eq_ratioassumption}
\tau\leq c_n:=\frac{p}{n} \leq \tau^{-1}.
\end{equation}   
Note that we allow for $p>n$ so that the inverse of the sample covariance matrix $\mathcal{Q}$ may not exist in general. We also treat our models with general and possibly spiked population covariance matrices $\Sigma$ (up to some mild technical assumptions). This substantially generalizes Johnstone's spiked covariance matrix model \cite{johnstone2001}, where the non-spiked eigenvalues are assumed to be unity (see (\ref{eq_simplemodel}) below). 

%
  
In the general model setup and under the condition (\ref{eq_ratioassumption}), we provide analytical and closed-form formulas to characterize the convergent limits of \eqref{eq_varphigeneralform} and \eqref{eq_varphigeneralform0} for both the spiked and non-spiked eigenvectors $\bm{u}_i$. 
These limits can be further estimated consistently and adaptively for various choices of loss functions (or equivalently, the function $\ell$). One crucial step involves studying the asymptotic distributions for the sample eigenvectors $\bm{u}_i$, which can be of significant independent interest. In what follows, we first discuss some related works on the shrinkers' estimation for the regime (\ref{eq_ratioassumption}) in Section \ref{sec_shrinkageestimation}. Then, we review some relevant literature about the eigenvector distributions of some classical random matrix models in Section \ref{sec_eigenvectordistributionreview}. Finally, we provide an overview of the contributions of this paper and highlight some main novelties in Section \ref{sec_overviewresultsnovelties}.

\subsection{Related works on shrinker estimation}\label{sec_shrinkageestimation}

We now provide a brief review of some previous results concerning the estimation of the shrinkers $\varphi_i$ in the high-dimensional regime.  In \cite{donoho2018}, the authors considered Johnstone's spiked covariance matrix model with 
\begin{equation}\label{eq_simplemodel}
\Sigma=\sum_{i=1}^r \widetilde{\sigma}_i \bm{v}_i \bm{v}_i^\top+I,
\end{equation}
where $r\in \N$ is the rank of signals. By replacing Stein's original estimator (\ref{eq_steincovarianceestimator}) with the stronger rank-aware assumption $\varphi_i\equiv 1$ for $r+1\le i \le p$ (see equation (1.13) therein), \cite{donoho2018} provided analytical and closed-form convergent limits for the shrinkers $\varphi_j$, $1\le j\le r$, under various choices of loss functions when $p\le n$. These convergent limits can be consistently estimated using the first $r$ eigenvalues and eigenvectors of $\mathcal{Q}.$
An important insight conveyed by \cite{donoho2018} is that the selection of the loss function can have a significant impact on the estimation of shrinkers.
Nevertheless, the assumption $\varphi_i\equiv 1$ for $i > r$ is crucial for their theory, which fails when $I$ in (\ref{eq_simplemodel}) is replaced by a general positive definite covariance matrix $\Sigma_0$. 
To address this issue, under Stein's setup (\ref{eq_steincovarianceestimator}) and using Frobenius norm as the loss function, Bun \cite{bun2018optimal} derived the convergent limits for shrinkers associated with the spikes of a general covariance matrix $\Sigma$ and provide an adaptive estimation of these limits.


On the other hand, the derivation of the convergent limits for shrinkers associated with non-spiked eigenvectors becomes much more challenging and is substantially different from \cite{donoho2018, bun2018optimal}. 
In this context, adaptive estimations for the shrinkers still can be provided, see e.g., \cite{MR2834718, ledoit2012, ledoit2022quadratic, mainreference},
but theoretically, they are consistent only in an averaged sense. The main challenge lies in the fact that a key theoretical input is the limiting eigenvector empirical spectral distribution derived in \cite{MR2834718}, which is insufficient for deriving the convergent limits and providing consistent estimators for individual shrinkers. 

Inspired by these earlier works, in this paper, we will rigorously prove the convergent limits for all the (spiked or non-spiked) shrinkers associated with general spiked covariance matrix models in (\ref{eq_spikedcovariancematrix}) below for various loss functions. Roughly speaking, our generalized model extends (\ref{eq_simplemodel}) by replacing $I$ with an (almost) arbitrary positive definite covariance matrix, introducing a more realistic modeling approach. Note that the framework in \cite{donoho2018} is not applicable in this setting. We will also provide consistent estimators for individual shrinkers by approximating these convergent limits with sample quantities. From a technical viewpoint, our current paper improves all previous results in the existing literature.

\subsection{Related works on eigenvector distributions of sample covariance matrices}\label{sec_eigenvectordistributionreview}

Next, we briefly review some previous results concerning eigenvector distributions of random matrices, with a focus on the sample covariance matrices. On the global level, the eigenvector empirical spectral distribution (VESD) has been extensively investigated under various assumptions on the population covariance matrices, as explored in studies such as \cite{Baipaper,yangeigenvector,XYY,xqb}. Notably, these papers demonstrated that the limiting VESD is closely linked to the Marchenko-Pastur (MP) law \cite{marchenko1967distribution}. 
Building upon these results, investigations into the distributions of the linear spectral statistics (LSS) of the eigenvectors have been conducted under various settings in \cite{Baipaper, yangeigenvector}. Specifically, it has been established that the LSS of the eigenvectors exhibit asymptotic Gaussian behavior under almost arbitrary population covariance $\Sigma$. 


On the local level, an important result was established in \cite{MR3606475} when $\Sigma=I$, demonstrating that the projection of any eigenvector onto an arbitrary deterministic unit vector converges in law to a standard normal distribution after a proper normalization. In particular, this implies that all the eigenvector entries are asymptotically Gaussian. Furthermore, in \cite{MR3606475}, the concept of \emph{quantum unique ergodicity} (QUE) for eigenvectors was also established based on these findings.  Similar results were obtained for the non-outlier eigenvectors under Johnstone's spiked covariance matrix model (\ref{eq_simplemodel}) in \cite{bloemendal2016principal}, while the distributions of outlier eigenvectors were studied in \cite{bao2022statistical,bao2022eigenvector,paul2007asymptotics}. 
For the case of a general \emph{diagonal} population covariance matrix $\Sigma$, the universality of eigenvector distributions was established both in the bulk and near the edge in \cite{ding2019singular}. However, the explicit distribution remains unknown.

Motivated by our specific applications in shrinkage estimation, our goal is to derive the explicit distributions for all eigenvectors of the non-spiked model, as well as the non-outlier eigenvectors of the spiked model, considering the assumption of a general population covariance matrix $\Sigma$. These results have been lacking in the existing literature, as indicated by the aforementioned overview. To achieve this, we draw inspiration from recent advancements in the analysis of eigenvectors of Wigner matrices, as presented in \cite{benigni2020eigenvectors,benigni2022optimal,MR3606475, knowles2013eigenvector,marcinek2022high}.

\subsection{An overview of our results and technical  novelties}\label{sec_overviewresultsnovelties}

In this subsection, we present an overview of our results and highlight the main novelties of our work. Our main contributions can be divided into two parts: the convergent limit of each individual shrinker for various loss functions under the general spiked covariance model (see Section \ref{sec_statisticsmainresults}); the asymptotic eigenvector distributions for sample covariance matrices with general covariance structures (see Section \ref{sec_mainresultque}).

For the convergence of shrinkers, under a general spiked covariance matrix model, we present explicit and closed-form formulas that characterize the convergent limit of each individual shrinker under various loss functions. The precise statement of this result can be found in Theorem \ref{thm_shrinkerestimate}. Notably, these formulas remain applicable even in the singular case with $p > n$. Corollary \ref{coro_simplifiedformula} provides simplified versions of these formulas for specific choices of the loss function. Leveraging these theoretical findings, we establish the asymptotic risks associated with shrinkage estimation of covariance and precision matrices under various loss functions in Corollary \ref{coro_lowbound}. To facilitate practical implementation, we also introduce adaptive and consistent estimators for the convergent limits of the shrinkers, as outlined in Theorem \ref{thm_consistentestimators}. An $\mathtt{R}$ package $\mathtt{RMT4DS}$\footnote{An online demo for the package can be found at \url{https://xcding1212.github.io/RMT4DS.html}} has been developed to implement our methods.

Now, we will delve into the technical details concerning the derivation of the convergent limits of \eqref{eq_varphigeneralform}; the derivation of the limit of \eqref{eq_varphigeneralform0} is considerably simpler. 
The proofs for the spiked and non-spiked shrinkers exhibit notable differences. In the case of spiked shrinkers, we employ a decomposition of (\ref{eq_varphigeneralform}) into two components: a low-rank part and a high-rank part, as illustrated in equation (\ref{eq_decomposition}) below. The low-rank portion can be effectively approximated by leveraging the convergence limits of the outlier eigenvalues and eigenvectors, as demonstrated in Lemma \ref{lemma_spikedlocation}. On the other hand, the high-rank component takes the form of $\sum_{j=r+1}^p w_j (\bm{u}_i^\top \bm{v}_j)^2$, $1 \leq i \leq r$, where $w_j$ are some deterministic weights and $\bm{v}_j$ are the eigenvectors of $\Sigma.$
The existing literature (e.g., \cite{MR3704770,DRMTA}) provides a "rough" delocalization bound like $(\bm{u}_i^\top \bm{v}_j)^2 \le n^{-1+\e}$ with high probability, for any small constant $\e > 0$. However, this bound is insufficient for our specific purpose. To address this issue, consider the resolvent (or Green's function) of $\cal Q$ defined as $G(z)=(\mathcal{Q}-z)^{-1}$ for $z\in \C$. 
Then, we will conduct a higher order resolvent expansion to obtain that (c.f.~Lemma \ref{lem_bulkproperty})
$$|\bm{u}_i^\top \bm{v}_j|^2=w'_{ij} G_{ij}(\mathfrak{a}_i) G_{ji}(\mathfrak{a}_i) +\OO(n^{-3/2+\e}) \quad \text{with high probability},$$
where $\fa_i$ is the convergent limit of the $i$-th outlier eigenvalue of $\cal Q$ and $w'_{ij}$ denotes certain deterministic weights. Now, our focus narrows down to estimating the random variable given by $\sum_{j=r+1}^p w'_{ij} G_{ji}(\mathfrak{a}_i) G_{ij}(\mathfrak{a}_i).$ In the special case where $w'_{ij}\equiv 1,$ Bun \cite{bun2018optimal} has provided an estimation of this quantity utilizing a concentration inequality. However, in this paper, we employ a variational approach that simplifies the estimation process and enables the computation of the limit for any sequence of weights $w_{ij}'$; see \eqref{eq_Lireducedform} for further details.


On the other hand, for the non-spiked shrinkers $\varphi_i$, $r+1\le i\le \min\{p,n\}$, the above approach fails. Instead, we need to directly evaluate $\sum_{j=1}^p w_j (\bm{u}_i^\top \bm{v}_j)^2$ by establishing what is known as a \emph{QUE estimate}. In the case where $\Sigma=I$, this form has been investigated in  \cite{bloemendal2016principal} utilizing the methodology developed in \cite{MR3606475}. In this work, we establish the QUE for the non-spiked eigenvectors of $\cal Q$ under an almost arbitrary population covariance matrix $\Sigma$. The key lies in establishing the asymptotic distributions for all non-outlier eigenvectors (c.f.~Theorem \ref{Thm: EE}), from which the QUE estimate can be derived (c.f.~\Cref{corollary_que}). The proof of the eigenvector distributions in this work builds upon and extends the eigenvector moment flow (EMF) approach introduced in \cite{MR3606475}, which involves three main steps. In the first step, we establish the local laws for the resolvent of $\mathcal{Q}$. In the second step, we study the EMF for the rectangular matrix Dyson Brownian motion (DBM) of $\cal Q$ defined as $\cal Q_t:=(Y+B_t)(Y+B_t)^\top/n$, where $Y$ represents the data matrix and $B_t$ is a matrix consisting of i.i.d.~Brownian motions of variance $t$. 
We demonstrate the relaxation of the EMF dynamics to equilibrium when $t \gg n^{-1/3}$, from which we can establish the Gaussian normality for the eigenvectors of $\cal Q_t$. Finally, in the last step, we show that the eigenvector distributions of $\mathcal{Q}$ are close to those of $\cal Q_t$, which is achieved through a standard Green's function comparison argument.
For a more detailed discussion of the above strategy, we direct readers to Section \ref{sec_generalproofdetail}. Notably, this approach has been recently employed to establish the asymptotic distributions of eigenvalues and eigenvectors for various random matrix ensembles featuring general population covariance or variance profiles, as evidenced in works such as \cite{benigni2020eigenvectors, benigni2022optimal,9779233,marcinek2022high,MR4156609}. 


In our setting, Step 1 has already been accomplished in \cite{MR3704770}. Moving on to Step 2, following the idea of \cite{MR3606475}, we establish in \Cref{main result: Jia} that a general functional $f_t(\xi)$, encoding the joint moments of the projections of the eigenvectors of $\cal Q_t$, relaxes to equilibrium with high probability on the time scale $t \gg n^{-1/3}$.
This equilibrium characterizes the joint moments of a multivariate normal distribution, which concludes Step 2 by the moment method (c.f.~Lemma \ref{lem: moment flow}). We remark that the proof of Theorem \ref{main result: Jia} relies on a probabilistic description of $f_t(\xi)$ as the solution to a system of coupled SDEs representing a specific interacting particle system (c.f.~Lemma \ref{lem: dyn}). Since the current paper is motivated by applications in shrinker estimation, we specifically focus on investigating the joint distribution of eigenvectors projected onto a single direction, as illustrated in (\ref{eq_lemma41equation}). Nevertheless, as discussed in \Cref{rmk:proj}, our results can be extended to scenarios where different eigenvectors are projected onto multiple distinct directions. In such cases, it becomes necessary to consider a more complex interacting particle system.

One technical innovation of our proof is the comparison argument in Step 3 (see Lemma \ref{lemma_universal}). In the existing literature (e.g., \cite{bloemendal2016principal, knowles2013eigenvector}), the comparison is typically made between two random matrices, say $W$ and $W^G_t$, where $W^G_t$ has the same covariance structure or variance profile as $W$, but with entries that are Gaussian divisible (roughly speaking, we will set $W=Y$ and $W^G_t=\sqrt{1-t}Y+B_t$). 
However, this argument is applicable in our context only when the population covariance matrix $\Sigma$ is diagonal, as demonstrated in \cite{ding2019singular}. To address this issue, we introduce a new comparison approach by introducing an additional intermediate matrix $\mathcal{W}_t$ as defined in (\ref{eq_mathcalwt}). The matrix $\mathcal{W}_t$ possesses the same covariance structure as $W^G_t$ while incorporating the randomness in $W$. Moreover, for a fixed $t$, we can carefully choose $\mathcal{W}_t$ in such a way that it has the same law as $W$. 
Then, to conclude Step 3, we will introduce a novel interpolation between $\cal W_t$ and $W^G_t$. Specifically, we define a continuous sequence of matrices $W_t^s$, $ s\in [0,1]$ (see (\ref{eq_dtusx}) for the precise definition). These matrices are specifically selected so that $W_t^0$ follows the distribution of $\cal W_t$, while $W_t^1$ follows the distribution of $W^G_t$. Notably, unlike previous comparison arguments in the literature, during the transition from $W_t^0$ to $W_t^1$, only the deterministic covariance structure of the interpolating matrices varies with $s$. Finally, the comparison is conducted by controlling the derivative of the relevant quantities with respect to $s$. For a more comprehensive explanation, readers can refer to the discussion below (\ref{eq_parttwocompare}).



\vspace{3pt}

\noindent{\bf Outline of the paper.} In this paper, we focus on the spiked covariance matrix model with a general population covariance. In Section \ref{s:main}, we introduce this model and state our main results, which are divided into two parts: in Section \ref{sec_statisticsmainresults}, we present the asymptotics of shrinkers; in Section \ref{sec_mainresultque}, we provide results concerning the eigenvector distributions. 
In Section \ref{sec_statisticalestimation}, we introduce adaptive and consistent estimators for the shrinkers and conduct numerical simulations to demonstrate the superior performance of our estimators. In Section \ref{sec_generalproofdetail}, we discuss our proof strategy and highlight the technical novelties. All the technical proofs of the main results are given in Appendices \ref{appendix_preliminary}--\ref{appendix_finalone}. For the convenience of readers, in Appendix \ref{appendix_lossfunctionsummary}, we summarize some commonly used loss functions and the related shrinkers.

\vspace{3pt}

\noindent{\bf Notations.} 
To facilitate the presentation, we introduce some necessary notations that will be used in this paper. We are interested in the asymptotic regime with $p, n\to \infty$. 
When we refer to a constant, it will not depend on $p$ or $n$. Unless otherwise specified, we will use $C$ to denote generic large positive constants, whose values may change from line to line. Similarly, we will use $\epsilon$, $\delta$, $\tau$, $c$ etc.~to denote generic small positive constants. 
For any two sequences $a_n$ and $b_n$ depending on $n$, $a_n = \OO(b_n)$, or $a_n \lesssim b_n$ means that $|a_n| \le C|b_n|$ for some constant $C>0$, whereas $a_n=\oo(b_n)$ or $|a_n|\ll |b_n|$ means that $|a_n| /|b_n| \to 0$ as $n\to \infty$. We say that $a_n\asymp b_n$ if $a_n = \OO(b_n)$ and $b_n = \OO(a_n)$. 
Moreover, for a sequence of random variables $x_n$ and non-negative quantities $a_n$, we use $x_n=\OO_{\mathbb{P}}(a_n)$ to mean that $x_n/a_n$ is stochastically bounded and use $x_n=\oo_{\mathbb{P}}(a_n)$ to mean that $x_n/a_n$ converges to zero in probability. Denote by $\mathbb{C}_+:=\{z\in\C:\im z>0\}$ the upper half complex plane and $\mathbb{R}_+:=\{x\in\R:x>0\}$ the positive real line. For an event $\Xi$, we let $\mathbf 1_\Xi$ or $\mathbf 1(\Xi)$ denote its indicator function.  We use $\{\bf e_k\}$ to denote the standard basis of certain Euclidean space, whose dimension will be clear from the context. For any $a, b \in \Z$, we denote $\qq{a, b}: = [a,b]\cap \Z$ and abbreviate $\qq{a}:=\qq{1, a}$. Given two (complex) vectors $\bu$ and $\bv$, we denote their inner product by $\langle \bu, \bv\rangle:=\bu^*\bv.$ We use $\|\mathbf v\|_q$, $q\ge 1$, to denote the $\ell_q$-norm of $\mathbf v$. Given a matrix $B = (B_{ij})$, we use $\|B\|$ and $\|B\|_{HS}$, and $\|B\|_{\max}:=\max_{i,j}|B_{ij}|$ to denote the operator, Hilbert-Schmidt, and maximum norms, respectively. We also use $B_{\bu\bv}:=\bu^*B\bv$ to denote the ``generalized entries" of $B$.

\vspace{3pt}

\noindent{\bf Acknowledgments.} The authors want to thank Jun Yin for many helpful discussions. XCD is partially supported by NSF DMS-2114379 and DMS-2306439. 
YL and FY are partially supported by the National Key R\&D Program of China (No. 2023YFA1010400).


\section{Main results}\label{s:main}

\subsection{The model and main assumptions}\label{sec_generallargecovariancematrixmodel}


In this paper, we consider the \emph{non-spiked population covariance matrix} $\Sigma_0$, which is a deterministic $p \times p$ positive definite  matrix. Suppose it has a spectral decomposition  
\begin{equation}\label{eq_defnsigma0}
\Sigma_0=\sum_{i=1}^p \sigma_i \bm{v}_i \bm{v}_i^\top = V \Lambda_0 V^\top,\quad \Lambda_0=\operatorname{diag}\{\sigma_1, \cdots, \sigma_p\},
\end{equation}
where $\Lambda_0$ is the diagonal matrix of eigenvalues $0<\sigma_p \leq \cdots \leq \sigma_2 \leq \sigma_1<\infty$ arranged in the descending order, and $V$ denotes the eigenmatrix, i.e., the collection of the eigenvectors $\{\bm{v}_i\}$. 
In the statistical literature, people are also interested in finite rank deformations of $\Sigma_0$ by adding a fixed number of spikes 
to $\Lambda_0$. Following \cite{bun2017cleaning,DRMTA,ding2021spiked}, we define the \emph{spiked population covariance matrix} $\Sigma$ as 
\begin{equation}\label{eq_spikedcovariancematrix}
\Sigma=\sum_{i=1}^p \widetilde{\sigma}_i \bm{v}_i \bm{v}_i^\top  \equiv V \Lambda V^\top, \quad \Lambda=\operatorname{diag}\{\wt\sigma_1, \cdots, \wt\sigma_p\}.
\end{equation}
Here, for the sequence of eigenvalues $0<\widetilde{\sigma}_p \leq \cdots \leq \widetilde{\sigma}_2 \leq \widetilde{\sigma}_1<\infty$, 
we assume that there exists a fixed $r\in \N$ and a non-negative sequence $\{d_i\}_{i=1}^r$ such that 
\begin{equation}\label{eq_defnspikes}
\widetilde{\sigma}_i:=
\begin{cases}
(1+d_i) \sigma_i, & i \leq r \\
\sigma_i, & i \geq r+1
\end{cases}.
\end{equation}
In other words, the first $r$ eigenvalues $\widetilde{\sigma}_i$, $i\in \qq{r}$, are the spiked eigenvalues with spiked strengths characterized by $d_i$. 
Accordingly, we define the non-spiked and spiked sample covariance matrices considered in this paper as
\begin{equation}\label{eq_samplecovariancematrixdefinition}
\mathcal{Q}_1:=\Sigma_0^{1/2} XX^\top \Sigma_0^{1/2}, \qquad  \widetilde{\mathcal{Q}}_1:=\Sigma^{1/2} XX^\top \Sigma^{1/2},
\end{equation}  
where $X \in \mathbb{R}^{p \times n}$ is a $p\times n$ random data matrix with i.i.d.~entries of mean zero and variance $n^{-1}.$ In our proof, we will also frequently use their companions:  
\begin{equation}\label{eq_grammatrixdefinition}
\mathcal{Q}_2:= X^\top \Sigma_0 X, \qquad  \widetilde{\mathcal{Q}}_2:=X^\top \Sigma X. 
\end{equation}  
Note $\mathcal Q_1$ (resp.~$\widetilde{\mathcal{Q}}_1$) has the same nonzero eigenvalues as $\mathcal Q_2$ (resp.~$\widetilde{\mathcal{Q}}_2$). In this paper, we will often regard the non-spiked model as a special case of the spiked one with $r=0$.

In this paper, we study the asymptotic behavior of the models $\mathcal{Q}_1$ and $\widetilde{\mathcal{Q}}_1$ using the \emph{Stieltjes transform method}. Given any $n \times n$ symmetric matrix $M$, we denote its \emph{empirical spectral density} (ESD) as 
	\begin{equation*}
\mu_M:=\frac{1}{n} \sum_{i=1}^n  \delta_{\lambda_i(M)},
\end{equation*}
where $\lambda_i(M)$, $i\in \qq{n}$, denote the eigenvalues of $M$. 
Given a probability measure $\nu$ on $\mathbb{R}$, we define its \emph{Stieltjes transform} as
		\begin{equation}\label{eq_defnstitlesjtransform}
		m_{\nu}(z):=\int_{\mathbb{R}} \frac{1}{x-z} \nu (\rd x), \qquad z \in \mathbb{C}_+.
		\end{equation} 
When $z=x\in \R$, we adopt the convention that
		$m_\nu(x):=\lim_{\eta \downarrow 0} m_\nu(x+\mathrm{i} \eta)$.
It is well-known that the Stieltjes transform of the ESD of $\mathcal{Q}_2$ has the same asymptotics as the so-called \emph{deformed Marchenko-Pastur (MP) law} $\varrho \equiv \varrho_{\Sigma_0, n}$ \cite{marchenko1967distribution}. It may include a Dirac delta mass $\delta_0$ at zero, for example, when $p > n$ or when $\sigma_i = 0$ for some $i$. Furthermore, $\varrho$ on $\R_+$ can be determined by its Stieltjes transform $m(z)$. 
More precisely, define the function $h:\C\to \C$ as
\begin{equation}\label{eq_defnf}
h(x)\equiv h(x,\Sigma_0):=-\frac{1}{x}+\frac{1}{n} \sum_{i=1}^p \frac{1}{x+\sigma_i^{-1}}. 
\end{equation}
(As a convention, we let $(x+\sigma_i^{-1})^{-1}=0$ when $\sigma_i=0$.) 
For any fixed $z\in\C_+$, there exists a unique solution $m \equiv m(z,\Sigma_0) \in \mathbb{C}_+$ to the self-consistent equation  
\begin{equation}\label{eq_defnmc}
z=h(m,\Sigma_0), \quad \text{with} \quad \operatorname{Im} m>0; 
\end{equation}  
see \cite[Lemma 2.2]{MR3704770} or the book \cite{baibook} for more details.  Then, we can determine the MP density $\varrho$ from $m$ as
\begin{equation}\label{ST_inverse}
\varrho(E) = {\pi}^{-1}\lim_{\eta\downarrow 0} \im m (E+\ii\eta),\quad E>0.
\end{equation}
We summarize some basic facts about the support of $\varrho$ in the following lemma.
\begin{lemma}[Lemma 2.5 of \cite{MR3704770} and Lemma 2.4 of \cite{ding2018}]\label{lem_property} 
The support of $\varrho$ is a disjoint union of connected components on ${\mathbb{R}}_+:$
\begin{equation}\label{eq_supportdmp}
{\mathbb{R}}_+\cap \operatorname{supp} \varrho  = {\mathbb{R}}_+\cap \bigcup_{k=1}^q [a_{2k}, a_{2k-1}]   ,
\end{equation}
where $q$ is an integer that depends only on the \textup{ESD} of $\Sigma_0$, and we shall call $a_{1} > a_2 >\cdots > a_{2q}\ge 0$ the \emph{spectral edges} of $\varrho$. Here, $\{a_k\}_{k=1}^{2q}$ can be characterized as follows: there exists a real sequence $\{b_k\}_{k=1}^{2q}$ such that $(x,m)=(a_k,b_k)$ are real solutions to the equations 
\begin{equation*}
x=h(m), \qquad  \text{and} \qquad h'(m)=0.
\end{equation*}
\end{lemma}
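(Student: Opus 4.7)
The plan is to exploit that $m(z)$ is defined as a holomorphic branch of the inverse of the meromorphic function $h$, and that by the inversion formula \eqref{ST_inverse} a point $E > 0$ lies outside $\operatorname{supp}\varrho$ exactly when $m(E) := \lim_{\eta \downarrow 0} m(E + \mathrm{i}\eta)$ exists as a real number. This reduces the lemma to a careful real analysis of $h$ on the real line and an identification of its critical values as the spectral edges.

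First, I would view $h$ as a real-analytic function on $\mathbb{R}$ with singularities only at $0$ and at the points $-\sigma_i^{-1}$ with $\sigma_i > 0$, and compute
$$h'(m) = \frac{1}{m^2} - \frac{1}{n}\sum_{\sigma_i > 0}\frac{1}{(m + \sigma_i^{-1})^2}.$$
Between consecutive singularities and on the two unbounded intervals, $h'$ is real-analytic with isolated zeros, so a monotonicity-and-limits argument shows that the open set $\{m \in \mathbb{R} : h'(m) > 0\}$ has finitely many connected components $I_1, \ldots, I_N$. On each such component, $h$ is a strictly increasing real-analytic bijection onto an open interval $J_k \subset \mathbb{R}$, and the implicit function theorem furnishes a real-analytic inverse $m_k : J_k \to I_k$.

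Next, I would prove the dichotomy that $E \in \mathbb{R} \setminus \operatorname{supp}\varrho$ if and only if $E \in J_k$ for some $k$. The $(\Leftarrow)$ direction follows because $m_k$ provides a real-analytic extension of $m$ across $J_k$, forcing $\operatorname{Im} m \equiv 0$ there by uniqueness of the Herglotz extension. For the $(\Rightarrow)$ direction, on the complement of the support $m$ is the Stieltjes transform of a positive measure, so $m'(E) = \int (x-E)^{-2}\,\varrho(\mathrm{d}x) > 0$; differentiating $E = h(m(E))$ then gives $h'(m(E)) = 1/m'(E) > 0$, placing $m(E)$ in some $I_k$. Consequently, $\mathbb{R} \setminus \operatorname{supp}\varrho = \bigcup_k J_k$, so $\operatorname{supp}\varrho$ is a finite union of closed intervals whose endpoints are precisely the images $h(b_k)$ of the boundary points $b_k \in \partial I_k$. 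Since each such $b_k$ is a zero of $h'$, these edges are exactly the real solutions $(x, m) = (a_k, b_k)$ to the system $x = h(m)$, $h'(m) = 0$. Intersecting with $\mathbb{R}_+$ and relabeling the edges in decreasing order yields the claimed form $\bigcup_{k=1}^q [a_{2k}, a_{2k-1}]$, with $q$ determined by $h$, hence only by the eigenvalue multiset of $\Sigma_0$.

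The step I expect to be the main obstacle is the careful enumeration of which components $J_k$ actually contribute to $\mathbb{R}_+ \cap \operatorname{supp}\varrho$ and the boundary behavior at $0$ when $p > n$ or when some $\sigma_i = 0$. In those cases one has to track the limiting behavior of $h(m)$ as $m \to 0^\pm$ and as $m \to \pm\infty$ using monotone convergence in the sum defining \eqref{eq_defnf}, verify that any Dirac mass of $\varrho$ at $0$ is correctly excluded by the intersection with $\mathbb{R}_+$, and confirm that the unbounded monotonicity interval containing $-\infty$ is always mapped into $\mathbb{R}_+$, so that the top edge $a_1$ is well-defined. This bookkeeping is the only genuinely delicate ingredient; the rest is a direct application of the implicit function theorem and standard properties of Herglotz functions.
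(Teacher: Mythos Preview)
The paper does not give its own proof of this lemma: it is stated as a citation to Lemma 2.5 of \cite{MR3704770} and Lemma 2.4 of \cite{ding2018}, with no argument supplied. Your proposal is correct and is essentially the standard argument appearing in those references---analyzing the real branches of the inverse of $h$ via the sign of $h'$, identifying the complement of the support with the images of the monotone intervals, and reading off the edges as critical values of $h$.
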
  

Following the convention in the random matrix theory literature, we denote the rightmost and leftmost edges by $\lambda_+:=a_1$ and $\lambda_-:=a_{2q}$, respectively. For any $1\le k\le q$, we define
\begin{equation}\label{Nk}
n_k := \sum_{l\le k} n\int_{(a_{2l},a_{2l-1}]} \varrho (x)\dd x,
\end{equation}
which is the classical number of eigenvalues in $(a_{2k},\lambda_+]$. As a convention, we set $n_0=0$. It has been shown in \cite[Lemma A.1]{MR3704770} that $n_k\in \N$ for $k\in \qq{q}$. We now introduce the quantiles $\gamma_k$ of $\varrho$, which are indeed the classical locations for the eigenvalues of $\mathcal Q_2$.

\begin{definition}
\label{defn_generaleigenvaluelocation} 
We define the \emph{classical eigenvalue locations} (or the quantiles) $\gamma_k \equiv \gamma_k(n)$ of the deformed Marchenco-Pastur law $\varrho$ as the unique solution to the equation 
\begin{equation}\label{def:gamma_alpha}
 \int_{\gamma_k}^{\infty} \varrho(x)\dd x= \frac{k-1/2}{n},  \quad k\in \qq{\mathsf K},
\end{equation}
where we have abbreviated $\mathsf{K}:=n\wedge p$. 
Note that $\gamma_k$ is well-defined since the $n_k$'s are integers. As a convention, we set $\gamma_k=0$ for $k\in \qq{\mathsf{K}+1, n\vee p}$.

\end{definition}


Now, we are ready to state the main assumptions for our results. 
\begin{assumption}\label{main_assumption} 
We assume the following assumptions hold. 
\begin{enumerate}
\item{\bf On dimensionality.} There exists a small constant $\tau\in (0,1)$ such that \eqref{eq_ratioassumption} holds and
\be\label{eq_ratioassumption2}
|c_n-1|\ge \tau.
\ee

\item{\bf On $X$ in (\ref{eq_samplecovariancematrixdefinition}).} For $X=(x_{ij}),$ suppose that $x_{ij} ,\ i \in \qq{p}, \ j \in \qq{n},$ are i.i.d.~real random variables with
$\mathbb{E} x_{ij}=0$ and $\mathbb{E} x_{ij}^2= {n}^{-1}.$  
In addition, we assume that the entries of $X$ have arbitrarily high moments: for each fixed $k\in \N$, there exists a constant $C_k>0$ such that 
\begin{equation}\label{eq_momentassumption}
\mathbb{E} | \sqrt{n} x_{ij}|^k \leq C_k. 
\end{equation}
Finally, we assume that the entries of $X$ have vanishing third moments, i.e., $\mathbb{E} x_{ij}^3=0$. 

\item{\bf On $\Sigma_0$ in (\ref{eq_defnsigma0}).} There exists a small constant $\tau_1 \in (0,1)$ such that  
\begin{equation}\label{eq_evassumptionone}
\tau_1 \leq \sigma_p \leq  \cdots \leq \sigma_2 \leq \sigma_1 \leq \tau_1^{-1}. 
\end{equation}
Moreover, for the two sequences of $\{a_k\}$ and $\{b_k\}$ given in Lemma \ref{lem_property}, we assume that 
\begin{equation}\label{eq:edgeregular}
a_k \geq \tau_1,  \quad \min_{l \neq k} |a_k-a_l|\geq \tau_1, \quad \min_{i}|\sigma_i^{-1}+b_k| \geq \tau_1. 
\end{equation}
Finally, for any small constant $\tau_2\in (0,1),$ there exists a constant $\varsigma \equiv \varsigma_{\tau_1, \tau_2}>0$ such that 
\begin{equation}\label{eq:bulkregular}
\varrho(x)\ge \varsigma \quad \text{for}\quad x\in [a_{2k}+\tau_2, a_{2k-1}-\tau_2], \ k\in \qq{q}.
\end{equation}

\item{\bf On the spikes in (\ref{eq_defnspikes}).}  
There exists a fixed integer $r$ and a constant $\varpi\in (0,1)$ such that 
\begin{equation}\label{eq_outlierassumption}
\widetilde{\sigma}_i>-b_1^{-1}+\varpi \quad \text{for}\quad i\in \qq{r};\quad  {\sigma}_i<-b_1^{-1}-\varpi \quad \text{for}\quad i\in \qq{r+1,p}.  
\end{equation}
Moreover, the spikes are distinct in the sense that $\min_{i \neq j \in \qq{r}}| \widetilde{\sigma}_i-\widetilde{\sigma}_j|>\varpi.$ We also assume that the largest spike $\widetilde{\sigma}_1$ is bounded from above by $\varpi^{-1}$. 


\end{enumerate}
\end{assumption}

Let us now provide a brief discussion on the above assumption. 
Condition \eqref{eq_ratioassumption} of part (i) means that we are considering the high-dimensional regime in this paper. We remark that in related works such as \cite{donoho2018,ledoit2012,mainreference}, it is required that $c_n$ converges to a fixed constant $c\in (0,\infty)$ as $n\to \infty$. In comparison, our assumption (\ref{eq_ratioassumption}) is more flexible and purely data-dependent. The condition \eqref{eq_ratioassumption2} is introduced to avoid the occurrence of the ``hard edge" phenomenon in the deformed MP law $\rho$,
that is, $\lambda_-\to 0$ as $c_n\to 1$. In fact, by the properties of the MP law \cite{marchenko1967distribution}, it is known that 
\be\label{eq:lambda-}
\lambda_+\le \sigma_1 (1+\sqrt{c_n})^2, \quad \lambda_-\ge \sigma_p (1-\sqrt{c_n})^2. 
\ee
The condition \eqref{eq_ratioassumption2} is relatively mild since when $c_n=1+\oo(1)$, we can always omit certain samples to ensure that $p/n\ge 1+\tau$. Additionally, it is worth mentioning that even when $c_n=1+\oo(1)$, all our results remain valid for eigenvectors with indices up to $(1-\tau)p$, which correspond to the eigenvalues away from the hard edge $\lambda_-$.

Part (ii) imposes some moment conditions on the entries of $X$. We remark that the high moment condition (\ref{eq_momentassumption}) can be relaxed to a certain extent---we may only assume that \eqref{eq_momentassumption} holds for $k \leq C$ with $C$ being an absolute constant (e.g., $C=8$). 
Additionally, the vanishing third-moment condition is not essential, and it can be eliminated using an alternative approach distinct from our current proof. However, due to limitations in length, we will not explore this direction in the present paper, leaving it for future investigation.

For part (iii), the condition \eqref{eq_evassumptionone} is seen to be mild. The conditions \eqref{eq:edgeregular} and \eqref{eq:bulkregular} are some technical regularity conditions imposed on the ESD of $\Sigma_0$. The edge regularity condition \eqref{eq:edgeregular} has previously appeared (in slightly different forms) in several works on sample covariance matrices \cite{BPZ1, Karoui,HHN,MR3704770,LS,Regularity4}, and it ensures a regular square-root behavior of the density $\varrho$ near the spectral edges $a_k$. The bulk regularity condition \eqref{eq:bulkregular} was introduced in \cite{MR3704770}, and it imposes a lower bound on the asymptotic density of eigenvalues away from the edges. Both of these conditions are satisfied by ``generic" population covariance matrices $\Sigma_0$; see e.g., the discussions in \cite[Examples 2.8 and 2.9]{MR3704770}.


Finally, condition \eqref{eq_outlierassumption} in part (iv) means that $\wt\sigma_i$, $i\in \qq{r}$, are the supercritical spikes concerning the BBP transition \cite{BBP} of the largest few  eigenvalues of \smash{$\widetilde{\mathcal{Q}}_1$}. In particular, they will give rise to outlier sample eigenvalues beyond the right edge $\lambda_+$ of $\supp \varrho$; see e.g., Theorem 3.6 of \cite{ding2021spiked}. 
It is possible to extend our results to the more general case with the sharper condition: for a small constant $\e>0$,
$$\widetilde{\sigma}_i>-b_1^{-1}+n^{-1/3+\e},\quad i\in \qq{r}.$$
(It is known that $-b_1^{-1}+\OO(n^{-1/3})$ is the critical regime for BBP transition.) 
We can also allow for degenerate spikes and spikes $\widetilde{\sigma}_i \equiv \widetilde{\sigma}_i(n)$ that diverge with $n$ as $n\to \infty$. However, for brevity, we do not pursue such generalizations in this paper. 


\subsection{Asymptotics of optimal shrinkage estimators}\label{sec_statisticsmainresults} 

In this subsection, we state the main statistical results on the analytical formulas and asymptotic risks for the optimal shrinkage estimators. Our first result concerns the asymptotics of the shrinkers. To state it, we now introduce more notations. For $h$ defined in (\ref{eq_defnf}) and $i \in \qq{r},$ we denote 
\begin{equation}\label{eq_importantdefinition}
\mathfrak{a}_i= h(-\widetilde{\sigma}_i^{-1}), \qquad \mathfrak{b}_i={h'(-\widetilde{\sigma}_i^{-1})}/{h(-\widetilde{\sigma}_i^{-1})}. 
\end{equation}
(We will see in \Cref{lemma_spikedlocation} that $\fa_i$ is the classical location (i.e., convergent limit) of the $i$-th outlier eigenvalue $\lambda_i(\wt{\cal Q}_1)$.)  
As noted in \eqref{eq_varphigeneralform} and \eqref{eq_varphigeneralform0}, the optimal shrinkers usually depend on the loss function $\mathcal{L}$ via some continuous function $\ell(x)$ on $(0,\infty)$. Given such a function $\ell$, for $\mathsf{t}>0$ small, and $x>0$, we define 
\begin{equation}\label{eq:sigmait}
\sigma_{i,\mathsf t}(x):= \frac{\sigma_i}{1+\mathsf{t}\ell(\sigma_i)/x},\qquad i\in \qq{p}.
\end{equation}
We further define $\wt{m}_{\mathsf{t}}(z,x)$ as in (\ref{eq_defnmc}) by replacing $\{\sigma_i\}_{i=1}^p$ with $\{{\sigma}_{i,\mathsf t}(x)\}_{i=1}^p$, that is,  $\widetilde m_{\st}(z,x)$ solves the following self-consistent equation: 
\begin{equation}\label{eq_selfconsistent_mt}
  z = -\frac{1}{\widetilde m_{\mathsf{t}}(z,x)} +\frac{1}{n}\sum_{i=1}^p \frac{1}{\widetilde m_{\mathsf{t}}(z,x)+\sigma_i^{-1}(1+\mathsf{t}\ell(\sigma_i)/x)}. 
\end{equation}
Note when $\mathsf{t}=0$, we have ${\sigma}_{i,0}(x)=\sigma_i$ and $\widetilde{m}_0(z,x)=m(z)$. 
We then denote the partial derivative of $\wt m_\st(z,x)$ in $\mathsf{t}$ as 
\begin{equation}\label{eq_mtderivative}
\dot m_0(z,x)=\left. \partial_\st \wt m_\st(z,x)\right|_{\st=0}.  
\end{equation}
By differentiating the equation \eqref{eq_selfconsistent_mt} with respect to $z$ and $\mathsf{t}$ at $\st=0$, we obtain that 
\be\label{eq_m'm0} \frac{m'(z)}{m(z)^2}-\frac{1}{n}\sum_{i=1}^p \frac{m'(z)}{[m(z)+\sigma_i^{-1}]^2}=1,\quad \frac{\dot m_0(z,x)}{m(z)^2}-\frac{1}{n}\sum_{i=1}^p \frac{\dot m_0(z,x)+\ell(\sigma_i)/(\sigma_i x)}{[m(z)+\sigma_i^{-1}]^2}=0.
\ee
From these two equations, we can derive the following explicit expression for $\dot m_0(z,x)$:
\be\label{eq_formofm0}
\dot m_0(z,x)=\frac{m'(z)}{nx}\sum_{i=1}^p \frac{\ell(\sigma_i)/\sigma_i}{[m(z)+\sigma_i^{-1}]^2}.
\ee


By abbreviating $\dot m_0(\mathfrak{a}_i)\equiv \dot m_0(\mathfrak{a}_i,\fa_i)$, we now define 
\begin{equation}\label{eq_defnpsi}
\psi_i\equiv \psi_i(\ell):=\mathfrak{b}_i \left( \frac{\ell(\widetilde{\sigma}_i)}{\widetilde{\sigma}_i} +\mathfrak{a}_i \dot m_0(\mathfrak{a}_i)
\right), \quad i \in \qq{r},
\end{equation}
%
%
%
%
%
%
and the function 
\begin{equation}\label{eq_keydefinition1111}
\vartheta(x) \equiv \vartheta(\ell,x):=\frac{1}{p} \sum_{j=r+1}^{p} \ell(\wt\sigma_j) \phi(\bm{v}_j, \bm{v}_j, x), 
\end{equation} 
where for any two vectors $\bm{w}_1, \bm{w}_2\in \R^p$, we denote
\begin{equation}\label{eq_phidefinitionintheend}
\phi(\bm{w}_1, \bm{w}_2, x):=
\begin{cases}
c_n \bm{w}_1^\top \left[\Sigma \left(x |1+m(x)\Sigma|^2\right)^{-1} \right] \bm{w}_2, & x>0 \\
 \left(1-c_n^{-1}\right)^{-1} \bm{w}_1^\top \left(1+m(0)\Sigma\right)^{-1}  \bm{w}_2, & x=0 \ \text{and} \ c_n>1
\end{cases}. 
\end{equation}
Here, we adopt the convention that $\phi(\bm{w}_1, \bm{w}_2, x):=\lim_{\eta \downarrow 0} \phi(\bm{w}_1, \bm{w}_2, x+\ii \eta)$ for $x\in \R$. 

In this paper, we denote the eigenvalues of the non-spiked sample covariance matrix $\cal Q_1$ by $\lambda_1\ge \lambda_2\ge \cdots \ge \lambda_p$ and those of the spiked version $\widetilde{\mathcal{Q}}_1$ by $\wt\lambda_1\ge \wt\lambda_2 \ge \cdots \ge \wt\lambda_p$.
To facilitate the presentation and with a slight abuse of notation, we will use the notation $\{\bm{u}_i\}_{i=1}^p$ to represent the sample eigenvectors of either $\cal Q_1$ or \smash{$\widetilde{\mathcal{Q}}_1$}. Additionally, we denote by $U_0=(\bm{u}_{\mathsf K+1},\ldots, \bm{u}_p)\in \R^{p\times (p-n)}$ the eigenmatrix associated with the zero eigenvalues of $\cal Q_1$ or $\wt{\cal Q}_1$. The specific model being referred to will be evident from the context. 
Now, we are ready to state the main results. First, we provide the convergent limits of the shrinkers.

\begin{theorem}\label{thm_shrinkerestimate} Suppose Assumption \ref{main_assumption} holds. Recall that $\gamma_i$ are the quantiles of $\varrho$ defined in \Cref{defn_generaleigenvaluelocation}. For any continuous function $\ell(x)$ defined on $(0, \infty)$, the following estimates hold:
\begin{align}
    \mathbb{E} \left| \bm{u}_i^\top \ell(\Sigma) \bm{u}_i-\psi_i(\ell) \right|=\oo(1),\quad\quad &i\in \qq{r}; \label{eq_outliercase}\\
    \mathbb{E} \left| \bm{u}_i^\top \ell(\Sigma) \bm{u}_i- \vartheta(\ell,\gamma_{i-r}) \right|=\oo(1),\quad \quad &i\in \qq{r+1,\sfK};\label{eq_nonspikedshrinkeranalytical}\\
    \mathbb{E} \left| (p-n)^{-1}\tr\left[U_0^\top \ell(\Sigma) U_0\right] - \vartheta (\ell,0)\right|=\oo(1),\quad\quad &i\in \qq{\sfK+1,p}. \label{eq_nonspikedshrinkeranalytical0}
\end{align}
The above results (\ref{eq_nonspikedshrinkeranalytical}) and (\ref{eq_nonspikedshrinkeranalytical0}) also extend to the non-spiked model with $r=0$.


\end{theorem}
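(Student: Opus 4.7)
The plan is to handle the three cases in \eqref{eq_outliercase}--\eqref{eq_nonspikedshrinkeranalytical0} separately, as each requires different analytical tools. In all three, the main technical engine is the resolvent $G(z) := (\wt{\cal Q}_1 - z)^{-1}$ together with the anisotropic local laws of \cite{MR3704770}, which replace generalized entries $G_{\bm v_j \bm v_k}(z)$ by their deterministic equivalents. Convergence in $L^1$ (rather than only in probability) will follow from high-probability estimates combined with the deterministic bound $|\bm u_i^\top \ell(\Sigma)\bm u_i| \le \sup_i \ell(\wt\sigma_i) = \OO(1)$.

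For the outlier case $i \in \qq{r}$, I would decompose
\begin{equation*}
\bm u_i^\top \ell(\Sigma)\bm u_i = \sum_{k=1}^r \ell(\wt\sigma_k)(\bm u_i^\top \bm v_k)^2 + \sum_{j=r+1}^p \ell(\sigma_j)(\bm u_i^\top \bm v_j)^2.
\end{equation*}
The low-rank part contributes only the $k = i$ term asymptotically: by the outlier eigenvector convergence in Lemma \ref{lemma_spikedlocation}, $(\bm u_i^\top \bm v_i)^2 \to \fb_i/\wt\sigma_i$ while $(\bm u_i^\top \bm v_k)^2 = \oo_{\mathbb P}(1)$ for $k \ne i$, producing the first summand of $\psi_i(\ell)$ in \eqref{eq_defnpsi}. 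For the high-rank part, I would represent each projection by a Cauchy contour integral $(\bm u_i^\top \bm v_j)^2 = -(2\pi\ii)^{-1}\oint_{\Gamma_i} G_{\bm v_j \bm v_j}(z)\,\dd z$, where $\Gamma_i$ encloses only the $i$-th outlier eigenvalue $\wt\lambda_i$ (which concentrates near $\fa_i$ by Lemma \ref{lemma_spikedlocation}). Substituting the anisotropic local law and computing the residue at $z = \fa_i$, then summing over $j$ and using the self-consistent relations \eqref{eq_m'm0} together with the explicit formula \eqref{eq_formofm0}, the high-rank sum reduces to $\fb_i \fa_i \dot m_0(\fa_i)$. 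Combining both pieces recovers $\psi_i(\ell)$.

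For the bulk case $i \in \qq{r+1,\sfK}$, the contour trick fails because $\bm u_i$ corresponds to an eigenvalue embedded in the continuous bulk and cannot be isolated by a macroscopic contour. Instead, I would invoke the QUE estimate of Corollary \ref{corollary_que}, itself a consequence of the asymptotic Gaussianity of bulk eigenvectors established in Theorem \ref{Thm: EE}: for any deterministic symmetric $A$ with $\|A\| = \OO(1)$,
\begin{equation*}
\bm u_i^\top A \bm u_i = \tr[A\,\Phi_i] + \oo_{\mathbb P}(1),
\end{equation*}
where $\Phi_i := p^{-1}\sum_j \phi(\bm v_j,\bm v_j,\gamma_{i-r})\,\bm v_j\bm v_j^\top$ is the deterministic equivalent extracted from the isotropic local law at the spectral parameter $\gamma_{i-r}$. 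Applying this with $A = \ell(\Sigma)$ and noting that the contributions of the $r$ outlier directions $\bm v_1,\dots,\bm v_r$ to the trace are $\OO_{\mathbb P}(p^{-1})$ each (so negligible since $r$ is fixed) produces exactly $\vartheta(\ell,\gamma_{i-r})$ defined in \eqref{eq_keydefinition1111}.

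For the zero-eigenvalue case \eqref{eq_nonspikedshrinkeranalytical0}, non-trivial only when $p > n$, I would exploit the matrix identity $U_0 U_0^\top = -\lim_{z \to 0^-} z\,G(z)$, valid because the zero eigenvalue of $\wt{\cal Q}_1$ is separated from the positive spectrum under \eqref{eq_ratioassumption2}, and write
\begin{equation*}
(p-n)^{-1}\tr[U_0^\top \ell(\Sigma) U_0] = -(p-n)^{-1}\lim_{z \to 0^-} z\,\tr[\ell(\Sigma)\,G(z)].
\end{equation*}
The anisotropic local law replaces the random trace by a deterministic expression involving $m(z)$ and $\Sigma$, and taking $z \to 0^-$, at which $m(z)$ admits a finite real limit thanks to $c_n \ge 1 + \tau$, yields the $x = 0$ branch of $\phi$ in \eqref{eq_phidefinitionintheend} and hence $\vartheta(\ell,0)$. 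The main difficulty across the whole argument is the bulk case: it rests on the full strength of the eigenvector moment flow and Green's function comparison developed in Section \ref{sec_generalproofdetail}, going well beyond the standard delocalization bound $(\bm u_i^\top \bm v_j)^2 = \OO_{\mathbb P}(n^{-1+\e})$.
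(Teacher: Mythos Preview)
Your treatment of the bulk case \eqref{eq_nonspikedshrinkeranalytical} via Corollary~\ref{corollary_que} and of the null-space case \eqref{eq_nonspikedshrinkeranalytical0} via the local law near the origin is essentially what the paper does. The gap is in the outlier case, specifically the high-rank sum $\sum_{j=r+1}^p \ell(\sigma_j)(\bm u_i^\top\bm v_j)^2$.

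Your plan is to write $(\bm u_i^\top\bm v_j)^2=-(2\pi\ii)^{-1}\oint_{\Gamma_i}G_{\bm v_j\bm v_j}(z)\,\dd z$ with $G=(\wt{\cal Q}_1-z)^{-1}$, substitute the anisotropic local law, and compute the residue at $\fa_i$. But for $j\ge r+1$ the deterministic limit $\wt\Pi_{\bm v_j\bm v_j}(z)=-\wt\sigma_j\big(z(1+m(z)\wt\sigma_j)\big)^{-1}$ is holomorphic on and inside $\Gamma_i$ (its only pole would require $m(z)=-\sigma_j^{-1}$, which does not happen near $\fa_i$ since $m(\fa_i)=-\wt\sigma_i^{-1}\ne -\sigma_j^{-1}$). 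So the residue of the leading term vanishes, and the entire contribution comes from the local-law error $\OO_\prec(n^{-1/2})$; after summing over $p$ indices you get nothing usable. In other words, the standard local law only recovers the delocalization bound, which, as the paper emphasizes in Section~\ref{sec_overviewresultsnovelties}, is precisely what is insufficient here.

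The paper closes this gap in two steps. First, Lemma~\ref{lem_bulkproperty} performs a third-order resolvent (Woodbury) expansion relating the spiked and non-spiked resolvents to obtain the refined identity \eqref{eq_intermediateresult}, namely $|\langle\bm u_i,\bm v_j\rangle|^2=(\mathfrak c_i/\sigma_j)\,G_{ij}(\fa_i)G_{ji}(\fa_i)+\OO_\prec(n^{-3/2})$ with $G$ the \emph{non-spiked} linearized resolvent. The crucial point is that the $\OO_\prec(n^{-3/2})$ error can now be summed over $j$. Second, to evaluate $\mathcal L_i=\sum_j \ell(\sigma_j)\sigma_j^{-1}G_{ij}(\fa_i)G_{ji}(\fa_i)$, the paper does not try to control each summand; instead it uses the variational trick \eqref{eq_Lireducedform}: introduce the perturbed resolvent $\mathcal G(\mathsf t,z)=(\Lambda_0^{1/2}V^\top XX^\top V\Lambda_0^{1/2}-z-\mathsf t L)^{-1}$ with $L=\mathrm{diag}(\ell(\sigma_j))$, observe $\mathcal L_i=\partial_{\mathsf t}\mathcal G_{ii}(\mathsf t,\fa_i)|_{\mathsf t=0}$, apply the local law to the perturbed model, and differentiate. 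This is what produces the $\fa_i\dot m_0(\fa_i)$ term. Your sketch jumps from the contour integral to the answer without either of these ingredients.
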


Theorem \ref{thm_shrinkerestimate} provides a closed-form, analytic, and deterministic formula for the convergent limits of the shrinkage estimators. As summarized in Appendix \ref{appendix_lossfunctionsummary}, commonly used loss functions include $\ell(x)=x,\ x^2,$ $x^{-1},\ x^{-2},\ \log x$, etc. Our result gives the asymptotics of the shrinkers with exact dependence on the loss functions, the population eigenvalues, the classical eigenvalue locations, and the aspect ratio $c_n$. In Section \ref{sec_combinedalgo}, we will provide numerical algorithms to estimate the aforementioned quantities consistently with sample quantities.
Shrinkage estimators for other random matrix models have also been studied to some extent; see e.g., \cite{benaych2023optimal,bun2017cleaning, 7820186,leeb2022optimal}. We believe that our strategies and arguments can be extended and applied to these models with some modifications. We will explore this direction in future studies.

In the special case with $\ell(x)=x$ (corresponding to the loss functions of  Frobenius, minimal variance, disutility, and inverse Stein norms), the formulas of $\vartheta$ or $\psi_i$ will be significantly simplified, as shown in the following result.
\begin{corollary}\label{coro_simplifiedformula}
Suppose Assumption \ref{main_assumption} holds. Define functions $ \xi(x)$ and $\zeta(x)$ as
\begin{equation*}
\xi(x)\deq
\begin{cases}
\frac{1}{x|m(x)|^2}, & x>0 \\
\frac{1}{(c_n-1)m(0)}, & x=0 
\end{cases}, \quad \quad \zeta(x)\deq \frac{m'(x)}{|m(x)|^2}.
\end{equation*}
Then, the following estimates hold:
\begin{align}
    \mathbb{E}\left|\bm{u}_i^\top \Sigma \bm{u}_i-\mathfrak{b}_i\zeta (\mathfrak{a}_i)\right|=\oo(1), \quad\quad  &i\in \qq{r};\label{eq_spikedform}\\
    \mathbb{E}\left|\bm{u}_i^\top \Sigma \bm{u}_i-\xi(\gamma_{i-r})\right|=\oo(1),\quad\quad &i \in \qq{r+1,\sfK};\label{eq_nonspikeduniform}\\
    \mathbb{E} \left| (p-n)^{-1}\tr\left[U_0^\top \Sigma U_0\right] - \xi(0) \right|=\oo(1),\quad\quad &i\in\qq{\sfK+1,p}. \label{eq_nonspikeduniform0}
\end{align}
The above results (\ref{eq_nonspikeduniform}) and (\ref{eq_nonspikeduniform0}) also extend to the non-spiked model with $r=0$. 


\end{corollary}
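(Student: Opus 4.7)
The strategy is to specialize \Cref{thm_shrinkerestimate} to the case $\ell(x)=x$ and then reduce the general formulas $\psi_i(\ell)$ and $\vartheta(\ell,x)$ to the compact expressions $\mathfrak b_i\zeta(\mathfrak a_i)$ and $\xi(x)$ by algebraic manipulation of the self-consistent Marchenko--Pastur equation \eqref{eq_defnmc} and its derivatives recorded in \eqref{eq_m'm0}.

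\textbf{Spiked shrinkers.} With $\ell(x)=x$ we have $\ell(\wt\sigma_i)/\wt\sigma_i=1$ and $\ell(\sigma_j)/\sigma_j=1$, so \eqref{eq_formofm0} simplifies to $\dot m_0(\mathfrak a_i)=\frac{m'(\mathfrak a_i)}{n\mathfrak a_i}\sum_j(m(\mathfrak a_i)+\sigma_j^{-1})^{-2}$, while the first identity in \eqref{eq_m'm0} rearranges to $\frac{m'(z)}{n}\sum_j(m(z)+\sigma_j^{-1})^{-2}=\frac{m'(z)}{m(z)^2}-1$. Combining these yields $\mathfrak a_i\dot m_0(\mathfrak a_i)=\frac{m'(\mathfrak a_i)}{m(\mathfrak a_i)^2}-1$. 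Because $\mathfrak a_i=h(-\wt\sigma_i^{-1})$ forces $m(\mathfrak a_i)=-\wt\sigma_i^{-1}\in\R$ and hence $|m(\mathfrak a_i)|^2=m(\mathfrak a_i)^2$, we conclude that $1+\mathfrak a_i\dot m_0(\mathfrak a_i)=\zeta(\mathfrak a_i)$, so $\psi_i(\ell)=\mathfrak b_i\zeta(\mathfrak a_i)$, proving \eqref{eq_spikedform}.

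\textbf{Non-spiked and null shrinkers.} For $j\ge r+1$ one has $\wt\sigma_j=\sigma_j$ and $\Sigma\bm v_j=\sigma_j\bm v_j$, so \eqref{eq_phidefinitionintheend} collapses to $\phi(\bm v_j,\bm v_j,x)=c_n\sigma_j/(x|1+m(x)\sigma_j|^2)$ for $x>0$. Using $|1+m(x)\sigma_j|^2=\sigma_j^2|m(x)+\sigma_j^{-1}|^2$ and $c_n/p=1/n$, this gives
\begin{equation*}
\vartheta(\ell,x)=\frac{1}{xn}\sum_{j=r+1}^p\frac{1}{|m(x)+\sigma_j^{-1}|^2}=\frac{1}{xn}\sum_{j=1}^p\frac{1}{|m(x)+\sigma_j^{-1}|^2}+\oo(1),
\end{equation*}
since discarding the $\OO(1)$ spike indices costs only $\OO(r/n)$: by Assumption \ref{main_assumption}(iii), the edge regularity \eqref{eq:edgeregular}, and the bulk regularity \eqref{eq:bulkregular}, the denominators $|m(x)+\sigma_j^{-1}|$ are uniformly bounded below on $\supp\varrho$. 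Subtracting \eqref{eq_defnmc} from its complex conjugate and dividing by $m(x)-\bar m(x)$ yields the key identity $\frac{1}{n}\sum_j|m(x)+\sigma_j^{-1}|^{-2}=|m(x)|^{-2}$ (extended by continuity to the edges where $\im m=0$), whence $\vartheta(\ell,x)=\xi(x)+\oo(1)$, proving \eqref{eq_nonspikeduniform}. The null case $x=0$ (with $c_n>1$) proceeds in parallel: \eqref{eq_phidefinitionintheend} at $x=0$ reduces $\vartheta(\ell,0)$ to $\frac{1}{c_n-1}\cdot\frac{1}{n}\sum_j\sigma_j/(1+m(0)\sigma_j)+\oo(1)$, and the $z=0$ specialization of \eqref{eq_defnmc}, namely $\frac{1}{n}\sum_j\sigma_j/(1+m(0)\sigma_j)=1/m(0)$, delivers $\vartheta(\ell,0)=\xi(0)+\oo(1)$, proving \eqref{eq_nonspikeduniform0}.

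No genuine obstacle arises; the entire argument is algebraic bookkeeping on top of \Cref{thm_shrinkerestimate}. The only point deserving care is the uniform lower bound on $|m(x)+\sigma_j^{-1}|$ (and $|1+m(0)\sigma_j|$), used both to absorb the $r$ spike indices and to extend the conjugate identity to the spectral edges, which is immediate from Assumption \ref{main_assumption}.
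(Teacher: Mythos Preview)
Your proposal is correct and follows essentially the same approach as the paper: specialize \Cref{thm_shrinkerestimate} to $\ell(x)=x$ and simplify $\psi_i$ and $\vartheta$ via the self-consistent equation \eqref{eq_defnmc} and its derivative \eqref{eq_m'm0}. The paper phrases the key step for the non-spiked case as ``taking the imaginary parts of both sides of \eqref{eq_defnmc}'' rather than your ``subtract the conjugate and divide by $m-\bar m$,'' but these are the same manipulation; your remark about extending to the edges by continuity is harmless but in fact unnecessary, since every $\gamma_{i-r}$ lies strictly inside $\supp\varrho$ and $\im m(\gamma_{i-r})>0$.
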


Compared with Theorem \ref{thm_shrinkerestimate}, we have simpler and more implementable formulas in the case of $\ell(x)=x$. 
Notably, the unobserved quantities depend only on the Stieltjes transform $m(z)$ and the classical eigenvalue locations $\fa_i$ or $\gamma_i$. As we will see in Section \ref{sec_combinedalgo} and \Cref{appendix_preliminary}, $m(z)$ can be well approximated by the Stieltjes transform of the ESD of $\cal Q_2$ or \smash{$\wt {\cal Q}_2$} due to the local laws of their resolvents, and $\fa_i$ and $\gamma_i$ can be efficiently estimated by their sample counterparts using the convergence of outlier eigenvalues and the rigidity of non-outlier eigenvalues. 
These estimations only rely on the sample eigenvalues of $\cal Q_2$ or $\wt {\cal Q}_2$, which significantly simplifies our numerical algorithms.



Before concluding this section, we establish the asymptotic risks (or generalization errors) for Stein's estimator under 12 commonly used loss functions as summarized in Appendix \ref{appendix_lossfunctionsummary}.

\begin{corollary}\label{coro_lowbound} Suppose Assumption \ref{main_assumption} holds. Recall (\ref{eq_defnpsi}) and (\ref{eq_keydefinition1111}). Given a  function $\ell$, 
define $\theta_i(\ell):=\psi_i(\ell)$ for $i\in \qq{r}$, $\theta_i(\ell):=\vartheta(\ell,\gamma_{i-r})$ for $i\in \qq{r+1,\sfK}$, and $\theta_i(\ell):=\vartheta(\ell,0)$ for $i\in \qq{\sfK+1,p}$. 
Let \smash{$\wt\Sigma$} denote the optimal invariant
estimators for $\Sigma$ as defined in \eqref{eq_steincovarianceestimator}.
For the loss functions $\mathcal{L}(\cdot,\cdot)$ given in Table \ref{table_summarylossfunction} below, the following estimates hold with probability $1-\oo(1)$. 
\begin{enumerate}
\item For 
$\theta_i\equiv \theta_i(\ell)$ with $\ell(x)=x,$ we have that with probability $1-\oo(1)$, 
\begin{equation*}
\begin{aligned}
&\mathcal{L}_{\operatorname{Fro}}(\Sigma, \wt{\Sigma})=\sqrt{\frac{1}{p} \sum_{i=1}^p \left( \sigma_i^2-\theta_i^2 \right) }+\oo(1), \ \ \mathcal{L}_{\operatorname{inStein}}(\Sigma, \wt{\Sigma})=\frac{1}{p} \sum_{i=1}^p \log \frac{\theta_i}{\sigma_i} +\oo(1),\\
&\mathcal{L}_{\operatorname{disu}}(\Sigma, \wt{\Sigma})=1-\frac{\sum_{i=1}^p \theta_i^{-1}}{\sum_{i=1}^p \sigma_i^{-1}}+\oo(1), \ \ \mathcal{L}_{\operatorname{MV}}(\Sigma, \wt{\Sigma})=p \left( \frac{1}{\sum_{i=1}^p \theta_i^{-1}}-\frac{1}{\sum_{i=1}^p \sigma_i^{-1}} \right)+\oo(1), 
\end{aligned}
\end{equation*}
where $\operatorname{Fro,\ inStein,\ disu}$, and $\operatorname{MV}$ are the shorthand notations for the Frobenius, inverse Stein, disutility, and minimum variance norms, respectively.

\item For $\theta_i\equiv \theta_i(\ell)$ with  $\ell(x)=x^{-1},$ we have that with probability $1-\oo(1)$,
\begin{equation*}
\begin{aligned}
&\mathcal{L}_{\operatorname{inFro}}(\Sigma, \wt{\Sigma})=\sqrt{\frac{1}{p}\sum_{i=1}^p \left( \sigma_i^{-2}-\theta_i^{2} \right)}+\oo(1), \  \  
\mathcal{L}_{\operatorname{Stein}}(\Sigma, \wt{\Sigma})=\frac{1}{p} \sum_{i=1}^p \log (\sigma_i \theta_i)+\oo(1), \\
&\mathcal{L}_{\operatorname{wFro}}(\Sigma, \wt{\Sigma})=1-\frac{\sum_{i=1}^p \theta_i^{-1}}{\sum_{i=1}^p \sigma_i}+\oo(1),
\end{aligned}
\end{equation*}
where $\operatorname{inFro,\ Stein,}$ and $\operatorname{wFro}$ are the shorthand notations for the inverse Frobenius, Stein, and weighted Frobenius norms, respectively. 

\item For $\theta_i\equiv \theta_i(\ell)$ with  $\ell(x)=\sqrt{x},$ we have that with probability $1-\oo(1)$, 
\begin{equation*}
\mathcal{L}_{\operatorname{Fre}}(\Sigma, \wt{\Sigma})=\sqrt{\frac{1}{p} \sum_{i=1}^p (\sigma_i-\theta_i^2)}+\oo(1),
\end{equation*}
where $\operatorname{Fre}$ means the Fr{\'e}chet norm. 

\item For $\theta_i\equiv \theta_i(\ell)$ with  $\ell(x)=\log x,$ we have that with probability $1-\oo(1)$, 
\begin{equation*}
\mathcal{L}_{\operatorname{LE}}(\Sigma, \wt{\Sigma})=\sqrt{\frac{1}{p} \sum_{i=1}^p \left[(\log \sigma_i)^2 -  {\theta_i^2}\right]}+\oo(1),
\end{equation*}
where $\operatorname{LE}$ means the Log-Euclidean norm. 

\item Define $\theta_{i,k}\equiv \theta_i(\ell_k)$, $k=1,2,3,4,$ with $\ell_1(x)=x^{-2}$, $\ell_2(x)=x^{-1}$, $\ell_3(x)=x$, and $\ell_4(x)=x^2$. Then, we have that with probability $1-\oo(1)$, 
\begin{equation*}
\begin{aligned}
&\mathcal{L}_{\operatorname{Qu}}(\Sigma, \wt{\Sigma})=\sqrt{1-\frac{1}{p} \sum_{i=1}^p \frac{(\theta_{i,2})^2}{\theta_{i,1}}}+\oo(1), \ \ \mathcal{L}_{\operatorname{inQu}}(\Sigma, \wt{\Sigma})=\sqrt{1-\frac{1}{p} \sum_{i=1}^p \frac{(\theta_{i,3})^2}{\theta_{i,4}}}+\oo(1), \\
&\mathcal{L}_{\operatorname{symStein}}(\Sigma, \wt{\Sigma})=2 \left( \frac{1}{p} \sum_{i=1}^p\sqrt{\theta_{i,2} \theta_{i,3}} -1\right)+\oo(1),
\end{aligned}
\end{equation*}
where $\operatorname{Qu,\ inQu,}$ and $\operatorname{symStein}$ are the shorthand notations for the quadratic, inverse quadratic, and symmetrized Stein norms, respectively. 
\end{enumerate}
\end{corollary}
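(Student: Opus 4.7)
The plan is to reduce each of the twelve asymptotic risk formulas to \Cref{thm_shrinkerestimate} via algebraic manipulations. For every loss function $\mathcal{L}$, the strategy has three steps: (a) plug in the explicit form of Stein's estimator $\widetilde{\Sigma} = \sum_{i=1}^p \varphi_i \bm{u}_i \bm{u}_i^\top$ with the optimal shrinker $\varphi_i = \bm{u}_i^\top \ell(\Sigma) \bm{u}_i$ (and the common value $\varphi_i = (p-n)^{-1}\tr[U_0^\top \ell(\Sigma) U_0]$ for $i\in \qq{\sfK+1,p}$) determined by $\mathcal{L}$ via Appendix \ref{appendix_lossfunctionsummary}; (b) expand $\mathcal{L}(\Sigma, \widetilde{\Sigma})$ into a continuous functional of $\{\varphi_i\}_{i=1}^p$ and $\{\sigma_i\}_{i=1}^p$; (c) substitute $\varphi_i \leadsto \theta_i$ using \Cref{thm_shrinkerestimate} and pass from $L^1$ to in-probability convergence via Markov's inequality.

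For step (b), I would use the spectral decomposition and orthonormality of $\{\bm{u}_i\}$ to derive explicit identities. For example, when $\ell(x)=x$ (parts (i)), $\varphi_i = \bm{u}_i^\top \Sigma \bm{u}_i$, so $\tr(\Sigma \widetilde{\Sigma}) = \sum_i \varphi_i \bm{u}_i^\top \Sigma \bm{u}_i = \sum_i \varphi_i^2 = \tr(\widetilde{\Sigma}^2)$, which yields the identity
\begin{equation*}
\frac{1}{p}\|\Sigma - \widetilde{\Sigma}\|_F^2 = \frac{1}{p}\sum_{i=1}^p (\sigma_i^2 - \varphi_i^2).
\end{equation*}
The inverse Stein formula follows from $\log\det(\widetilde{\Sigma}\Sigma^{-1}) = \sum_i \log(\varphi_i/\sigma_i)$, and disutility/minimum variance from $\tr(\widetilde{\Sigma}^{-1}) = \sum_i \varphi_i^{-1}$. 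The $\ell(x) = x^{-1}$ cases (part (ii)), $\ell(x) = \sqrt{x}$ (part (iii)), and $\ell(x) = \log x$ (part (iv)) cases follow analogously using $\tr(\Sigma^{-1}\widetilde{\Sigma})$, the Fréchet metric $\|\Sigma^{1/2}-\widetilde{\Sigma}^{1/2}\|_F^2$, and $\|\log\Sigma - \log\widetilde{\Sigma}\|_F^2$, respectively. For the quadratic-type losses in part (v), two different choices of $\ell$ enter the optimal shrinker formulas (see Appendix \ref{appendix_lossfunctionsummary}), but the same expansion scheme applies.

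For step (c), under \eqref{eq_evassumptionone} and \eqref{eq_outlierassumption}, the quantities $\sigma_i, \theta_i, \varphi_i$ stay in a compact subset of $(0,\infty)$ (for $\theta_i$ and $\varphi_i$ this requires verifying positivity and boundedness from the explicit formulas in \Cref{thm_shrinkerestimate}, which follow from the regularity conditions). Therefore all functions $F \in \{x, x^2, \log x, x^{-1}, \sqrt{\cdot}\}$ appearing in the risk formulas are Lipschitz on the relevant domain, so
\begin{equation*}
\mathbb{E}\absbb{\frac{1}{p}\sum_{i=1}^p F(\varphi_i) - \frac{1}{p}\sum_{i=1}^p F(\theta_i)} \leq \frac{C}{p}\sum_{i=1}^p \mathbb{E}|\varphi_i - \theta_i| = \oo(1),
\end{equation*}
and Markov's inequality then upgrades this to convergence in probability with rate $\oo(1)$.

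The main obstacle is that \Cref{thm_shrinkerestimate} is stated pointwise in $i$, whereas the averages above require the $\oo(1)$ rate to be uniform over $i \in \qq{p}$ (so that $p^{-1}\sum_i \mathbb{E}|\varphi_i - \theta_i| \to 0$ instead of merely each summand going to zero). I would resolve this by inspecting the proof of \Cref{thm_shrinkerestimate}---which builds on uniform local laws and eigenvector estimates from \Cref{Thm: EE} and \Cref{corollary_que}---and extracting an explicit polynomial rate $n^{-c}$ uniform in $i$, or alternatively by a standard dominated-convergence-style argument noting that $|\varphi_i - \theta_i|$ is uniformly bounded by $2\|\ell(\Sigma)\| \leq C$. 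Once uniformity is secured, the remaining work is purely algebraic and requires no new random matrix input.
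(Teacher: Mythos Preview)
Your approach is essentially the paper's: the paper's proof is the one-liner ``follows directly from Lemma~\ref{lem_explicityformula}, Lemma~\ref{lem_lossdecomposition}, and Theorem~\ref{thm_shrinkerestimate},'' where Lemma~\ref{lem_explicityformula} is exactly your step~(a) and Lemma~\ref{lem_lossdecomposition} is your step~(b), both recorded in Appendix~\ref{appendix_lossfunctionsummary}. Your uniformity concern in step~(c) is legitimate and is resolved exactly as you suggest---the proof of Theorem~\ref{thm_shrinkerestimate} in fact yields an explicit polynomial rate $n^{-c}$ uniform in $i$ (see the bounds $\OO_\prec(n^{-1/4})$ for the spiked part and the uniform probability bound $n^{-\fd}/\e^2$ from Theorem~\ref{corollary_que} for the non-spiked part); your ``dominated convergence'' alternative would not work on its own, since boundedness plus pointwise convergence over a growing index set does not give convergence of the average.
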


\subsection{Eigenvector distributions for sample covariance matrices}\label{sec_mainresultque}

In this subsection, we present the theoretical results regarding the asymptotic distributions of the non-outlier eigenvectors of (non-spiked or spiked) sample covariance matrices. For simplicity of presentation, we adopt the notion of ``asymptotic equality in distribution", as defined in \cite[Definition 2.2]{bao2022statistical}.

\begin{definition}\label{defn_asymptoticweakconvergence}
Two sequences of random vectors $\bm{x}_n, \bm{y}_n \in \mathbb{R}^{k}$ are \emph{asymptotically equal in distribution}, denoted by $\bm{x}_n \simeq \bm{y}_n,$ if they are tight (i.e., for any $\epsilon>0,$ there exists a $C_\e>0$ such that $\sup_n \mathbb{P}(\| \bm{x}_n\| \geq C_\e) \leq \epsilon$) and satisfy
\begin{equation*}
\lim_{n \rightarrow \infty} \left[ \mathbb{E} \mathfrak{h}(\bm{x}_n)-\mathbb{E} \mathfrak{h}(\bm{y}_n) \right]=0,
\end{equation*}
for any bounded smooth function $\mathfrak{h}: \mathbb{R}^{k} \rightarrow \mathbb{R}.$ 
\end{definition}

\begin{theorem}[Eigenvector distribution]\label{Thm: EE}
Suppose Assumption \ref{main_assumption} holds. Given a deterministic unit vector $\bm{v} \in \mathbb{R}^p$ and any fixed integer $L$, take a subset of indices 
$\{i_k+r\}_{k=1}^L \subset \qq{r+1,\sfK}$ for the spiked model. Define the $L\times L$ diagonal matrix 
\begin{equation}\label{eq_Vdefinition}
\Xi_L:=\operatorname{diag}\left\{ \phi(\bm{v}, \bm{v},\gamma_{i_1}), \cdots, \phi(\bm{v}, \bm{v},\gamma_{i_L}) \right\},
\end{equation}
where $\phi$ is defined in (\ref{eq_phidefinitionintheend}) and $\{\gamma_{i_k}\}_{k=1}^L$ are defined in Definition \ref{defn_generaleigenvaluelocation}. 
Then, we have that  
\be\label{eq:conv_indis}
(p|\avg{\bm{v},\bm{u}_{i_1+r}}|^2,\ldots, p|\avg{\bm{v},\bm{u}_{i_L+r}}|^2) \simeq (|\cal N_1|^2,\ldots,|\cal N_L|^2),
\ee
where $\{\cal N_k\}_{k=1}^L$ are independent centered Gaussian random variables with variances $\phi(\bm{v}, \bm{v},\gamma_{i_k})$. Equivalently, we can express \eqref{eq:conv_indis} as 
\begin{equation}\label{eq:conv_indis2}
\sqrt{p} 
\begin{pmatrix}
 \xi_1 \avg{\bm{v},\bm{u}_{i_1+r}} \\
 \vdots \\
 \xi_L \avg{\bm{v}, \bm{u}_{i_L+r}}
\end{pmatrix}
\simeq \mathcal{N}(\bm{0}, \Xi_L) ,
\end{equation}
where $\xi_1,\ldots, \xi_L\in\{\pm 1\}$ are i.i.d.~uniformly random signs independent of $X$. 
The above result also extends to the non-spiked model with $r=0$. 
\end{theorem}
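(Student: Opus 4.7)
\textbf{Proof plan for Theorem \ref{Thm: EE}.} The plan is to follow the three-step eigenvector moment flow (EMF) strategy that the paper describes in Section \ref{sec_overviewresultsnovelties}, reducing the joint distribution question to the method of moments. By Definition \ref{defn_asymptoticweakconvergence} and the fact that the moments of centered Gaussians are determined by their covariance matrix, it suffices to show that for every fixed finite collection of non-negative integers $m_1,\dots,m_L$,
\begin{equation*}
\mathbb{E}\prod_{k=1}^L \bigl(p\,|\avg{\bm v,\bm u_{i_k+r}}|^2\bigr)^{m_k} \;=\; \mathbb{E}\prod_{k=1}^L |\mathcal N_k|^{2m_k}+\oo(1),
\end{equation*}
and standard tightness bounds (coming from the isotropic local law for $\widetilde{\mathcal Q}_1$) take care of the tightness requirement in Definition \ref{defn_asymptoticweakconvergence}. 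Up to an a priori deterministic truncation it is enough to treat the non-spiked model $\mathcal Q_1$; the spikes only shift indices by $r$ and, because the outlier eigenvectors are macroscopically separated from the bulk ones by Lemma \ref{lemma_spikedlocation}, the non-outlier eigenvectors can be analyzed on $\widetilde{\mathcal Q}_1$ exactly as on $\mathcal Q_1$ after removing the rank-$r$ perturbation.

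\textbf{Step 1 (local law).} First I would invoke the anisotropic/isotropic local laws for the resolvent $G(z)=(\mathcal Q_1-z)^{-1}$ established in \cite{MR3704770}, valid under Assumption \ref{main_assumption}. These provide optimal control on $G_{\bm v\bm v}(z)$ and on delocalization $|\avg{\bm v,\bm u_j}|^2\lesssim n^{-1+\epsilon}$ with high probability, and identify the deterministic equivalent of $G(z)$ in terms of $m(z)$ and $\Sigma_0$. In particular, an overlap-resolvent identity (spectral decomposition of $\im G$) immediately yields $\sum_{j:\lambda_j\approx \gamma_{i}} |\avg{\bm v,\bm u_j}|^2 \approx \eta\,\im G_{\bm v\bm v}(\gamma_i+\mathrm i\eta)$, which matches the variance $\phi(\bm v,\bm v,\gamma_i)$ in Theorem \ref{Thm: EE} after comparing with the definition of $\phi$ in \eqref{eq_phidefinitionintheend}. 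This fixes the predicted covariance.

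\textbf{Step 2 (EMF for the rectangular DBM).} Next I would add a small Gaussian component: set $\mathcal Q_t=(Y+B_t)(Y+B_t)^\top/n$ with $B_t$ a matrix of independent Brownian motions of variance $t$, so the normalized eigenvectors follow a rectangular Dyson Brownian motion. Following the approach of \cite{MR3606475} with the generalization needed for non-identity $\Sigma_0$ (as in \cite{marcinek2022high,benigni2020eigenvectors}), the functional
\begin{equation*}
f_t(\xi) \;:=\; \mathbb E\Bigl[\prod_{k} q_{j_k,t}^{2\xi_k}\,\big|\,\lambda(t)\Bigr],\qquad q_{j,t}:=\sqrt p\,\avg{\bm v,\bm u_j(t)},
\end{equation*}
solves a parabolic equation (an interacting particle system, as in the forthcoming Lemma \ref{lem: dyn}) whose equilibrium is exactly the product of moments of independent Gaussians with the variances predicted by Step 1. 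Invoking the finite speed of propagation and rigidity from Step 1, the maximum principle / De Giorgi--Nash--Moser-type relaxation (Theorem \ref{main result: Jia}) gives that $f_t(\xi)$ equals its equilibrium value up to $\oo(1)$ once $t\gg n^{-1/3}$. Combined with the moment method (Lemma \ref{lem: moment flow}), this proves the joint Gaussian limit for $\mathcal Q_t$.

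\textbf{Step 3 (comparison).} Finally I would transfer the result from $\mathcal Q_t$ back to $\mathcal Q_1$ by a Green's function comparison argument applied to a smooth regularized observable of $|\avg{\bm v,\bm u_{i_k+r}}|^2$ expressed via contour integrals of $G_{\bm v\bm v}(z)$. The honest obstacle, and the place where the proof departs from \cite{bloemendal2016principal,knowles2013eigenvector}, is that when $\Sigma_0$ is non-diagonal the naive interpolation between $Y$ and $\sqrt{1-t}Y+B_t$ does \emph{not} preserve the anisotropic covariance structure. Following the strategy outlined around \eqref{eq_mathcalwt} and \eqref{eq_dtusx}, I would introduce an intermediate matrix $\mathcal W_t$ having the same covariance as $\sqrt{1-t}Y+B_t$ but built from the randomness of $Y$, and with the property $\mathcal W_t\stackrel d= Y$ for fixed $t$; then interpolate between $\mathcal W_t$ and $\sqrt{1-t}Y+B_t$ through a family $W_t^s$, $s\in[0,1]$, that varies only the deterministic covariance. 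The comparison reduces to bounding $\partial_s$ of the relevant Green's function functional, which by Gaussian integration by parts together with the cumulant/high-moment bound \eqref{eq_momentassumption} and the vanishing third moment assumption produces only negligible errors, giving asymptotic equality of the joint moments of interest between $\mathcal Q_1$ and $\mathcal Q_t$ for a suitably chosen $t=n^{-1/3+\epsilon}$. I expect this intermediate-matrix construction to be the main technical obstacle; the rest of the argument is an assembly of Steps 1--3 with the moment method to identify the Gaussian limit with covariance $\Xi_L$.
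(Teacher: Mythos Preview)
Your proposal is correct and follows essentially the same three-step strategy as the paper: local laws from \cite{MR3704770}, relaxation of the eigenvector moment flow for the Gaussian-divisible ensemble (Theorem \ref{main result: Jia} and Lemma \ref{lem: moment flow}), and the novel Green's function comparison via the intermediate matrix $\mathcal W_t$ and the interpolation $W_t^s$ in \eqref{eq_dtusx} (Lemma \ref{lemma_universal}). One minor clarification: the paper's DBM is the additive flow $W(t)=D_0^{1/2}V^\top X+\sqrt{t}\,X^G$ with $D_0=\Lambda-\mathfrak t$ chosen so that $\mathcal W_{\mathfrak t}=D_{\mathfrak t}^{1/2}V^\top X$ equals the original $W$ exactly at $t=\mathfrak t$, rather than the Ornstein--Uhlenbeck form $\sqrt{1-t}\,Y+B_t$ you wrote; this particular choice is what makes $\Xi_L(\mathfrak t)=\Xi_L+\oo(1)$ transparent in the final identification of the covariance.
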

%
%

Theorem \ref{Thm: EE} provides a characterization of the joint distribution of the projections $\avg{\bm{v},\bm{u}_{i_k}}$ (referred to as ``generalized components") of the non-outlier eigenvectors from the non-spiked or spiked sample covariance matrices. It demonstrates that the generalized components of different eigenvectors are asymptotically independent Gaussian variables, and their variances can be explicitly computed. Given that eigenvectors are defined only up to a phase, we express the distributions of eigenvectors using the forms presented in \eqref{eq:conv_indis} or \eqref{eq:conv_indis2}. 
As a specific example, when $\Sigma=I_p$, we can verify that $\phi(\bm{v},\bm{v},\gamma_{i_k})=1$ by using the explicit form of $m(z)$ solved from (\ref{eq_defnmc}). 
This reduces to the result in \cite[Theorem 2.20]{bloemendal2016principal}. We provide numerical illustrations of \Cref{Thm: EE} in Figure \ref{fig_compareone}. 

\begin{figure}
\begin{center}
\begin{subfigure}{0.4\textwidth}
\includegraphics[width=6.5cm,height=4.4cm]{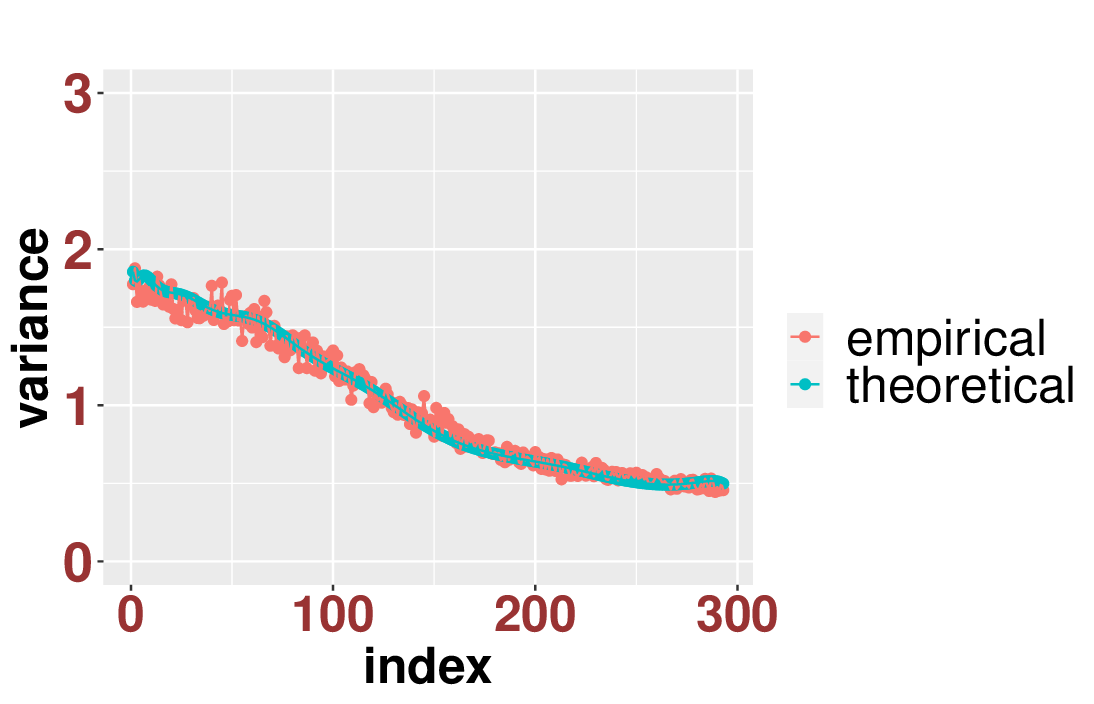}
\end{subfigure}
\hspace*{1.5cm}
\begin{subfigure}{0.4\textwidth}
\includegraphics[width=6.5cm,height=4.4cm]{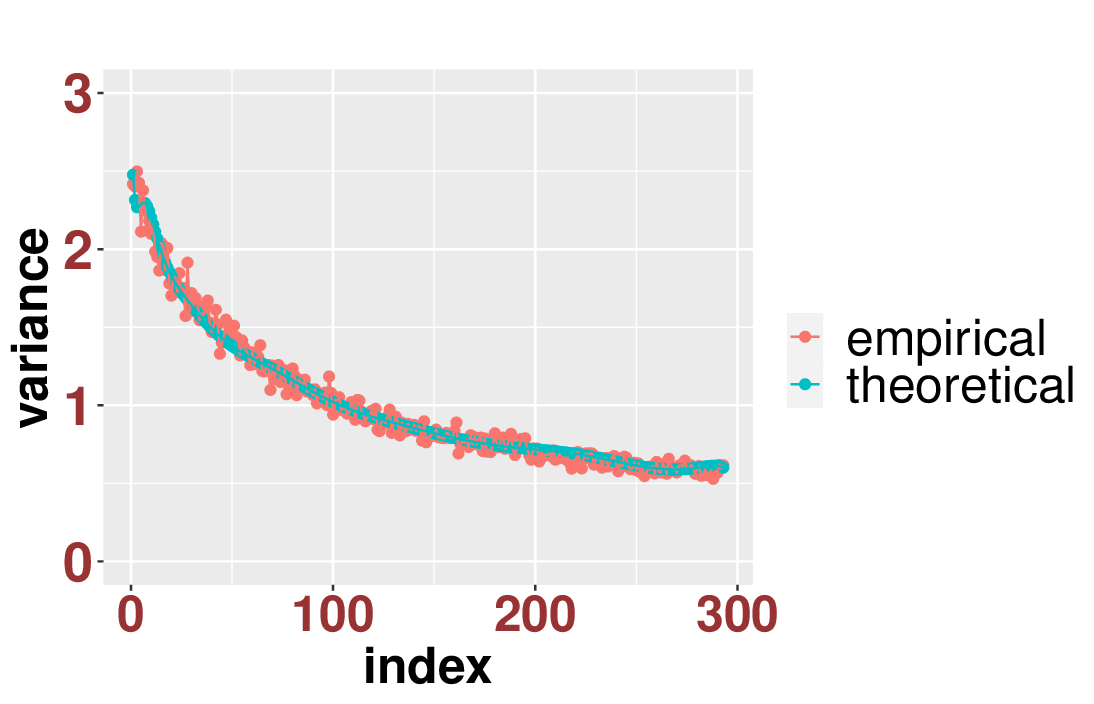}
\end{subfigure}
\end{center}
\caption{  Variances of the generalized components of non-outlier eigenvectors. We demonstrate the validity of Theorem \ref{Thm: EE} by comparing the empirical variances with the formula presented in (\ref{eq_Vdefinition}). We consider $\mathcal{Q}_1$ in (\ref{eq_samplecovariancematrixdefinition}) with $p=300$ and $n=600$. The random matrix $X$ consists of i.i.d.~$\mathcal{N}(0,1)$ entries. 
In the left panel, we take the ESD of $\Sigma_0$ to be $\mu_{\Sigma_0}=0.5 \delta_3+0.5 \delta_1$. In the right panel, we take $\mu_{\Sigma_0}=p^{-1}\sum_{i=1}^p\delta_{1+i/p}$. The empirical results are based on 1,000 repetitions and $\bm{v}=\bm{v}_1$, the leading eigenvector of $\Sigma_0$. The theoretical variances are evaluated through the estimated $\phi$, which will be discussed in detail in \Cref{sec_statisticalestimation} (see \Cref{thm_consistentestimators}). To facilitate implementation, users can directly use the function $\mathsf{MP}_{-}\mathsf{vector}_{-}\mathsf{dist}$ provided in our R package $\mathsf{RMT4DS}.$
}
\label{fig_compareone}
\end{figure}

\begin{remark}\label{rmk:proj}
For the sake of statistical applications and to maintain simplicity in our presentation, we have focused on a scenario where the sample eigenvectors are projected onto the same direction $\bm{v}$. However, as discussed in \Cref{rem:color} below, our results can be extended to situations where different sample eigenvectors are projected onto multiple distinct directions. More precisely, our method can be generalized to show that for deterministic unit vectors $\bm{v}_{i_k} \in \mathbb{R}^p$, $k \in \qq{L}$,     
\begin{equation}\label{eq_coloredresult}
\sqrt{p} 
\begin{pmatrix}
\xi_1 \avg{\bm{v}_{i_1}, \bm{u}_{i_1+r}} \\
 \vdots \\
\xi_L \avg{\bm{v}_{i_L}, \bm{u}_{i_L+r}}
\end{pmatrix}
\simeq \mathcal{N}(\bm{0}, \Xi'_L), \quad \text{with} \quad   \Xi'_L:=\operatorname{diag}\left\{ \phi(\bm{v}_{i_1}, \bm{v}_{i_1},\gamma_{i_1}), \cdots, \phi(\bm{v}_{i_L}, \bm{v}_{i_L},\gamma_{i_L}) \right\}. 
\end{equation} 
Such a result has recently been established for Wigner matrices in \cite{marcinek2022high}, and we believe similar arguments can be applied to our setting. However, since (\ref{eq_coloredresult}) falls beyond the scope of the current paper, we intend to pursue it in future research. 
\end{remark}

\begin{remark}
In the current paper, our focus regarding the spiked model has been on the distribution of its non-outlier eigenvectors. As for the outlier eigenvectors, we only examine their first-order asymptotics (see Lemmas \ref{lemma_spikedlocation} and \ref{lem_bulkproperty}), which suffice for our statistical applications. It is important to note that the derivation of their asymptotic distributions relies on a totally different approach from that for Theorem \ref{Thm: EE}, which will be explained in more detail in Section \ref{sec_proofstrategy}. 
In the case where $\Sigma_0=I$, the distributions of outlier eigenvectors have been extensively studied in \cite{bao2022statistical}. Unlike Theorem \ref{Thm: EE}, the results in \cite{bao2022statistical} demonstrate that the distribution of the generalized components of outlier eigenvectors generally involves a linear combination of Gaussian and Chi-square random variables. Nevertheless, we believe that by following the approach in \cite{bao2022statistical} and utilizing some tools developed in our paper, we can derive the distribution of all outlier eigenvectors for general $\Sigma_0$.
Finally, we remark that the methods presented in our paper can be applied to study the eigenvector distribution in other important statistical models, such as those discussed in \cite{fan2021principal,zhou2019eigenvalue}. Exploring these directions will be the focus of our future work. 
\end{remark}

\begin{remark}
Motivated by the applications in covariance matrix estimation, we have focused on demonstrating the eigenvector distributions for the sample covariance matrices defined in (\ref{eq_samplecovariancematrixdefinition}), specifically the distribution of the left singular vectors of the data matrix.  
However, in applications such as spectral clustering, there is also interest in the right singular vectors. Fortunately, by virtue of symmetry, our arguments can be readily extended to study the eigenvectors of $\cal Q_2$ and $\wt{\cal Q}_2$ in (\ref{eq_grammatrixdefinition}) with minor modifications. For the sake of simplicity, we will not explore this direction in the present paper.
\end{remark}

During the proof of Theorem \ref{Thm: EE}, we can derive the following concentration inequality for the weighted average of the generalized components of eigenvectors, known as the quantum unique ergodicity estimate \cite{benigni2020eigenvectors, MR3606475}. This result plays a key role in the proof of Theorem \ref{thm_shrinkerestimate}.  

\begin{theorem}[Quantum unique ergodicity]\label{corollary_que}
Let $\{w_j\}_{j=1}^{p}$ be a deterministic sequence of real values such that $|w_j|\le 1$ for all $ j \in \qq{p}$. Recall that \smash{$\{\bm{v}_j\}_{j=1}^{p}$} are the eigenvectors of $\Sigma$. Then, there exists a constant $\fd>0$ such that for any $\e>0$ and each $i \in \qq{r+1,\sfK}$, 
the following estimate holds:
\begin{equation}\label{eq:weakQUE}
\P\bigg( \Big|\sum_{j=1}^{p} w_j \left| \avg{\bm{u}_i, \bm{v}_j} \right|^2-p^{-1}\sum_{j=1}^{p} w_j \phi(\bm{v}_j, \bm{v}_j, \gamma_{i-r})  \Big|>\e \bigg) \leq  n^{-\mathfrak d}/ \e^{2}.
\end{equation}
The above result also extends to the non-spiked model with $r=0$. 
\end{theorem}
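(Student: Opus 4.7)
The plan is to derive \Cref{corollary_que} as a quantitative second-moment consequence of the eigenvector distribution in \Cref{Thm: EE}. By Markov's inequality, it suffices to establish the bound
\begin{equation*}
\mathbb{E}\biggl|\sum_{j=1}^p w_j |\avg{\bm{u}_i,\bm{v}_j}|^2 - \frac{1}{p}\sum_{j=1}^p w_j \phi_j\biggr|^2 \le C n^{-\fd},
\end{equation*}
where $\phi_j := \phi(\bm{v}_j,\bm{v}_j,\gamma_{i-r})$. Setting $S := \sum_j w_j|\avg{\bm{u}_i,\bm{v}_j}|^2$ and $\bar S := p^{-1}\sum_j w_j \phi_j$, expanding $|S-\bar S|^2$ reduces the task to controlling the first moment $\mathbb{E}\,|\avg{\bm u_i,\bm v_j}|^2$ and the joint second moment $\mathbb{E}\,\bigl[|\avg{\bm u_i,\bm v_j}|^2|\avg{\bm u_i,\bm v_k}|^2\bigr]$, each with a polynomial rate.

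For the first moment, the eigenvector moment flow analysis (\Cref{main result: Jia}) underlying \Cref{Thm: EE} gives $\mathbb{E}\,[\,p|\avg{\bm u_i,\bm v_j}|^2\,] = \phi_j + \OO(n^{-\fd_1})$ uniformly in $j$. For the joint second moment with $j\ne k$, the decisive structural input is that the population eigenvectors $\bm v_j,\bm v_k$ are orthogonal; a direct inspection of \eqref{eq_phidefinitionintheend} together with $\Sigma\bm v_j = \wt\sigma_j\bm v_j$ gives $\phi(\bm v_j,\bm v_k,\gamma_{i-r})=0$. Applying \Cref{Thm: EE} (upgraded to fourth-moment convergence via the EMF moment bounds and the isotropic delocalization of $\bm u_i$ recalled in \Cref{appendix_preliminary}) to the unit test vectors $\bm v_j$, $\bm v_k$, and $(\bm v_j \pm \bm v_k)/\sqrt 2$ then yields, with $a := \avg{\bm u_i,\bm v_j}$ and $b := \avg{\bm u_i,\bm v_k}$,
\begin{equation*}
p^2\,\mathbb{E}(a \pm b)^4 = 3(\phi_j + \phi_k)^2 + \OO(n^{-\fd_2}), \qquad p^2\,\mathbb{E}\,a^4 = 3\phi_j^2 + \OO(n^{-\fd_2}),
\end{equation*}
and similarly for $b$. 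The polarization identity $(a+b)^4 + (a-b)^4 = 2(a^4 + 6\,a^2 b^2 + b^4)$ then isolates $\mathbb{E}[a^2 b^2] = \phi_j\phi_k/p^2 + \OO(n^{-\fd_2}/p^2)$ for $j\ne k$; the $j=k$ case is the Gaussian fourth moment $\mathbb{E}\,a^4 = 3\phi_j^2/p^2 + \OO(n^{-\fd_2}/p^2)$.

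Substituting these identities into the expansion of $\mathbb{E}|S-\bar S|^2$, the principal terms telescope via $p^{-2}\sum_{j,k}w_j w_k\phi_j\phi_k - 2\bar S^2 + \bar S^2 = 0$, leaving only (i) the diagonal overcorrection $2 p^{-2}\sum_j w_j^2 \phi_j^2 = \OO(p^{-1})$ (using $|w_j|\le 1$ and boundedness of $\phi_j$ under \Cref{main_assumption}), (ii) the pair-summed joint-moment error $\OO(n^{-\fd_2})$, obtained by summing the per-pair error $\OO(n^{-\fd_2}/p^2)$ over the $p^2$ pairs, and (iii) the weighted first-moment error $\OO(n^{-\fd_1})$. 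Combining, $\mathbb{E}|S-\bar S|^2 \le C n^{-\fd}$ for any $\fd < \min(\fd_1,\fd_2)$ (using $p \gtrsim n$ from \eqref{eq_ratioassumption}), and then $\P(|S-\bar S|>\e) \le \e^{-2}\mathbb{E}|S-\bar S|^2 \le Cn^{-\fd}/\e^2$ concludes the proof.

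The main technical obstacle will be upgrading the distributional convergence in \Cref{Thm: EE} to \emph{quantitative} fourth-moment convergence, uniformly over the test directions $\bm w \in \{\bm v_j,\bm v_k,(\bm v_j\pm\bm v_k)/\sqrt 2\}$ and the $p^2$ pairs $(j,k)$. The polynomial rate is furnished by the explicit relaxation of the eigenvector moment flow on the time scale $t \gg n^{-1/3}$ combined with the Green's function comparison of \Cref{lemma_universal}, while the uniform integrability needed to pass from weak convergence to fourth-moment convergence of $(p|\avg{\bm u_i,\bm w}|^2)^2$ is supplied by the isotropic delocalization of the non-outlier eigenvectors of $\cal Q_1$ and $\wt{\cal Q}_1$.
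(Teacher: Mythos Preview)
Your proposal is correct and follows essentially the same route as the paper: reduce to a second-moment bound via Markov's inequality, obtain the second and fourth moments of $p|\avg{\bm u_i,\bm v_j}|^2$ from the quantitative moment convergence \eqref{eq:conv_indis3} (which the paper derives from \Cref{main result: Jia}, \Cref{thm_localdensityestimate0}, and \Cref{lemma_universal}), and extract the cross moment $\mathbb{E}[|\avg{\bm u_i,\bm v_j}|^2|\avg{\bm u_i,\bm v_k}|^2]$ by applying the same moment convergence to $(\bm v_j\pm\bm v_k)/\sqrt 2$ together with the orthogonality $\phi(\bm v_j,\bm v_k,\gamma_{i-r})=0$. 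The paper uses i.i.d.\ random signs $(\xi_1\bm v_j+\xi_2\bm v_k)/\sqrt 2$ in place of your explicit $\pm$ polarization, which is the same computation.
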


In the literature, \Cref{corollary_que} is sometimes called a ``weak" form of QUE since both the probability bound and the rate in estimate \eqref{eq:weakQUE} are non-optimal. Recently, a stronger notion of QUE called the \emph{eigenstate thermalization hypothesis} has also been established for Wigner matrices  \cite{cipolloni2021eigenstate,adhikari2023eigenstate,cipolloni2022normal,CIPOLLONI2022109394,CEH2023,CEJK2023}. We conjecture that a similar form should also hold for sample covariance matrices, in the sense that 
\begin{equation*}
\sum_{j=1}^p w_j \left| \avg{\bm{u}_i, \bm{v}_j} \right|^2=p^{-1}\sum_{j=1}^{p} w_j \phi(\bm{v}_j, \bm{v}_j, \gamma_{i-r})+\OO_{\prec}(n^{-1/2}).
\end{equation*}
We plan to explore this direction in future works.


\section{Adaptive and consistent estimators for shrinkers}\label{sec_statisticalestimation}

In Section \ref{sec_statisticsmainresults}, we have provided the formulas for the convergent limits of the shrinkers $\varphi_i$. However, in practical applications, the quantities involved in (\ref{eq_importantdefinition})--(\ref{eq_phidefinitionintheend}) are typically unknown and need to be estimated. 
In this section, we propose adaptive and consistent estimators for these quantities, which in turn provide consistent estimators for the shrinkers $\varphi_i$.  Our focus will be on the possibly spiked model, which encompasses the non-spiked model as a special case with $r=0$.


\subsection{Data-driven estimators for the shrinkers}\label{sec_combinedalgo}

As one can see, estimating the quantities in (\ref{eq_importantdefinition})--(\ref{eq_phidefinitionintheend}) requires consistent estimation of the spectrum of the \emph{non-spiked eigenvalues} of $\Sigma$. In the literature, this problem has been addressed for non-spiked sample covariance matrices in \cite{NEK, KV} based on the sample eigenvalues of $\cal Q_1$. 
For the spiked sample covariance matrix model, we know that the non-outlier eigenvalues of \smash{$\widetilde{\mathcal{Q}}_1$} stick to those of $\cal Q_1$ as indicated by equation (\ref{eq_eigenvaluesticking}) below. Consequently, we can substitute the non-outlier eigenvalues \smash{$\wt\lambda_i$}, $i\in \qq{r+1,p}$, into the algorithms from \cite{NEK,KV} to obtain estimators for the eigenvalues of $\Sigma_0$, denoted as 
\begin{equation}\label{eq_sigmaestimator}
\widehat{\sigma}_{1} \geq \widehat{\sigma}_{2} \geq \cdots \geq \widehat{\sigma}_{p} > 0.
\end{equation}
In particular, they provide consistent estimation of the spectrum of the non-spiked eigenvalues $\wt\sigma_j=\sigma_j$, $j\ge r+1$. To facilitate ease of use for users, these algorithms can be implemented using the functions $\mathtt{MPEst}$ (for the method in \cite{NEK})  or $\mathtt{MomentEst}$ (for the method in \cite{KV}) in our $\mathtt{R}$ package. Next, we turn to the estimation of the quantities associated with the spikes of $\Sigma$. For $i\in \qq{r}$, we let 
\begin{equation}\label{eq:aibi}
\widehat{\mathfrak{a}}_i=\wt\lambda_i, \quad \widehat{\mathfrak{b}}_i=(\widehat{\mathfrak{a}}_i\widehat{m}_i')^{-1}, \quad  \widehat{\widetilde{\sigma}}_i=-(\widehat{m}_i)^{-1}, 
\end{equation}
where $\widehat{m}_i$ and $\widehat{m}_i'$ are defined as 
\begin{equation} \label{eq:aibi2}
\widehat{m}_i\deq \frac{1}{n}  \sum_{j=r+1}^n \frac{1}{ \wt{\lambda}_j-\wh\fa_i},\quad \widehat{m}_i'\deq \frac{1}{n}  \sum_{j=r+1}^n \frac{1}{ (\wt{\lambda}_j-\wh\fa_i)^2} .
\end{equation}
For a small constant $\e>0$, we define
\be\label{eq:kepm}
K_\e^+:=\max_{j}\left\{j: \wh\sigma_j \ge \e\right\},\quad K_\e^-:=\min_{j}\left\{j: \wh\sigma_j \le -\wh m_r^{-1}-\e\right\}.
\ee
Then, we introduce the following estimator of $\dot m_0(\mathfrak{a}_i)$:
$$ \widehat{\mathsf{m}}'_{i,0}(\e)=\frac{\widehat{m}_i'}{n\wh\fa_i}\sum_{j=(r+1)\vee K_\e^-}^{p\wedge K_\e^+} \frac{\ell(\wh \sigma_j)\wh \sigma_j}{(1+\wh m_i \wh\sigma_j)^2}.$$


Note all the above quantities can be computed adaptively using the observed sample covariance matrix $\widetilde{\mathcal{Q}}_1.$ Moreover, the next result shows that 
they are consistent estimators for the relevant quantities. 

\begin{lemma}\label{lem_usefulquantitiesconsistentestimator}
Under Assumption \ref{main_assumption}, the ESD $\wh\mu$ of $\{\widehat{\sigma}_{i}\}$ converges weakly to $\mu_{\Sigma_0}$, that is, for each $x\ge 0$, 
\begin{equation}\label{eq_consistentestimatorsigma}
\left|\wh \mu((-\infty,x])-\mu_{\Sigma_0}((-\infty,x])\right|=\oo_{\mathbb{P}}(1).
\end{equation}
For any continuous function $\ell$ defined on $(0, \infty)$, there exists a small constant $\e_0>0$ such that for any $i \in \qq{r}$ and $\e\le \e_0$,
\begin{equation}\label{eq_consistenestimatorotherquantities}
\widehat{\mathfrak{a}}_i=\mathfrak{a}_i+\oo_{\mathbb{P}}(1), \quad  \widehat{\mathfrak{b}}_i=\mathfrak{b}_i+\oo_{\mathbb{P}}(1), \quad \widehat{\widetilde{\sigma}}_i=\widetilde{\sigma}_i+\oo_{\mathbb{P}}(1), \quad  \widehat{\mathsf{m}}'_{i,0}(\e)=\dot m_0(\mathfrak{a}_i)+\oo_{\mathbb{P}}(1).
\end{equation}
\end{lemma}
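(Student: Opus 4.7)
The plan is to assemble each consistency claim from ingredients already available in the excerpt: the convergence of outlier eigenvalues $\wt\lambda_i\to\fa_i$ for $i\in\qq{r}$ provided by Lemma \ref{lemma_spikedlocation}, the rigidity and local law for the non-outlier eigenvalues of $\wt{\cal Q}_1$ (which by eigenvalue sticking are close to those of $\cal Q_1$), and the consistency of the algorithms of \cite{NEK, KV} on non-spiked sample covariance matrices. I work throughout on a high-probability event on which these inputs hold with polynomial speed. For \eqref{eq_consistentestimatorsigma}, the sticking phenomenon between $\wt{\cal Q}_1$ and $\cal Q_1$ yields $|\wt\lambda_{j+r}-\lambda_j|=\oo_\P(1)$ uniformly in $j$, so the non-outlier inputs $(\wt\lambda_{r+1},\ldots,\wt\lambda_p)$ fed into the algorithms differ negligibly from the sample eigenvalues $(\lambda_1,\ldots,\lambda_{p-r})$ of $\cal Q_1$; removing a fixed number $r$ of eigenvalues does not affect the limiting ESD, and the consistency results of \cite{NEK, KV} then yield $\wh\mu\Rightarrow\mu_{\Sigma_0}$ in probability.

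For the outlier quantities in \eqref{eq_consistenestimatorotherquantities}, $\wh\fa_i=\wt\lambda_i\to\fa_i$ is immediate from Lemma \ref{lemma_spikedlocation}. Viewing $\wh m_i$ and $\wh m_i'$ as an empirical Stieltjes transform and its derivative of the bulk eigenvalues of $\wt{\cal Q}_1$, evaluated at the random point $\wh\fa_i$, the spike condition \eqref{eq_outlierassumption} forces $\fa_i$ to lie at distance at least some $c=c(\varpi)>0$ outside $\supp\varrho$, so $z\mapsto m(z)$ and $z\mapsto m'(z)$ are analytic and Lipschitz on a complex neighborhood of $\fa_i$. Combining this with the local law of \cite{MR3704770} for $\wt{\cal Q}_1$ and $\wh\fa_i\to\fa_i$ gives $\wh m_i\to m(\fa_i)$ and $\wh m_i'\to m'(\fa_i)$ in probability. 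Since $\fa_i=h(-\wt\sigma_i^{-1})$ and $h$ is invertible on the relevant branch outside $\supp\varrho$, one has $m(\fa_i)=-\wt\sigma_i^{-1}$, hence $\wh{\wt\sigma}_i=-\wh m_i^{-1}\to\wt\sigma_i$. Differentiating $z=h(m(z))$ once gives $m'(z)=1/h'(m(z))$, whence $\fb_i=h'(-\wt\sigma_i^{-1})/h(-\wt\sigma_i^{-1})=1/(\fa_i m'(\fa_i))$, and therefore $\wh\fb_i=(\wh\fa_i\wh m_i')^{-1}\to\fb_i$ in probability.

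For the last estimator, rewrite the target via \eqref{eq_formofm0} as
\[
\dot m_0(\fa_i)=\frac{m'(\fa_i)}{n\fa_i}\sum_{j=1}^p\frac{\ell(\sigma_j)\sigma_j}{[1+m(\fa_i)\sigma_j]^2},
\]
which matches the analytic form of $\wh\sm'_{i,0}(\e)$ up to hats and truncation. Under Assumption \ref{main_assumption}(iv) the spike and non-spike eigenvalues of $\Sigma$ are separated by a gap of at least $2\varpi$ around $-b_1^{-1}$, and $-1/\wh m_r$ concentrates at $\wt\sigma_r$; thus choosing $\e_0:=\varpi/4$ ensures that for $\e\le\e_0$ the lower truncation index $(r+1)\vee K_\e^-$ equals $r+1$ with probability $1-\oo(1)$, i.e.\ $K_\e^-$ discards exactly the $r$ spike estimates. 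Assumption \ref{main_assumption}(iii) gives $\sigma_p\ge\tau_1$, so after shrinking $\e_0$ if necessary the upper truncation satisfies $K_\e^+=p$ with probability $1-\oo(1)$. On this event the truncated sum is a bounded continuous functional of $\wh\mu$, since the integrand is continuous on the compact set $[\tau_1,\tau_1^{-1}]$ on which $1+m(\fa_i)\sigma$ is bounded away from $0$ (again by \eqref{eq_outlierassumption}); combining $\wh\mu\Rightarrow\mu_{\Sigma_0}$ with the scalar convergences already proved yields $\wh\sm'_{i,0}(\e)\to\dot m_0(\fa_i)$ in probability, with the finite-$n$ discrepancy from the omitted $r$ spike terms being $\OO(r/n)=\oo(1)$.

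The principal technical difficulty is the continuous-mapping step just above: weak convergence of $\wh\mu$ only delivers convergence of integrals of bounded continuous test functions, so the truncations $K_\e^\pm$ must be chosen carefully to restrict the integrand to a compact subset of $(0,\infty)$ on which it is uniformly continuous and bounded. The argument goes through cleanly because the spike gap \eqref{eq_outlierassumption} guarantees that the truncation removes only the $r$ spike estimates and none of the non-spike mass; without that gap one would need to include the spike contributions explicitly, together with a quantitative rate in the consistency of \cite{NEK, KV}, which is not required here.
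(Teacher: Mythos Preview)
Your treatment of \eqref{eq_consistentestimatorsigma} and of the first three estimates in \eqref{eq_consistenestimatorotherquantities} is essentially the paper's argument: eigenvalue sticking plus the consistency results of \cite{NEK,KV} for the ESD, Lemma~\ref{lemma_spikedlocation} for $\wh\fa_i$, and the identities $m(\fa_i)=-\wt\sigma_i^{-1}$, $m'(\fa_i)=1/h'(-\wt\sigma_i^{-1})$ combined with the outside-the-spectrum local law \eqref{eq:aniso_out} (the paper makes the passage to $m'$ explicit via Cauchy's integral formula, but your reasoning is equivalent).

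There is, however, a genuine gap in your argument for $\wh{\sm}'_{i,0}(\e)$. You assert that with high probability $K_\e^-\le r+1$ and $K_\e^+=p$, and you justify this by saying ``$K_\e^-$ discards exactly the $r$ spike estimates.'' But the $\wh\sigma_j$ are \emph{not} spike estimates: by construction (see \eqref{eq_sigmaestimator}) they are produced by feeding only the non-outlier sample eigenvalues $\wt\lambda_{r+1},\ldots,\wt\lambda_p$ into the algorithms of \cite{NEK,KV}, and they estimate the eigenvalues of $\Sigma_0$. The only control available is the \emph{weak} convergence \eqref{eq_consistentestimatorsigma}, which says nothing about individual order statistics; a vanishing fraction of the $\wh\sigma_j$ could lie arbitrarily close to $\wt\sigma_i$ or below $\e$. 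Consequently one can only conclude $K_\e^-=\oo(p)$ and $K_\e^+=p-\oo(p)$ (this is exactly what the paper proves in \eqref{eq:k+e}--\eqref{eq:k-e}), not the exact equalities you claim. Relatedly, your assertion that the integrand $\sigma\mapsto \ell(\sigma)\sigma/(1+m(\fa_i)\sigma)^2$ is bounded on $[\tau_1,\tau_1^{-1}]$ is false: since $m(\fa_i)=-\wt\sigma_i^{-1}$, the integrand has a pole at $\sigma=\wt\sigma_i$, and nothing in \eqref{eq_outlierassumption} prevents $\wt\sigma_i\in[\tau_1,\tau_1^{-1}]$. So the ``bounded continuous functional'' step does not go through as written.

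The paper closes this gap by a regularization trick: it first adds a small $\delta>0$ to the denominator to obtain a genuinely bounded continuous test function, applies weak convergence, then uses the truncation $j\in[(r+1)\vee K_\e^-,\,p\wedge K_\e^+]$ (on which $(1+\wh m_i\wh\sigma_j)^2\gtrsim\e^2$ by the very definition of $K_\e^-$) to remove the regularization at cost $\OO(\delta)$; finally $\delta\downarrow 0$. See \eqref{eq:consist_hard}--\eqref{eq:dotm0}. Your argument would be repaired by inserting this regularization step and replacing your unjustified claims about $K_\e^\pm$ by the weaker (and correct) $K_\e^-=\oo(p)$, $K_\e^+=p-\oo(p)$.
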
 


%
%


 

With the above lemma, we can propose a consistent estimator for $\psi_i$ in \eqref{eq_defnpsi} as
\begin{equation}\label{eq:estpsi}
\widehat{\psi}_i(\ell,\e):=\widehat{\mathfrak{b}}_i \left( \frac{\ell(\widehat{\widetilde{\sigma}}_i)}{\widehat{\widetilde{\sigma}}_i}+\widehat{\mathfrak{a}}_i \widehat{\mathsf{m}}'_{i,0}(\e) \right),\quad   i \in \qq{r}.
\end{equation}
Next, we propose a consistent estimator for $\vartheta$ in \eqref{eq_keydefinition1111}. 
Corresponding to $\phi$ in \eqref{eq_phidefinitionintheend}, we define
\begin{equation}\label{eq_phiestimator}
\widehat{\phi}_j(x)=
\begin{cases}
c_n \widehat{\sigma}_j ( x|1+ \widehat{m}(x) \widehat{\sigma}_j |^2 )^{-1}, &\quad r+1 \leq j \leq p, \ x>0 \\
(1-c_n^{-1})^{-1}(1+\widehat{m}_0 \widehat{\sigma}_j)^{-1}, &\quad r+1 \leq j \leq p,\ x=0
\end{cases},
\end{equation} 
where $\wh m(x)$ and $\wh m_0$ are defined as follows with $\eta=n^{-1/2}$:
\begin{equation}\label{eq_choiceofeta_m}
\widehat{m}(x)\deq \frac{1}{n}  \sum_{j=r+1}^n \frac{1}{ \wt{\lambda}_j-x-\mathrm{i} \eta} , \quad x>0; \quad \wh m_0:=\re \bigg(\frac{1}{n} \sum_{j=r+1}^n \frac{1}{ \wt{\lambda}_j-\mathrm{i} \eta} \bigg).
\end{equation}
With the above notations, we then define that for $i \in \qq{r+1 , \sfK}$ and small $\e>0$, 
\begin{equation}\label{eq:estvartheta}
\begin{aligned}
&\widehat{\vartheta}_i(\ell,\e):=\frac{1}{p}\sum_{j=r+1}^{p\wedge K_\e^+}  \ell(\widehat{\sigma}_j) \widehat{\phi}_j(\wt\lambda_{i}) \mathbf 1\left(\left|1+ \widehat{m}(\wt\lambda_i) \widehat{\sigma}_j \right|\ge \e\right), \quad \widehat{\vartheta}_0(\ell,\e):=\frac{1}{p}\sum_{j=r+1}^{p\wedge K_\e^+}  \ell(\widehat{\sigma}_j) \widehat{\phi}_j(0) .
\end{aligned}
\end{equation}
Now, we are prepared to present the consistency result for our proposed estimators of the shrinkers.

\begin{theorem} \label{thm_consistentestimators} 
Under Assumption \ref{main_assumption}, 
for any continuous function $\ell$ defined on $(0, \infty)$, there exists a small constant $\e_0>0$ such that for any $\e\le \e_0$, 
\be
\widehat{\psi}_i(\ell,\e)=\psi_i(\ell)+\oo_{\mathbb{P}}(1), \quad i \in \qq{r}; \label{eq_spikedshrinkerestimationandconvergence}
\ee
\be
\widehat{\vartheta}_i(\ell,\e)=\vartheta(\ell,\gamma_{i-r})+\mathrm{o}_{\mathbb{P}}(1), \quad i \in \qq{r+1 , \sfK}; \quad \widehat{\vartheta}_0(\ell,\e)=\vartheta(\ell,0)+\oo_{\mathbb{P}}(1). 
\label{eq_nonspikedshrinkerestimationandconvergence}
\ee
\end{theorem}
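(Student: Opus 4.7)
The proof naturally separates into two parts corresponding to the two consistency claims. The first part, establishing \eqref{eq_spikedshrinkerestimationandconvergence} for $\widehat{\psi}_i$, is short: by Assumption \ref{main_assumption}(iv) the spike $\wt\sigma_i$ is bounded in $[\varpi, \varpi^{-1}]$, and Lemma \ref{lem_usefulquantitiesconsistentestimator} gives in one stroke the convergence in probability of each of $\widehat{\mathfrak{a}}_i, \widehat{\mathfrak{b}}_i, \widehat{\widetilde{\sigma}}_i, \widehat{\mathsf{m}}'_{i,0}(\e)$ to $\mathfrak{a}_i, \mathfrak{b}_i, \wt\sigma_i, \dot m_0(\mathfrak{a}_i)$, respectively. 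The continuity of $\ell$ on $(0,\infty)$ together with the continuous mapping theorem then gives $\ell(\widehat{\widetilde{\sigma}}_i)/\widehat{\widetilde{\sigma}}_i \to \ell(\wt\sigma_i)/\wt\sigma_i$ in probability, and Slutsky's lemma plugged into the definition \eqref{eq:estpsi} produces \eqref{eq_spikedshrinkerestimationandconvergence}.

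The second part, for $\widehat{\vartheta}_i$, is the bulk of the argument. The first observation is that because $\bm{v}_j$ is an eigenvector of $\Sigma$ with eigenvalue $\wt\sigma_j$, the definition \eqref{eq_phidefinitionintheend} collapses to
\begin{equation*}
\phi(\bm{v}_j, \bm{v}_j, x) = \frac{c_n \wt\sigma_j}{x\,|1+m(x)\wt\sigma_j|^2}\ (x>0),\qquad \phi(\bm{v}_j, \bm{v}_j, 0) = \frac{1}{(1-c_n^{-1})(1+m(0)\wt\sigma_j)} ,
\end{equation*}
so $\vartheta(\ell,\gamma_{i-r})$ is exactly the deterministic analogue of $\widehat{\vartheta}_i(\ell,\e)$ with $(\wh\sigma_j, \wt\lambda_i, \wh m)$ replaced by $(\wt\sigma_j, \gamma_{i-r}, m)$. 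Three ingredients drive the comparison: (a) the rigidity of the non-outlier eigenvalues of $\wt{\mathcal{Q}}_1$, which combined with the sticking of non-outlier eigenvalues to those of $\mathcal{Q}_1$ gives $\wt\lambda_i=\gamma_{i-r}+\oo_{\mathbb{P}}(1)$ for $i\in\qq{r+1,\sfK}$; (b) the averaged local law for $\wt{\mathcal{Q}}_1$ of \cite{MR3704770}, which with the choice $\eta=n^{-1/2}$ in \eqref{eq_choiceofeta_m} yields $\wh m(\wt\lambda_i)= m(\gamma_{i-r})+\oo_{\mathbb{P}}(1)$ and, at $x=0$ under $c_n\ge 1+\tau$, $\wh m_0=m(0)+\oo_{\mathbb{P}}(1)$; and (c) the weak convergence $\wh\mu \Rightarrow \mu_{\Sigma_0}$ of the estimated spectrum from Lemma \ref{lem_usefulquantitiesconsistentestimator}.

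Granted these three ingredients, the plan is to rewrite $\widehat{\vartheta}_i(\ell,\e)$ as the integral of the (random) integrand $F_\e(y;\wt\lambda_i,\wh m(\wt\lambda_i)):= \ell(y)\cdot c_n y/(\wt\lambda_i |1+\wh m(\wt\lambda_i) y|^2)\cdot \mathbf 1(|1+\wh m(\wt\lambda_i)y|\ge \e)$ against $p^{-1}\sum_{j=r+1}^{p\wedge K_\e^+}\delta_{\wh\sigma_j}$, and to compare it to the same integrand (without hats) against $p^{-1}\sum_{j=r+1}^{p}\delta_{\wt\sigma_j}$. On the subdomain $\{y:|1+m(\gamma_{i-r})y|\ge 2\e\}$ the integrand is continuous and bounded by a constant depending only on $\e$ and $\tau,\tau_1,\varpi$, so ingredients (a)--(c) and a standard tightness argument turn the difference of the two integrals into $\oo_{\mathbb{P}}(1)$.

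The main obstacle is therefore controlling the piece that the indicator removes, namely the mass $\mu_{\Sigma_0}(\{y:|1+m(\gamma_{i-r})y|<C\e\})$ and its sample analogue under $\wh\mu$. For this one uses the edge and bulk regularity \eqref{eq:edgeregular}--\eqref{eq:bulkregular} together with \eqref{eq_outlierassumption} to show that $y\mapsto 1+m(x)y$ stays bounded away from zero on $\supp\mu_{\Sigma_0}$ except possibly in an arc of $\mu_{\Sigma_0}$-mass $\OO(\e)$, and to show that $K_\e^\pm$ correctly identify the indices on which $(1+\wh m(\wt\lambda_i)\wh\sigma_j)$ can be small (these correspond to the spike and hard-edge directions, which together have $o(1)$ mass under $p^{-1}\sum_{j\ge r+1}\delta_{\wh\sigma_j}$). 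Taking $\e\le \e_0$ small enough so that the discarded mass contributes at most $C\e\|\ell\|_{L^\infty([\tau_1,\tau_1^{-1}])}$ to both sides and then letting the stochastic error dominate gives the desired $\oo_{\mathbb{P}}(1)$ bound. The argument for $\widehat{\vartheta}_0(\ell,\e)$ is strictly simpler since $x=0$ is fixed and no rigidity of $\wt\lambda_i$ is needed; it relies only on $\wh m_0\to m(0)$ and $\wh\mu\Rightarrow\mu_{\Sigma_0}$.
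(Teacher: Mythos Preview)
Your proposal is correct and follows essentially the same route as the paper: \eqref{eq_spikedshrinkerestimationandconvergence} is immediate from Lemma~\ref{lem_usefulquantitiesconsistentestimator}, while for \eqref{eq_nonspikedshrinkerestimationandconvergence} the paper likewise combines rigidity plus sticking (your (a)), the averaged local law together with a short splitting of the sum defining $\widehat m$ to obtain $\widehat m(\widetilde\lambda_i)\to m(\gamma_{i-r})$ (your (b), handled in slightly more detail in \eqref{eq:est111}--\eqref{eq:whmlambda} since $\widehat m$ is evaluated at the random point $\widetilde\lambda_i$), and weak convergence of $\widehat\mu$ (your (c)), then refers back to the $\delta$-regularization argument from the proof of Lemma~\ref{lem_usefulquantitiesconsistentestimator}. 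One simplification worth noting: by \eqref{eq:denominator} the quantity $|1+m(x)y|$ is uniformly bounded below for $y\in[\sigma_p,\sigma_1]$, so your ``arc of $\mu_{\Sigma_0}$-mass $\OO(\e)$'' is in fact empty for small $\e$, and the indicator in $\widehat\vartheta_i$ is only there to guard against stray $\widehat\sigma_j$'s on the sample side.
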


Combining \Cref{thm_consistentestimators} and \Cref{thm_shrinkerestimate}, we see that $\widehat{\vartheta}_i$, $\widehat{\vartheta}_0$, and $\widehat{\psi}_i$ can be used to consistently estimate the shrinkers and their associated asymptotic risks for various loss functions (see Corollary \ref{coro_lowbound}). These estimators are constructed in a data-driven manner and can be implemented easily. In Section \ref{sec_simulations}, we conduct extensive numerical simulations to demonstrate the effectiveness of our proposed estimators. Also notice that by employing (\ref{eq_phiestimator}), we are able to estimate the variances in (\ref{eq_Vdefinition}) for the eigenvector distribution stated in Theorem \ref{Thm: EE}.


When $\ell(x)=x,$ we can provide more straightforward estimators for the simplified formulas presented in Corollary \ref{coro_simplifiedformula}. 
With the notations in \eqref{eq:aibi2} and \eqref{eq_choiceofeta_m}, we define
\begin{equation*}
\wh\zeta_i\deq \frac{\wh m'_i}{|\wh m_i|^2},\quad i\in \qq{r};\quad \wh\xi_i\deq
\begin{cases}
\frac{1}{\wt\lambda_i|\wh m(\wt\lambda_i)|^2}, & i\in\qq{r+1 , \mathsf{K}} \\
\frac{1}{(c_n-1)\wh m_0}, & i \in \qq{\mathsf{K}+1,p}
\end{cases}.
\end{equation*}
Notably, all these estimators do not involve the estimated non-outlier eigenvalues $\wh\sigma_i$ in \eqref{eq_sigmaestimator}.

\begin{lemma}\label{rem_otherestimators}
Under Assumption \ref{main_assumption}, we have that  
\be
\wh \fb_i\widehat{\zeta}_i=\fb_i\zeta(\fa_i)+\mathrm{o}_{\mathbb{P}}(1), \quad i \in \qq{r};
\label{eq_nonspikedshrinkerestimationandconvergence1}
\ee
\be\label{eq_nonspikedshrinkerestimationandconvergence2}
\widehat{\xi}_i=\xi(\gamma_{i-r})+\oo_{\mathbb{P}}(1), \quad i \in \qq{r+1,\sfK};\quad  \widehat{\xi}_0 =\xi(0)+\oo_{\mathbb{P}}(1).
\ee
\end{lemma}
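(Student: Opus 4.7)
The plan is to derive each of \eqref{eq_nonspikedshrinkerestimationandconvergence1}--\eqref{eq_nonspikedshrinkerestimationandconvergence2} from three ingredients already in hand: (a) \Cref{lem_usefulquantitiesconsistentestimator}, which gives $\wh\fa_i\to\fa_i$ and $\wh\fb_i\to\fb_i$ in probability for $i\in\qq{r}$; (b) rigidity of the non-outlier eigenvalues of $\wt{\cal Q}_1$, obtained from standard rigidity for $\cal Q_1$ together with the sticking of non-outlier eigenvalues of $\wt{\cal Q}_1$ to those of $\cal Q_1$; and (c) the averaged local law for the resolvent of $\cal Q_1$ on the scale $\eta\gtrsim n^{-1/2}$ from \cite{MR3704770}. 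The three displays in the lemma correspond to evaluating Stieltjes-type sums at three kinds of spectral points: outside $\supp\varrho$ with a gap of order one (the spike and the null cases) or on $\supp\varrho$ (the bulk case).

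For the spike case, the supercriticality condition \eqref{eq_outlierassumption} together with $\fa_i=h(-\wt\sigma_i^{-1})$ forces $\fa_i\ge\lambda_++c$ for some constant $c>0$. On the set $\{z:\dist(z,\supp\varrho)\ge c/2\}$, ingredients (b) and (c) give uniform convergence
\begin{equation*}
\frac{1}{n}\sum_{j=r+1}^{n}\frac{1}{\wt\lambda_j-z}=m(z)+\oo_{\P}(1),\qquad \frac{1}{n}\sum_{j=r+1}^{n}\frac{1}{(\wt\lambda_j-z)^2}=m'(z)+\oo_{\P}(1),
\end{equation*}
since the integrands are uniformly Lipschitz in the eigenvalue variable on that set, and the $r$ removed outlier terms contribute only $\OO(1/n)$. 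Specialising at $z=\wh\fa_i\to\fa_i$ and using continuity of $m,m'$ off $\supp\varrho$ yields $\wh m_i\to m(\fa_i)$ and $\wh m_i'\to m'(\fa_i)$. Since $m(\fa_i)\in\R\setminus\{0\}$ by \eqref{eq_defnmc}, the continuous mapping theorem gives $\wh\zeta_i\to m'(\fa_i)/m(\fa_i)^2=\zeta(\fa_i)$, and (a) then produces \eqref{eq_nonspikedshrinkerestimationandconvergence1}. For the null case, \eqref{eq:edgeregular} gives $\lambda_-\ge\tau_1>0$, so $0\notin\supp\varrho$ and $m(0)$ is real; the same display evaluated at $z=\ii\eta\to 0$ yields $\wh m_0\to\re m(0)=m(0)$, hence $\wh\xi_0\to\xi(0)$.

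For the bulk case $i\in\qq{r+1,\sfK}$, we can no longer stay off the support. From (b), $\wt\lambda_i=\gamma_{i-r}+\oo_{\P}(1)$. The local law (c), applied at $z=\wt\lambda_i+\ii\eta$ with $\eta=n^{-1/2}$, together with the $\OO(1/n)$ contribution of the $r$ outlier terms removed in \eqref{eq_choiceofeta_m}, gives
\begin{equation*}
\wh m(\wt\lambda_i)=m(\wt\lambda_i+\ii\eta)+\OO_{\prec}((n\eta)^{-1})=m(\wt\lambda_i+\ii\eta)+\oo_{\P}(1).
\end{equation*}
Since $m$ extends continuously from $\C_+$ to $\overline{\C_+}\cap\{\re z>0\}$, we obtain $m(\wt\lambda_i+\ii\eta)\to m(\gamma_{i-r})$. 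The regularity conditions \eqref{eq:edgeregular} and \eqref{eq:bulkregular} ensure $m(\gamma_{i-r})\ne 0$: in each open band $\im m(\gamma_{i-r})=\pi\varrho(\gamma_{i-r})\ge\pi\varsigma>0$, while at edges a non-vanishing real part follows from \eqref{eq_defnmc} together with \eqref{eq:edgeregular}. Continuous mapping then yields \eqref{eq_nonspikedshrinkerestimationandconvergence2}.

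The only mildly delicate step is the bulk case, in which $\eta=n^{-1/2}$ sits precisely at the boundary of the scales on which the averaged local law of \cite{MR3704770} produces an error $\OO_{\prec}(n^{-1/2})=\oo_{\P}(1)$; everything else follows either by quoting earlier results (\Cref{lem_usefulquantitiesconsistentestimator}, the local law, and finite-rank sticking) or by elementary manipulations of the self-consistent equation \eqref{eq_defnmc}.
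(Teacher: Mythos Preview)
Your approach is the same as the paper's: the paper derives \eqref{eq_nonspikedshrinkerestimationandconvergence1} from the consistency $\wh\fb_i=\fb_i+\oo_\P(1)$ in \Cref{lem_usefulquantitiesconsistentestimator} together with the estimates $\wh m_i=m(\fa_i)+\OO_\prec(n^{-1/2})$ and $\wh m_i'=m'(\fa_i)+\OO_\prec(n^{-1/2})$ (proved there via the local law outside $\supp\varrho$), and derives \eqref{eq_nonspikedshrinkerestimationandconvergence2} from rigidity $\wt\lambda_i=\gamma_{i-r}+\OO_\prec(n^{-2/3})$, the approximation $\wh m(\wt\lambda_i)=m(\gamma_{i-r})+\OO_\prec(n^{-1/4})$, and the analogous statement $\wh m_0=m(0)+\OO_\prec(n^{-1/2})$ at the origin.

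There is one step in your bulk argument that is too quick as written. Your displayed claim
\[
\wh m(\wt\lambda_i)=m(\wt\lambda_i+\ii\eta)+\OO_{\prec}\big((n\eta)^{-1}\big)
\]
does not follow from the averaged local law for $\cal Q_1$ plus removing the $r$ outlier terms alone: the estimator $\wh m$ is built from the \emph{spiked} eigenvalues $\wt\lambda_j$, not from the $\lambda_j$ to which the averaged local law applies. Replacing $\wt\lambda_j$ by $\lambda_{j-r}$ via sticking costs more than a trivial bound, because the individual denominators can be as small as $\eta=n^{-1/2}$; the paper handles this by splitting the sum into near terms $|j-i|<n^{1/4}$ (each bounded by $\eta^{-1}$, total $\OO_\prec(n^{-1/4})$) and far terms (where rigidity gives $|\lambda_{j-r}-\wt\lambda_i|\gtrsim |j-i|/n$, summing to $\OO_\prec(n^{-1/4})$). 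The resulting rate is $\OO_\prec(n^{-1/4})$, not $\OO_\prec((n\eta)^{-1})$. Since you only need $\oo_\P(1)$ this does not break your conclusion, but the sticking you listed in ingredient (b) has to be \emph{used} here, not just at the level of $\wt\lambda_i\to\gamma_{i-r}$.
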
  

\begin{remark}\label{rem_rank}
In the construction of the estimators for shrinkers, we assume that the number of spikes $r$ is known. However, in practical applications, this assumption is often unrealistic. Fortunately, it has been demonstrated in \cite{ding2021spiked,9779233,ke2023estimation, passemier2014estimation} that $r$ can be estimated consistently using the eigenvalues $\{\wt\lambda_i\}$ under item (iv) of Assumption \ref{main_assumption}. To facilitate the convenience of our readers, we have included a function called $\mathtt{GetRank}$ in the $\mathtt{RMT4DS}$ package, which can be used to estimate $r$.
\end{remark}

\subsection{Numerical simulations}\label{sec_simulations}

In this section, we employ Monte-Carlo simulations to demonstrate the effectiveness of our proposed estimators for the loss functions corresponding to $\ell(x)=x$ or $\ell(x)=x^{-1}$. 


\subsubsection{Setup}\label{sec_setup}


In the simulations, we generate $X$ with i.i.d.~Gaussian random variables that satisfy item (ii) of Assumption \ref{main_assumption}. As for the population covariance matrix $\Sigma$, we consider the following four alternatives:
\begin{enumerate}
\item $\Sigma$ takes the form (\ref{eq_spikedcovariancematrix}), where $V$ is an orthogonal matrix generated from the $\mathtt{R}$ function $\mathtt{randortho},$ and  $\Lambda$ is defined as
\begin{equation*}
\Lambda=\operatorname{diag} \{9, \underbrace{3,\cdots,3}_\text{$p/2-1$}, \underbrace{1,\cdots,1}_\text{$p/2$} \}.
\end{equation*} 
\item $\Sigma$ takes the form  (\ref{eq_spikedcovariancematrix}), where $V$ is an orthogonal matrix generated from the $\mathtt{R}$ function $\mathtt{randortho}$, and $\Lambda$ is defined as
$\Lambda=\operatorname{diag} \{9, g_2, \cdots, g_p \},$
where $\{g_k\}$ represents real numbers evenly distributed within the interval $[1, 2]$.

\item $\Sigma$ is a spiked matrix with a single spike equal to 9, and $\Sigma_0$ is a Toeplitz matrix with $(\Sigma_0)_{ij}= 0.4^{|i-j|}$ for $i,j \in \qq{p}$.

\item $\Sigma$ takes the form  (\ref{eq_spikedcovariancematrix}), where $V$ is an orthogonal matrix generated from the $\mathtt{R}$ function $\mathtt{randortho}$, and $\Lambda$ is defined as
\begin{equation*}
\Lambda=\operatorname{diag} \{15, \underbrace{8,\cdots,8}_\text{$p/2-1$}, \underbrace{1,\cdots,1}_\text{$p/2$} \}.
\end{equation*}
\end{enumerate}
We would like to highlight that in our simulations, the support of the MP law $\varrho$ consists of a single component (i.e., $q=1$ in (\ref{eq_supportdmp})) for settings (i)--(iii), and two bulk components (i.e., $q=2$ in (\ref{eq_supportdmp})) for setting (iv).

Regarding the loss functions, we consider those associated with $\ell(x)=x$ and $\ell(x)={x}^{-1}.$ As stated in \Cref{coro_lowbound} (or \Cref{lem_explicityformula} below), $\ell(x)=x$ corresponds to the Frobenius, inverse Stein, disutility, and minimum variance loss functions, while $\ell(x)=x^{-1}$ corresponds to the Stein, weighted Frobenius, and inverse Frobenius loss functions.

\subsubsection{Performance of our estimators}

We proceed to evaluate the performance of our estimators for the shrinkers $\varphi_i$. To compute the quantity in (\ref{eq_choiceofeta_m}), we select $\eta \asymp n^{-1/2}$.
In order to facilitate visual interpretation, we present our results for $\ell(x)$ in Figure \ref{fig_compareoursonlyone} and for $\ell(x)=x^{-1}$ in Figure \ref{fig_compareoursonlytwo}. It is evident that the estimators outlined in Theorem \ref{thm_consistentestimators} and \Cref{rem_otherestimators} yield accurate predictions for the shrinkers across all simulation scenarios, encompassing different loss functions associated with $\ell(x)=x$ or $\ell(x)=x^{-1}$.



\begin{figure}[h]
\hspace*{-0.1cm}
\begin{subfigure}{0.2\textwidth}
\includegraphics[width=5cm,height=4.4cm]{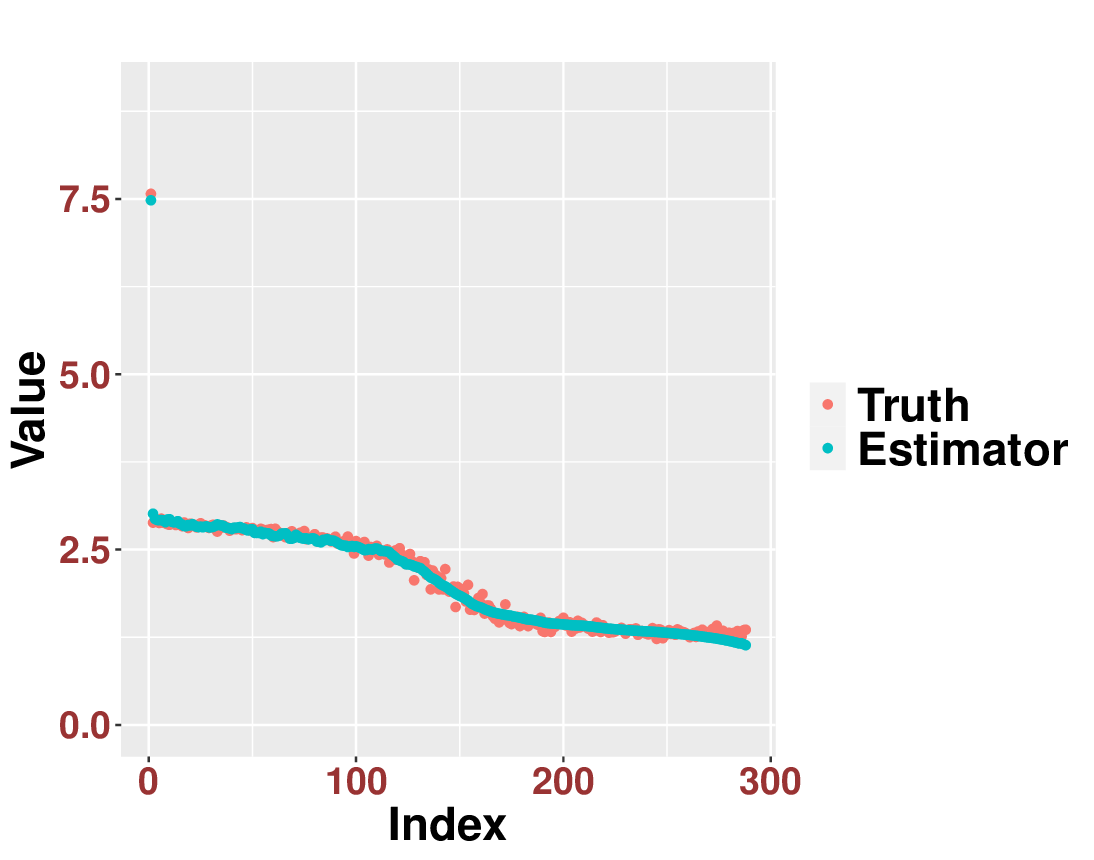}
\end{subfigure}
\hspace{0.18cm}
\begin{subfigure}{0.2\textwidth}
\includegraphics[width=5cm,height=4.4cm]{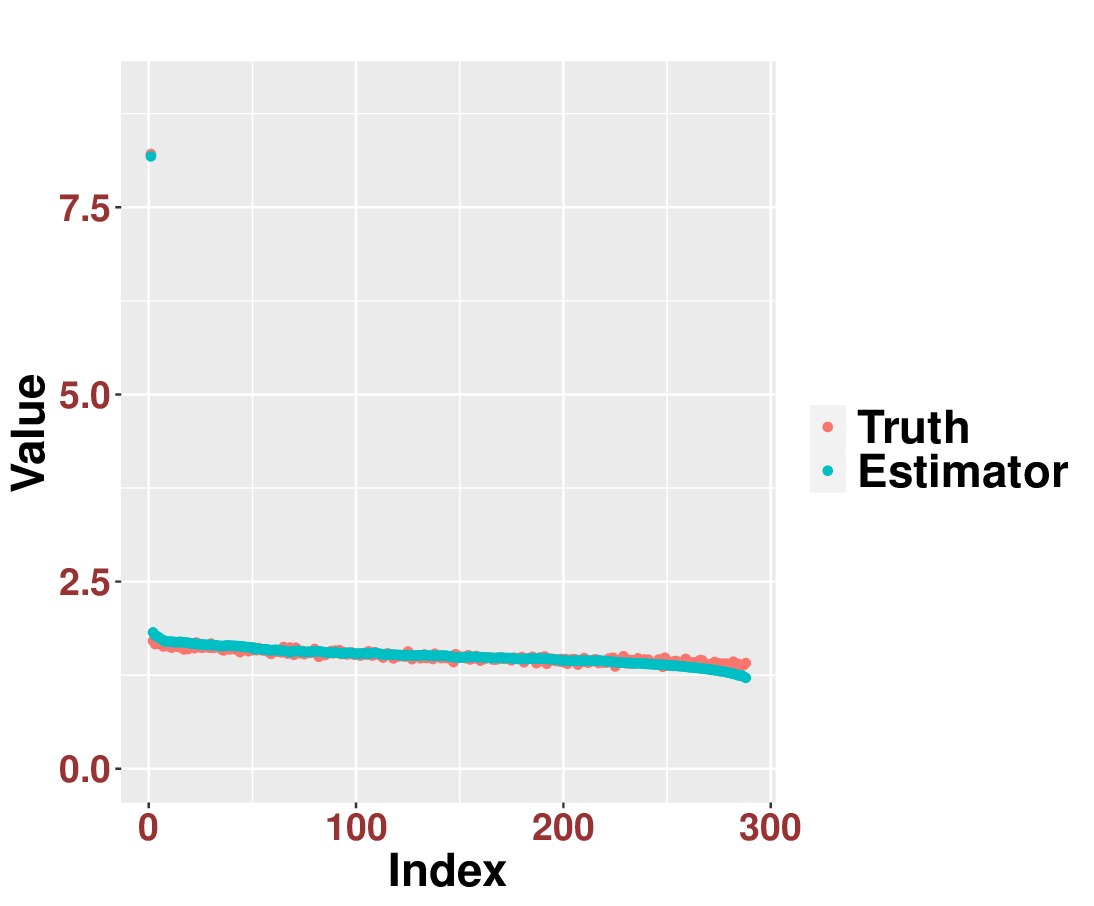}
\end{subfigure}
\hspace*{0.17cm}
\begin{subfigure}{0.2\textwidth}
\includegraphics[width=5cm,height=4.4cm]{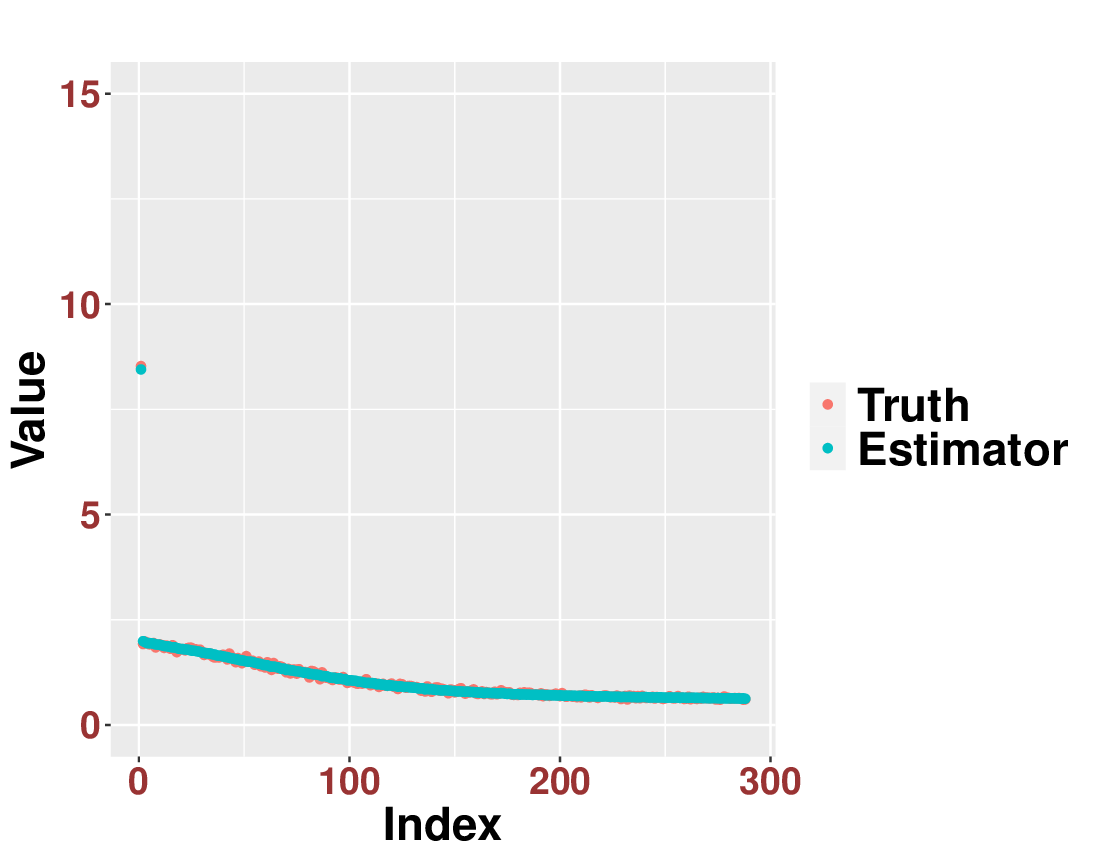}
\end{subfigure}
\hspace*{0.17cm}
\begin{subfigure}{0.2\textwidth}
\includegraphics[width=5cm,height=4.4cm]{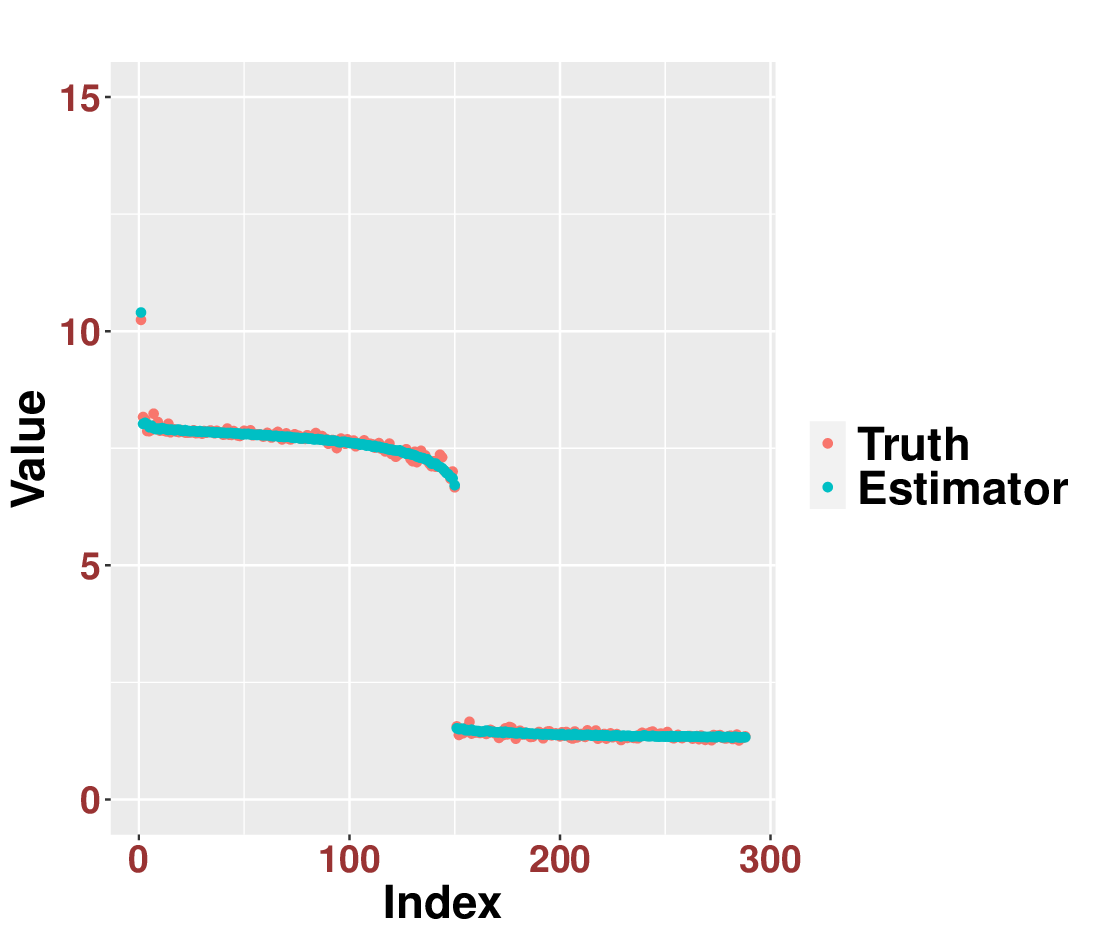}
\end{subfigure}
\caption{Performance of our estimators for all $\varphi_i , \  i \in \qq{p},$ for $\ell(x)=x$. From left to right, we provide the results for the simulation settings (i)--(iv) as outlined in Section \ref{sec_setup}. In the simulations, we set $p=300$ and $n=600$, and we have used the consistent estimators presented in \Cref{rem_otherestimators}.
}
\label{fig_compareoursonlyone}
\end{figure}

\begin{figure}[h]
\hspace*{-0.1cm}
\begin{subfigure}{0.2\textwidth}
\includegraphics[width=5cm,height=4.4cm]{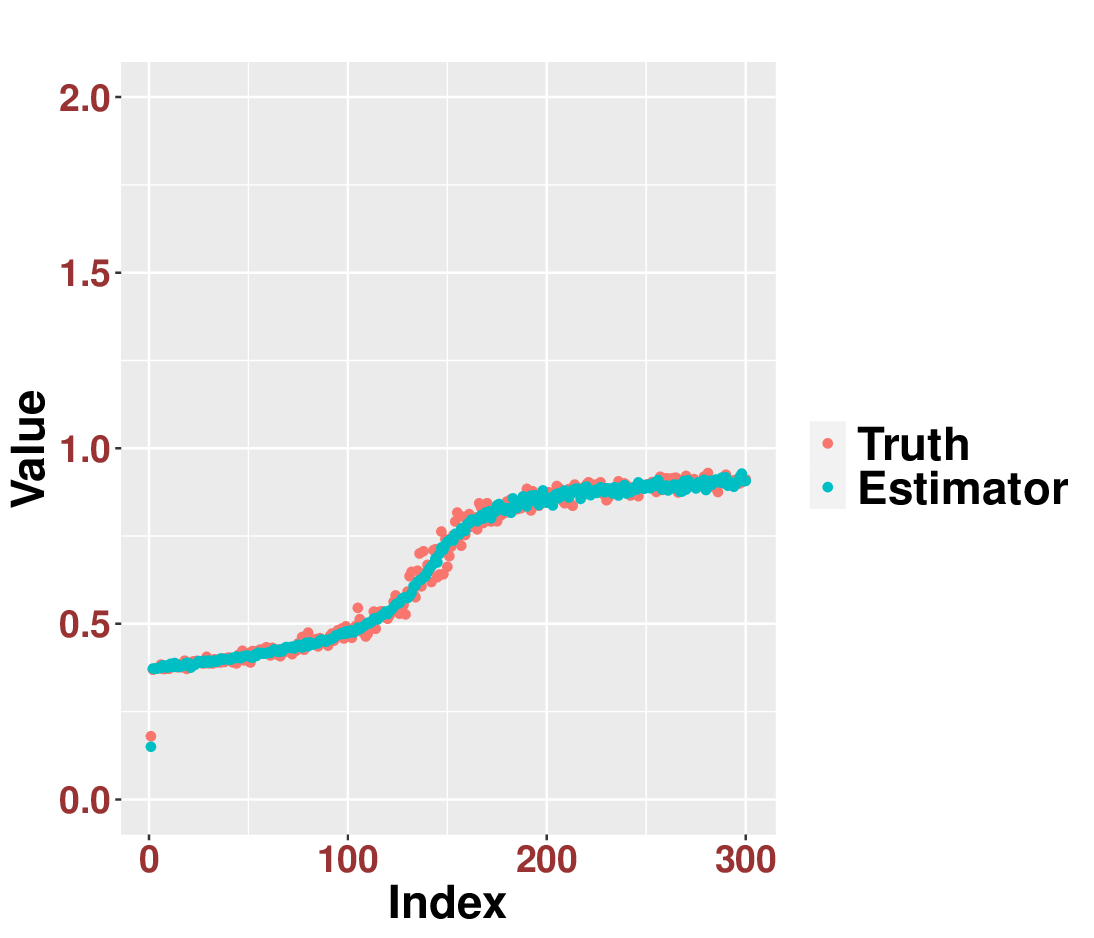}
\end{subfigure}
\hspace{0.18cm}
\begin{subfigure}{0.2\textwidth}
\includegraphics[width=5cm,height=4.4cm]{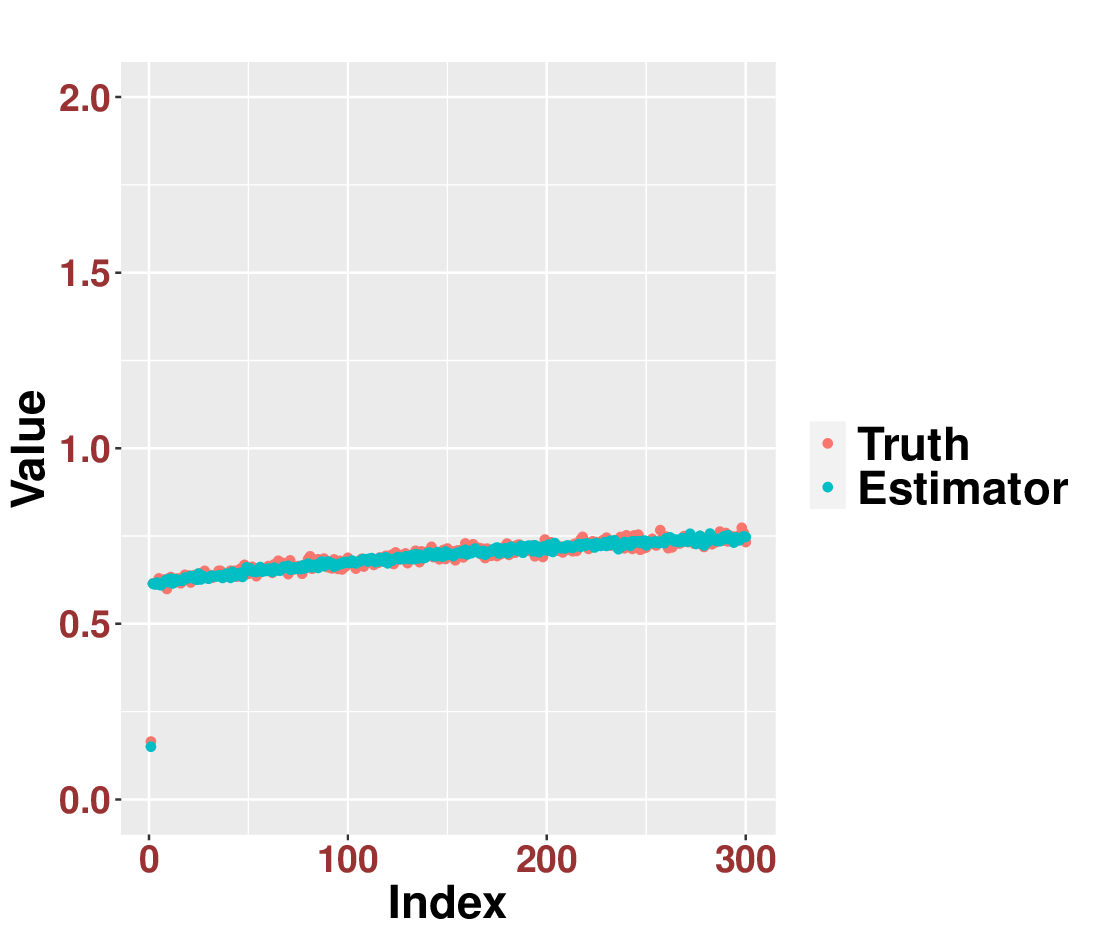}
\end{subfigure}
\hspace{0.18cm}
\begin{subfigure}{0.2\textwidth}
\includegraphics[width=5cm,height=4.4cm]{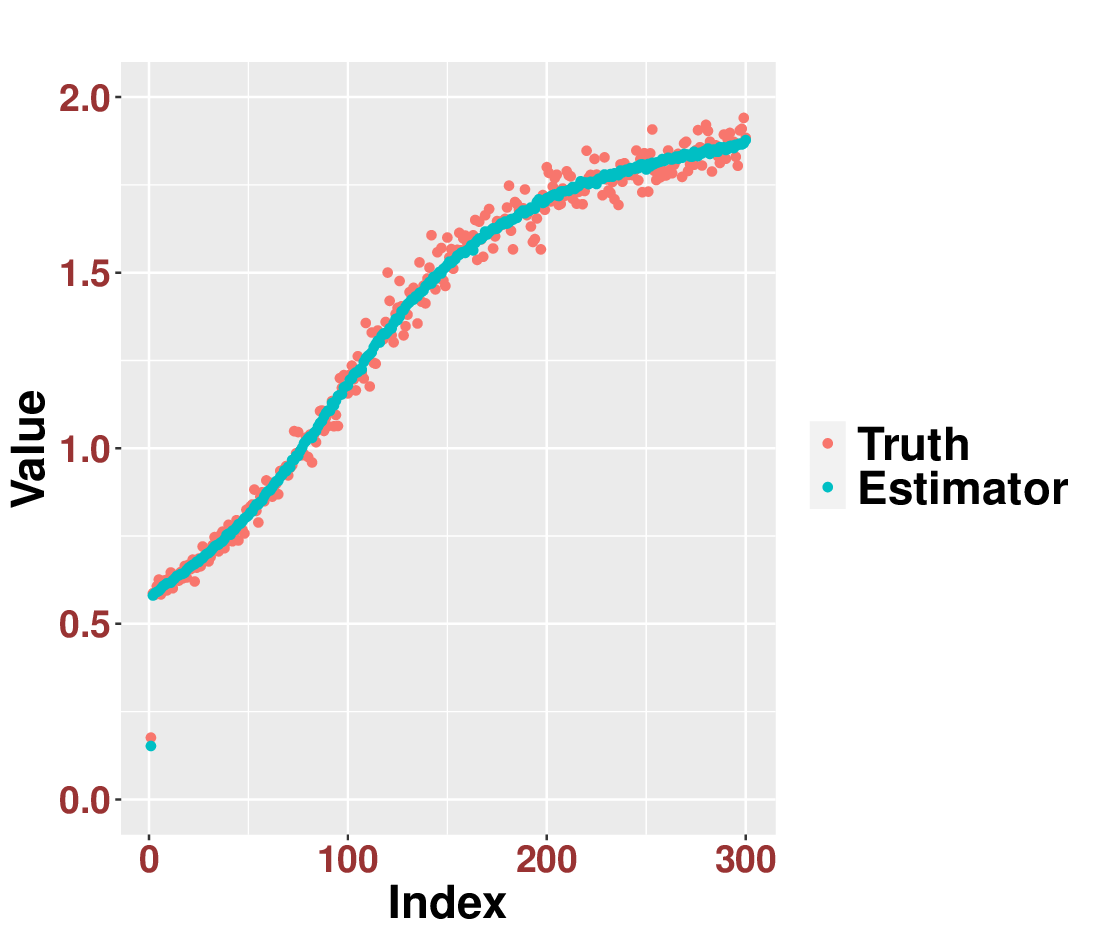}
\end{subfigure}
\hspace{0.18cm}
\begin{subfigure}{0.2\textwidth}
\includegraphics[width=5cm,height=4.4cm]{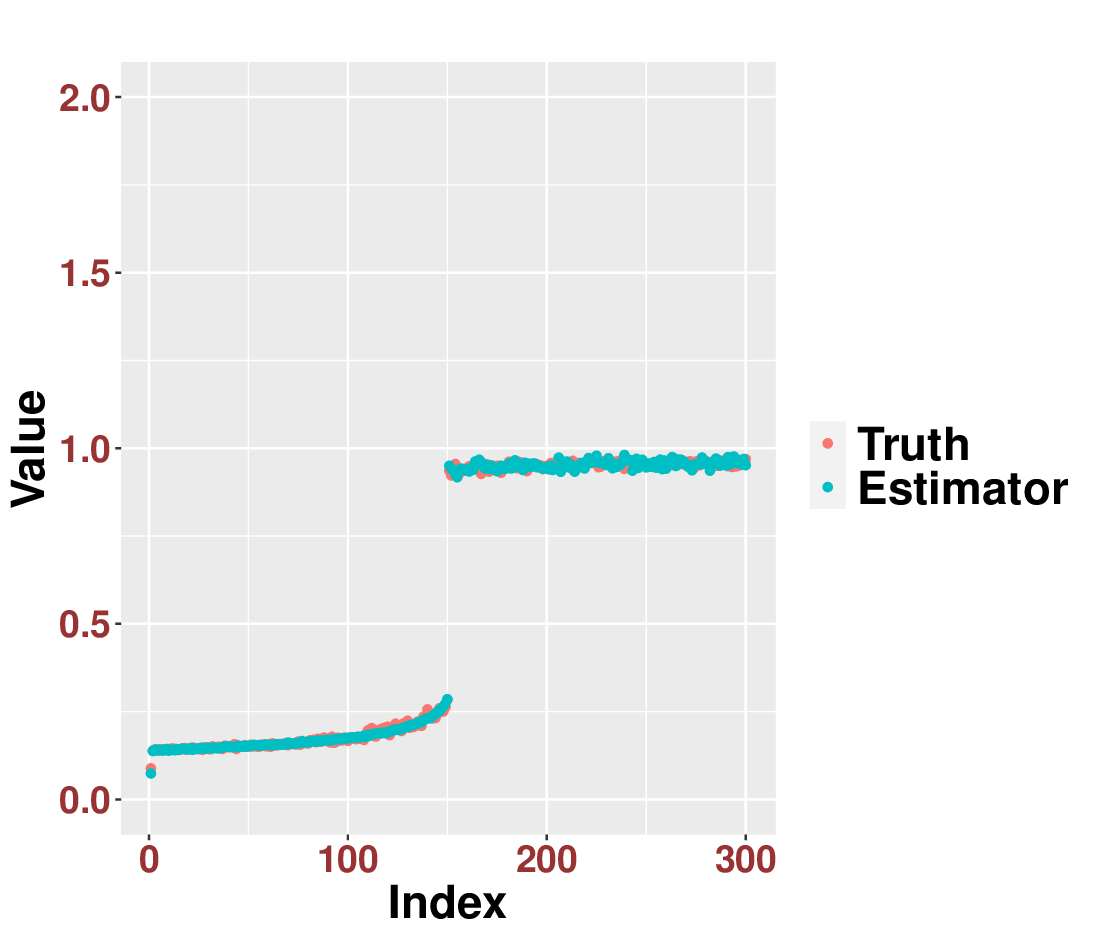}
\end{subfigure}
\caption{Performance of our estimators for all $\varphi_i ,\ i \in \qq{p}$, for $\ell(x)=x^{-1}$. From left to right,  we provide the results for the simulation settings (i)--(iv) as outlined in Section \ref{sec_setup}. In the simulations, we set $p=300$ and $n=600.$ }
\label{fig_compareoursonlytwo}
\end{figure}  

\subsubsection{Comparison with other methods}

Next, we compare our proposed estimators with the existing methods for estimating shrinkers proposed by Ledoit and Wolf. For $\ell(x)=x,$ we compare our method (Estimator) with the recently proposed quadratic-inverse shrinkage estimator (QIS) \cite{ledoit2022quadratic}\footnote{The $\mathtt{R}$ codes can be found at \url{https://github.com/MikeWolf007/covShrinkage/blob/main/qis.R}} and the QuEST method based on numerically solving equation (\ref{eq_defnmc}) \cite{LEDOIT2015360}.\footnote{The QuEST method can be implemented using the $\mathtt{R}$ package $\mathtt{nlshrink}.$} For $\ell(x)=x^{-1},$ the only existing method is the LIS method proposed in \cite{ledoit2022quadratic}.\footnote{The $\mathtt{R}$ codes can be found at \url{https://github.com/MikeWolf007/covShrinkage/blob/main/lis.R}} Since all these methods focus on non-spiked models, we remove the spikes from the four settings described in Section \ref{sec_setup} for comparison.

First, we visually compare the plots of different estimators of the shrinkers in Figures \ref{fig_compareothersone} and \ref{fig_compareotherstwo}. Our proposed estimators demonstrate superior accuracy in all settings and for both forms of $\ell(x)$. The performance of the other estimators depends on the specific underlying population covariance matrix and the chosen loss function.
Second, in Figure \ref{fig_compareotherriskone}, we assess the accuracy of different estimators in predicting the generalization errors, as stated in Corollary \ref{coro_lowbound}. Our proposed estimators provide the most accurate predictions. It is worth noting that the competing methods perform well when $\ell(x)=x$, but their performance deteriorates when $\ell(x)=x^{-1}$.

\begin{figure}[h]
\hspace*{-0.1cm}
\begin{subfigure}{0.2\textwidth}
\includegraphics[width=5cm,height=4.4cm]{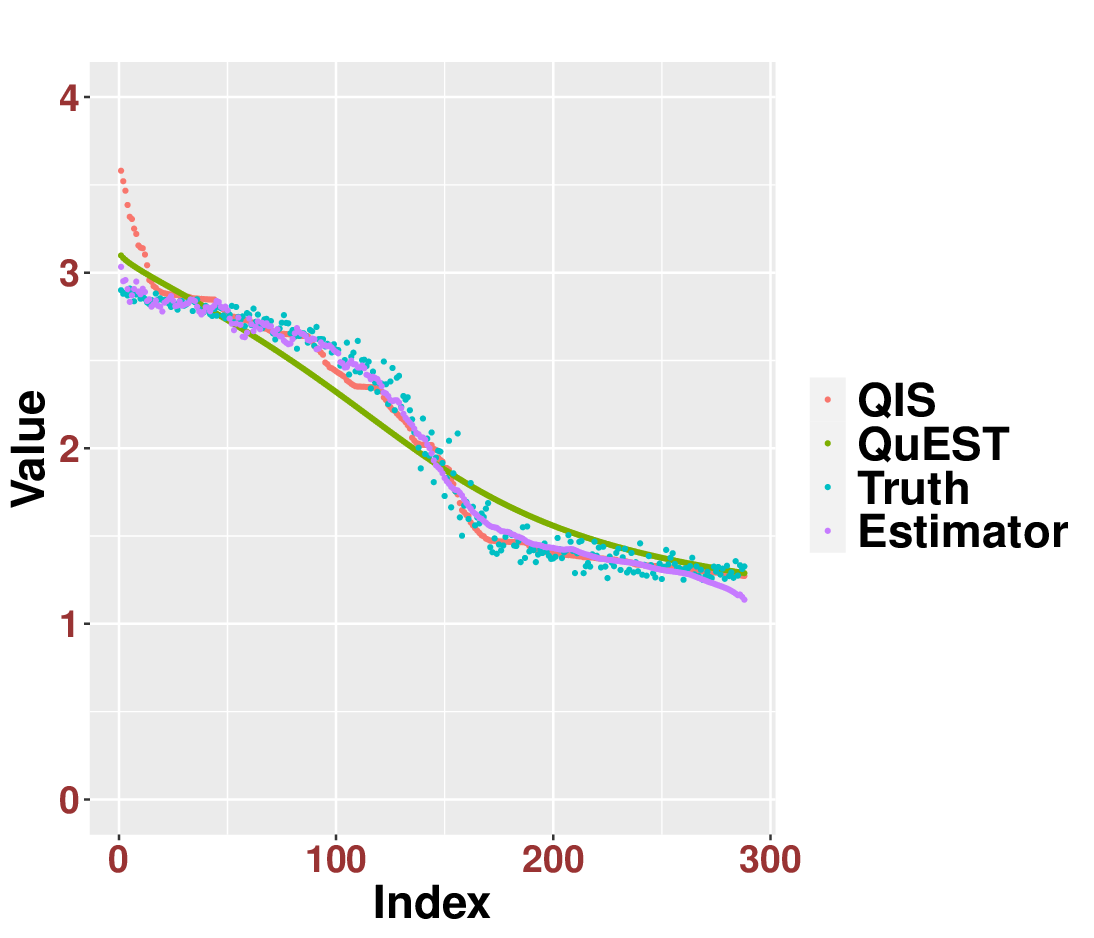}
\end{subfigure}
\hspace{0.18cm}
\begin{subfigure}{0.2\textwidth}
\includegraphics[width=5cm,height=4.4cm]{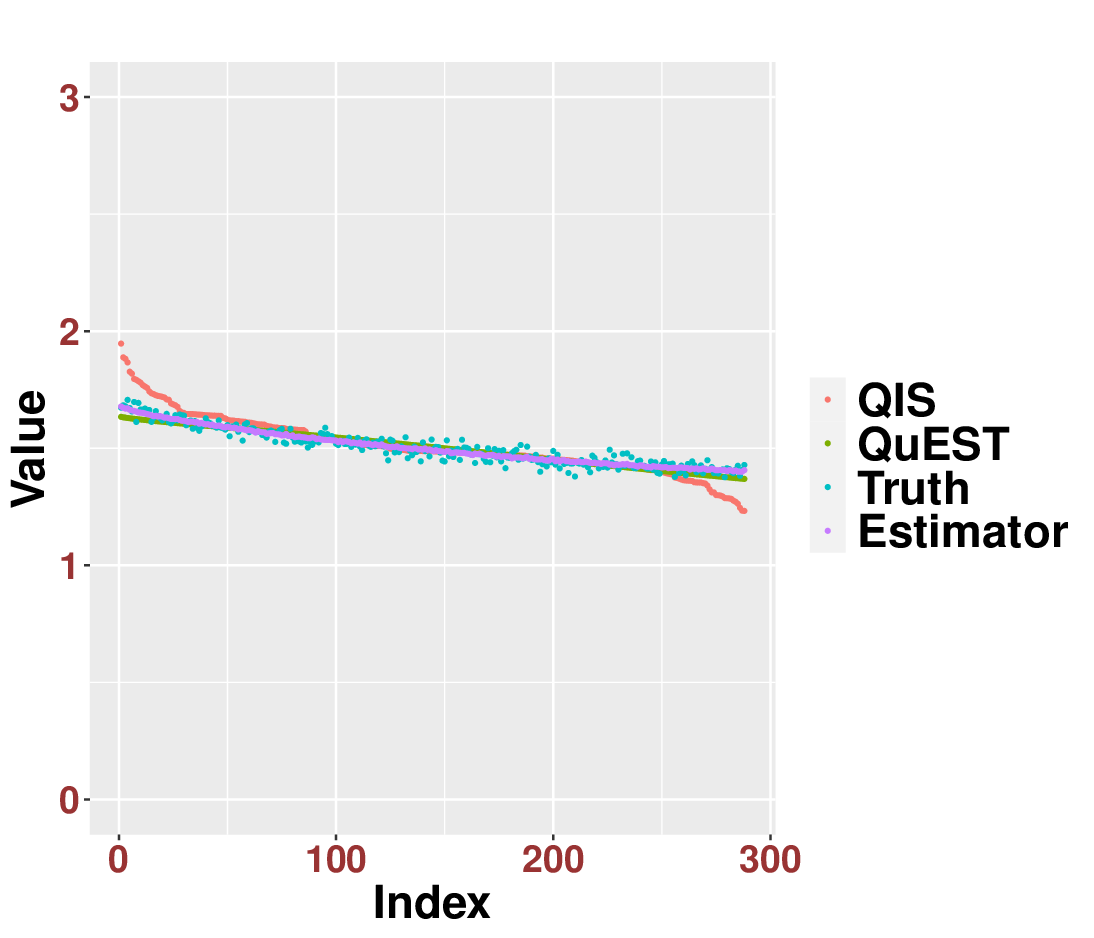}
\end{subfigure}
\hspace{0.18cm}
\begin{subfigure}{0.2\textwidth}
\includegraphics[width=5cm,height=4.4cm]{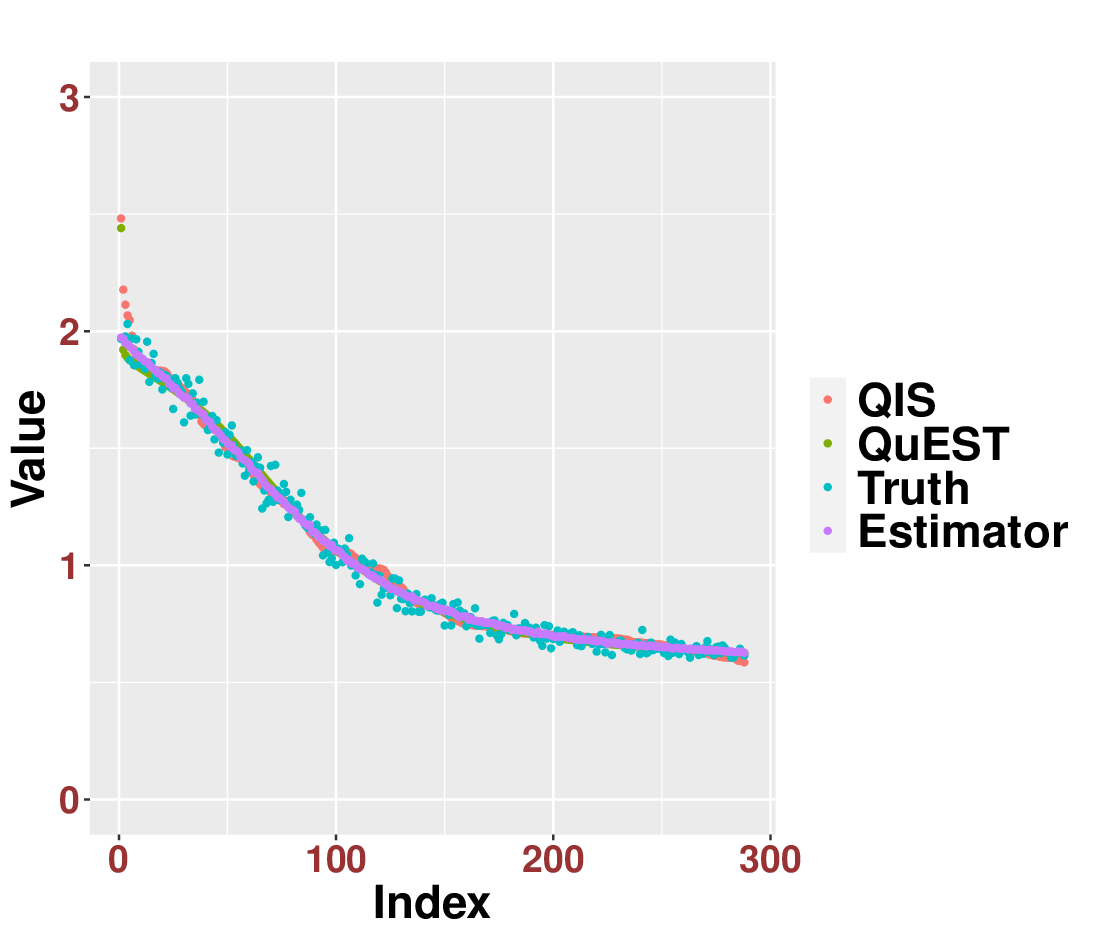}
\end{subfigure}
\hspace{0.18cm}
\begin{subfigure}{0.2\textwidth}
\includegraphics[width=5cm,height=4.4cm]{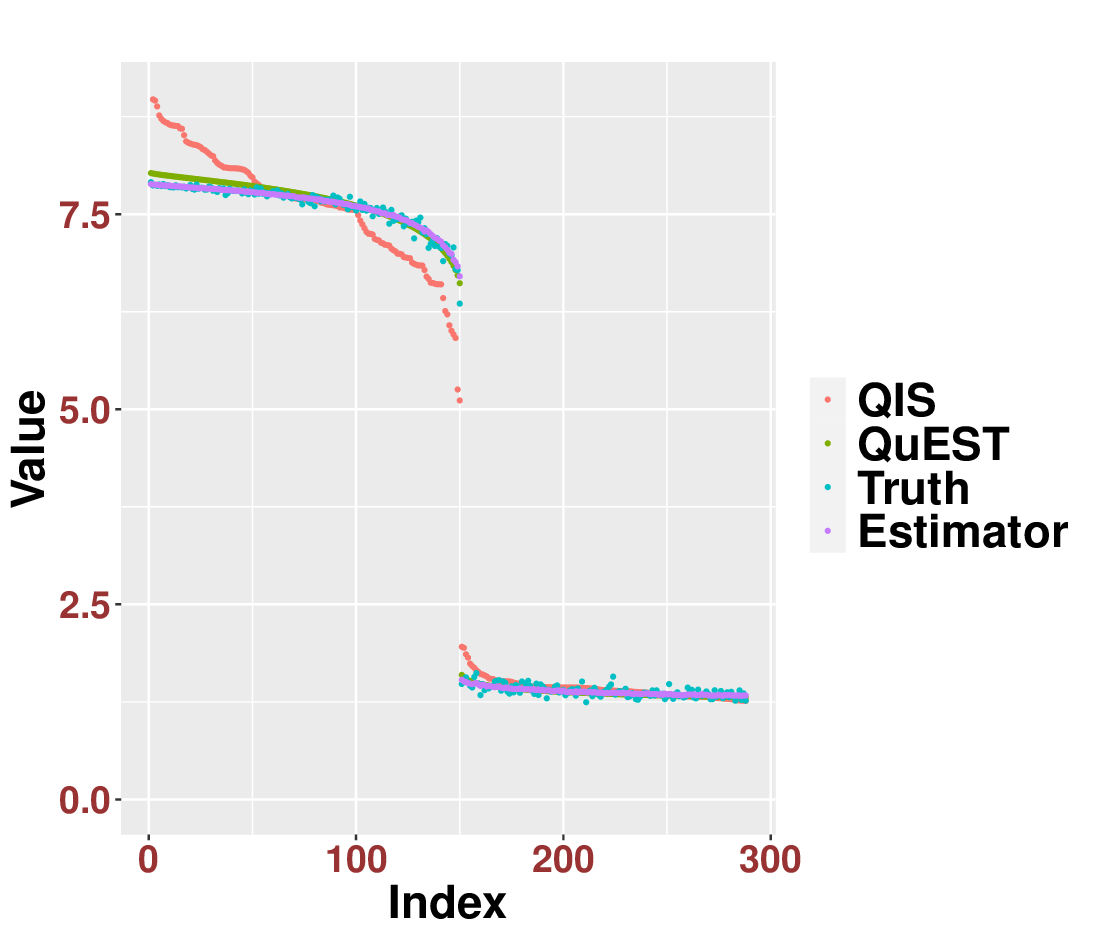}
\end{subfigure}
\caption{Comparison of different methods for $\ell(x)=x$. From left to right, we provide the results for the simulation settings (i)--(iv) as outlined in Section \ref{sec_setup}. In the simulations, we set $p=300$ and $n=600.$}
\label{fig_compareothersone}
\end{figure}

\begin{figure}[h]
\hspace*{-0.1cm}
\begin{subfigure}{0.2\textwidth}
\includegraphics[width=5cm,height=4.4cm]{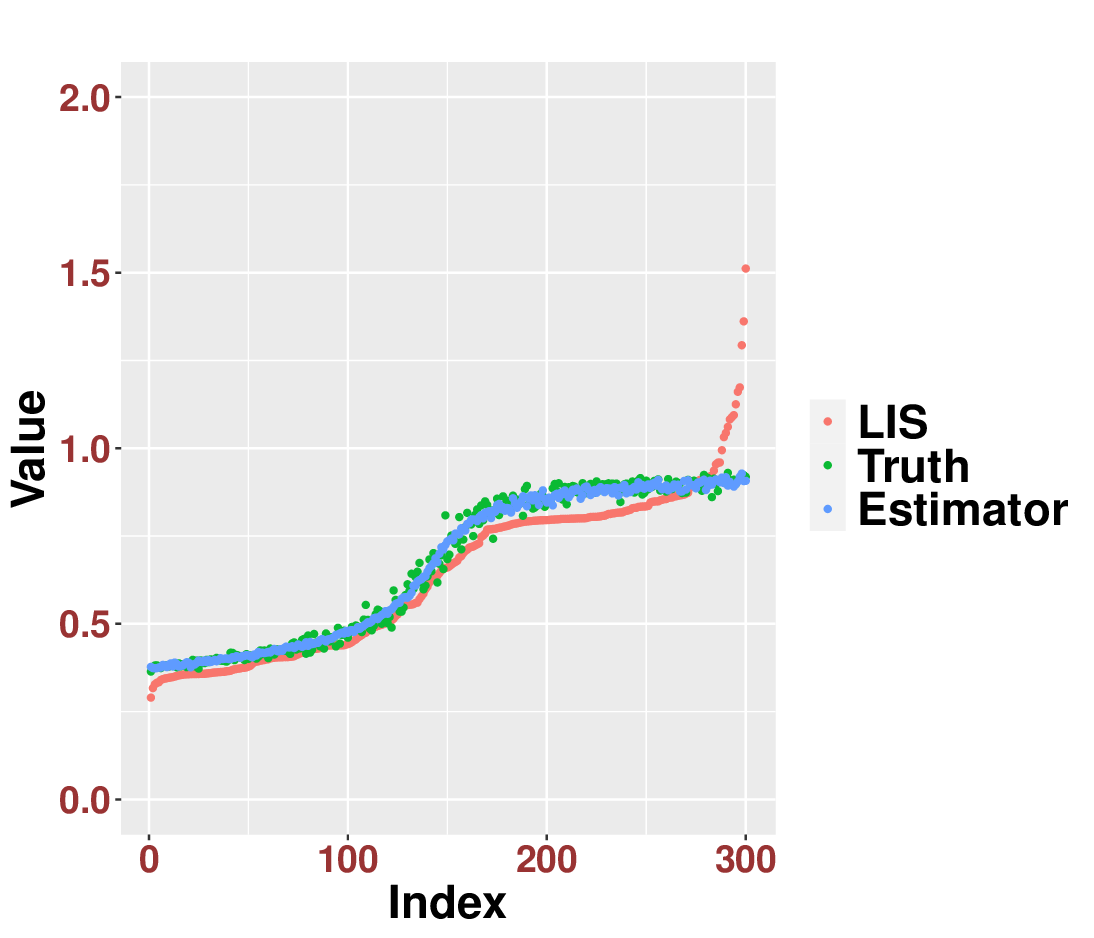}
\end{subfigure}
\hspace{0.18cm}
\begin{subfigure}{0.2\textwidth}
\includegraphics[width=5cm,height=4.4cm]{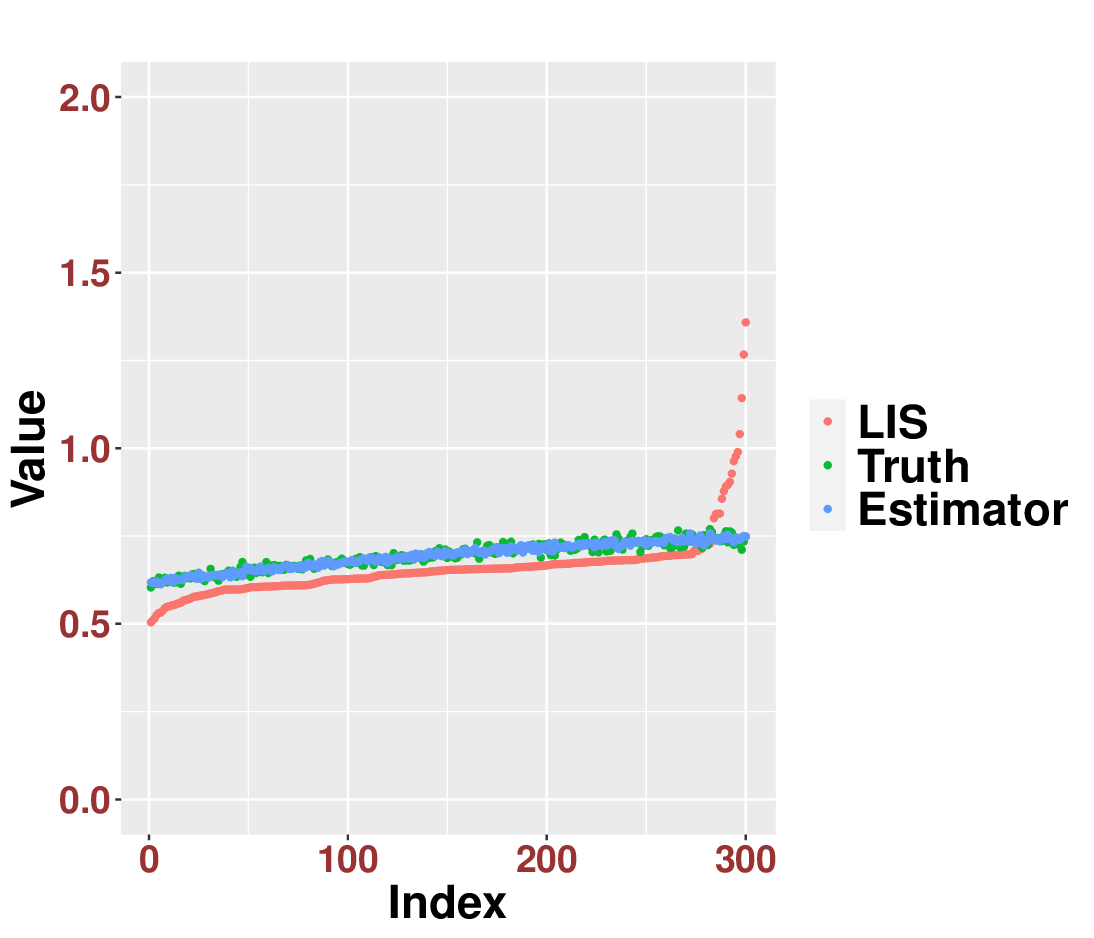}
\end{subfigure}
\hspace{0.18cm}
\begin{subfigure}{0.2\textwidth}
\includegraphics[width=5cm,height=4.4cm]{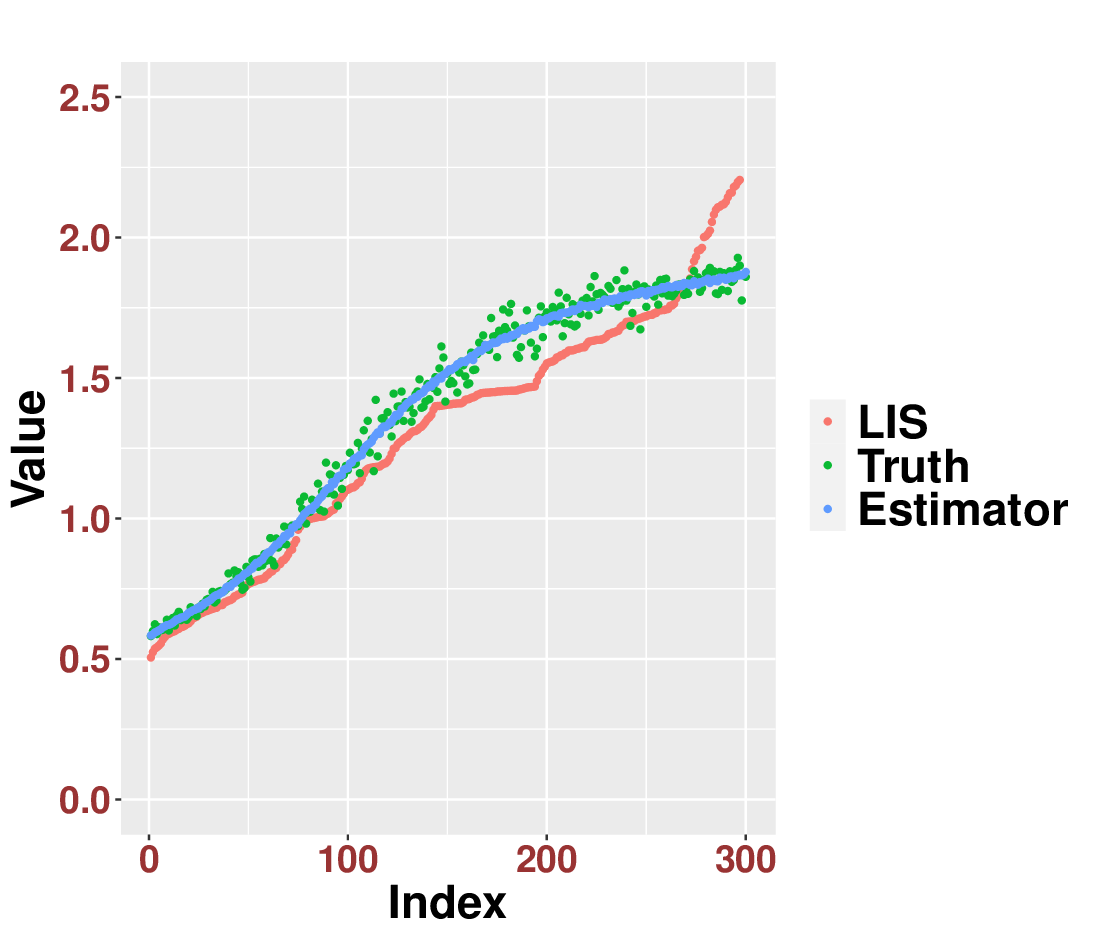}
\end{subfigure}
\hspace{0.18cm}
\begin{subfigure}{0.2\textwidth}
\includegraphics[width=5cm,height=4.4cm]{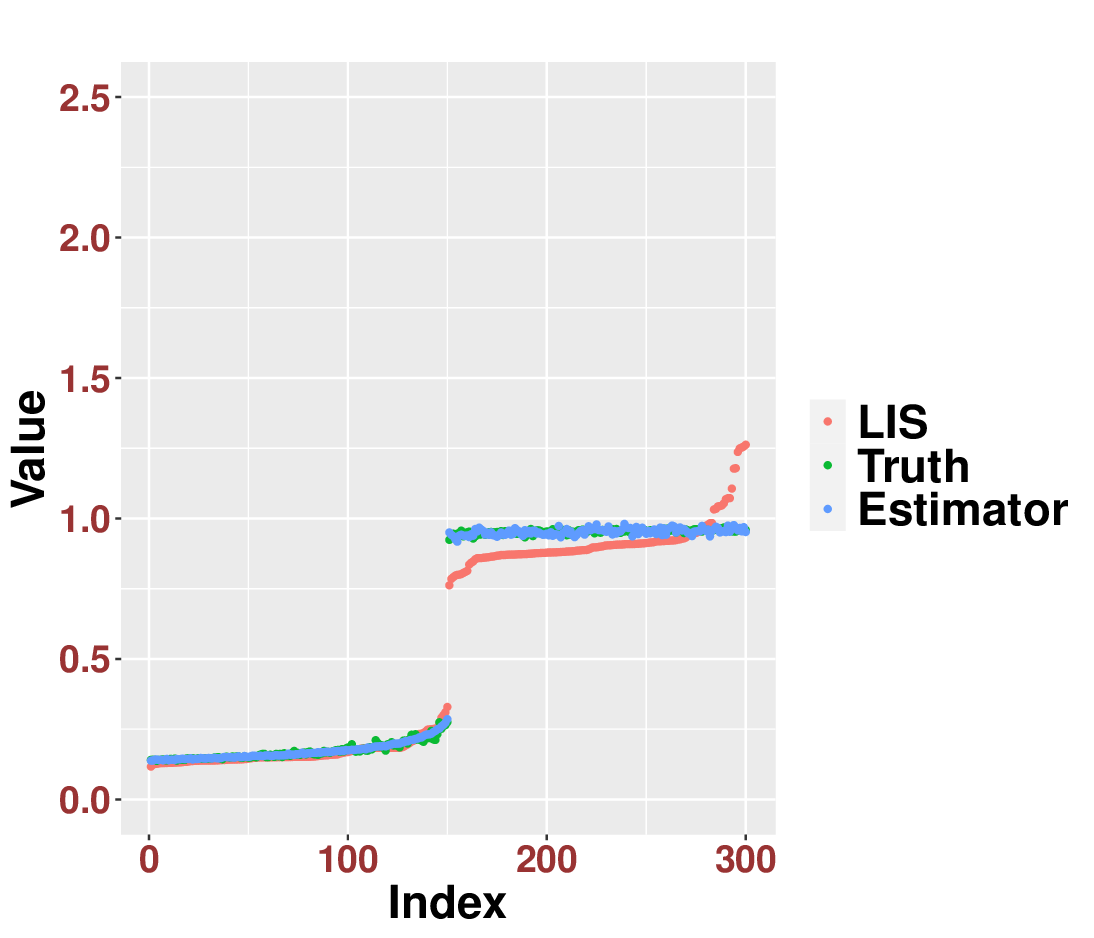}
\end{subfigure}
\caption{Comparison of different methods for $\ell(x)=x^{-1}$. From left to right, we provide the results for the simulation settings (i)--(iv) as outlined in Section \ref{sec_setup}. In the simulations, we set $p=300$ and $n=600.$ }
\label{fig_compareotherstwo}
\end{figure}


\begin{figure}[h]
\begin{subfigure}{0.22\textwidth}
\includegraphics[width=5cm,height=4.4cm]{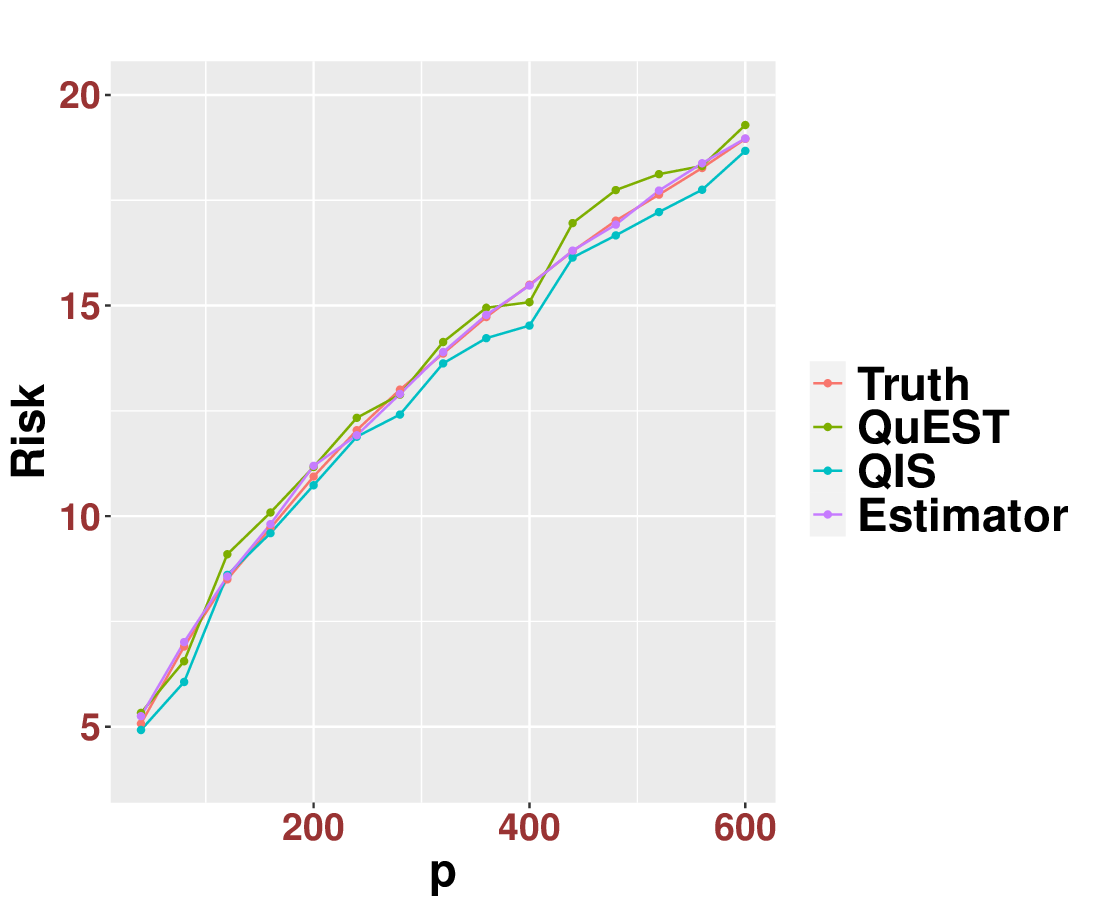}
\end{subfigure}
\begin{subfigure}{0.21\textwidth}
\includegraphics[width=5cm,height=4.4cm]{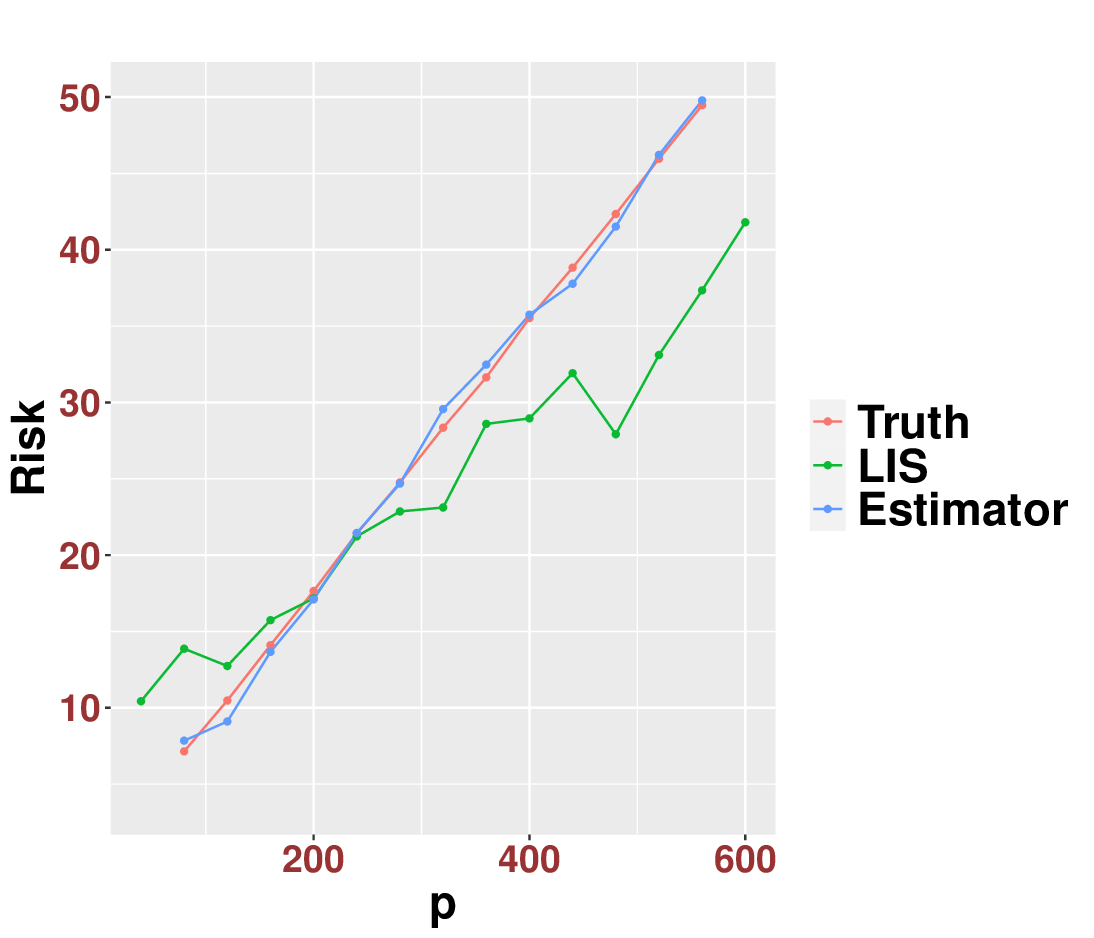}
\end{subfigure}
\begin{subfigure}{0.21\textwidth}
	\includegraphics[width=5cm,height=4.4cm]{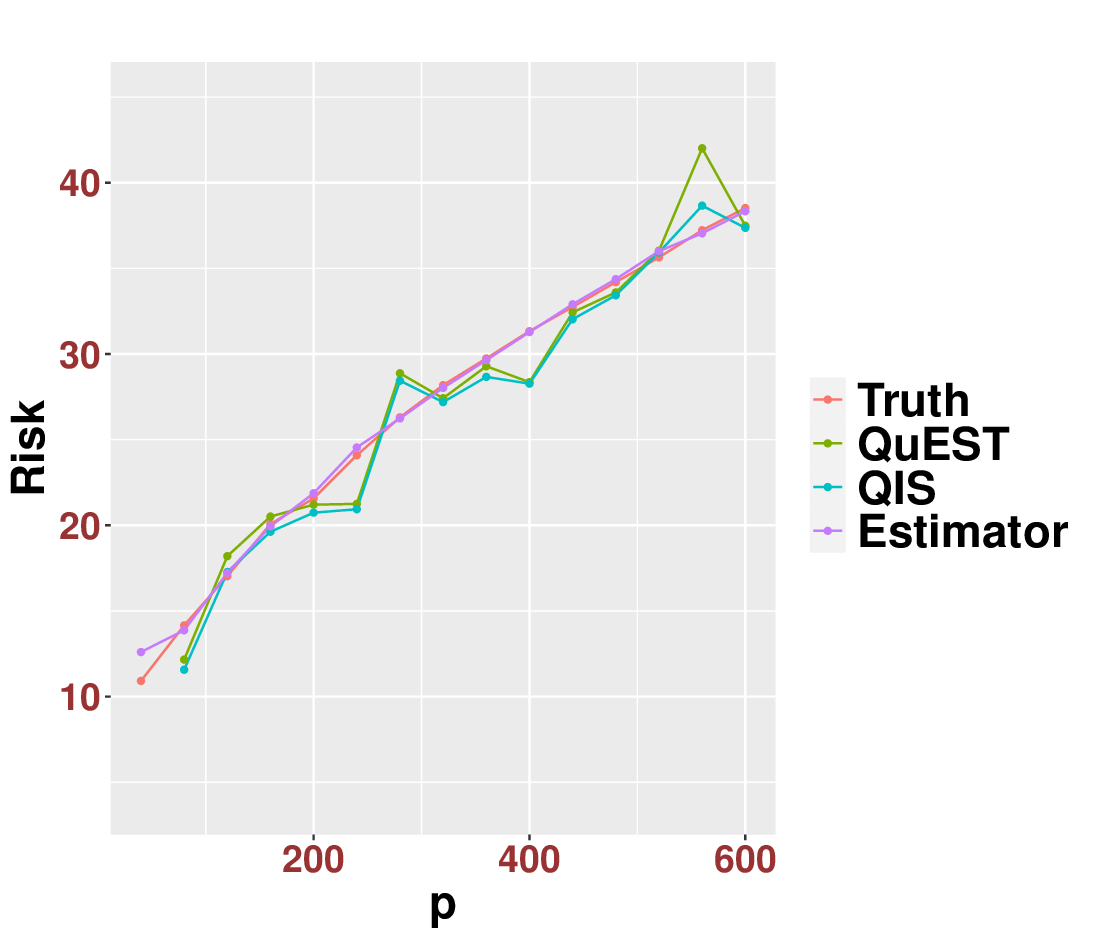}
\end{subfigure}
\begin{subfigure}{0.21\textwidth}
	\includegraphics[width=5cm,height=4.4cm]{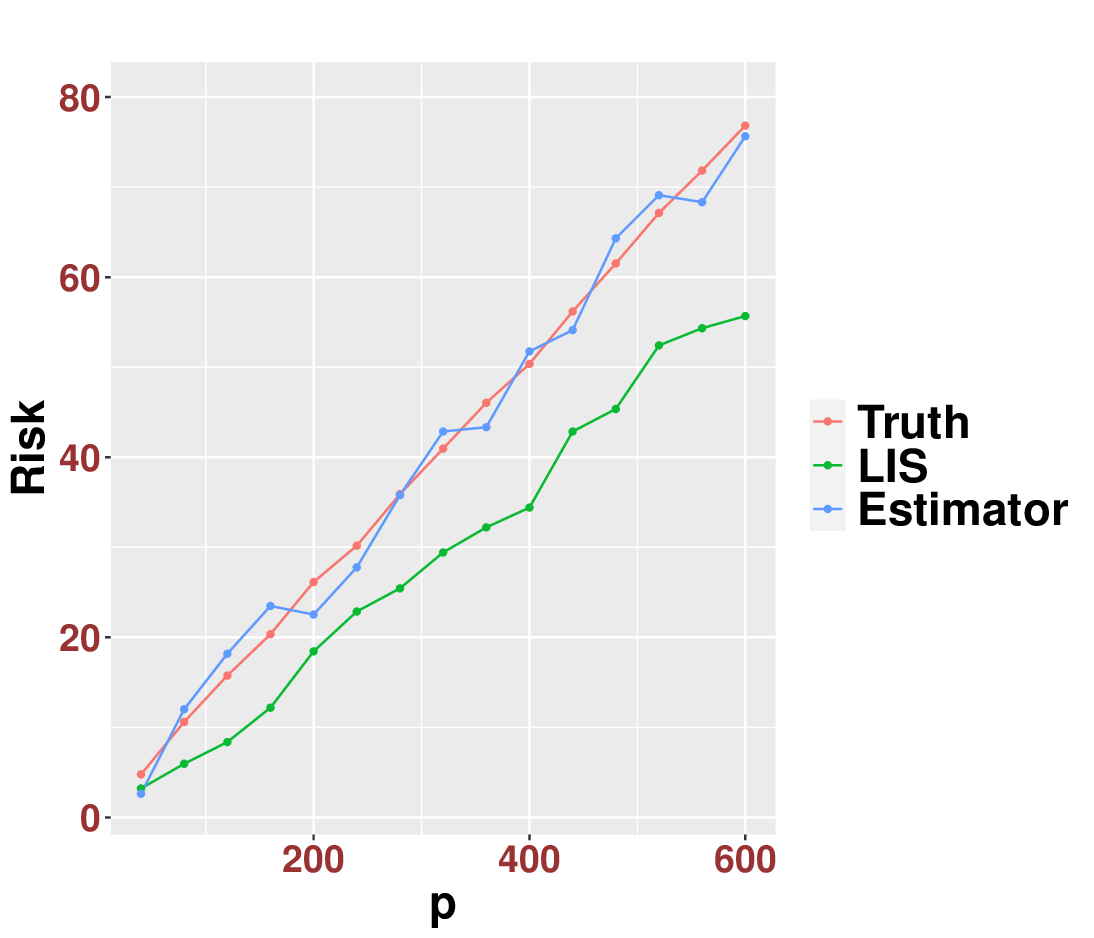}
\end{subfigure}
\caption{Comparison of different methods for predicting the risks under simulation settings (i) and (iv) as outlined in \Cref{sec_setup}. From left to right, the comparisons are made for four cases: setting (i) with the Frobenius norm ($\ell(x)=x$), setting (i) with the Stein norm ($\ell(x)=x^{-1}$), setting (iv) with the Frobenius norm, and setting (iv) with the Stein norm. In the simulations, we set $p=300$ and $n=600$. To enhance visualization, the reported risks are multiplied by $p$.}
\label{fig_compareotherriskone}
\end{figure}


\section{Proof strategy and key technical ingredients}\label{sec_generalproofdetail}

\subsection{Outline of the proof strategy}\label{sec_proofstrategy}

In this subsection, we outline the proof strategy. We focus on the more challenging term in \eqref{eq_varphigeneralform}, while the term in \eqref{eq_varphigeneralform0} is considerably simpler to deal with. We rewrite \eqref{eq_varphigeneralform} as
\begin{equation}\label{eq:ellSigma}
\bm{u}_i^\top \ell(\Sigma) \bm{u}_i=\sum_{j=1}^p \ell(\wt\sigma_j) |\avg{\bm{u}_i, \bm{v}_j}|^2, \quad   i\in \qq{\sfK},
\end{equation}
where recall that $\{\bm{v}_i\}_{i=1}^p$ and $\{\bm{u}_i\}_{i=1}^p$ are the eigenvectors of the population and sample covariance matrices $\Sigma$ and \smash{$\widetilde{\mathcal{Q}}_1$}, respectively. The analysis of outlier and non-outlier eigenvectors in the spiked model relies on different strategies. 
The study of outlier eigenvectors is based on a standard perturbation argument. 
Under item (iv) of Assumption \ref{main_assumption}, the outlier eigenvalues are well separated from the bulk of the spectrum (see Lemma \ref{lemma_spikedlocation} below). Therefore, using Cauchy's integral formula, the generalized components of the outlier eigenvectors can be represented as a contour integral of the generalized components of the resolvents, which can be well approximated using the anisotropic local law (c.f.~Section \ref{appendix_locallawsection}).

Specifically, for $i\in \qq{r}$, we decompose \eqref{eq:ellSigma} as   
\begin{equation}\label{eq_decomposition}
\bm{u}_i^\top \ell(\Sigma) \bm{u}_i =\sum_{j=1}^r \ell(\widetilde{\sigma}_j) |\avg{\bm{u}_i, \bm{v}_j}|^2 +\sum_{j=r+1}^p \ell(\widetilde{\sigma}_j) |\avg{\bm{u}_i, \bm{v}_j}|^2. 
\end{equation}
On one hand, according to Lemma \ref{lemma_spikedlocation} below, the first term on the right-hand side (RHS) of \eqref{eq_decomposition} is dominated by the term $\ell(\widetilde{\sigma}_i) |\avg{\bm{u}_i, \bm{v}_i}|^2$ since $|\avg{\bm{u}_i, \bm{v}_j}|^2$ is negligible for $j\ne i$. The term $\ell(\widetilde{\sigma}_i) |\avg{\bm{u}_i, \bm{v}_i}|^2$ can be further estimated using the asymptotic behavior of \smash{$\widetilde \lambda_i$} and $|\avg{\bm{u}_i, \bm{v}_i}|^2$ described in Lemma \ref{lemma_spikedlocation}.  
On the other hand, for the second term on the RHS of \eqref{eq_decomposition}, since there are $p-r$ terms in the summation, existing delocalization results in the literature---$|\avg{\bm{u}_i, \bm{v}_j}|^2 \prec n^{-1}$, $j \in \qq{r+1, p}$---are no longer sufficient for our proof. Instead, we need to derive higher-order asymptotics for $|\avg{\bm{u}_i, \bm{v}_j}|^2$. In (\ref{eq_intermediateresult}) below, we will show that with high probability,
\begin{equation*}
|\avg{\bm{u}_i, \bm{v}_j}|^2=\frac{\mathfrak{b}_id_i^2}{\widetilde{\sigma}_i^2\wt\sigma_j} G_{ij}(\mathfrak{a}_i) G_{ji}(\mathfrak{a}_i)+\OO (n^{-3/2+\e}),\quad j\in \qq{r+1,p},
\end{equation*}
where $G(z)$ represents the resolvent associated with the non-spiked model (see \Cref{rmk_keymodification} below for the precise definition). By estimating the first term on the RHS using a variational argument and the local laws of $G(z)$,  we obtain the asymptotic limit for the second term on the RHS of \eqref{eq_decomposition} and complete the proof. Further details can be found in the discussion below \eqref{eq_w2iform}. 

Next, we describe the strategy for studying the non-outlier eigenvectors, which differs significantly from the approach used for the outlier eigenvectors and is essentially non-perturbative. 
Let  $\{\bu_i\}_{i=1}^p=\{V^\top\bm{u}_i\}_{i=1}^p$ be the eigenvectors of $\Lambda^{1/2}V^\top XX^\top V \Lambda^{1/2}$. By noting that $\bm{u}_i^\top \ell(\Sigma) \bm{u}_i=\bu_i^\top \ell(\Lambda) \bu_i$, we can focus on the model $W:=\Lambda^{1/2}V^\top X$ and study the asymptotic properties of its singular vectors.
Let $X^G$ be a $p \times n$ Gaussian random matrix that is independent of $X$. Its entries are i.i.d.~centered Gaussian random variables with variance $n^{-1}$. For $t \geq 0,$ we define a \emph{matrix Dyson Brownian motion} $W(t)$ as
\begin{equation}\label{defn_wt}
W(t):=W(0) +\sqrt{t} X^G. 
\end{equation} 
The matrix $W(0)$ might not be precisely equal to $W$ (although in the proof below, we will consider it as a perturbation of $W$). To account for this, we introduce a new notation for the initial condition, denoting it as \smash{$W(0):=D_0^{1/2}V^\top X$}, where $D_0$ will be chosen later and is a diagonal matrix with eigenvalues that satisfy the conditions for $\{\wt\sigma_i\}$ in \Cref{main_assumption}. 
In other words, $D_0$ consists of a non-spiked part $D_{00}$ with eigenvalues satisfying item (iii) of \Cref{main_assumption}, while the spikes of $D_0$ satisfy item (iv) of \Cref{main_assumption}. Then, we denote the sample covariance matrix of $W(t)$ by  
\begin{equation}\label{eq_qt}
Q(t) \;\deq\; W(t) W(t)^\top\, ,
\end{equation}
and let $\{\lambda_i(t)\}_{i=1}^p$ and $\{\bu_i(t)\}_{i=1}^p$ be the associated eigenvalues and eigenvectors of $Q(t)$, respectively. 
As shown in \cite{9779233,ding2022edge}, the non-outlier eigenvalues of $Q(t)$ follow a \emph{rectangular Dyson Brownian motion}, whose limiting density $\varrho_t$ is given by \emph{the rectangular free convolution} of the ESD of $D_{00}$ with the MP law at time $t$. Similar to Definition \ref{defn_generaleigenvaluelocation}, we denote by $\{\gamma_k(t)\}_{k=1}^p$ the classical eigenvalue locations for $\varrho_t$.  (For the definitions of $\varrho_t$ and $\gamma_k(t)$, we refer readers to the discussion around \eqref{koux} in the appendix, where $\Lambda_0$ corresponds to $D_{00}$.)
Corresponding to (\ref{eq_phidefinitionintheend}), for $x>0$ and vectors $\bv, \bw\in \R^p,$ we define the function 
\be\label{defvarphi1}
\varphi_0(\bv,\bw,x)\equiv \varphi(\bv,\bw,x,D_0) := c_n\bv^\top D_0(x|1+m(x,D_{00})D_0|^2)^{-1} \bw,
\ee
where $m(x,D_{00})$ is defined as in (\ref{eq_defnmc}) with $\Sigma_0$ replaced by $D_{00}$. 

Our proof strategy for the non-outlier eigenvectors consists of two steps. In the first step, we establish the eigenvector distribution for $Q(t)$ on a small time scale $t \asymp n^{-1/3+\mathsf{c}}$, where $\mathsf c$ is a small positive constant. In the second step, we introduce a novel comparison argument to show that the eigenvector distribution of $Q(t)$ at time $t \asymp n^{-1/3+\mathsf{c}}$ is asymptotically equal to the distribution at $t=0$. These two steps together establish the asymptotic distribution of the eigenvectors of the original matrix $Q(0)= WW^\top$ for properly chosen $D_0$. For the first step, we will prove the following counterpart of \Cref{Thm: EE} for $t \asymp n^{-1/3+\mathsf{c}}$.

\begin{lemma}\label{lem: moment flow} 
Suppose Assumption \ref{main_assumption} holds (when $\Lambda=D_0$). Take $n^{-1/3+\e}\le t \le n^{-1/6-\e}$ for an arbitrary constant $\e\in (0,1/12)$. Given a deterministic unit vector $\bv \in \mathbb{R}^p$ and a subset of indices $\{i_k+r\}_{k=1}^L \subset \qq{r+1,\sfK}$ for a fixed integer $L$, define the $L\times L$ diagonal matrix 
\begin{equation}\label{eq:XiL}
\Xi_L(t):=\operatorname{diag}\left\{ \varphi_0(\bv, \bv,\gamma_{i_1}(t)), \cdots, \varphi_0(\bv, \bv,\gamma_{i_L}(t)) \right\}.
\end{equation}
Then, we have that  
\begin{equation}\label{eq_lemma41equation}
\sqrt{p} 
\begin{pmatrix}
 \xi_1\avg{\bv, \bu_{i_1+r}(t)} \\
 \vdots \\
\xi_L\avg{\bv, \bu_{i_L+r}(t)}
\end{pmatrix}
\simeq \mathcal{N}(\bm{0}, \Xi_L(t)), 
\end{equation}
where $\xi_1,\ldots, \xi_L\in\{\pm 1\}$ are i.i.d.~uniformly random signs independent of $W(t)$. 
\end{lemma}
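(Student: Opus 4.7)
The plan is to implement the three-substep eigenvector moment flow strategy outlined in the introduction, adapted to the rectangular sample-covariance setting with general covariance. First I would set up an observable $f_t(\xi)$ parameterized by a particle configuration $\xi$ on $\qq{\sfK}$, arranged so that evaluating $f_t$ at a suitable $\xi$ reproduces expected products of the moments $p\,|\avg{\bv, \bu_{i_k+r}(t)}|^2$. Applying It\^{o}'s formula to the DBM \eqref{defn_wt} together with the standard perturbation identities for the eigenvalues and eigenvectors of $Q(t)=W(t)W(t)^\top$, I would derive that $f_t(\xi)$ satisfies a parabolic system of coupled SDEs describing an interacting particle system whose transition rates are built from the inverse squared eigenvalue gaps of $Q(t)$; this is the content recorded as Lemma \ref{lem: dyn}.

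The technical heart, recorded as Theorem \ref{main result: Jia}, is to show that $f_t(\xi)$ relaxes to its equilibrium on the time scale $n^{-1/3+\e}\le t\le n^{-1/6-\e}$. I would feed in the anisotropic local law and rigidity for $Q(t)$ to replace the stochastic coefficients in the SDE by their deterministic rectangular-free-convolution counterparts up to $n^{-\mathfrak c}$ errors, and then run a maximum-principle / finite-speed-of-propagation argument on the linearized generator to obtain an $L^\infty$ relaxation bound. The equilibrium measure must be identified as a product of centered Gaussians with variances $\varphi_0(\bv,\bv,\gamma_{i_k}(t))$: this identification is forced by the fixed-point relation for the equilibrium covariance, which after substitution of the self-consistent equation for $m(\,\cdot\,,D_{00})$ collapses to the closed form \eqref{defvarphi1}. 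Once relaxation is established, the Gaussian joint convergence in \eqref{eq_lemma41equation} follows from the classical moment method, combined with tightness via the standard eigenvector delocalization bound $p\,|\avg{\bv, \bu_{i_k+r}(t)}|^2 = \OO(1)$ with overwhelming probability; the i.i.d.\ uniform signs $\xi_k$ only absorb the sign ambiguity of eigenvectors and kill odd moments, consistent with the Gaussian limit.

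The main obstacle is the identification of the equilibrium variance with $\varphi_0(\bv,\bv,\gamma_{i_k}(t))$ for a general non-diagonal $D_0$. In the isotropic setting of \cite{bloemendal2016principal} the variance is simply $1$ and the particle system has constant-coefficient equilibrium, whereas here the matrix $D_0$ couples the test direction $\bv$ to the spectral data nontrivially through the factor $(|1+m(\,\cdot\,,D_{00})D_0|^2)^{-1}$. To overcome this, I would rewrite the coefficients of the linearized generator in terms of the generalized entries of the resolvent of $Q(t)$ via the anisotropic local law, so that the diagonal of $VD_0(x|1+m(x,D_{00})D_0|^2)^{-1}V^\top$ evaluated on $\bv$ emerges as the natural diffusion coefficient at the classical location $\gamma_{i_k}(t)$, thereby reproducing precisely \eqref{eq:XiL}. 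A secondary difficulty is ensuring that the time window $n^{-1/3+\e}\le t \le n^{-1/6-\e}$ is simultaneously long enough for relaxation (lower bound) and short enough that the perturbation $\sqrt{t}\,X^G$ does not yet change the deterministic spectral data significantly (upper bound), which I would verify by a quantitative stability analysis of the rectangular free convolution with respect to $t$.
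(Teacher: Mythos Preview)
Your proposal is correct and follows essentially the same approach as the paper: set up the eigenvector moment flow $f_t(\xi)$ via Lemma~\ref{lem: dyn}, establish its relaxation to the equilibrium $\prod_k \varphi_0(\bv,\bv,\gamma_k(t))^{\xi(k)}$ (Theorem~\ref{main result: Jia} together with Lemma~\ref{thm_localdensityestimate0}), identify the variance through the anisotropic local law exactly as you describe, and conclude by the method of moments. One minor remark: the paper's proof of the relaxation step (Theorem~\ref{main result: Jia}) proceeds via an $L^{\mathsf q}$-energy/Dirichlet-form argument with an induction on the particle number rather than a pure maximum principle, but this is a technical variant of the same EMF machinery you invoke.
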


The proof of Lemma \ref{lem: moment flow} will be outlined in Section \ref{sec_maintwolemmas}. It is based on a careful analysis of the eigenvector moment flow for $Q(t)$, and the main result of the analysis is summarized in \Cref{main result: Jia}.





For the second step, we will establish the following comparison result, which shows that the distributions of the non-outlier eigenvectors of $W(t)$ coincide with those of $W$ when we carefully select $D_0$. Note that the proof of Theorem \ref{Thm: EE} is completed by combining \Cref{lemma_universal} with Lemma \ref{lem: moment flow}.



\begin{lemma}\label{lemma_universal}  
Suppose Assumption \ref{main_assumption} holds. Fix a time $\ft = n^{-1/3+\mathsf{c}}$ for a small constant $\mathsf{c} \in (0,1/6)$. We choose $D_0=\Lambda-\ft$. For a fixed integer $L$, let $\theta:\mathbb{R}^L \rightarrow \mathbb R$ be a smooth function satisfying that 
$$ \partial^{\mathbf k}\theta(x)\le C(1+\|x\|_2)^C$$
for a constant $C>0$ and all $\mathbf k\in \N^L$ satisfying $\|\mathbf k\|_1\le 5$. Then, given any deterministic unit vector $\bv \in \mathbb{R}^p$ and an arbitrary subset of indices $\{i_k+r\}_{k=1}^L \subset \qq{r+1,\sfK}$, there exists a constant $\nu>0$ such that 
\begin{equation*}
\mathbb{E} \left( \theta\left(p|\avg{\bv, \bu_{i_1+r}(\ft)}|^2, \ldots, p|\avg{\bv, \bu_{i_L+r}(\ft)}|^2\right) \right)=\mathbb{E} \left( \theta\left(p|\avg{\bv, \bu_{i_1+r}}|^2, \ldots, p|\avg{\bv, \bu_{i_L+r}}|^2\right) \right) +\OO(n^{-\nu}), 
\end{equation*}
where $\{\bu_i(\ft)\}_{i=1}^p$ and $\{\bu_i\}_{i=1}^p$ are the eigenvectors of $Q(\ft)$ and $WW^\top$, respectively. 

%
%
 \end{lemma}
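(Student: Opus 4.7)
The proof is a Green's function comparison (GFT), but with a twist caused by the non-diagonal population covariance $\Sigma = V\Lambda V^\top$: a direct entry-wise comparison between $W$ and $W(\ft)$ fails because the two matrices do not share a common natural basis in which the GFT moment matching can be performed. The plan is to exploit the intermediate matrix $\mathcal{W}_\ft$ from \eqref{eq_mathcalwt}, constructed so that $\mathcal{W}_\ft \eqdist W$ (hence both have identical eigenvector statistics) while its covariance structure is the same as that of $W(\ft) = D_0^{1/2}V^\top X + \sqrt{\ft}\, X^G$. We then work with the continuous interpolation $W_\ft^s$, $s\in[0,1]$, defined in \eqref{eq_dtusx}, which satisfies $W_\ft^0 \eqdist \mathcal{W}_\ft \eqdist W$ and $W_\ft^1 \eqdist W(\ft)$, and for which only the deterministic coefficients depend on $s$, while the random building blocks $X$ and $X^G$ are held fixed. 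Write $Q^s := W_\ft^s (W_\ft^s)^\top$, $G^s(z) := (Q^s - z)^{-1}$, and denote by $\bu_i^s$, $\lambda_i^s$ its eigenvectors and eigenvalues.

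The next step is the contour representation. For each $i_k + r \in \qq{r+1,\sfK}$, choose a contour $\Gamma_k$ of radius $\asymp n^{-2/3+\e}$ centered at the classical location $\gamma_{i_k}$. By the anisotropic local laws of \cite{MR3704770} and the rigidity of non-outlier eigenvalues---which hold uniformly along the interpolation since $Q^s$ has the same limiting ESD for all $s$---the contour $\Gamma_k$ isolates exactly $\lambda^s_{i_k+r}$ on an event $\Omega_s$ of probability $1-\oo(1)$; after inserting a smooth cutoff $\chi_s$ of $\Omega_s$, we have
$$p\absb{\avg{\bv,\bu^s_{i_k+r}}}^2 \chi_s = -\frac{p}{2\pi\ii}\oint_{\Gamma_k}\avg{\bv, G^s(z)\bv}\, dz \cdot \chi_s.$$
It then suffices to bound $\int_0^1 \tfrac{d}{ds}\E\qb{\theta(\cal F_1^s,\ldots,\cal F_L^s)}\, ds$, where $\cal F_k^s$ denotes the above contour integral. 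By the chain rule and the resolvent identity $\partial_s G^s = G^s (\partial_s Q^s) G^s$, each derivative reduces to an expectation of polynomials in entries of $G^s$ and of the fixed random matrices $X$, $X^G$, with coefficients that depend smoothly and slowly on $s$.

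The last and most delicate step is to estimate these expectations via cumulant (Stein) expansion applied to the entries of $X$ and $X^G$. The critical feature is that the interpolation was designed precisely so that the second cumulants of $W_\ft^s$ are independent of $s$ in the sense relevant for the GFT; consequently the Gaussian-order contribution to $\tfrac{d}{ds}\E[\theta(\cal F^s)]$ vanishes identically. The third-order corrections cancel thanks to the assumption $\E x_{ij}^3 = 0$, and the remaining higher-cumulant contributions are bounded by the anisotropic local law on $\Gamma_k$ (which lives at distance $\asymp n^{-2/3+\e}$ from the real axis) together with the delocalization estimates for $\bu_i^s$. A careful power-counting yields a per-$s$ bound of order $n^{-1/2 + C\e}$, which upon integrating in $s\in[0,1]$ produces the claimed $\OO(n^{-\nu})$ error for some small $\nu>0$; the window $\ft = n^{-1/3 + \mathsf{c}}$ is exactly what is needed to obtain a positive $\nu$. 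The main obstacle is this last step: one must verify in detail that the leading-order terms of the cumulant expansion cancel across the interpolation, which is precisely where the non-standard choice of $\mathcal{W}_\ft$ and of $W_\ft^s$---varying only the deterministic covariance coefficients---becomes indispensable, since a naive interpolation between $W$ and $W(\ft)$ would produce nonzero Gaussian-order contributions and leave a $\OO(1)$ error.
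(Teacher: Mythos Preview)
Your overall architecture matches the paper: reduce to comparing $\mathcal W_\ft$ and $W(\ft)$ via the Bernoulli interpolation $W_\ft^s$ from \eqref{eq_dtusx}, represent the eigenvector overlaps in terms of resolvents, differentiate in $s$, and exploit the semi-orthogonality $U^0(U^0)^\top=U^1(U^1)^\top=I$ to kill the second-order term (together with $\E x_{ij}^3=0$ for the third). This is precisely the content of Lemmas~\ref{7.2} and~\ref{nanzi}, and your identification of why a naive interpolation would fail is spot-on.

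There is, however, a genuine gap in your ``contour representation'' step. A contour $\Gamma_k$ of radius $\asymp n^{-2/3+\e}$ around $\gamma_{i_k}$ cannot isolate the single eigenvalue $\lambda_{i_k+r}^s$: for bulk indices the eigenvalue spacing is $\asymp n^{-1}$, so such a contour would enclose $\sim n^{1/3+\e}$ eigenvalues and the contour integral would compute a large sum of overlaps, not $|\langle\bv,\bu_{i_k+r}^s\rangle|^2$. Even if you adapt the radius to the local scale $\Delta_{i_k}$ from \eqref{eq_mathsfKdefinition}, rigidity alone does not guarantee that only one eigenvalue lies inside; you additionally need a level repulsion estimate (Proposition~\ref{prop: level repulsion}), which is only a polynomial probability bound and must be woven into the representation, not invoked after the fact on a single high-probability event. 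The paper replaces your contour by the more delicate integral representation in Lemma~\ref{7.2}: one integrates $\im\big[\bv^\top G^s(E+\ii\eta_{i_k})\bv\big]$ over the window $I_{i_k}$ of \eqref{eq:Ik}, multiplied by a smooth detector $\mathfrak q_{i_k}[\tr\mathfrak f_{i_k}(Q^s)]$ that counts how many eigenvalues lie in $[E^-_{i_k},E^+]$, and then applies level repulsion and Helffer--Sj\"ostrand (Lemma~\ref{lem_reducedusingfc}) to obtain a resolvent functional amenable to the $s$-derivative argument.

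One smaller correction: your final power-count $n^{-1/2+C\e}$ does not reflect where the gain actually comes from. In the paper the crucial factor is the smallness $\alpha_j=\|(U^1-U^0)\mathbf e_j\|\le Cn^{-\mathsf c'}$ with $\mathsf c'=1/3-\mathsf c$ (condition~\eqref{tianya}), which arises because one of the $P^{U^\bullet}_{j\mu}$ derivatives in the difference \eqref{fouling} is necessarily a $P^{\wt U}_{j\mu}$; this is how the window $\ft=n^{-1/3+\mathsf c}$ enters the bound.
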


The proof of Lemma \ref{lemma_universal} will be outlined in Section \ref{subsec_functionalrepresentation}. In the proof, we introduce the auxiliary matrix 
\begin{equation}\label{eq_mathcalwt}
\mathcal{W}_t :=D_t^{1/2} V^\top X, \quad \text{with}\quad D_t:=D_0 +t .
\end{equation}  
We choose the non-spiked parts of $D_0$ and $D_t$ as $D_{00}=\Lambda_0-\ft$ and $D_{t0}=D_{00}+t$, respectively. 
We denote the sample covariance matrix of $\cal W_t$ as $\mathcal{Q}_t:=\mathcal{W}_t \mathcal{W}_t^\top$ and its eigenvectors as $\{\bw_i(t) \}_{i=1}^p$. We aim to prove that for any $t\asymp n^{-1/3+\fc}$, 
\begin{equation}\label{eq_parttwocompare}
\mathbb{E} \left( \theta\left(p|\avg{\bv, \bu_{i_1+r}(t)}|^2, \ldots, p|\avg{\bv, \bu_{i_L+r}(t)}|^2\right) \right)=\mathbb{E} \left( \theta\left(p|\avg{\bv, \bw_{i_1+r}(t)}|^2, \cdots, p|\avg{\bv, \bw_{i_L+r}(t)}|^2\right) \right)+\oo(1). 
\end{equation}
This completes the proof of \Cref{lemma_universal} by taking $t=\ft$, in which case we have $\cal W_{\ft}=W$ as $D_0=\Lambda-\ft$. 

To prove (\ref{eq_parttwocompare}), we will utilize a functional integral representation formula in terms of the resolvents, as presented in Lemma \ref{7.2}, and a novel comparison argument in Lemma \ref{nanzi} below. Now, we discuss briefly the new comparison strategy. In the literature (e.g., \cite{bloemendal2016principal, ding2019singular, knowles2013eigenvector}), when establishing the universality for the singular vector distributions of $\cal W_t$, people typically compare the representation formula for $\cal W_t$ directly with that for \smash{$D_t^{1/2}V^\top X^G$}. Let \smash{$Y^G$} be an independent copy of $X^G$. Due to the rotational invariance of the distribution of \smash{$X^G$}, \smash{$D_t^{1/2}V^\top X^G$} has the same law as $D_0^{1/2}V^\top X^G + \sqrt{t} Y^G$, whose non-outlier eigenvector distributions have been provided in \Cref{lem: moment flow}.
In the previous comparison approach between $\cal W_t$ and \smash{$D_t^{1/2}V^\top X^G$}, people often applied the Lindeberg replacement (or other interpolations) to replace the entries of $X$ step by step with the entries of $X^G$ in the representation formula, controlling the error at each step using the local laws of the related resolvents. 
However, this approach fails for our setting---a direct application of the comparison idea (as in \cite{ding2019singular, knowles2013eigenvector}) leads to uncontrollable errors. 
To address this issue, we propose a novel interpolation method that establishes a connection between $\mathcal{W}_t$ and $W(t)$ as $s$ varies from 0 to 1: 
\be\label{eq_dtusx}
W_t^s:=D_t^{1/2}\left[U^0 \mathsf{X}+(U^1-U^0) (\chi^s\odot \mathsf{X})\right]
, \quad \text{with}\quad \mathsf{X}:=\begin{pmatrix}
 X  \\
 X^G
\end{pmatrix}.
\ee
Here, $\chi^s$ represents a continuous sequence of $2p\times n$ Bernoulli random matrices with i.i.d.~Bernoulli$(s)$ random entries, $\odot$ represents the Hadamard product, $U^0:=(V^\top,\mathbf 0_{p\times p})$, and $U^1:=((D_0/D_t)^{1/2} V^\top,  (t/D_t)^{1/2} V^\top )$. Under this choice, we can verify that $W_t^0=\mathcal{W}_t$, and $W_t^1$ has the same law as $W(t)$, leveraging the rotational invariance of the distribution of $X^G$. As a result, our analysis will focus on the deterministic covariance structure 
rather than the random components $X$ and $X^G$.


\subsection{Eigenvector moment flow and proof of Lemma \ref{lem: moment flow}}\label{sec_maintwolemmas}  


The objective of this section is to prove Lemma \ref{lem: moment flow} by analyzing the eigenvector moment flow (EMF) of $Q(t)$ in (\ref{eq_qt}), building upon the idea in \cite{MR3606475}. We first introduce some new notations before discussing the key ideas for the proof. 

For any deterministic unit vector $\bv\in \R^{p }$ as in Lemma \ref{lem: moment flow}, we define $z_k (t) \deq \sqrt{p} \scalar{\b v}{\b u_k(t)}.$ Moreover, we use the notation $\bm{\lambda}(s)\equiv (\lambda_i(s))_{1 \leq i \leq p}$ and denote the filtration of $\sigma$-algebras up to time $t$ by
\begin{equation}\label{eq_mathcalft}
\mathcal{F}_t=\sigma\left( W(0), (\bm{\lambda}(s))_{0 \leq s \leq t} \right).  
\end{equation}
Inspired by \cite{MR3606475,9779233}, 
we consider observables of the form
\begin{equation}\label{eq_momentflowflowequation}
f_t(\xi) \deq \E \pBB{\prod_{k = i_0}^{{ \sfK}} z_k(t)^{2 \xi(k)} \bigg\vert \cal F_t} \prod_{k = i_0}^{{ \sfK}} \frac{1}{(2\xi(k)-1)!!}\,,
\end{equation}
where we set $i_0=1$ for the non-spiked model and $i_0=r+1$ for the spiked model. Here, $\xi \col \qq{i_0, \sfK} \to \N$ represents a fixed configuration, and we can interpret $\xi(j)$ as the number of particles at site $j$. This interpretation allows us to view the EMF as an interacting particle system later.
The factor $(2 \xi(k)-1)!!$ corresponds precisely to the $\xi(k)$-th moment of a standard Gaussian random variable, and we adopt the convention that $(-1)!!=1$. Clearly, $f_t(\xi)$ is a functional encoding joint moments of the generalized components of the non-outlier eigenvectors.

Before delving into the analysis of $f_t(\xi),$ let us provide some heuristics as to why the asymptotic variance takes the form presented in \eqref{eq:XiL}. For simplicity, we consider the single-particle case where $\xi=\delta_{k_0}$ for some $k_0\in \qq{i_0,\sfK}$. In this scenario, we have $f_t(\xi)=\E \left(|z_{k_0}(t)|^2\big \vert \cal F_t\right). $ For $z=E+\ii \eta \in \mathbb{C}_+$, using the spectral decomposition of $W(t)W(t)^\top$, we find that on the local scale of order $\eta,$
\be\label{paidu}
\frac{p \im  \big(\bv^\top \left(W(t) W(t)^\top-z\right)^{-1} \bv\big)}{\im \tr\left(W(t) W(t)^\top -z\right)^{-1}}
\approx \text {local average of }|z_k(t)|^2 \text{ for }{\lambda_k(t)-E=\OO (\eta)} .
\ee
Drawing inspiration from the results for Wigner matrices \cite{MR3606475}, we expect that the random variables $z_k(t)$ are asymptotically independent, and that the distributions of $z_k(t)$ and $z_{k'}(t)$ are close to each other when $|k-k'|\ll n$. Guided by this intuition, we expect that that $f_t(\xi)$ should closely resemble the averaged quantity in (\ref{paidu}) due to the law of large numbers, as long as we choose $n^{-1}\ll \eta \ll 1$.


Next, we show that the left-hand side (LHS) of \eqref{paidu} can be effectively approximated by $\varphi_0$ as defined in \eqref{defvarphi1}. First, using the local laws presented in \Cref{sare} and \Cref{rmk_m1m2law} below, the LHS of (\ref{paidu}) has a deterministic limit, denoted as 
 \begin{align}\label{eq_varphitz}
 \varphi_t(\bv, \bv, z)\equiv \varphi(\bv, \bv, z,D_t):=&\, -\frac{  \im \big(\bv^\top [z(1+m(z,D_{t0})D_t)]^{-1} \bv\big) }{ |z|^{-2} \eta (1-c_n^{-1})+c_n^{-1} \im m(z,D_{t0})}, 
 \end{align}
for $z=E+\ii \eta$, where $D_{t0}:=D_{00}+t$ denotes the non-spiked part of $D_t$. 
By setting $t=0$, $E=x$, and $\eta \downarrow 0$, we obtain the formula in \eqref{defvarphi1} with $\bw=\bv$. (This is also why we use the same notation $\varphi$ in \eqref{defvarphi1} and \eqref{eq_varphitz}.)
Moreover, by the eigenvalue rigidity property of $W(t)W(t)^\top$ stated in Lemma \ref{sare} below, we know that  $\lambda_k(t)$ is well-approximated by $\gamma_k(t)$ with high probability. Therefore, by choosing $E=\gamma_{k_0}(t)$ and $\eta=\oo(1)$, according to the above discussion, we expect that when $t=\oo(1)$,
\begin{equation*}
\E  |z_{k_0}(t)|^2\approx \varphi_t(\bv,\bv,\gamma_{k_0}(t)+\ri \eta )\approx \varphi_t(\bv,\bv,\gamma_{k_0}(t))\approx  \varphi_0(\bv,\bv,\gamma_{k_0}(t)).
\end{equation*}
This gives the desired result for the single-particle case. 
For general configuration, using the above argument we heuristically expect that  
\begin{equation}\label{eq_approximatecool}
f_t(\xi)\approx \prod_{k=i_0}^\sfK  \varphi_0(\bv,\bv,\gamma_k(t))^{\xi(k)}.
\end{equation} 
%
%
%
%
%
In general, establishing the asymptotic distribution of $\{|z_{k}(t)|^2\}_{k=i_0}^{\sfK}$ can be accomplished by demonstrating that its finite-dimensional moments asymptotically match those of a random vector composed of independent Chi-squared random variables with the desired variances.
To make the above heuristics rigorous, we will work with the following smoothed version of $\varphi_0(\cdot)$ in the actual proof. 

\begin{definition}\label{defn_gt(xi)_single}
Given the deterministic unit vector $\bv \in \mathbb{R}^p$, let $\varphi_0$ be defined as in \eqref{defvarphi1}. Let $s(x)$ be a smooth, non-negative function supported in $[-1,1]$ such that $\int_\mathbb{R} s(x)dx=1$. For any constant $\epsilon >0$, we define $s_{\epsilon}(x)\deq n^{\epsilon}s(n^{\epsilon} x)$ and the function $\phi(x, \epsilon)$ as the convolution
    \begin{equation}\label{eq_defnphiepsilon_single}
\phi(x, \epsilon):=s_{\epsilon} *\varphi_0(\bv,\bv,x). 
    \end{equation}
    Then, for any fixed configuration $\xi:\qq{i_0,p}\mapsto \mathbb{N}$, we define 
    \begin{equation}\label{eq_onecolorgt}
	g_t(\xi,\epsilon)\deq \prod_{k=i_0}^\sfK \phi(\lambda_k(t),\epsilon)^{\xi(k)}. 
    \end{equation}
\end{definition}

\begin{remark}
We remark that the function $g_t$ is defined in terms of $\varphi_0$ and $\lambda_k(t)$ rather than $\varphi_t$ and $\gamma_k(t)$. The usage of $\lambda_k(t)$ instead of $\gamma_k(t)$ is due to certain technical considerations, which will become apparent in the technical derivation (\ref{eq_whygtrandom}) below.
Using the definition of $\phi$ and the estimate \eqref{eq:denominator} in the appendix, it can be readily observed that for any small constant $\tau>0$,
 \be\label{xiaoy}
 \left|\partial_x^s\phi^{ij}(x, \e) \right|=\OO(n^{s\e}), \quad \forall \ \ s\in \{0,1,2\}, \quad x\in [\tau, \tau^{-1}].
\ee
\end{remark}

The following EMF result shows that $f_t(\xi)$ converges to the equilibrium $g_t(\xi,\e)$ at a time scale of  $n^{-1/3+\mathsf c}$ as long as we choose $\e$ sufficiently small. 



%
%
%

\begin{theorem}\label{main result: Jia} 
Under the setting of Lemma \ref{lem: moment flow}, choose $T_i=C_i n^{-1/3+\mathsf{c}}$ for some constants $0<C_1<C_2$ and $\mathsf{c} \in (0,1/6)$.
Then, for any fixed $L\in \N$ and $\nu>0$, there exist (small) positive constants $\e_0,\e_1>0$ such that 
 $$
 \P\left(\sup_{ T_1\le t\le T_2} \max_{|\xi|=L} \left |f_t(\xi)-g_t(\xi,\e_1)\right| \ge n^{-\e_0}\right)\le n^{-\nu},  
  $$
  where $|\xi|:=\sum_{j=i_0}^{\sfK} \xi(j)$ denotes the total number of particles. 
\end{theorem}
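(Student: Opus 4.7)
}
The plan is to follow the eigenvector moment flow (EMF) strategy of Bourgade--Yau, suitably adapted to the rectangular matrix Dyson Brownian motion of $Q(t)$: derive a parabolic evolution for $f_t(\xi)$, show that the proposed profile $g_t(\xi,\e_1)$ is an \emph{approximate} solution of the same evolution, and then close the argument by a discrete maximum/energy principle to propagate smallness over the time window $[T_1,T_2]$.

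First, I would make precise the SDE system for $f_t(\xi)$ alluded to in Lemma~\ref{lem: dyn}. Applying It\^o's formula to $z_k(t)=\sqrt p\,\scalar{\bv}{\bu_k(t)}$ along the trajectories of the rectangular DBM, and collecting the drift terms coming from the eigenvector perturbation formulas, one obtains $\partial_t f_t(\xi) = \mathcal B(t) f_t(\xi)$, where $\mathcal B(t)$ is the generator of an interacting random walk on configurations $\xi\col\qq{i_0,\sfK}\to \N$ whose jump rates $c_{jk}(t)=1/[n(\lambda_j(t)-\lambda_k(t))^2]$ are those of the standard EMF. In particular, $\mathcal B(t)$ is reversible with respect to the measure $\prod_k (2\xi(k)-1)!!$ appearing implicitly in the normalization of $f_t$. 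The heat semigroup generated by $\mathcal B(t)$ enjoys the usual short-range finite-speed-of-propagation and ultracontractivity properties derived in \cite{MR3606475,9779233}.

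Next I would study the evolution of $g_t(\xi,\e_1)$. Writing $g_t(\xi,\e_1)=\prod_k\phi(\lambda_k(t),\e_1)^{\xi(k)}$ and again applying It\^o to the rectangular DBM of $\{\lambda_k(t)\}$, the drift splits into a first-order part (involving $\partial_x\phi$ against the free-convolution PDE of the density) and a second-order part (involving $\partial_x^2\phi$ against the fluctuation part of the eigenvalue dynamics). Using the rigidity of $\lambda_k(t)$ around $\gamma_k(t)$ from Lemma~\ref{sare}, the anisotropic local laws of the resolvent evaluated at the spectral parameter implicit in $\varphi_0$, and the bounds \eqref{xiaoy} which ensure that the derivatives $\partial_x^s\phi$ cost only $n^{s\e_1}$, one shows
\begin{equation*}
\partial_t g_t(\xi,\e_1) = \mathcal B(t) g_t(\xi,\e_1) + \mathcal E_t(\xi),
\end{equation*}
where $\mathcal E_t(\xi)$ is an error term bounded by $n^{-\kappa}$ for some $\kappa>0$ on the high-probability event of the local laws, provided $\e_1$ is taken small enough relative to $\mathsf c$. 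The quantity $\phi$ is engineered so that the cancellation between the ``deterministic part'' of the eigenvalue drift and the action of $\mathcal B(t)$ reproduces the same equation that $f_t$ satisfies, which is the heart of the approximation.

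Finally, setting $u_t(\xi):=f_t(\xi)-g_t(\xi,\e_1)$, I would show $\partial_t u_t = \mathcal B(t)u_t + \mathcal E_t$ and apply a discrete maximum principle (or equivalently an $L^\infty\!\to L^\infty$ bound on the semigroup of $\mathcal B(t)$) to conclude
\begin{equation*}
\sup_{T_1\le t\le T_2}\max_{|\xi|=L}|u_t(\xi)|\le \|u_{s_0}\|_\infty + (T_2-s_0)\sup_{s}\|\mathcal E_s\|_\infty,
\end{equation*}
for a suitable initial time $s_0\le T_1$ where $f_{s_0}$ can be compared to a local equilibrium using Gaussian-divisibility of $W(s_0)$ and the first-order Gaussian structure of moments at the scale $s_0\gg n^{-1/3}$. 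Combining this with $\|\mathcal E_s\|_\infty\le n^{-\kappa}$ and the $\OO(n^{-1/3+\mathsf c})$ length of the interval yields the claim with $\e_0=\kappa-\mathsf c-\text{loss}>0$ after choosing $\e_1$ sufficiently small.

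The main obstacle I anticipate is the bookkeeping in the second step: producing the identity $\partial_t g_t = \mathcal B(t) g_t + \mathcal E_t$ with a genuinely small $\mathcal E_t$. This requires matching the local-law expansion of $\varphi_t(\bv,\bv,\cdot)$ to the discrete action of $\mathcal B(t)$ on configurations of arbitrary (but bounded) size $L$, taming the apparent singularity of the jump rates $c_{jk}(t)$ for nearby eigenvalues by using the Hilbert-transform-type identities satisfied by $\varphi_0$, and controlling the high-order derivatives $\partial_x^2\phi$ via \eqref{xiaoy}. The rigidity bounds, the square-root behavior of $\varrho_t$ near its edges, and the lower bound on $\varrho_t$ in the bulk from \eqref{eq:bulkregular} are precisely what keep these errors under $n^{-\kappa}$ uniformly in $\xi$ with $|\xi|\le L$ and in $t\in [T_1,T_2]$.
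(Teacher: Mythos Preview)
Your outline captures the overall EMF framework, but the mechanism you propose for closing the argument does not work, and it differs in an essential way from what the paper actually does.

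The central gap is your Duhamel-type estimate
\[
\sup_{T_1\le t\le T_2}\max_{|\xi|=L}|u_t(\xi)|\le \|u_{s_0}\|_\infty + (T_2-s_0)\sup_s\|\mathcal E_s\|_\infty .
\]
For this to give anything, you need both factors on the right to be small. Neither is. First, the forcing $\mathcal E_t=\mathscr B(t)g_t-\partial_t g_t$ is \emph{not} $\OO(n^{-\kappa})$: the jump rates $\Upsilon_{kl}\sim n^{-1}(\lambda_k-\lambda_l)^{-2}$ are singular, and after the leading cancellation with the eigenvalue drift the remaining contribution is of size $\OO(n^{C\e_1})$ (compare the paper's bound $\cal E(t)=\OO(n^{2\e})$ in Lemma~\ref{lem_keycontrollemma}). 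Multiplied by $T_2\sim n^{-1/3+\mathsf c}$ this is still not $\oo(1)$ in general. Second, your proposal to control $\|u_{s_0}\|_\infty$ ``using Gaussian-divisibility of $W(s_0)$'' is circular: the only reason one knows the moments of $z_k(s_0)$ match the Gaussian ones at some $s_0\gg n^{-1/3}$ is precisely Theorem~\ref{main result: Jia} (or an equivalent statement). There is no independent input that makes $u_{s_0}$ small.

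The paper replaces your passive Duhamel estimate by an active \emph{relaxation} estimate. Working with the weighted $L^{\mathsf q}$-quantity $F_t=\theta_-\theta_+\sum_{|\xi|=L}\pi(\xi)(f_t-g_t)^{\mathsf q}$, it applies It\^o's formula and shows that $\cal A_t=\log(F_t+n^{-\e'\mathsf q+L})$ satisfies an SDE whose drift contains a large \emph{negative} term $-C(t)\,F_t/(F_t+n^{-\e'\mathsf q+L})$ with $C(t)\gtrsim n^{1/3-\mathsf c/4}$, dominating the $\OO(n^{2\e})$ error. This strong dissipation makes the initial value irrelevant: a stopping-time plus BDG argument shows $\cal A_t$ drops below the target level before time $T_1$, regardless of $\cal A_{T_1/2}$. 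Crucially, extracting this negative drift requires rewriting $\sum_l \Upsilon_{kl}f(\xi^{k\to l})$ via the spectral decomposition of the resolvent and the local law, which produces a factor $f(\xi-\mathbf e_k)$ (not $g(\xi-\mathbf e_k)$); replacing it by $g(\xi-\mathbf e_k)$ needs an \emph{induction on the particle number $L$} (see~\eqref{eq:adddd1}). This inductive structure is absent from your plan and is the key idea that makes the argument close.
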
   
\begin{lemma}\label{thm_localdensityestimate0}
Under the setting of Lemma \ref{lem: moment flow}, take $n^{-1/3+\e}\le t \le n^{-1/6-\e}$ for an arbitrary constant $\e\in (0,1/12)$. Then, for any fixed $m\in\mathbb{N}$, there exist (small) positive constants $\e_0,\e_1,\nu>0$ such that 
\begin{align*}
    \mathbb{P}\left(\max_{|\xi|=m}\Big|g_t(\xi,\e_1) - \prod_{k=i_0}^\sfK  \varphi_0(\bv,\bv,\gamma_k(t))^{\xi(k)}\Big|\ge n^{-\e_0}\right) \le n^{-\nu}.
\end{align*}
\end{lemma}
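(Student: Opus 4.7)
The plan is to establish the pointwise approximation $\phi(\lambda_k(t),\e_1) \approx \varphi_0(\bv,\bv,\gamma_k(t))$ with a uniform error $n^{-\e_0}$ for each $k\in\qq{i_0,\sfK}$, and then deduce the product estimate by a standard telescoping argument since $|\xi|=m$ is fixed. The first ingredient is eigenvalue rigidity for the rectangular Dyson Brownian motion $Q(t)$: by the analogue of Lemma~\ref{sare} used throughout the paper, there is a high-probability event $\Omega$ with $\P(\Omega^c)\le n^{-\nu}$ on which
$$\max_{k\in\qq{i_0,\sfK}} |\lambda_k(t)-\gamma_k(t)| \le n^{-2/3+\e_2},$$
uniformly in $t\in[T_1,T_2]$, for some small constant $\e_2>0$.

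The second ingredient is regularity of $\varphi_0$. The explicit formula \eqref{defvarphi1} shows that $x\mapsto \varphi_0(\bv,\bv,x)$ is uniformly bounded on $\supp\varrho_t$ (this uses the edge-regularity condition \eqref{eq:edgeregular} to keep $|1+m(x,D_{t0})D_{t0}|$ away from $0$) and is $\tfrac12$-H\"older continuous with an $O(1)$ constant, as $\partial_x m(x,D_{t0})$ behaves like $|x-a|^{-1/2}$ near each spectral edge $a$ and is smooth in the interior of $\supp\varrho_t$. Convolving with the mollifier $s_{\e_1}$ of width $n^{-\e_1}$ therefore gives
$$\sup_x |\phi(x,\e_1)-\varphi_0(\bv,\bv,x)| \le C n^{-\e_1/2}.$$
Combining this with the derivative bound $|\partial_x \phi(x,\e_1)|\le Cn^{\e_1}$ from \eqref{xiaoy} and with the rigidity bound from the first step, on $\Omega$ we obtain
$$\max_{k\in\qq{i_0,\sfK}} |\phi(\lambda_k(t),\e_1)-\varphi_0(\bv,\bv,\gamma_k(t))| \le C\bigl(n^{-\e_1/2}+n^{\e_1-2/3+\e_2}\bigr) \le 2Cn^{-\e_1/2},$$
provided $\e_1,\e_2$ are chosen small enough.

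Since the factors $\phi(\lambda_k(t),\e_1)$ and $\varphi_0(\bv,\bv,\gamma_k(t))$ are all bounded by a constant $C'$ on $\Omega$, a telescoping bound applied to products of at most $m$ such factors yields, still on $\Omega$,
$$\max_{|\xi|=m} \Big|g_t(\xi,\e_1)-\prod_{k=i_0}^{\sfK} \varphi_0(\bv,\bv,\gamma_k(t))^{\xi(k)}\Big| \le m(C')^{m-1}\cdot 2Cn^{-\e_1/2},$$
which is $\le n^{-\e_0}$ for any $\e_0<\e_1/2$ and $n$ large. The main technical obstacle will be verifying the claimed uniform $\tfrac12$-H\"older control on $\varphi_0(\bv,\bv,\cdot)$ all the way to the spectral edges of $\varrho_t$; this relies on carefully combining parts (iii) and (iv) of Assumption~\ref{main_assumption} with the square-root behavior of $\partial_x m(x,D_{t0})$, and on the fact that the edges of $\varrho_t$ and the function $m(\cdot,D_{t0})$ vary smoothly in $t\in[T_1,T_2]$. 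The rigidity input itself is standard for this model and poses no real difficulty.
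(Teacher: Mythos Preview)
Your proposal is correct and follows essentially the same approach as the paper's proof, which simply cites the standard fact about convolutions with approximate identities together with the eigenvalue rigidity estimate \eqref{TAZ2} from Lemma~\ref{sare}. You have supplied more detail than the paper does---namely the explicit $\tfrac12$-H\"older bound on $\varphi_0$, the mollifier error $n^{-\e_1/2}$, the derivative bound \eqref{xiaoy} to pass from $\lambda_k(t)$ to $\gamma_k(t)$, and the telescoping step for the product---but the underlying mechanism is identical.
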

\begin{proof}
The function $g_t(\xi,\e_1)$ represents a smoothed version of the product $\prod_{k=i_0}^\sfK\varphi_0(\bv,\bv,\lambda_k(t))^{\xi(k)}$.
The proof then follows from the standard fact concerning convolutions with approximate identities and the rigidity of $\lambda_k(t)$, as stated in Lemma \ref{sare} below (specifically, refer to \eqref{TAZ2}).
\end{proof}

We are now ready to prove Lemma \ref{lem: moment flow} by combining the results in \Cref{main result: Jia} and Lemma \ref{thm_localdensityestimate0}. 
\begin{proof}[\bf Proof of Lemma \ref{lem: moment flow}] 
Let $\xi$ be a configuration supported on $\{i_k+r\}_{k=1}^L\subset\qq{r+1,\sfK}$. 
Note that the functional $f_t(\xi)$ in \eqref{eq_momentflowflowequation} encodes the joint moments of the random variables $ p|\avg{\bv, \bu_{i_1+r}(t)}|^2,\ldots,  p|\avg{\bv, \bu_{i_L+r}(t)}|^2$. By employing Theorem \ref{main result: Jia} and Lemma \ref{thm_localdensityestimate0}, we establish that these joint moments match the corresponding finite-dimensional joint moments of $|\cal N_1|^2,\ldots,|\cal N_L|^2,$ where $\cal N_1,\ldots, \cal N_L$ are i.i.d.~centered Gaussian random variables having the desired variances $\varphi_0(\bv, \bv,\gamma_{i_1}(t)),\ldots, \varphi_0(\bv, \bv,\gamma_{i_L}(t))$. This concludes the proof using the standard method of moments. 
\end{proof}

\begin{remark}
In \Cref{main result: Jia}, we have chosen $t \asymp n^{-1/3+\mathsf{c}}$, which yields an almost sharp speed of convergence to local equilibrium for the edge eigenvalues and eigenvectors (see e.g., \cite{MR3606475,landon2017edge,9779233}). However, this rate is suboptimal for the convergence of bulk eigenvalues and eigenvectors. Indeed, it has been demonstrated in  \cite{MR3606475,benigni2020eigenvectors} that the bulk eigenvectors of Wigner matrices converge to local equilibrium once $t \gg n^{-1}$, and we anticipate that similar results extend to sample covariance matrices as well. Since this aspect is not the primary focus of the current paper, we will refrain from delving into this direction here.


\end{remark}

The proof of \Cref{main result: Jia} can be found in Appendix \ref{appendix_maximumestimation}. The main ingredient is a probabilistic description of the EMF of $f_t(\xi)$ as a multi-particle random walk in a random environment. The random environment is described by the well-studied Dyson Brownian motion of the eigenvalues. 
Let $\mathsf B(t)=(\mathsf B_{ij}(t))$ be a $p \times n$ matrix where $\mathsf B_{ij}$, $i\in \qq{p}$ and $j\in \qq{n}$, are independent standard Brownian motions. For any $t\ge 0$, $W(t)$ in \eqref{defn_wt} can be rewritten as 
\be\label{eq:WtDBM}
W(t)=W(0)+n^{-1/2}\mathsf B_t. 
\ee
We refer to this type of dynamics as a rectangular DBM. Under \eqref{eq:WtDBM}, \Cref{lem:Dyson_ev} provides the stochastic differential equations (SDEs) that describe the evolution of eigenvalues and eigenvectors of $Q(t)$.  
Note that $\lambda_{k}(t) \equiv 0$ when $k\in \qq{\mathsf{K}+1, p}$. To account for this, we introduce the following equivalence relation. 

\begin{definition}[Equivalence relation]\label{defn_equivalentrelation}
 We define the following equivalence relation on $\qq{1,p}$: $k \sim l$ if and only if $k = l$ or $k,l \geq \mathsf K + 1$. In particular, when $p \leq n$, $k \sim l$ simply means $k = l$. 
\end{definition}

\begin{lemma}\label{lem:Dyson_ev}
The eigenvalues $\{\lambda_k(t)\}_{k=1}^p$ and eigenvectors $\{\bu_k(t)\}_{k=1}^p$ of $Q(t)$ satisfy the following SDEs: 
\begin{equation}\label{sLa}
\dd \lambda_k(t) \;=\; 2 \sqrt{\lambda_k (t)} \frac{\dd B_{kk}(t)}{\sqrt{n}}  + \bigg( 1+ \frac{1}{n} \sum_{l:l\not\sim  k } \frac{\lambda_k(t) + \lambda_l(t)}{\lambda_k(t) - \lambda_l(t)}  \bigg) \dd t,  
\end{equation}
\begin{equation} \label{dyson_evector}
\dd \b u_k(t) \;=\; \frac{1}{\sqrt{n}}\sum_{l:l \not\sim k}  \frac{\sqrt{\lambda_k(t)} \ \dd B_{lk}(t) + \sqrt{\lambda_l(t)} \, \dd B_{kl}(t)}{\lambda_k(t) - \lambda_l(t)} \, \b u_l(t)  - \frac{1}{2n} \sum_{l:l \not\sim k} \frac{\lambda_k(t) + \lambda_l(t)}{(\lambda_k(t) - \lambda_l(t))^2} \, \b u_k(t) \, \dd t\,,
\end{equation}
where $k \in \qq{p}$ and $B_{ij}$, $i,j\in \qq{p}$, are independent standard Brownian motions. 
(Note that $B$ has the same distribution as $\mathsf B$, although they generally correspond to different matrix Brownian motions.) We will refer to \eqref{sLa} and \eqref{dyson_evector} as DBM and eigenvector flow, respectively. Their initial conditions at $t=0$ are given by \smash{$\{\wt\lambda_k\}_{k=1}^p$} and $\{\bu_k\}_{k=1}^p$.
\end{lemma}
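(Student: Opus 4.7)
The proof is the classical derivation of rectangular Dyson Brownian motion by matrix It\^o calculus combined with first-order spectral perturbation theory, following the template in \cite{9779233,ding2022edge}. Starting from $\dd W_{ij}=n^{-1/2}\,\dd \mathsf B_{ij}$ with $\dd \mathsf B_{ij}\,\dd \mathsf B_{i'j'}=\delta_{ii'}\delta_{jj'}\,\dd t$, the matrix It\^o expansion gives
\begin{equation*}
\dd Q = (\dd W)W^\top + W(\dd W)^\top + (\dd W)(\dd W)^\top, \qquad (\dd W)(\dd W)^\top = I_p\,\dd t,
\end{equation*}
where the It\^o correction arises from the $n$ independent columns of $\mathsf B$ each contributing $n^{-1}\,\dd t$ to the diagonal of $Q$.

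For $k$ with $\lambda_k(t)>0$ simple, I would use the standard spectral-perturbation identities in It\^o form,
\begin{align*}
\dd \lambda_k &= \bu_k^\top \dd Q\,\bu_k + \sum_{l\not\sim k}\frac{(\bu_l^\top \dd Q\,\bu_k)^2}{\lambda_k-\lambda_l},\\
\dd \bu_k &= \sum_{l\not\sim k}\frac{\bu_l^\top \dd Q\,\bu_k}{\lambda_k-\lambda_l}\,\bu_l \;-\; \frac{1}{2}\sum_{l\not\sim k}\frac{(\bu_l^\top \dd Q\,\bu_k)^2}{(\lambda_k-\lambda_l)^2}\,\bu_k,
\end{align*}
the last term being the It\^o correction enforcing $\|\bu_k\|=1$. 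Pairs with $l\sim k$ are omitted because, when $p>n$, the $(p-n)$-dimensional kernel of $W(t)^\top$ carries no individual eigenvector dynamics and by orthogonality contributes only collectively through the retained sum.

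I would then pass to the SVD $W(t)=\sum_k\sqrt{\lambda_k}\,\bu_k\bv_k^\top$, extending $\{\bv_k\}$ if necessary to an orthonormal basis on the relevant side. Since $W^\top\bu_k=\sqrt{\lambda_k}\,\bv_k$,
\begin{equation*}
\bu_l^\top \dd Q\,\bu_k = \sqrt{\lambda_k}\,\bu_l^\top \dd W\,\bv_k + \sqrt{\lambda_l}\,\bu_k^\top \dd W\,\bv_l + \delta_{kl}\,\dd t.
\end{equation*}
Defining $\dd B_{lk}(t):=\sqrt{n}\sum_{i,j}(\bu_l(t))_i(\bv_k(t))_j\,\dd \mathsf B_{ij}(t)$, a direct computation using orthonormality of $\{\bu_l\},\{\bv_k\}$ gives $\dd B_{lk}\,\dd B_{l'k'}=\delta_{ll'}\delta_{kk'}\,\dd t$, so by L\'evy's characterization (on a suitable enlargement of the filtration) the $B_{lk}$ are independent standard Brownian motions and $\bu_l^\top \dd W\,\bv_k=n^{-1/2}\,\dd B_{lk}$. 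Substituting into the perturbation formulas yields \eqref{sLa} and \eqref{dyson_evector}: the $l=k$ It\^o term supplies the constant ``$+1$'' in the drift of \eqref{sLa}; the squared off-diagonal entries evaluate to $(\bu_l^\top \dd Q\,\bu_k)^2=(\lambda_k+\lambda_l)n^{-1}\,\dd t$, the cross term $\sqrt{\lambda_k\lambda_l}\,\dd B_{lk}\dd B_{kl}$ vanishing by the just-shown independence, thereby producing the interaction sum $n^{-1}\sum_{l\not\sim k}(\lambda_k+\lambda_l)/(\lambda_k-\lambda_l)$; and the eigenvector drift and diffusion drop out in the stated form.

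The genuine care is bookkeeping rather than algebra. One must enforce the equivalence relation $\sim$ of Definition \ref{defn_equivalentrelation} everywhere so that the degenerate kernel (when $p>n$) is excluded---my argument treats only indices $k$ with $\lambda_k>0$, and the kernel enters only through the gauge-independent collective sum. One also needs the It\^o-sense validity of the perturbation expansion with only the stated second-order corrections, which in turn requires almost-sure non-collision of the positive spectrum along the flow; this is the main (mild) obstacle and is standard for rectangular DBM, following from the level-repulsion structure of the drift in \eqref{sLa}.
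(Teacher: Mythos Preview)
Your proposal is correct and follows precisely the standard derivation that the paper invokes: the paper's own proof consists solely of the reference ``See \cite{bru1989diffusions} for the derivations. Here, we adopt the formulation presented in \cite[Appendix C]{MR3606475},'' and Bru's argument is exactly the matrix It\^o calculus plus SVD-rotation scheme you sketch. Your bookkeeping remarks about the equivalence relation $\sim$, the kernel when $p>n$, and the need for a filtration enlargement to realize all $p^2$ Brownian motions $B_{lk}$ from the $pn$ driving noises are apt and match what is implicit in those references.
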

 \begin{proof}
See \cite{bru1989diffusions} for the derivations. Here, we adopt the formulation presented in \cite[Appendix C]{MR3606475}.
 \end{proof}


We extend the definition \eqref{eq_momentflowflowequation} to 
\begin{equation}\label{eq_momentflowflowequation2}
f_t(\xi) \deq \E \pBB{\prod_{k = 1}^{p} z_k(t)^{2 \xi(k)} \bigg\vert \cal F_t} \prod_{k = 1}^{p} \frac{1}{(2\xi(k)-1)!!}\,
\end{equation}
for the particle configuration $\xi :\qq{p}\to \N$. For $\xi = (\xi(1),\ldots, \xi(p))$, we introduce the notation
\begin{equation}\label{eq_simplifiedcomeandgo}
\xi^{k\to l} \;\deq\; \xi + \ind{\xi(k) \geq 1} (\b e_l - \b e_k),
\end{equation}
which denotes the particle configuration obtained by moving one particle from site $k$ to site $l$, given that $\xi(k)\ge 1$. The following lemma defines the main object of study---the eigenvector moment flow. Its proof will be presented in Appendix \ref{appendix_flowequation}. 
\begin{lemma}\label{lem: dyn}
Suppose the SDEs \eqref{sLa} and \eqref{dyson_evector} in Lemma \ref{lem:Dyson_ev} hold. Let $f_t(\xi)$ be defined as in \eqref{eq_momentflowflowequation2}. Define the generator $\mathscr  B (t)$ as
\begin{equation}\label{eq_generatorb}
\mathscr  B (t) f(\xi) \;\deq\; \frac12\sum_{l \not\sim k} \Upsilon_{kl}(t) \, 2 \xi(k) (1 + 2 \xi(l)) \pb{f(\xi^{k \to l}) - f(\xi)},
\end{equation}
where the matrix $\Upsilon$ is defined as 
\begin{equation}\label{eq_Upsilonkldefinition}
\Upsilon_{kl}(t) \;\deq\; \ind{k \not\sim l} \frac{\lambda_k(t) + \lambda_l(t)}{2n(\lambda_k(t) - \lambda_l(t))^2}\,.
\end{equation}
Then, $f_t(\xi)$ satisfies the equation
\begin{equation} \label{moment_flow}
\partial_t f_t(\xi) \;=\;  \mathscr B (t) f_t(\xi)\,.
\end{equation}
\end{lemma}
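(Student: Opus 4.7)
The plan is to apply Itô's formula to $F(t)\deq \prod_{k=1}^p z_k(t)^{2\xi(k)}$, regarded as a smooth function of the $p$-dimensional diffusion $(z_k(t))_{k=1}^p$, and then pass to the conditional expectation $\E[\,\cdot\mid \cal F_t]$. Because the filtration in \eqref{eq_mathcalft} contains the history of all eigenvalues, it controls the drift and diffusion coefficients of the eigenvector SDE \eqref{dyson_evector}, so the Brownian increments $dB_{ij}$ still average to zero under this conditioning; only the drift of $F$ survives, and dividing it by $\prod_m (2\xi(m)-1)!!$ should yield the claimed equation $\partial_t f_t(\xi) = \mathscr B(t) f_t(\xi)$.

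Concretely, I first project \eqref{dyson_evector} along $\bv$ and multiply by $\sqrt p$ to obtain
$$dz_k(t) \;=\; \sum_{l\not\sim k}\frac{\sqrt{\lambda_k(t)}\,dB_{lk}(t)+\sqrt{\lambda_l(t)}\,dB_{kl}(t)}{\sqrt{n}\,(\lambda_k(t)-\lambda_l(t))}\,z_l(t)\;-\;\sum_{l\not\sim k}\Upsilon_{kl}(t)\,z_k(t)\,dt,$$
where the drift coefficient is precisely $-\Upsilon_{kl}$ by \eqref{eq_Upsilonkldefinition}. Using the mutual independence of the $B_{ij}$, the quadratic covariations compute to $d[z_k,z_k]_t = 2\sum_{l\not\sim k}\Upsilon_{kl}(t)\,z_l(t)^2\,dt$ (only the pairings $B_{lk}\leftrightarrow B_{lk}$ and $B_{kl}\leftrightarrow B_{kl}$ survive, and they contribute the $\lambda_k$ and $\lambda_l$ in the numerator of $\Upsilon_{kl}$) and, for $k\not\sim l$, $d[z_k,z_l]_t = -2\Upsilon_{kl}(t)\,z_k(t) z_l(t)\,dt$ (the minus sign coming from $(\lambda_k-\lambda_l)(\lambda_l-\lambda_k) = -(\lambda_k-\lambda_l)^2$). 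Plugging these into Itô's formula with $\partial_k F = 2\xi(k) z_k^{2\xi(k)-1}\prod_{m\ne k}z_m^{2\xi(m)}$, $\partial_k^2 F = 2\xi(k)(2\xi(k)-1) z_k^{2\xi(k)-2}\prod_{m\ne k}z_m^{2\xi(m)}$, and $\partial_k\partial_l F = 4\xi(k)\xi(l) z_k^{2\xi(k)-1} z_l^{2\xi(l)-1}\prod_{m\ne k,l}z_m^{2\xi(m)}$, the drift of $F$ reduces to a linear combination of only two types of joint moments: those of $\prod_m z_m^{2\xi(m)}$, which give $\prod_m(2\xi(m)-1)!!\cdot f_t(\xi)$ under conditional expectation; and those of $z_k^{2\xi(k)-2} z_l^{2\xi(l)+2}\prod_{m\ne k,l} z_m^{2\xi(m)}$, which give $\prod_m(2\xi^{k\to l}(m)-1)!!\cdot f_t(\xi^{k\to l})$.

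The final step is to recognize this drift as $\mathscr B(t) f_t(\xi)$, and this is where the combinatorial bookkeeping has to be executed carefully. The key identity is
$$\frac{\prod_m (2\xi^{k\to l}(m)-1)!!}{\prod_m (2\xi(m)-1)!!} \;=\; \frac{2\xi(l)+1}{2\xi(k)-1}\qquad (\xi(k)\ge 1),$$
which, after dividing through by $\prod_m(2\xi(m)-1)!!$, converts the coefficient $2\xi(k)(2\xi(k)-1)\Upsilon_{kl}$ of $\prod_m(2\xi^{k\to l}(m)-1)!!\cdot f_t(\xi^{k\to l})$ (arising from $\partial_k^2F\cdot d[z_k,z_k]$) into $2\xi(k)(1+2\xi(l))\Upsilon_{kl}$ as the coefficient of $f_t(\xi^{k\to l})$. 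Separately, the coefficient of $f_t(\xi)$ combines $-2\xi(k)\Upsilon_{kl}$ from the drift of $dz_k$ with $-4\xi(k)\xi(l)\Upsilon_{kl}$ from the cross covariation $d[z_k,z_l]$, producing again $-2\xi(k)(1+2\xi(l))\Upsilon_{kl}$. The matching of these two coefficients is the main technical obstacle: it reflects the fact---first observed for Wigner matrices in \cite{MR3606475}---that although the particle move $k\to l$ is asymmetric, the jump rate factorizes in a way that turns the moment flow into a well-defined Markov generator, whose reversible measure is determined by the double factorials $\prod_m(2\xi(m)-1)!!$. With this matching in place, the evolution equation \eqref{moment_flow} follows from the definition \eqref{eq_generatorb} of $\mathscr B(t)$.
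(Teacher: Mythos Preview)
Your proposal is correct and follows essentially the same route as the paper. The paper packages the computation through the generator $\mathsf L(t)=\tfrac12\sum_{k,l}\Upsilon_{kl}(\mathsf X_{kl})^2$ acting on the polynomials $P_\xi$ and $Q_\xi$ (Lemmas in Appendix~\ref{appendix_flowequation}, then specialized to the single-colour case $M=1$, $\xi_k^1=2\xi(k)$), whereas you carry out the equivalent It\^o calculation directly on the projected scalars $z_k$; the covariation identities $d[z_k,z_k]_t=2\sum_{l\not\sim k}\Upsilon_{kl}z_l^2\,dt$, $d[z_k,z_l]_t=-2\Upsilon_{kl}z_kz_l\,dt$ and the double-factorial ratio you isolate are precisely the content of that generator computation.
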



The proof of Theorem \ref{main result: Jia} is based on a careful analysis of the equation \eqref{moment_flow}. For more details, readers can refer to Appendix \ref{appendix_maximumestimation}. 


\subsection{Green's function comparison and proof of Lemma  \ref{lemma_universal}}\label{subsec_functionalrepresentation} 


As discussed below \Cref{lemma_universal}, in order to complete the proof, it suffices to prove the comparison estimate \eqref{eq_parttwocompare}. Inspired by the works \cite{bloemendal2016principal, ding2019singular, knowles2013eigenvector}, we first establish a functional integral representation formula for the generalized components of the non-outlier eigenvectors, expressed in terms of the resolvents of either the non-spiked or spiked model. 

We first introduce some new notations. For any $E_2\ge E_1 >0,$ let $ \mathfrak{f}_{E_1, E_2, \eta}(x)$ denote an indicator function of the interval $[E_1, E_2]$ smoothed on the scale $\eta.$ More precisely,  $\mathfrak{f}_{E_1, E_2, \eta}$ is a smooth function satisfying the following properties: (1) $\mathfrak{f}_{E_1, E_2, \eta}(x) \equiv 1$ for $x \in [E_1, E_2]$, (2) $\mathfrak{f}(x) \equiv 0$ for $x \notin  [E_1-\eta, E_2+\eta]$, and (3) 
\begin{equation}\label{eq_generalfdefinition}
\sup_{x\in \R} |\mathfrak{f}_{E_1, E_2, \eta}'(x)| \lesssim \eta^{-1}  , \qquad  \sup_{x\in \R} |\mathfrak{f}_{E_1, E_2, \eta}''(x)| \lesssim \eta^{-2} . 
\end{equation}
For any positive integer $k\in \N$, let $\mathfrak{q}_k: \mathbb{R} \rightarrow \mathbb{R}^{+}$ be a smooth cutoff function such that 
\begin{equation}\label{eq_smoothqfunction}
\mathfrak{q}_k(x) \equiv 1, \ \ \text{if} \ |x-k| \leq \frac{1}{3}; \qquad \mathfrak{q}_k(x) \equiv 0, \ \ \text{if} \ |x-k| \geq \frac{2}{3}; \qquad \sup_{x\in \R}|\mathfrak q_k'(x)|\lesssim 1. 
\end{equation}
For any $k\in \qq{r+1,\sfK}$, if $\gamma_k \in [a_{2j},a_{2j-1}]$, then we define 
\begin{equation}\label{eq_mathsfKdefinition}
	\Delta_k:=[(n_{j}+1-k)\wedge (k-n_{j-1})]^{-1/3}n^{-{2}/{3}}, 
\end{equation}
where recall that $n_j$ is defined in \eqref{Nk} and the choice of $\Delta_k$ is due to the rigidity of eigenvalues in \eqref{TAZ2} below. Now, for any constants $\epsilon, \delta_1,\delta_2>0$ and $E\in \R$, we define
\begin{equation}\label{eq_keyquantitiesdefinition}
E^-_k(\e,\delta_1):=E- n^{-2\epsilon+\delta_1}\Delta_k,\quad E^+(\e)\deq \lambda_++2n^{-2/3+\epsilon}, \quad \eta_k(\e):=\Delta_k n^{-2\epsilon}, \quad \widetilde{\eta}_k(\e):=\Delta_k n^{-3\epsilon},
\end{equation} 
and the intervals (recall that $\gamma_k(t)$ are defined above \eqref{defvarphi1})
 \begin{equation}\label{eq:Ik}
 	I_k\equiv I_k(t,\delta_2)\deq[\gamma_k(t)-n^{\delta_2}\Delta_k, \gamma_k(t)+n^{\delta_2}\Delta_k].
 \end{equation}
Corresponding to the function $\mathfrak{f}_{E_1, E_2,\eta}$ defined above, we abbreviate 
\begin{equation}\label{eq_sequecenoffunction}
\mathfrak{f}_{k}(x)\equiv \mathfrak{f}_{k}(x,\e,\delta_1):=\mathfrak{f}_{E_{k}^-(\e,\delta_1),E^+(\e), \widetilde{\eta}_{k}(\e)}(x). 
\end{equation}
Finally, for any $E \in \mathbb{R}$, we define 
\begin{equation}\label{eq_xedefinition}
x_{k}^1(E,t):=\bv^\top (Q(t)-E-\ri \eta_{k})^{-1} \bv,\quad  x_{k}^0(E,t):=\bv^\top (\cal Q_t-E-\ri \eta_{k})^{-1} \bv, 
\end{equation} 
where recall that $Q(t)$ and $\cal Q_t$ were defined in \eqref{eq_qt} and \eqref{eq_mathcalwt}, respectively. 

Now, we are ready to state the key functional representation formula for the generalized components of non-outlier eigenvectors. The proof of this formula will be presented in Appendix \ref{sec_proofof72}.

 \begin{lemma}\label{7.2}
Under the setting of Lemma \ref{lemma_universal}, for any $t\asymp n^{-1/3+\mathsf c}$, there exists a small constant $\e>0$ such that the following estimate holds for sufficiently small constants $\delta_1\equiv \delta_1(\e)$, $\delta_2\equiv \delta_2(\e,\delta_1)$, and $\nu>0$:    
 \begin{align*}
	\E \theta \left(p |\avg{\bv, \bu_{i_1+r}(t)}|^2, \ldots, p |\avg{\bv, \bu_{i_L+r}(t)}|^2 \right)= \E \theta \Big( & \frac{p}{\pi}\int_{I_{i_1}} \left[\im x^1_{i_1}(E,t) \right]
	\mathfrak q_{i_1}\left[\tr \mathfrak f_{i_1}(Q(t)) \right]\dd E,  \ldots, \\
	&  \frac{p}{\pi}\int_{I_{i_L}} \left[\im x^1_{i_L}(E,t) \right]
	\mathfrak q_{i_L}\left[\tr \mathfrak f_{i_L}(Q(t)) \right]\dd E\Big)+\OO(n^{-\nu}).
\end{align*}
A similar estimate holds for $\{p|\avg{\bv, \bw_{i_k+r}(t)}|^2\}_{k=1}^L$, $\{x^0_{i_k}(E,t)\}_{k=1}^L$, and $\mathcal{Q}_t$. 
\end{lemma}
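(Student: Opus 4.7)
The plan is to identify, on a high-probability event, the smoothed contour integral on the right-hand side of the claim with $\pi |\langle \bv,\bu_{i_k+r}(t)\rangle|^2$ for each $k$, and then lift this pointwise identification to expectations using the smoothness and polynomial growth of $\theta$.

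First, I will apply the spectral decomposition
\begin{equation*}
\im x^1_{i_k}(E,t) \;=\; \eta_{i_k}\sum_{j=1}^{p}\frac{|\langle \bv,\bu_j(t)\rangle|^2}{(\lambda_j(t)-E)^2 + \eta_{i_k}^2}
\end{equation*}
and interchange sum and integral, so that each term is weighted by the Poisson mass $P_j \deq \int_{I_{i_k}} \eta_{i_k}/[(\lambda_j(t)-E)^2+\eta_{i_k}^2]\,dE$. In closed form, $P_j/\pi$ is an $\arctan$ difference that equals $1-\OO(\eta_{i_k}/\dist(\lambda_j(t),\partial I_{i_k}))$ when $\lambda_j(t)$ sits well inside $I_{i_k}$, and $\OO(\eta_{i_k}|I_{i_k}|/\dist(\lambda_j(t),I_{i_k})^2)$ otherwise. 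Thus the task reduces to controlling which eigenvalues lie inside $I_{i_k}$.

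Next, I would invoke the rigidity estimate (see \eqref{TAZ2} and Lemma \ref{sare}) for the eigenvalues of $Q(t)$, together with the $\Omega(1)$ separation of the $r$ outliers from $\lambda_+$ guaranteed by item (iv) of Assumption \ref{main_assumption}, to construct a high-probability event $\Omega$ with $\P(\Omega^c)\le n^{-\nu_0}$ for some $\nu_0>0$. On $\Omega$, by choosing a rigidity exponent $\delta_0\ll\delta_2$, I arrange that (a) $\lambda_{i_k+r}(t)$ lies inside $I_{i_k}$ at distance $\gtrsim n^{\delta_2}\Delta_{i_k}$ from the boundary; (b) the neighbouring non-outliers $\lambda_{i_k+r\pm 1}(t)$ sit outside $I_{i_k}$ by a comparable distance, since consecutive quantiles are spaced by $\gtrsim \Delta_{i_k}$; (c) the outliers $\lambda_{1}(t),\ldots,\lambda_r(t)$ are separated from $I_{i_k}$ by an $\Omega(1)$ amount. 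Plugging these into the $\arctan$ formula and using the trivial bound $\sum_{j\ne i_k+r}|\langle\bv,\bu_j(t)\rangle|^2\le 1$ to absorb the tail contributions, I obtain on $\Omega$
\begin{equation*}
\frac{1}{\pi}\int_{I_{i_k}}\im x^1_{i_k}(E,t)\,\dd E \;=\; |\langle\bv,\bu_{i_k+r}(t)\rangle|^2 + \OO(n^{-\nu_1})
\end{equation*}
for some $\nu_1>0$. In the same step, $\tr \mathfrak f_{i_k}(Q(t))$ on $\Omega$ equals exactly $i_k$ (the $r$ outliers sit above $E^+(\e)$; the first $i_k$ non-outliers sit above $E^-_{i_k}(\e,\delta_1)$ with margin much larger than $\widetilde\eta_{i_k}$; the remaining non-outliers sit below with a comparable margin), so that $\mathfrak q_{i_k}[\tr\mathfrak f_{i_k}(Q(t))]=1$.

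Finally, I will lift the pointwise approximation to expectations. On $\Omega$ the inputs to $\theta$ on the two sides of the asserted identity agree up to $\OO(n^{-\nu_1})$ componentwise, so a Taylor expansion of $\theta$ (using the assumed polynomial growth of its derivatives up to order five) gives an $\OO(n^{-\nu_1})$ contribution to the expectation. Off $\Omega$, both inputs are polynomially bounded (the left-hand side via eigenvector delocalization, the right-hand side via the trivial bound $|\im x^1_{i_k}|\le \eta_{i_k}^{-1}$ together with $|I_{i_k}|\le n^{C}$), and $\P(\Omega^c)\le n^{-\nu_0}$ is polynomially small, so the complement contributes $\OO(n^{-\nu'})$. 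This yields the desired $\OO(n^{-\nu})$ bound for a suitable $\nu>0$. The argument for $\mathcal Q_t$, $x^0_{i_k}$, and $\bw_{i_k+r}(t)$ is identical, since $D_t$ satisfies Assumption \ref{main_assumption} uniformly in $t\asymp n^{-1/3+\mathsf c}$ and hence the same rigidity and local laws apply. I expect the main obstacle to be the careful bookkeeping of the scales $\e,\delta_1,\delta_2,\delta_0,\mathsf c$, so that (a)--(c) hold simultaneously for all $L$ chosen indices with a uniform polynomial probability bound; a closely related subtle point is ensuring that with high probability no eigenvalue lands in the smoothing layer of width $\widetilde\eta_{i_k}$ around $E^-_{i_k}(\e,\delta_1)$, which requires a level-repulsion/rigidity input at a scale slightly finer than the quantile scale $\Delta_{i_k}$.
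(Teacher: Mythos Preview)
Your argument has a structural error in how it reads the right-hand side. The interval $I_{i_k}$ has half-width $n^{\delta_2}\Delta_{i_k}$ with $\delta_2>0$, whereas consecutive quantiles are spaced by $\asymp\Delta_{i_k}$; so $I_{i_k}$ typically contains $\asymp n^{\delta_2}$ eigenvalues, not one. Your claim (b) that $\lambda_{i_k+r\pm1}(t)$ sits \emph{outside} $I_{i_k}$ is therefore false, and the Poisson integral $\int_{I_{i_k}}\im x^1_{i_k}\,\dd E$ picks up all of those eigenvectors, not just the $(i_k+r)$-th.

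What you are missing is the role of the selector $\mathfrak q_{i_k}[\tr\mathfrak f_{i_k}(Q(t))]$. This factor is \emph{$E$-dependent}: $\mathfrak f_{i_k}$ is built from $E^-_{i_k}=E-n^{\delta_1}\eta_{i_k}$ (see \eqref{eq_keyquantitiesdefinition}--\eqref{eq_sequecenoffunction}), so $\tr\mathfrak f_{i_k}(Q(t))$ is a smooth count of eigenvalues above $E^-_{i_k}$ that changes as $E$ moves through $I_{i_k}$. It equals $i_k$ only on the sub-window $\{E:\lambda_{i_k+r+1}(t)<E^-_{i_k}\le\lambda_{i_k+r}(t)\}$, whose length is the random gap $\lambda_{i_k+r}(t)-\lambda_{i_k+r+1}(t)$. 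It is this selector, not the outer interval $I_{i_k}$, that isolates a single eigenvector. For the Poisson integral over this sub-window to recover $\pi|\langle\bv,\bu_{i_k+r}(t)\rangle|^2$ you need the gap to be $\gg\eta_{i_k}=n^{-2\e}\Delta_{i_k}$, and rigidity gives no lower bound on eigenvalue gaps. The paper supplies exactly this via a level repulsion estimate over the whole spectrum (Proposition~\ref{prop: level repulsion}), which you mention only tangentially at the end but is in fact the key input: it guarantees that the gap is $\ge n^{-\delta}\Delta_{i_k}$ except on an event of probability $n^{-\delta-\epsilon}$, which is then absorbed into the $\OO(n^{-\nu})$ error. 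The paper packages this into two steps: first replace $p|\langle\bv,\bu_{i_k+r}(t)\rangle|^2$ by the Poisson integral with the sharp indicator $\chi_{i_k}(E)=\mathbf 1(\lambda_{i_k+r+1}<E^-_{i_k}\le\lambda_{i_k+r})$ (Lemma~\ref{lem_expansionchacteristic}), then smooth $\chi_{i_k}$ to $\mathfrak q_{i_k}[\tr\mathfrak f_{i_k}(Q(t))]$ (Lemma~\ref{lem_smoothapproximation}), both steps relying on level repulsion.
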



With \Cref{7.2}, proving \eqref{eq_parttwocompare} boils down to showing that the two representation formulas involving the resolvents of $Q(t)$ and $\mathcal{Q}_t$ are asymptotically equal to each other.  As discussed below \eqref{eq_parttwocompare}, this is achieved through a novel comparison argument based on the interpolation \eqref{eq_dtusx}. Correspondingly, we define the family of interpolating sample covariance matrices as 
\begin{equation}\label{eq_dtusx0}
	\mathsf{Q}_t^s= W_t^s (W_t^s)^\top,
 \quad \text{with}\quad \mathsf{Q}^1_t\stackrel{d}{=}Q(t),\quad \mathsf{Q}^0_t=\mathcal{Q}_t.
\end{equation}  
Here, ``$\stackrel{d}{=}$" means ``equal in distribution".
Let $x^s_{k}(E,t)$ be defined as in (\ref{eq_xedefinition}) using $\mathsf{Q}_t^s.$ The following lemma provides the Green's function comparison result, which, together with \Cref{7.2}, concludes the proof of \eqref{eq_parttwocompare}.  

 \begin{lemma}\label{nanzi} 
 Under the setting of Lemma \ref{lemma_universal}, define $W_t^s$ as in \eqref{eq_dtusx} for some semi-orthogonal matrices ${U}^0, U^1 \in \mathbb{R}^{p \times 2p}$. 
Suppose there exist constants $C,\mathsf{c}'>0$ such that
\be\label{tianya}
\big[(U^0-U^1)^\top (U^0-U^1)\big]^{1/2}_{jj}\le Cn^{-\mathsf c'},\quad j\in \qq{2p}. 
\ee 
Then, there exists a constant $\nu>0$ such that
\begin{align}\label{eq_comparisonkeykeykeykey}
	&\E \theta \left( \mathcal U_{i_1}(\mathsf Q_t^0),  \ldots, \mathcal U_{i_L}(\mathsf Q_t^0)\right) =\E \theta \left( \mathcal U_{i_1}(\mathsf Q_t^1),  \ldots, \mathcal U_{i_L}(\mathsf Q_t^1)\right)+\OO(n^{-\nu}),
\end{align}
where we used the simplified notation
$$\mathcal U_k(\mathsf Q_t^s):= \frac{p}{\pi}\int_{I_{k}} \left[\im x^s_{k}(E,t) \right]\mathfrak q_{k}\left(\tr \mathfrak f_{k}(\mathsf{Q}_t^s) \right)\dd E,\quad k\in \qq{r+1,\sfK}.$$

%
  \end{lemma}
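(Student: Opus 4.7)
The plan is to bound the difference between $s=0$ and $s=1$ by a continuous interpolation argument in the Bernoulli parameter $s$. Define
\[F(s) := \E\,\theta\left(\mathcal U_{i_1}(\mathsf Q_t^s), \ldots, \mathcal U_{i_L}(\mathsf Q_t^s)\right),\qquad s\in[0,1],\]
and observe that $F(0)$ and $F(1)$ are precisely the two sides of \eqref{eq_comparisonkeykeykeykey}. Since the only $s$-dependence enters through the Bernoulli$(s)$ matrix $\chi^s$ (whose entries are independent with $\partial_s\P(\chi^s_{\alpha\beta}=1)=1=-\partial_s\P(\chi^s_{\alpha\beta}=0)$), a direct calculation gives
\[F'(s) = \sum_{(\alpha,\beta)\in\qq{2p}\times\qq{n}}\E\left[\theta\circ\mathcal U(\mathsf Q_t^{s,+(\alpha\beta)})-\theta\circ\mathcal U(\mathsf Q_t^{s,-(\alpha\beta)})\right],\]
where $\mathsf Q_t^{s,\pm(\alpha\beta)}$ denotes $\mathsf Q_t^s$ with the entry $\chi^s_{\alpha\beta}$ conditioned to $1$ or $0$. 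It then suffices to show $\sup_{s\in[0,1]}|F'(s)|\le n^{-\nu}$ for some $\nu>0$ and integrate.

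Fix $(\alpha,\beta)$ and freeze all entries of $\mathsf X$ and $\chi^s$ other than $\mathsf X_{\alpha\beta}$ and $\chi^s_{\alpha\beta}$. Then $W_t^{s,\pm(\alpha\beta)} = W^{\mathrm{base}} + \mathsf X_{\alpha\beta} A^{\pm}$, with $A^+ := D_t^{1/2} U^1 \eb_\alpha \eb_\beta^\top$ and $A^- := D_t^{1/2} U^0 \eb_\alpha \eb_\beta^\top$, so that $A^+ - A^- = D_t^{1/2}(U^1 - U^0)\eb_\alpha \eb_\beta^\top$ has rank one and operator norm at most $Cn^{-\mathsf c'}$ by hypothesis \eqref{tianya}. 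Taylor-expanding $\theta\circ\mathcal U(\mathsf Q_t^{s,\pm(\alpha\beta)})$ around $W^{\mathrm{base}}$ as a polynomial in $\mathsf X_{\alpha\beta}$, by iterated use of the resolvent identity on $G^s(E+\ii\eta_k)=(\mathsf Q_t^s-E-\ii\eta_k)^{-1}$ together with a parallel expansion of the smooth cutoff $\mathfrak q_k\circ\tr\mathfrak f_k(\mathsf Q_t^s)$, the difference becomes a finite polynomial in $\mathsf X_{\alpha\beta}$ whose coefficients are generalized resolvent entries contracted against $\bv$, columns of $W^{\mathrm{base}}$, and the small vector $D_t^{1/2}(U^1-U^0)\eb_\alpha$. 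Since $\E\mathsf X_{\alpha\beta}=\E\mathsf X_{\alpha\beta}^3=0$ and $\E\mathsf X_{\alpha\beta}^{2k}\lesssim n^{-k}$, the odd-order terms vanish after expectation, and every surviving monomial must carry at least one factor of $A^+-A^-$ (monomials without such a factor are identical on the two sides of the difference and cancel), yielding at least one gain of $n^{-\mathsf c'}$ per entry.

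The coefficients of this polynomial are controlled uniformly in $s\in[0,1]$ by the anisotropic and averaged local laws for $\mathsf Q_t^s$, combined with the eigenvalue rigidity that keeps $\mathfrak q_k\circ\tr\mathfrak f_k(\mathsf Q_t^s)$ and its derivatives bounded, and the polynomial-growth assumption on $\partial^{\mathbf k}\theta$ for $\|\mathbf k\|_1\le 5$. Summing the per-entry estimates over the $2pn$ pairs $(\alpha,\beta)$ yields $|F'(s)|\le n^{-\nu}$ for some $\nu>0$, provided the parameter $\epsilon$ in the definitions of $\eta_k$ and $I_k$ is chosen sufficiently small relative to $\mathsf c'$. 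The main obstacle is verifying that the anisotropic local law holds for $\mathsf Q_t^s$ uniformly in $s$: since $\mathsf Q_t^s$ is a sample covariance matrix whose effective population covariance is a random function of $\chi^s$, we handle this by conditioning on $\chi^s$ and showing that, with overwhelming probability, the conditional population covariance stays close (in operator norm and in its Stieltjes transform) to $D_t$, so that the regularity required by item~(iii) of Assumption~\ref{main_assumption} is inherited and the local laws from Section~\ref{appendix_locallawsection} apply under the conditional law. Taking the expectation over $\chi^s$ then completes the uniform bound on $F'(s)$.
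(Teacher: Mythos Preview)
Your overall architecture---differentiate the Bernoulli parameter $s$, expand in the single entry $\mathsf X_{\alpha\beta}$, and exploit the moment conditions together with a factor $A^+-A^-$---matches the paper's route. But two steps, as written, do not go through.

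\textbf{The second-order term.} Your accounting gives, for each $(\alpha,\beta)$, a contribution of size $\E\mathsf X_{\alpha\beta}^2\cdot\|A^+-A^-\|\cdot O(1)=n^{-1-\mathsf c'}\cdot O(1)$ from the quadratic term in the Taylor expansion. Summed over $2pn$ pairs this is $O(n^{1-\mathsf c'})$, which is large. One factor of $A^+-A^-$ is not enough here; you need an \emph{exact} cancellation. The paper obtains it by first re-expanding each Taylor coefficient so that it is evaluated at the common point $Y^s$ (rather than at the $(\alpha,\beta)$-dependent base $W^{\mathrm{base}}$), and then observing that
\[
\sum_{j,\mu}\big(P^{U^0}_{j\mu}\big)^2 F(Y^s)=\sum_{i,\mu}\partial_{Y^s_{i\mu}}^2 F(Y^s)=\sum_{j,\mu}\big(P^{U^1}_{j\mu}\big)^2 F(Y^s),
\]
because $U^0(U^0)^\top=U^1(U^1)^\top=I_p$. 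Only after this algebraic identity does the expansion start at order $\ge 3$, where the vanishing third moment kills $\ell=3$ and the remaining $\ell\ge 4$ terms carry $n^{-2}$ together with one factor $\alpha_j\le Cn^{-\mathsf c'}$, which safely sums to $n^{-\mathsf c'+C\epsilon}$. Your proposal skips both the recentering and the semi-orthogonality identity.

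\textbf{The local law for $\mathsf Q_t^s$.} Conditioning on $\chi^s$ does not put you back into the framework of Section~\ref{appendix_locallawsection}: conditionally, each column of $D_t^{-1/2}W_t^s$ is $\big[U^0+(U^1-U^0)\operatorname{diag}(\chi^s_{\cdot,\mu})\big]\mathsf X_{\cdot,\mu}$, so different columns carry different (random) effective covariances, and the model is not of the form $\Sigma^{1/2}X$ with a single deterministic $\Sigma$ and i.i.d.\ $X$. Under hypothesis~\eqref{tianya} alone you do not even control $\|U^1-U^0\|$ in operator norm, so ``closeness of the conditional population covariance to $D_t$'' is not available. The paper avoids this entirely: it proves the anisotropic local law for $G^s_t$ directly (Lemma~\ref{lem_locallaw}) by the same telescoping-in-$s$ comparison, bootstrapping in $\im z$ from the known $s=0$ case, and never interprets $\mathsf Q_t^s$ as a standard sample covariance matrix. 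That lemma is then used as an input to control the resolvent factors in the main comparison.
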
  

To establish \eqref{eq_comparisonkeykeykeykey}, we will control the derivative of $\E \theta \left( \mathcal U_{i_1+r}(\mathsf Q_t^s),  \ldots, \mathcal U_{i_L+r}(\mathsf Q_t^s)\right)$ with respect to $s$. More details can be found in Appendix \ref{sec_prooflemma410}. 
Now, we are ready to conclude the proof of Lemma \ref{lemma_universal} by combining the above two lemmas. 

\begin{proof}[\bf Proof of Lemma \ref{lemma_universal}]
When $t \asymp n^{-1/3+\mathsf{c}}$, it is straightforward to check that the condition (\ref{tianya}) holds with $c'=1/3-\mathsf c$ for $U^0$ and $U^1$ defined  below \eqref{eq_dtusx}. Then, by applying \Cref{7.2} and \Cref{nanzi}, we obtain \eqref{eq_parttwocompare}, which further concludes the proof of \Cref{lemma_universal} by setting $t=\ft$ and $D_0=\Lambda-\ft$. 
\end{proof}

\bibliographystyle{abbrv}
\bibliography{shrinkage} 

\appendix 

\section{Summary of loss functions and shrinkers}\label{appendix_lossfunctionsummary}
According to \cite{donoho2018, mainreference}, some special loss functions have been used frequently in the literature owing to their significance in applications. For the convenience of readers, we list them in Table \ref{table_summarylossfunction}. Hereafter, we consistently denote the loss functions by $\mathcal{L}(\cdot, \cdot): \mathbb{R}^{p \times p} \times \mathbb{R}^{p \times p} \rightarrow [0, \infty)$. 

\begin{table}[!ht]
\def\arraystretch{2}
\begin{center}
\hspace*{-0.8cm}
\begin{tabular}{|c|c|c|c|}
\hline
\bf Loss function  &  ${\bf \mathcal{L}(\Sigma, \widehat{\Sigma})}$ & \bf Loss function  &  ${\bf \mathcal{L}(\Sigma, \widehat{\Sigma})}$  \\ \hline
Frobenius & $\| \Sigma-\widehat{\Sigma} \|_F/\sqrt{p}$ & Inverse Quadratic &  $\|  \widehat{\Sigma}^{-1}\Sigma-I \|_F/\sqrt{p}$ \\ \hline
Disutility  & $\tr[(\widehat{\Sigma}^{-1}-\Sigma^{-1})^2 \Sigma]/\tr(\Sigma^{-1}) $ & Quadratic & $\| \Sigma^{-1} \widehat{\Sigma}-I \|_F/\sqrt{p}$ \\ \hline
Inverse Stein & $ \left( \tr[\widehat{\Sigma}^{-1} \Sigma-I]-\log[\det (\widehat{\Sigma}^{-1} \Sigma) ] \right)/p $ & Fr{\'e}chet  & $\| \widehat{\Sigma}^{1/2}-\Sigma^{1/2}\|_F/\sqrt{p}$ \\ \hline
Minimum Variance & $p\tr[\widehat{\Sigma}^{-1} \Sigma \widehat{\Sigma}^{-1}] \Big/ \left( \tr [\widehat{\Sigma}^{-1}] \right)^2-p/\tr(\Sigma^{-1}) $ & Log-Euclidean &   $\| \log(\widehat{\Sigma})-\log (\Sigma) \|_F/\sqrt{p}$ \\ \hline
Stein & $ \left( \tr[\widehat{\Sigma} \Sigma^{-1}-I]-\log[\det (\widehat{\Sigma} \Sigma^{-1}) ] \right)/p $ & Symmetrized Stein &  $\tr [\widehat{\Sigma} \Sigma^{-1}+\widehat{\Sigma}^{-1} \Sigma-2I]/p$ \\ \hline
Weighted Frobenius & $\tr[(\widehat{\Sigma}-\Sigma)^2 \Sigma^{-1}]/\tr(\Sigma)$ & Inverse Frobenius &  $\| \Sigma^{-1}-\widehat{\Sigma}^{-1} \|_F/\sqrt{p}$\\ \hline
\end{tabular}
\caption{Summary of commonly used loss functions for the population covariance matrix $\Sigma$ and its estimator $\widehat{\Sigma}$. }\label{table_summarylossfunction}
\end{center}
\end{table} 

For the loss functions considered in \Cref{table_summarylossfunction}, the analytical forms of the shrinkers $\varphi_i$ can be computed explicitly. We summarize them in Lemma \ref{lem_explicityformula} below. 

\begin{lemma}\label{lem_explicityformula} For the loss functions in Table \ref{table_summarylossfunction}, the optimal shrinkers $\varphi_i$, $i\in \qq{\mathsf K}$, are given as follows. 
\begin{enumerate}
\item For the Frobenius, inverse Stein, disutility, and minimum variance norms, we have that $\varphi_i=\bm{u}_i^\top \Sigma \bm{u}_i. $
\item For Stein, weighted Frobenius, and inverse Frobenius norms, we have that $\varphi_i=(\bm{u}_i^\top \Sigma^{-1} \bm{u}_i)^{-1}.$ 
\item For the symmetrized Stein norm, we have that $\varphi_i=\sqrt{\frac{\bm{u}_i^\top \Sigma \bm{u}_i}{\bm{u}_i^\top \Sigma^{-1} \bm{u}_i}}.$
\item For the Log-Euclidean norm, we have that $\varphi_i=\exp(\bm{u}_i^\top \log(\Sigma)\bm{u}_i).$
\item For the Fr{\'e}chet norm, we have that $\varphi_i=(\bm{u}_i ^\top \Sigma^{1/2} \bm{u}_i)^{2}.$
\item For the quadratic norm, we have that  $\varphi_i=\frac{\bm{u}_i^\top \Sigma^{-1} \bm{u}_i}{\bm{u}_i^\top \Sigma^{-2} \bm{u}_i}.$ 
\item For the inverse quadratic norm, we have that $\varphi_i=\frac{\bm{u}_i^\top \Sigma^{2} \bm{u}_i}{\bm{u}_i^\top \Sigma \bm{u}_i}.$ 
\end{enumerate}
When $c_n>1$, the above results remain valid for $i\in \qq{\mathsf K+1,p}$ by replacing the factors $\bm{u}_i^\top \ell(\Sigma) \bm{u}_i$ (for $\ell(x)=x, \ x^{-1}, \ \log x, \ \sqrt{x}, \ x^{-2}, \ x^2$) with $ \tr[U_0^\top \ell(\Sigma)U_0]/(p-n).$ 
\end{lemma}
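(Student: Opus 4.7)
The plan is to minimize each loss function in Table~\ref{table_summarylossfunction} over the shrinkers $\{\varphi_i\}$, exploiting the fact that the eigenframe $\{\bm{u}_i\}$ of $\mathcal{Q}$ simultaneously diagonalizes $\widetilde{\Sigma}$ and all of its functional calculi. Specifically, for any scalar function $g$ defined on $\spec(\widetilde{\Sigma})$ one has $g(\widetilde{\Sigma}) = \sum_i g(\varphi_i) \bm{u}_i \bm{u}_i^\top$, and hence $\tr[g(\widetilde{\Sigma}) A] = \sum_i g(\varphi_i)\, \bm{u}_i^\top A \bm{u}_i$ for any symmetric $A$. Using this identity, every loss in the table decomposes into an explicit sum of univariate functions of the $\varphi_i$'s, whose coefficients are the quadratic forms $\bm{u}_i^\top \Sigma^k \bm{u}_i$ for $k\in\{-2,-1,1/2,1,2\}$ or $\bm{u}_i^\top \log(\Sigma) \bm{u}_i$. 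The variables then decouple and the first-order optimality conditions can be solved one index at a time.

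To illustrate: for the Frobenius norm $\|\Sigma-\widetilde{\Sigma}\|_F^2 = \tr(\Sigma^2) - 2\sum_i \varphi_i \bm{u}_i^\top\Sigma\bm{u}_i + \sum_i \varphi_i^2$, the equation $\partial_{\varphi_i}=0$ gives part (i). The inverse Stein loss reduces to $p^{-1}\sum_i[\varphi_i^{-1}\bm{u}_i^\top\Sigma\bm{u}_i + \log\varphi_i]+\mathrm{const.}$ and the disutility loss to $\tr(\Sigma^{-1})^{-1}\sum_i[\varphi_i^{-2}\bm{u}_i^\top\Sigma\bm{u}_i - 2\varphi_i^{-1}]+\mathrm{const.}$, both yielding the shrinker in part (i); the minimum variance loss is scale-invariant in $\widetilde{\Sigma}$, so its critical equation $\varphi_i\propto\bm{u}_i^\top\Sigma\bm{u}_i$ is fixed by the canonical normalization adopted in part (i). The Stein loss reduces to $p^{-1}\sum_i[\varphi_i\bm{u}_i^\top\Sigma^{-1}\bm{u}_i - \log\varphi_i]+\mathrm{const.}$, the weighted Frobenius loss to $\tr(\Sigma)^{-1}\sum_i[\varphi_i^2\bm{u}_i^\top\Sigma^{-1}\bm{u}_i - 2\varphi_i]+\mathrm{const.}$, and the inverse Frobenius loss to $p^{-1}\sum_i[\varphi_i^{-2} - 2\varphi_i^{-1}\bm{u}_i^\top\Sigma^{-1}\bm{u}_i]+\mathrm{const.}$; all three first-order conditions give part (ii). Parts (iii)--(vii) follow from exactly the same recipe: the symmetrized Stein loss $p^{-1}\sum_i[\varphi_i\bm{u}_i^\top\Sigma^{-1}\bm{u}_i+\varphi_i^{-1}\bm{u}_i^\top\Sigma\bm{u}_i-2]$ yields $\varphi_i=[(\bm{u}_i^\top\Sigma\bm{u}_i)/(\bm{u}_i^\top\Sigma^{-1}\bm{u}_i)]^{1/2}$; the Log-Euclidean loss rewrites as $p^{-1}\sum_i(\log\varphi_i-\bm{u}_i^\top\log(\Sigma)\bm{u}_i)^2+\mathrm{const.}$, so $\log\varphi_i=\bm{u}_i^\top\log(\Sigma)\bm{u}_i$; the Fr\'echet loss reduces to $p^{-1}\sum_i[\varphi_i - 2\sqrt{\varphi_i}\,\bm{u}_i^\top\Sigma^{1/2}\bm{u}_i]+\mathrm{const.}$, giving $\sqrt{\varphi_i}=\bm{u}_i^\top\Sigma^{1/2}\bm{u}_i$; and the quadratic and inverse quadratic expansions yield the stated ratios via a single derivative.

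For the singular regime $c_n>1$, the eigenvectors $\{\bm{u}_i\}_{i=\sfK+1}^p$ spanning $\ker\mathcal{Q}$ are determined only up to orthogonal rotations within that subspace. The invariance condition \eqref{eq_steincondition} therefore forces $\varphi_i$ to take a single common value $\varphi_0$ for all $i\in\qq{\sfK+1,p}$; otherwise an in-subspace rotation would alter $\widetilde{\Sigma}$ while leaving $\mathcal{Q}$ unchanged. Replacing the null-space contribution by $\varphi_0\, U_0 U_0^\top$, each loss reduces to a one-variable optimization in $\varphi_0$, handled by the same differentiation arguments and producing $\varphi_0 = (p-n)^{-1}\tr[U_0^\top\ell(\Sigma) U_0]$ for the appropriate $\ell$. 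I do not anticipate any serious obstacle, since the entire proof consists of decoupled univariate minimizations; the only points requiring mild care are the normalization convention in the scale-invariant minimum variance case, the strict convexity check needed to confirm that each critical point is a true minimum (which is straightforward in every case under Assumption~\ref{main_assumption}, as $\ell(\Sigma)$ is positive definite), and the invariance-based reduction for the null-space block.
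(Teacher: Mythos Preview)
Your proposal is correct and is precisely the ``straightforward calculations'' the paper defers to \cite{donoho2018,mainreference}: expand each loss via $\tr[g(\widetilde\Sigma)A]=\sum_i g(\varphi_i)\,\bm{u}_i^\top A\bm{u}_i$, decouple, and solve the one-variable first-order conditions. The paper's own proof is nothing more than a citation, so your write-up in fact supplies the details the paper omits; the null-space argument via invariance is also the standard way to handle $c_n>1$.
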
   
\begin{proof}
The proof follows from straightforward calculations; see \cite{donoho2018, mainreference}. 
\end{proof}

For the above loss functions, we have the following decomposition of their associated risks. Recall the the optimal invariant estimator $\wt{\Sigma}$ in  (\ref{eq_steincovarianceestimator}) and define the diagonal matrix
\begin{equation*}
\Phi:=\operatorname{diag}\left\{ \varphi_1, \cdots, \varphi_p \right\}.
\end{equation*}
\begin{lemma}\label{lem_lossdecomposition}
For the loss functions in Table \ref{table_summarylossfunction} and their optimal solutions $\{\varphi_i\}$ given in Lemma \ref{lem_explicityformula}, we have the following identities. 
\begin{enumerate}
\item For the Frobenius norm,  we have that 
\begin{equation*}
\| \Sigma-\widetilde{\Sigma}   \|_F^2= \left\| \Sigma \right\|_F^2-\left\| \Phi \right\|_F^2. 
\end{equation*}
\item For the inverse Frobenius norm, we have that 
\begin{equation*}
\| \Sigma^{-1}-\widetilde{\Sigma}^{-1} \|_F^2=\| \Sigma^{-1} \|_F^2-\| \Phi^{-1} \|_F^2. 
\end{equation*}
\item For the weighted Frobenius norm, we have that 
\begin{equation*}
\tr\big[(\widetilde{\Sigma}-\Sigma)^2 \Sigma^{-1}\big]=\tr(\Sigma)-\tr(\Phi). 
\end{equation*}
\item For the disutility norm, we have that 
\begin{equation*}
\tr\big[(\widetilde{\Sigma}^{-1}-\Sigma^{-1})^2 \Sigma\big]=\tr(\Sigma^{-1})-\tr(\Phi^{-1}). 
\end{equation*}
\item For the inverse Stein norm,  we have that 
\begin{equation*}
 \tr\big[\widetilde{\Sigma}^{-1} \Sigma-I\big]-\log[\det (\widetilde{\Sigma}^{-1} \Sigma) ] =\log \left( \frac{\det (\Phi)}{\det (\Sigma)} \right). 
\end{equation*}
\item For the Stein norm, we have that 
\begin{equation*}
 \tr\big[\widetilde{\Sigma} \Sigma^{-1}-I\big]-\log[\det (\widetilde{\Sigma} \Sigma^{-1}) ] =\log \left( \frac{\det (\Sigma)}{\det (\Phi)} \right). 
\end{equation*}
\item For the Fr{\'e}chet norm,  we have that 
\begin{equation*}
\big\| \widetilde{\Sigma}^{1/2}-\Sigma^{1/2} \big\|_F^2=\tr(\Sigma)-\tr(\Phi).
\end{equation*}
\item For the minimal variance norm, we have that 
\begin{equation*}
p\tr[\widetilde{\Sigma}^{-1} \Sigma \widetilde{\Sigma}^{-1}] \Big/ \left( \tr [\widetilde{\Sigma}^{-1}] \right)^2 =p/\tr(\Phi^{-1}) .
\end{equation*}
\item For the quadratic norm, we have that 
\begin{equation*}
\big\| \Sigma^{-1} \wt{\Sigma}-I \big\|_F^2=p-\sum_{i=1}^{\mathsf K} \frac{(\bu_i^\top \Sigma^{-1} \bu_i)^2}{\bu_i^\top \Sigma^{-2} \bu_i} -  \frac{[\tr(U_0^\top \Sigma^{-1} U_0)]^2}{\tr(U_0^\top \Sigma^{-2} U_0)}.
\end{equation*}
\item For the inverse quadratic norm, we have that 
\begin{equation*}
\big\| \wt{\Sigma}^{-1} \Sigma-I \big\|_F^2=p-\sum_{i=1}^{\mathsf K} \frac{(\bu_i^\top \Sigma \bu_i)^2}{\bu_i^\top \Sigma^2 \bu_i}-  \frac{[\tr(U_0^\top \Sigma U_0)]^2}{\tr(U_0^\top \Sigma^{2} U_0)}.
\end{equation*}
\item For the Log-Euclidean norm, we have that
\begin{equation*}
\big\| \log (\widetilde{\Sigma})-\log (\Sigma) \big\|_F^2=\| \log(\Sigma) \|_F^2-\| \log(\Phi) \|_F^2.
\end{equation*}
\item For the symmetrized Stein norm, we have that
\begin{equation*}
\frac{1}{2}\tr \big[\wt{\Sigma} \Sigma^{-1}+\wt{\Sigma}^{-1} \Sigma-2I\big]=\sum_{i=1}^{\sfK} \sqrt{(\bu_i^\top \Sigma^{-1} \bu_i)\cdot ( \bu_i^\top \Sigma \bu_i)}+ \sqrt{ \tr (U_0^\top \Sigma^{-1} U_0)\cdot \tr( U_0^\top \Sigma U_0)} -p. 
\end{equation*}
\end{enumerate}
\end{lemma}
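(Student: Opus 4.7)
The plan is to carry out a direct algebraic verification of each identity in the spectral basis of $\wt\Sigma$. The optimal invariant estimator admits the decomposition
\begin{equation*}
\wt\Sigma \;=\; \sum_{i=1}^{\sfK}\varphi_i\bu_i\bu_i^\top + \varphi_0 U_0 U_0^\top,
\end{equation*}
where $\varphi_0$ is the common value of $\{\varphi_i\}_{i=\sfK+1}^p$ on the null-space block when $p>n$ (forced by the rotational invariance of $U_0$). Since the columns of $U_0$ together with $\{\bu_i\}_{i=1}^{\sfK}$ form an orthonormal basis of $\R^p$, any analytic function of $\wt\Sigma$ diagonalizes in this basis, and each identity then reduces to an elementary trace computation via cyclic invariance and $\sum_i\bu_i\bu_i^\top + U_0U_0^\top = I$. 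The crucial input from \Cref{lem_explicityformula} is that the explicit form of $\varphi_i$ (equivalently, the first-order optimality condition defining it) produces a targeted cancellation of cross terms.

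For the Frobenius-type identities, i.e.\ parts (i)--(iv), (vii), (viii), and (xi), I would expand the squared norm or weighted trace as $\|\ell_1(\Sigma)\|_F^2 - 2\tr(\ell_1(\Sigma)\ell_2(\wt\Sigma)) + \|\ell_2(\wt\Sigma)\|_F^2$ (or the obvious variant involving an extra weight $\Sigma^{\pm 1}$), and substitute the shrinker from \Cref{lem_explicityformula}. In each case, the chosen $\varphi_i$ makes the cross term equal to $-2\|\ell_2(\Phi)\|_F^2$ on the $\{\bu_i\}$-block, so the expansion collapses to $\|\ell_1(\Sigma)\|_F^2 - \|\ell_2(\Phi)\|_F^2$. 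As representative examples: for the Frobenius case, $\varphi_i=\bu_i^\top\Sigma\bu_i$ yields $\tr(\Sigma\wt\Sigma)=\sum_i\varphi_i^2=\tr(\wt\Sigma^2)$; for the weighted Frobenius case, $\varphi_i=(\bu_i^\top\Sigma^{-1}\bu_i)^{-1}$ yields $\tr(\wt\Sigma^2\Sigma^{-1})=\sum_i\varphi_i^2\cdot\varphi_i^{-1}=\tr(\wt\Sigma)$; for the minimum-variance case (also $\ell=\mathrm{id}$), $\tr(\wt\Sigma^{-1}\Sigma\wt\Sigma^{-1})=\sum_i\varphi_i^{-2}\cdot\varphi_i+\varphi_0^{-2}\cdot\varphi_0(p-\sfK)=\tr(\Phi^{-1})$; for the Log-Euclidean case, $\log\varphi_i=\bu_i^\top\log(\Sigma)\bu_i$ yields $\tr(\log(\Sigma)\log(\wt\Sigma))=\|\log(\Phi)\|_F^2$; and for the Fr\'echet case, $\varphi_i=(\bu_i^\top\Sigma^{1/2}\bu_i)^2$ yields $\tr(\Sigma^{1/2}\wt\Sigma^{1/2})=\sum_i\varphi_i^{1/2}\bu_i^\top\Sigma^{1/2}\bu_i=\tr(\Phi)$. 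The $U_0$-block contribution is handled by the identical argument with $\varphi_0$ in place of $\varphi_i$.

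For the Stein identities (parts (v)--(vi)), observe that $\tr(\wt\Sigma^{\pm 1}\Sigma^{\mp 1})=\sum_i\varphi_i^{\pm 1}\bu_i^\top\Sigma^{\mp 1}\bu_i+\varphi_0^{\pm 1}\tr(U_0^\top\Sigma^{\mp 1}U_0)=\sfK+(p-\sfK)=p$ with the shrinkers from \Cref{lem_explicityformula}, so the $\tr(\,\cdot\,-I)$ contribution vanishes, while $\det\wt\Sigma=\prod_{i=1}^{\sfK}\varphi_i\cdot\varphi_0^{p-\sfK}=\det\Phi$ directly handles the $\log\det$ term. For the symmetrized Stein identity (part (xii)), substituting $\varphi_i=\sqrt{(\bu_i^\top\Sigma\bu_i)/(\bu_i^\top\Sigma^{-1}\bu_i)}$ reduces both $\varphi_i\bu_i^\top\Sigma^{-1}\bu_i$ and $\varphi_i^{-1}\bu_i^\top\Sigma\bu_i$ to $\sqrt{(\bu_i^\top\Sigma\bu_i)(\bu_i^\top\Sigma^{-1}\bu_i)}$, and analogously for the $U_0$ block. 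For the quadratic norm (part (ix)), I would expand $\|\Sigma^{-1}\wt\Sigma-I\|_F^2=\tr(\wt\Sigma\Sigma^{-2}\wt\Sigma)-2\tr(\wt\Sigma\Sigma^{-1})+p$; the choice $\varphi_i=(\bu_i^\top\Sigma^{-1}\bu_i)/(\bu_i^\top\Sigma^{-2}\bu_i)$ yields the key identity
\begin{equation*}
\varphi_i^2\,\bu_i^\top\Sigma^{-2}\bu_i \;=\; \varphi_i\,\bu_i^\top\Sigma^{-1}\bu_i \;=\; \frac{(\bu_i^\top\Sigma^{-1}\bu_i)^2}{\bu_i^\top\Sigma^{-2}\bu_i},
\end{equation*}
so $\tr(\wt\Sigma\Sigma^{-2}\wt\Sigma)=\tr(\wt\Sigma\Sigma^{-1})$ and the expansion collapses to $p$ minus a single sum; the same collapse holds on the $U_0$ block with the scalar optimizer $\varphi_0=\tr(U_0^\top\Sigma^{-1}U_0)/\tr(U_0^\top\Sigma^{-2}U_0)$. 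Part (x) is completely analogous after $\Sigma\leftrightarrow\Sigma^{-1}$.

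The proof poses no real obstacle; it is a purely algebraic verification whose every step is dictated by the defining optimality of $\varphi_i$. The only point deserving care is the singular case $p>n$, in which the null-space block must be handled through a \emph{single} scalar optimization in $\varphi_0$ rather than coordinate-by-coordinate. This is precisely why, in the quadratic and symmetrized Stein identities, the $U_0$-contribution appears as a single ratio or square root inside a trace rather than as a sum over individual basis vectors within that block.
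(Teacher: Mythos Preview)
Your proposal is correct and is precisely the approach the paper takes: the paper's own proof reads in its entirety ``The proof follows from straightforward calculations using Lemma~\ref{lem_explicityformula}. We omit the details.'' Your write-up supplies exactly those omitted details---expanding each loss in the spectral basis of $\wt\Sigma$, substituting the optimal $\varphi_i$ from Lemma~\ref{lem_explicityformula}, and observing the resulting cancellation---including the correct treatment of the $U_0$ block via a single scalar $\varphi_0$ in the singular case.
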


\begin{proof}
The proof follows from straightforward calculations using Lemma \ref{lem_explicityformula}. We omit the details.  
\end{proof}


\section{Some preliminary results}\label{appendix_preliminary}
In this section, we present some results that will be used in the technical proofs of the main results. For ease of presentation, in this paper, we consistently use the following notion of \emph{stochastic domination} introduced in \cite{Average_fluc}.
It simplifies the presentation of the results and their proofs by systematizing statements of the form ``$\xi$ is bounded by $\zeta$ with high probability up to a small power of $n$".

\begin{definition}[Stochastic domination]\label{stoch_domination}
{\rm{(i)}} Let
	\[\xi=\left(\xi^{(n)}(u):n\in\mathbb N, u\in U^{(n)}\right),\hskip 10pt \zeta=\left(\zeta^{(n)}(u):n\in\mathbb N, u\in U^{(n)}\right),\]
	be two families of non-negative random variables, where $U^{(n)}$ is a possibly $n$-dependent parameter set. We say $\xi$ is stochastically dominated by $\zeta$, uniformly in $u$, if for any fixed (small) $\tau>0$ and (large) $D>0$, 
	\[\mathbb P\bigg(\bigcup_{u\in U^{(n)}}\left\{\xi^{(n)}(u)>n^\tau\zeta^{(n)}(u)\right\}\bigg)\le n^{-D}\]
	for large enough $n\ge n_0(\tau, D)$, and we will use the notation $\xi\prec\zeta$. If for some complex family $\xi$ we have $|\xi|\prec\zeta$, then we will also write $\xi \prec \zeta$ or $\xi=\OO_\prec(\zeta)$. 
	
	\vspace{5pt}
	\noindent {\rm{(ii)}} As a convention, for two deterministic non-negative quantities $\xi$ and $\zeta$, we will write $\xi\prec\zeta$ if and only if $\xi\le n^\tau \zeta$ for any constant $\tau>0$. 

	
	\vspace{5pt}
	\noindent {\rm{(iii)}} We say that an event $\Xi$ holds with high probability (w.h.p.) if for any constant $D>0$, $\mathbb P(\Xi)\ge 1- n^{-D}$ for large enough $n$. More generally, we say that an event $\Omega$ holds $w.h.p.$ in $\Xi$ if for any constant $D>0$,
	$\P( \Xi\setminus \Omega)\le n^{-D}$ for large enough $n$.
\end{definition}






We denote the resolvents of the sample covariance matrices in (\ref{eq_samplecovariancematrixdefinition}) and (\ref{eq_grammatrixdefinition}) by
\begin{equation}\label{eq_originalG}
\mathsf{G}_{a}(z):=\left(\mathcal{Q}_{a}-z\right)^{-1}, \quad  \widetilde{\mathsf{G}}_{a}(z):=\big(\widetilde{\mathcal{Q}}_{a}-z\big)^{-1},\quad a\in \{1,2\}. 
\end{equation}
For our proofs, it is more convenient to work with the following linearized block matrix $\sigH$, whose inverse $\sigG$ is also called the resolvent (of $\sigH$): 
\begin{equation}\label{eq_orginallinearzation}
\sigH(z):=
\begin{pmatrix}
-\Sigma_0^{-1} & X \\
X^\top & -zI_n
\end{pmatrix}, \quad \sigG(z):=\sigH(z)^{-1}. 
\end{equation}
By Schur's complement formula (see e.g., Lemma 4.4 of \cite{MR3704770}), we have that 
\begin{equation}\label{eq_gzschurcomplement}
\sigG(z):=
\begin{pmatrix}
z \Sigma_0^{1/2} \mathsf{G}_1(z) \Sigma_0^{1/2} & \Sigma_0 X \mathsf{G}_2 \\
\mathsf{G}_2X^\top \Sigma_0  & \mathsf{G}_2
\end{pmatrix}.
\end{equation} 
Hence, a control of the resolvent $\sigG$ yields directly a control of both $\sigG_1$ and $\sigG_2$.
For notational convenience, we will consistently use the following notations of index sets
\begin{equation}\label{eq_indexset} 
\mathcal I_1:=\qq{1,p}, \quad \ \mathcal I_2:=\qq{p+1, p+n}, \quad \ \mathcal I:=\mathcal I_1\cup\mathcal I_2=\qq{1,p+n}.
\end{equation}
Then, we label the indices of the blocks of $\sigH$ according to 
$X= (X_{i\mu}:i\in \mathcal I_1, \mu \in \mathcal I_2).$  
In what follows, we often omit the dependence on $z$ and simply write $\mathsf{H}, \mathsf{G}$, etc. Finally, we adopt the following convention for matrix multiplication: 
for matrices of the form $A:=(A_{st}: s \in \mathsf{l}(A), t \in \mathsf{r}(A))$ and $B:=(B_{st}: s \in \mathsf{l}(B), t \in \mathsf{r}(B))$, whose entries are indexed by some subsets $\mathsf{l}(A), \mathsf{r}(A),\mathsf{l}(B), \mathsf{r}(B) \subset \mathbb{N},$ the matrix multiplication $AB$ is understood as 
\begin{equation}\label{eq_matrixnotation}
(AB)_{st}:=\sum_{k \in \mathsf{r}(A) \cap \mathsf{l}(B)} A_{sk} B_{kt}
\end{equation} 
for $s \in \mathsf{l}(A)$ and $t \in \mathsf{r}(B)$.

\subsection{Local laws for sample covariance matrices}\label{appendix_locallawsection} 

In this subsection, we present a key input for our proofs, namely the local laws for the resolvent $\mathsf{G}$ \cite{MR3704770}.
Recall the Stieltjes transform of the deformed Marchenco-Pastur law $m(z)$ defined in (\ref{eq_defnmc}).
We introduce the following (deterministic) matrix limit of $\sigG$ defined as 
\begin{equation}\label{eq_Pigz}
\Pi_{\sigG}(z):=
\begin{pmatrix}
-\Sigma_0 (1+m(z) \Sigma_0)^{-1} & 0\\
0 & m(z)I_n
\end{pmatrix}.
\end{equation}
Moreover, for an arbitrarily small constant $\tau\in (0,1)$, we define the spectral parameter domain 
\begin{equation}\label{eq_generalspectraldomain}
\mathbf{D} \equiv \mathbf{D}(\tau, n):=\left\{z=E+\ri \eta \in \mathbb{C}_+: \tau \leq E \leq \tau^{-1}, \ n^{-1+\tau} \leq \eta \leq \tau^{-1}    \right\},
\end{equation}
a spectral domain outside the support of $\varrho$,
\begin{equation*}
\mathbf{D}_{o} \equiv \mathbf{D}_{o}(\tau, n):=\left\{ z=E+\ri \eta \in \mathbb{C}_+: \tau\le E\le \tau^{-1}, 0<\eta\le \tau^{-1}, \ \operatorname{dist} (E, \operatorname{supp}(\varrho)) \geq n^{-2/3+\tau} \right\},
\end{equation*}
and a spectral domain around the origin,
\begin{equation*}
\mathbf{D}_0 \equiv \mathbf{D}_0(n):=\left\{ z=E+\ri \eta \in \mathbb{C}: |z|\le (\log n)^{-1} \right\},
\end{equation*}
Define $\sm_n(z):=n^{-1} \operatorname{Tr} \mathsf{G}_2(z).$ The following lemma states the local laws for $\sigG(z)$ and $\sm_n(z)$. 
\begin{lemma}[Theorems 3.6 and 3.7 of \cite{MR3704770}]
\label{lem_standardaniso}

Suppose (i)--(iii) of Assumption \ref{main_assumption} hold. Then, the following estimates hold uniformly in $z=E+\ii \eta\in \mathbf D$:
\begin{itemize}
\item {\bf Anisotropic local law}: For any two  deterministic unit vectors $\mathbf{u}, \mathbf{v}\in \mathbb{R}^{p+n}$, we have 
\begin{equation}\label{eq:aniso}
\left| \mathbf{u}^\top \sigG(z) \mathbf{v}-\mathbf{u}^\top \Pi_{\sigG}(z) \mathbf{v} \right| \prec \Psi(z),
\end{equation}
where $\Psi$ is an error control parameter defined as
$$\Psi(z):=\sqrt{\frac{\im m(z)}{n \eta}}+\frac{1}{n \eta}.$$

\item {\bf Averaged local law}: We have
\begin{equation}\label{eq:aver}
|\sm_n(z)-m(z)| \prec \frac{1}{n \eta}.
\end{equation}
\end{itemize}
In addition, outside the support of $\varrho$, we have a stronger {\bf anisotropic local law} uniformly in $z=E+\ii \eta\in \mathbf D_{o}$:
\begin{equation}\label{eq:aniso_out}
\left| \mathbf{u}^\top \sigG(z) \mathbf{v}-\mathbf{u}^\top \Pi_{\sigG}(z) \mathbf{v} \right| \prec n^{-1/2}(\kappa+\eta)^{-1/4}, 
\end{equation}
for any two  deterministic unit vectors $\mathbf{u}, \mathbf{v}\in \mathbb{R}^{p+n}$, where $\kappa$ is defined as $\kappa:= \operatorname{dist}(E, \supp \varrho ).$ 
\end{lemma}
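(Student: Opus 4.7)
The plan is to follow the standard resolvent method for sample covariance matrices, working directly with the linearized block resolvent $\sigG$ so that both $\mathsf G_1$ and $\mathsf G_2$ are controlled simultaneously via \eqref{eq_gzschurcomplement}. First, I would derive approximate self-consistent equations for the diagonal entries of $\sigG$ by applying the Schur complement formula to $\sigH$: for $i\in\cal I_1$, one expresses $\sigG_{ii}^{-1}$ as $-\sigma_i^{-1}$ minus a quadratic form in the $i$-th row of $X$ contracted against the $\cal I_2\times\cal I_2$ block of the minor resolvent $\sigG^{(i)}$, with an analogous identity for $i\in\cal I_2$. These quadratic forms concentrate around their partial traces, and $zn^{-1}\tr\sigG^{(i)}_2(z)\approx z m(z)$, via a Hanson--Wright type large deviation bound---enabled by the moment hypothesis \eqref{eq_momentassumption} and the vanishing third moment---while the fluctuations are of size $\Psi(z)$ thanks to the Ward identity $\sum_\mu|\sigG_{\mu\nu}|^2 = \eta^{-1}\im\sigG_{\nu\nu}$.

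Next, I would close the system by comparing with the defining relation \eqref{eq_defnmc} for $m(z)$ and performing a stability analysis of the resulting perturbed equations. In the bulk, the derivative $h'(m)$ is bounded away from zero (via \eqref{eq:bulkregular}), while near each edge $a_k$ the square-root regularity of $\varrho$ ensured by \eqref{eq:edgeregular} gives $\im m \asymp \sqrt{\kappa+\eta}$, which supplies the coercivity needed to invert the linearization. A standard continuity bootstrap, starting at $\eta \asymp 1$ where the deterministic bounds are trivial, then propagates the entrywise control $|\sigG_{ii}-(\Pi_\sigG)_{ii}|\prec\Psi$ together with $|\sigG_{ij}|\prec\Psi$ for $i\ne j$ (in the basis diagonalizing $\Sigma_0$) down to the optimal scale $\eta \gtrsim n^{-1+\tau}$. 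To upgrade to the anisotropic bound \eqref{eq:aniso} for arbitrary unit vectors, I would employ the polynomialization trick: high even moments of $\mathbf u^\top(\sigG-\Pi_\sigG)\mathbf v$ are expanded into multi-resolvent graphs and estimated iteratively using the entrywise bounds and resolvent identities, with Markov's inequality closing the argument. The averaged law \eqref{eq:aver} is then obtained by a fluctuation-averaging estimate applied to $n^{-1}\sum_\mu\sigG_{\mu\mu}$, exploiting cancellations among the row-wise self-consistent errors to gain an extra factor of $(n\eta)^{-1/2}$ over the pointwise estimate.

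Finally, for the sharper bound \eqref{eq:aniso_out} outside $\operatorname{supp}\varrho$, I would exploit that in this regime $\im m \lesssim \eta/\sqrt{\kappa+\eta}$, so that $\Psi$ rearranges into $(n\eta)^{-1/2}(\kappa+\eta)^{-1/4}$; combined with a refined stability analysis where the Jacobian stays uniformly invertible (since $m(E)$ remains bounded away from the singular points $-\sigma_i^{-1}$ by the third condition in \eqref{eq:edgeregular}), this yields control that does not degrade as $\eta\downarrow 0$ and can be shown to hold uniformly in $\eta\in(0,\tau^{-1}]$. The hard part is the stability analysis at the spectral edges and near the origin: the vanishing of $h'(m)$ at edge points forces careful tracking of subleading corrections in the self-consistent equation, and the regularity assumptions \eqref{eq:edgeregular} and \eqref{eq:bulkregular} are tailored precisely so that this analysis goes through with uniform constants. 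The other ingredients---Hanson--Wright concentration, polynomialization, and fluctuation averaging---are by now fairly mechanical within the resolvent framework, although each involves nontrivial bookkeeping of error graphs.
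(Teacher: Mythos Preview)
The paper does not provide its own proof of this lemma: it is stated as a direct citation of Theorems~3.6 and~3.7 of \cite{MR3704770} (Knowles--Yin), with no proof environment following it. Your outline is a faithful sketch of the strategy actually used in that reference---Schur complement self-consistent equations, large-deviation concentration of the quadratic forms, stability of \eqref{eq_defnmc} using the regularity conditions \eqref{eq:edgeregular}--\eqref{eq:bulkregular}, a continuity bootstrap in $\eta$, polynomialization to pass from entrywise to anisotropic estimates, and fluctuation averaging for \eqref{eq:aver}---so there is nothing substantive to compare against here.

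One minor correction: the vanishing third-moment hypothesis in Assumption~\ref{main_assumption}(ii) is \emph{not} used in the proof of the local law in \cite{MR3704770}; that condition enters only later in the present paper's Green's function comparison argument (Lemma~\ref{nanzi}). The large-deviation step for the quadratic forms works under the moment assumption \eqref{eq_momentassumption} alone.
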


Similar to \eqref{eq_orginallinearzation} and \eqref{eq_Pigz}, we define the (linearized) resolvent $\wt\sigG$ for the spiked model and the corresponding deterministic limit as
\begin{equation}\label{eq_resol}
\wt\sigG(z):=
\begin{pmatrix}
-\Sigma^{-1} & X \\
X^\top & -zI_n
\end{pmatrix}^{-1},\quad \wt\Pi_{\sigG}(z):=
\begin{pmatrix}
-\Sigma (1+m(z) \Sigma)^{-1} & 0\\
0 & m(z)I_n
\end{pmatrix}.
\end{equation}

\begin{lemma}[Theorem C.4 of \cite{AOIT_2020}]\label{lem_standardaniso0}
Suppose Assumption \ref{main_assumption} holds and $c_n\ge 1+\tau$. Then, the following {\bf anisotropic local law} holds for $\sigG$ and $\wt{\sigG}$ uniformly in $z=E+\ii \eta\in \mathbf D_0$:
\begin{equation}\label{eq:aniso0}
\left| \mathbf{u}^\top \sigG(z) \mathbf{v}-\mathbf{u}^\top \Pi_{\sigG}(z) \mathbf{v} \right| \prec n^{-1/2},\quad \big| \mathbf{u}^\top \wt\sigG(z) \mathbf{v}-\mathbf{u}^\top \wt\Pi_{\sigG}(z) \mathbf{v} \big| \prec n^{-1/2}.
\end{equation}
\end{lemma}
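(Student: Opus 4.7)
The plan is to reduce both estimates to the existing anisotropic local law \eqref{eq:aniso_out} outside the support, extend the bound to the near-origin domain $\mathbf{D}_0$ by a holomorphicity/maximum modulus argument, and handle the spiked case $\wt{\sigG}$ via a low-rank perturbation of the non-spiked one.

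First, I would exploit the fact that when $c_n \geq 1+\tau$, zero is separated from $\supp \varrho$ by an order-one gap. Indeed, \eqref{eq:lambda-} combined with item (iii) of \Cref{main_assumption} gives $\lambda_- \geq \tau_1(1-\sqrt{c_n^{-1}})^2 \gtrsim 1$, so $\supp \varrho \cap \mathbb{R}_+$ is bounded away from $0$ independently of $n$. Consequently, on the boundary circle $|z|=(\log n)^{-1}$ of $\mathbf{D}_0$ one has $\kappa := \dist(z, \supp \varrho) \asymp 1$ and $\eta \leq (\log n)^{-1}$, so \eqref{eq:aniso_out} immediately yields $|\bu^\top \sigG(z) \bv - \bu^\top \Pi_\sigG(z) \bv| \prec n^{-1/2}$ uniformly on this circle.

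Second, I would extend this boundary estimate to the whole of $\mathbf{D}_0$ by noting that $\bu^\top \sigG(z) \bv - \bu^\top \Pi_\sigG(z) \bv$ is holomorphic inside $\mathbf{D}_0$ with high probability. Holomorphicity of $\Pi_\sigG(z)$ is automatic since $m(z)$ is holomorphic off $\supp \varrho$. For $\sigG(z)$ itself, holomorphicity on $\mathbf{D}_0$ is equivalent to $\sigH(z)$ being invertible there, which in turn follows from a lower bound $\sigma_{\min}(X) \gtrsim 1$ on the smallest singular value of $X$ (available w.h.p.\ when $c_n \geq 1+\tau$ by standard Rudelson--Vershynin type estimates) combined with $\Sigma_0 \geq \tau_1 I$. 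The maximum modulus principle then transfers the boundary bound to the interior of $\mathbf{D}_0$.

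Finally, for the spiked resolvent $\wt\sigG(z)$, the linearization differs from $\sigH(z)$ by a rank-$r$ perturbation supported in the upper-left block: $\wt{\sigH}(z) - \sigH(z)= -(\Sigma^{-1}-\Sigma_0^{-1}) \oplus 0$. Writing $\Sigma^{-1}-\Sigma_0^{-1} = A B A^\top$ with $A \in \mathbb{R}^{p \times r}$ and $B \in \mathbb{R}^{r \times r}$, the Woodbury identity expresses $\wt{\sigG} - \sigG$ and $\wt{\Pi}_\sigG - \Pi_\sigG$ in terms of generalized entries of $\sigG$ and of $\Pi_\sigG$ evaluated on the columns of $A$; controlling each such difference by the non-spiked bound then yields the desired $\OO_\prec(n^{-1/2})$ rate for $\wt{\sigG}$. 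The main technical obstacle is to verify the no-pole property on $\mathbf{D}_0$ for $\wt{\sigG}$, i.e., to rule out spike-induced sample eigenvalues near the origin: this uses the supercritical assumption \eqref{eq_outlierassumption}, which ensures that the spikes produce outlier eigenvalues only to the right of $\lambda_+$ and hence cannot push a sample eigenvalue into a neighborhood of $0$.
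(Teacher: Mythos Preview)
The paper does not supply its own argument for this lemma; it simply invokes Theorem~C.4 of \cite{AOIT_2020}, so there is no in-paper proof to compare your proposal against. Your outline must therefore stand on its own, and it has a genuine gap in the first two steps.

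The problem is a domain mismatch. You claim that on the circle $|z|=(\log n)^{-1}$ one has $\kappa\asymp 1$ and hence \eqref{eq:aniso_out} applies. But the domain $\mathbf{D}_o(\tau,n)$, as defined just above \Cref{lem_standardaniso}, requires $z\in\mathbb{C}_+$ with $\tau\le E\le\tau^{-1}$ for a \emph{fixed} positive constant $\tau$. The boundary circle $\partial\mathbf{D}_0$ lies entirely in the region $|E|\le(\log n)^{-1}$, so none of its points satisfy $E\ge\tau$; half of them have $\eta\le 0$ as well. Consequently \eqref{eq:aniso_out} is not available on $\partial\mathbf{D}_0$, and your maximum-modulus transfer has no input. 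Enlarging the contour does not help: any closed curve enclosing $\mathbf{D}_0$ must pass through the strip $E<\tau$, which neither $\mathbf{D}$ nor $\mathbf{D}_o$ covers. The conjugation symmetry $\sigG(\bar z)=\overline{\sigG(z)}$ handles the lower half-plane, but it cannot manufacture the missing constraint $E\ge\tau$.

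This is exactly why a separate statement is cited: the near-origin regime (where, by \eqref{eq:mzero}, $m(z)$ has its own asymptotics) is not reachable from the local laws stated for $\mathbf{D}$ and $\mathbf{D}_o$ by a soft analyticity argument; it requires a direct proof. Your third step---treating $\wt\sigG$ via a rank-$r$ Woodbury correction of $\sigG$---is a sound strategy once the non-spiked bound on $\mathbf{D}_0$ is available, but it inherits the gap from the first two steps.
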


\begin{remark}\label{rmk_keymodification}
In the proofs, it is slightly more convenient to use the following version of Lemma \ref{lem_standardaniso}. Recall the eigenmatrix $V \in \mathbb{R}^{p \times p}$ defined in (\ref{eq_defnsigma0}). Let 
\begin{equation*}
\mathbf{V}:=
\begin{pmatrix}
V & 0 \\
0 & I_n 
\end{pmatrix}\in \mathbb{R}^{(n+p)\times(n+p)}.
\end{equation*}
We then rotate $\sigH$ and $\sigG$ in (\ref{eq_orginallinearzation}) as 
\begin{equation}\label{eq_defnfinalG}
H(z):=\mathbf{V}^\top \sigH(z) \mathbf{V},  \quad  G(z):=H(z)^{-1}.
\end{equation}
Under this definition, with \eqref{eq_gzschurcomplement}, we have that 
\be\label{aniso}
H=\begin{pmatrix}
 -\Lambda_0^{-1}&  V^\top X\\
 X^\top V & -z
 \end{pmatrix}, \quad 
 G=\begin{pmatrix}
z \Lambda_0^{1/2} \mathcal{G}_1 \Lambda_0^{1/2} & \Lambda_0 V^\top X \mathcal{G}_2 \\
\mathcal{G}_2X^\top V \Lambda_0  & \mathcal{G}_2
\end{pmatrix}.
 \ee
where the resolvents $\cal G_1$ and $\cal G_2$ are defined as
\begin{equation}\label{eq_mathcalG1mathcalG2}
\mathcal{G}_1(z):=(\Lambda_0^{1/2} V^\top X X^\top V \Lambda_0^{1/2}-z)^{-1}, \quad  \mathcal{G}_2(z):=\mathsf{G}_2(z).
\end{equation}
In this case, it is easy to see that Lemmas \ref{lem_standardaniso} and \ref{lem_standardaniso0} hold for $G(z)$ if we replace $\Pi_{\sigG}(z)$ by 
\begin{equation}\label{eq:defnPi}
\Pi(z)=
\begin{pmatrix}
-\Lambda_0 (1+m(z) \Lambda_0)^{-1} & 0\\
0 & m(z)
\end{pmatrix}.
\end{equation}
The main advantage of $\Pi$ over $\Pi_{\sigG}$ is that $\Pi$ is a diagonal matrix. Similarly, we can define $\wt{\cal G}_1$, $\wt{\cal G}_2$, $\wt G$, and $\wt \Pi$ by replacing $\Lambda_0$ with $\Lambda$ in the above definitions. 
\end{remark}

The following lemma outlines some basic estimates regarding $m(z)$ and $\im m(z)$. Notably, it shows that the entries of $\Pi$ in \eqref{eq:defnPi} are of order 1. These estimates will be used tacitly in the subsequent proofs.

\begin{lemma}\label{remark_controlmplaw}
Suppose items (i) and (iii) of Assumption \ref{main_assumption} hold. Then, for any small constant $c>0$, the following estimates hold uniformly for $z$ satisfying $c \le |z|\le c^{-1}$:
\begin{equation}\label{eq:Imm}
|m(z)|\asymp 1, \quad \im m(z) \asymp 
\begin{cases}
\sqrt{\kappa+\eta}, \ & \text{if} \ E \in  \supp(\varrho) \\
\frac{\eta}{\sqrt{\kappa+\eta}},\  & \text{if} \ E \notin  \supp(\varrho)
\end{cases},
\end{equation}
\be\label{eq:denominator}
\min_{j=1}^p|1+m(z)\sigma_j|\gtrsim 1.
\ee
For sufficiently small constant $c > 0$, the following estimate holds uniformly for $z$ satisfying $|z|\le c$:
\be\label{eq:mzero}
|zm(z)| \asymp \begin{cases} 1, & \ \text{if} \ c_n\le 1-\tau \\ |z|, & \ \text{if} \ c_n\ge 1+\tau \end{cases}.
\ee
\end{lemma}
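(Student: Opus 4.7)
The estimates in \Cref{remark_controlmplaw} are standard consequences of the self-consistent equation \eqref{eq_defnmc}, the structure of $h$ in \eqref{eq_defnf}, and the regularity assumptions of \Cref{main_assumption}; closely related statements appear in \cite[Section 2 and Appendix A]{MR3704770}. My plan is to organize the four claims around a single reduction to $z=h(m)$ and the geometry of its level sets.

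For $|m(z)|\asymp 1$ and the separation bound \eqref{eq:denominator}, I first observe that \eqref{eq:lambda-} together with $a_k\ge\tau_1$ in \eqref{eq:edgeregular} confines $\supp\varrho\subset[\tau',\tau'^{-1}]$ for some $\tau'=\tau'(\tau,\tau_1)>0$, so the upper bound $|m(z)|\lesssim 1$ for $c\le|z|\le c^{-1}$ is immediate from $m(z)=\int(x-z)^{-1}\varrho(\dd x)$ combined with the fact that $\varrho$ has a bounded density (or only an integrable square-root singularity) near each edge. For \eqref{eq:denominator}, the key input is $\min_i|b_k+\sigma_i^{-1}|\ge\tau_1$ from \eqref{eq:edgeregular}: near each edge $a_k$, the Taylor expansion $z-a_k=\tfrac12 h''(b_k)(m-b_k)^2+\oo((m-b_k)^2)$, valid because $h'(b_k)=0$ and $h''(b_k)\asymp 1$ by edge regularity, gives $m(z)-b_k=\OO(\sqrt{|z-a_k|})$, whence $|m(z)+\sigma_i^{-1}|\gtrsim \tau_1-\OO(\sqrt{|z-a_k|})\gtrsim 1$ in a neighborhood of $a_k$. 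Away from the edges, a continuity/compactness argument using that $m(z)\ne-\sigma_i^{-1}$ on $\C_+\cup(\R_+\setminus\supp\varrho)$ (since $h(-\sigma_i^{-1})$ diverges) extends the bound to the whole range $c\le|z|\le c^{-1}$. The matching lower bound $|m(z)|\gtrsim 1$ then follows by rewriting $-1/m(z)=z-n^{-1}\sum_i(m(z)+\sigma_i^{-1})^{-1}=\OO(1)$ via \eqref{eq:denominator}, so $|m(z)|\gtrsim 1$ whenever $|z|\le c^{-1}$.

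The $\im m$ asymptotics in \eqref{eq:Imm} also follow from the same edge Taylor expansion (choosing the branch so that $\im m\ge 0$ on $\C_+$), which produces $\im m\asymp\sqrt{\kappa+\eta}$ inside the support and $\im m\asymp\eta/\sqrt{\kappa+\eta}$ outside, in a neighborhood of each edge. Inside the bulk, the regularity \eqref{eq:bulkregular} gives $\im m(E+\ii 0^+)=\pi\varrho(E)\asymp 1$, which matches $\sqrt{\kappa+\eta}\asymp 1$ since $\kappa\asymp 1$ there, and the bound extends to all $\eta\le\tau^{-1}$ via the Poisson-integral representation of $\im m$ together with the just-established bulk bound.

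Finally, \eqref{eq:mzero} reflects the atomic structure of the limiting ESD of $\mathcal Q_2=X^\top\Sigma_0 X$ at $0$. When $c_n\le 1-\tau$, $\mathcal Q_2$ has $n-p\ge\tau n$ zero eigenvalues, so $\varrho$ carries an atom of mass $1-c_n\asymp 1$ at the origin and $m(z)=-(1-c_n)/z+\OO(1)$ for small $|z|$, yielding $|zm(z)|\asymp 1$. When $c_n\ge 1+\tau$, $\mathcal Q_2$ is full rank w.h.p.~and $\supp\varrho$ is separated from $0$ by $\lambda_-\ge\sigma_p(1-\sqrt{c_n^{-1}})^2\gtrsim 1$, so $m$ extends analytically across $0$ with $|m(0)|\asymp 1$ (the lower bound again extracted from $z=h(m)$ at $z=0$), yielding $|zm(z)|\asymp|z|$. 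The main technical point to handle carefully is the uniformity of \eqref{eq:denominator} for $z$ that is neither near a spectral edge nor in the interior of the bulk; I expect this to follow from the global bijectivity of the map $m\leftrightarrow z$ established in \cite[Appendix A]{MR3704770}.
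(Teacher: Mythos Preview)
Your sketch is essentially correct and is considerably more detailed than what the paper actually does: the paper's proof is a pure citation, pointing to Lemma~4.10 and equation~(A.7) of \cite{MR3704770} for \eqref{eq:Imm}, to Lemma~A.6 of \cite{ding2018} for \eqref{eq:denominator}, and giving for \eqref{eq:mzero} exactly the atom-at-zero argument you wrote. So for \eqref{eq:mzero} you match the paper; for the rest you are reconstructing the content of those references rather than taking a different route.

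Two small points where your reconstruction should be tightened. First, when $c_n\le 1-\tau$ the measure $\varrho$ carries the atom $(1-c_n)\delta_0$, so your claim $\supp\varrho\subset[\tau',\tau'^{-1}]$ is false as stated; you must split off the $-(1-c_n)/z$ contribution (harmless since $|z|\ge c$) before bounding the Stieltjes integral of the continuous part. In fact the upper bound $|m(z)|\lesssim 1$ is obtained more cleanly directly from $z=h(m)$: if $|m|$ were large then every term in $h(m)=-1/m+n^{-1}\sum_i(m+\sigma_i^{-1})^{-1}$ would be small, contradicting $|z|\ge c$. Second, your continuity/compactness step for \eqref{eq:denominator} away from the edges must be made uniform in $n$, since both $\{\sigma_i\}$ and $m$ depend on $n$; the way \cite{MR3704770} and \cite{ding2018} handle this is again through the self-consistent equation (taking imaginary parts when $E\in\supp\varrho$ and using monotonicity of $h$ on the real preimages when $E\notin\supp\varrho$), rather than a soft compactness argument. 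With those fixes your outline is a faithful proof.
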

\begin{proof}
The estimates in \eqref{eq:Imm} are proved in Lemma 4.10 and equation (A.7) of \cite{MR3704770}. The estimate \eqref{eq:denominator} is proved in Lemma A.6 of \cite{ding2018}. Finally, the estimate \eqref{eq:mzero} follows from the fact that the deformed MP law has support within $[\lambda_-,\lambda_+]$ with a delta mass $(1-c_n)\delta_0$ when $c_n<1$. 
\end{proof}

As an important consequence of the averaged local laws in Lemma \ref{lem_standardaniso}, we have the following rigidity estimate of the eigenvalues $\lambda_j\equiv \lambda_j(\cal Q_2)$.


\begin{lemma}[Theorem 3.12 of \cite{MR3704770}] \label{lem_rigidity}
Under the assumptions of  \Cref{lem_standardaniso},  for $\gamma_j \in [a_{2k},a_{2k-1}]$, we have 
\begin{equation}\label{rigidity2} 
| \lambda_j - \gamma_j| \prec [(n_{k}+1-j)\wedge (j-n_{k-1})]^{-1/3}n^{-{2}/{3}},
\end{equation}
where we recall the notations in \eqref{Nk} and \Cref{defn_generaleigenvaluelocation}.
\end{lemma}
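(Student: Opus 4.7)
The plan is to derive the rigidity estimate from the averaged local law \eqref{eq:aver}, using the standard Stieltjes-transform-inversion mechanism that underlies the cited \cite[Theorem 3.12]{MR3704770}. The scheme has three conceptual steps, and the main effort lies in correctly handling the multi-component support \eqref{eq_supportdmp} of $\varrho$.

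First, I would convert the pointwise control $|\sm_n(z)-m(z)|\prec (n\eta)^{-1}$ into control of the eigenvalue counting function. Set $\mathfrak n(E):=\#\{j:\lambda_j\ge E\}$ and $N(E):=n\int_E^{\infty}\varrho(x)\dd x$. Using the Helffer--Sj\"ostrand representation of a smoothed indicator $\chi_E$ approximating $\mathbf 1_{[E,\infty)}$, one writes $\int \chi_E \dd(\mu_{\mathcal Q_2}-\varrho)$ as a contour integral over $z=x+\ri y$ in terms of $\sm_n(z)-m(z)$. Splitting this integral into the regime $|y|\ge n^{-1+\tau}$ (where \eqref{eq:aver} directly applies) and the small-$y$ regime (handled by the Lipschitz bound on $\im m$ from \eqref{eq:Imm}), and using \eqref{eq:edgeregular} and \eqref{eq:bulkregular} to control the density near the integration region, yields
\[
|\mathfrak n(E)-N(E)|\prec 1
\]
uniformly for $E$ in a neighborhood of $\supp\varrho$.

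Second, I would invert this counting estimate to recover individual eigenvalue rigidity. For $\gamma_j$ at distance $\kappa$ from the nearest edge $a_k$, the edge-regularity condition \eqref{eq:edgeregular} ensures the square-root behavior $\varrho(x)\asymp \sqrt{\mathrm{dist}(x,a_k)}$, while the bulk-regularity condition \eqref{eq:bulkregular} gives $\varrho(x)\gtrsim \varsigma$ away from edges. Translating the unit error in counting through $N'(x)=n\varrho(x)$ then produces the bound
\[
|\lambda_j-\gamma_j|\prec \frac{1}{n\varrho(\gamma_j)}\asymp \bigl[(n_k+1-j)\wedge(j-n_{k-1})\bigr]^{-1/3}n^{-2/3},
\]
which is exactly \eqref{rigidity2}; the index $(n_k+1-j)\wedge(j-n_{k-1})$ measures the distance of $j$ to the nearest spectral edge within the $k$-th bulk component.

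The main obstacle, and the source of the extra work beyond the standard one-cut case, is the multi-bulk structure \eqref{eq_supportdmp}: the counting estimate and its inversion must be carried out locally around each connected component $[a_{2k},a_{2k-1}]$, with no mass leaking between components. This is precisely what the edge-separation and density lower bounds in \eqref{eq:edgeregular}--\eqref{eq:bulkregular} rule out, together with the improved anisotropic law \eqref{eq:aniso_out} outside the support, which gives the gap statement that no $\lambda_j$ lies in an intra-spectral gap $(a_{2k},a_{2k-1})^c$. Once localization is in place, the argument within each component reduces to the classical one-cut rigidity proof. Since the statement is quoted verbatim from \cite[Theorem 3.12]{MR3704770}, the detailed execution can be imported directly.
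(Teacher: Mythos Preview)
Your proposal is correct and matches the paper's treatment: the paper gives no proof at all but simply cites \cite[Theorem~3.12]{MR3704770}, and the mechanism you outline (Helffer--Sj\"ostrand inversion of the averaged local law \eqref{eq:aver} to control the counting function, then inverting through the square-root edge behavior guaranteed by \eqref{eq:edgeregular}--\eqref{eq:bulkregular}) is precisely the standard argument underlying that cited theorem. Your closing remark that the detailed execution can be imported directly is exactly what the paper does.
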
 


\subsection{Eigenvalues and eigenvectors of the spiked covariance model} 

In this subsection, we present a collection of useful results regarding the eigenvalues and eigenvectors of the spiked model $ \widetilde{\mathcal{Q}}_1$ in (\ref{eq_samplecovariancematrixdefinition}). Recall that \smash{$\wt\lambda_i$} and $\lambda_i$ represent the eigenvalues of \smash{$\widetilde{\mathcal{Q}}_1$} and $\cal Q_1$, respectively, while $\fa_i$ and $\fb_i$ are defined in (\ref{eq_importantdefinition}). Furthermore, we emphasize that the ${\bm{u}_i}$ mentioned in Lemmas \ref{lemma_spikedlocation} and \ref{lem_bulkproperty} below refers to the eigenvectors of \smash{$\widetilde{\mathcal{Q}}_1$}, rather than $\mathcal{Q}_1$.
First, \Cref{lemma_spikedlocation} gives the first-order limits of the outlier eigenvalues and eigenvectors, along with nearly optimal convergence rates.



\begin{lemma}[Theorems 3.2 and 3.3 of \cite{DRMTA}]\label{lemma_spikedlocation} Suppose Assumption \ref{main_assumption} holds. For any $ i \in \qq{r}$, we have that
\begin{equation}\label{eq:outlier_location}
  \left|\wt\lambda_i-\mathfrak{a}_i\right|  \prec n^{-1/2}.
\end{equation}
Moreover, for the corresponding outlier eigenvectors, we have that for any $i,j\in \qq{r}$,
\begin{equation}\label{eq:outlier_proj}
\left| \langle \bm{u}_i, \bm{v}_j \rangle \right|^2- \frac{\mathfrak{b}_i }{\widetilde{\sigma}_i} \delta_{ij} \prec n^{-1/2}. 
\end{equation} 
\end{lemma}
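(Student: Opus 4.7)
The plan is to use the anisotropic local law of Lemma \ref{lem_standardaniso} as the master input, together with two classical devices: a determinantal / secular equation for the outlier eigenvalues, and a Cauchy contour integral for the eigenvector projections. The key structural fact is that in the rotated basis by $V$, the linearized spiked matrix differs from the non-spiked one only by the rank-$r$ diagonal perturbation
\begin{equation*}
\wt H(z) = H(z) + M,\qquad M = \mathrm{diag}\bigl(\sigma_i^{-1}-\wt\sigma_i^{-1}\bigr)_{i=1}^{r}\oplus 0_{(p+n-r)\times(p+n-r)},
\end{equation*}
so that $\det \wt H(z) = \det H(z)\cdot \det(I+G(z)M)$. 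By rigidity (Lemma \ref{lem_rigidity}) and the supercriticality assumption \eqref{eq_outlierassumption}, the candidate $\fa_i = h(-\wt\sigma_i^{-1})$ sits at distance $\asymp 1$ from $\supp \varrho$, hence $\det H(z)$ stays bounded away from $0$ w.h.p.\ on a fixed neighborhood of $\fa_i$. Consequently, the outliers $\wt\lambda_i$ are the zeros of the $r\times r$ secular equation $\det\!\bigl(I_r + G_{[r]}(z)\,\mathrm{diag}(\sigma_i^{-1}-\wt\sigma_i^{-1})\bigr)=0$, where $G_{[r]}$ is the upper-left $r\times r$ block of $G$.

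For the eigenvalue locations, I would replace $G_{[r]}(z)$ by the deterministic limit $-\mathrm{diag}\bigl(\sigma_i/(1+m(z)\sigma_i)\bigr)_{i=1}^r$, using \eqref{eq:aniso_out} with $\kappa\asymp 1$, which gives an $\OO_\prec(n^{-1/2})$ anisotropic error uniformly on a fixed neighborhood of $\fa_i$. Direct simplification reduces the deterministic secular equation to
\begin{equation*}
\prod_{i=1}^{r}\sigma_i\cdot\frac{m(z)+\wt\sigma_i^{-1}}{1+m(z)\sigma_i} = 0,
\end{equation*}
whose $r$ distinct roots (since the $\wt\sigma_i$ are distinct) are exactly the points $z$ with $m(z)=-\wt\sigma_i^{-1}$, i.e.\ $z = h(-\wt\sigma_i^{-1}) = \fa_i$. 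A standard implicit-function argument, using that $|h'(-\wt\sigma_i^{-1})|\asymp 1$ under \eqref{eq_outlierassumption}, then promotes the $\OO_\prec(n^{-1/2})$ error in the local law into $|\wt\lambda_i - \fa_i|\prec n^{-1/2}$.

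For the eigenvector part, given that $\wt\lambda_i$ is already known to be separated from all other eigenvalues (using the first step for the outliers and eigenvalue sticking / rigidity for the bulk), I would invoke the spectral projector identity
\begin{equation*}
|\langle \bm u_i,\bm v_j\rangle|^2 \;=\; -\frac{1}{2\pi\ii}\oint_{\Gamma_i}\bm v_j^\top\,\wt{\mathsf G}_1(z)\,\bm v_j\,\dd z,
\end{equation*}
with $\Gamma_i$ a counterclockwise circle of fixed radius around $\fa_i$ enclosing only $\wt\lambda_i$ (w.h.p.). The Schur block identity \eqref{aniso} (applied to the spiked model) gives $\bm v_j^\top \wt{\mathsf G}_1(z)\bm v_j = \wt G_{jj}(z)/(z\wt\sigma_j)$, and the anisotropic law for $\wt G$ (the analogue of Lemma \ref{lem_standardaniso} with $\Sigma_0$ replaced by $\Sigma$) gives $\bm v_j^\top \wt{\mathsf G}_1(z)\bm v_j = -[z(1+m(z)\wt\sigma_j)]^{-1}+\OO_\prec(n^{-1/2})$ uniformly on $\Gamma_i$. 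Residue calculus at $z=\fa_i$, using the differential identity $m'(\fa_i) = 1/h'(-\wt\sigma_i^{-1})$ obtained from $z=h(m(z))$ and the definition $\fb_i = h'(-\wt\sigma_i^{-1})/\fa_i$, yields residue $\fb_i/\wt\sigma_i$ when $j=i$. When $j\neq i$, distinctness of spikes makes $1+m(\fa_i)\wt\sigma_j = 1-\wt\sigma_j/\wt\sigma_i\neq 0$, so the integrand is analytic in $\Gamma_i$ and the contour integral vanishes to leading order; the $\OO_\prec(n^{-1/2})$ error is preserved upon integration over the fixed-length contour.

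The main technical obstacle I expect is transferring the anisotropic law from the non-spiked $G$ to the spiked $\wt G$ with the optimal $n^{-1/2}$ rate. I plan to handle this by Woodbury on the rank-$r$ perturbation $\wt H = H + M$, writing $\wt G = G - GM(I+GM)^{-1}G$, controlling $(I+GM)^{-1}$ away from the candidate outliers by the very same secular determinant analyzed above, and then feeding the generalized-entry bound \eqref{eq:aniso_out} into each factor. A secondary issue is excluding extra eigenvalues from the contour $\Gamma_i$; this follows from the eigenvalue location in the first part together with the rigidity of non-outlier eigenvalues.
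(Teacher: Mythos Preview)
Your proposal is correct and follows the standard approach of the cited reference \cite{DRMTA}: the secular-equation analysis of the rank-$r$ perturbation for the outlier locations, combined with the contour-integral representation and Woodbury expansion for the eigenvector projections, both fed by the outside-spectrum local law \eqref{eq:aniso_out}. The paper itself does not give a proof (it only cites \cite{DRMTA}), but your sketch is fully consistent with the machinery the paper deploys in the proof of the adjacent Lemma~\ref{lem_bulkproperty}, in particular the contour integral \eqref{eq:uij_perturb}--\eqref{eq:outuij} and the Woodbury-based representation there.
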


Next, we state some estimates for the non-outlier eigenvalues and eigenvectors. 
\begin{lemma}\label{lem_bulkproperty}
Suppose Assumption \ref{main_assumption} holds. For $ i \in \qq{\sfK-r}$, we have the eigenvalue sticking estimate:
\begin{equation}\label{eq_eigenvaluesticking}
|\wt\lambda_{i+r}-\lambda_i|\prec n^{-1}, 
\end{equation}
and the eigenvector delocalization estimate: 
\begin{equation}\label{eq_eigenvectordelocalization}
\left| \langle \bm{u}_{i+r}, \bv \rangle \right|^2 \prec n^{-1}
\end{equation}
for any deterministic unit vector $\bv\in \R^p$. When $c_n>1$, we have that
\begin{equation}\label{eq_eigenvectordelocalization_at0}
\|U_0^\top \bv \|_2^2 \prec 1.
\end{equation}
Finally, for $i \in \qq{r}$ and $j \in \qq{r+1, p},$ we have that 
\begin{equation}\label{eq_intermediateresult}
\left| \langle \bm{u}_i, \bm{v}_j \rangle\right|^2- \frac{\mathfrak{c}_i}{\sigma_j} G_{ij}(\mathfrak{a}_i) G_{ji}(\mathfrak{a}_i)   \prec n^{-3/2},\quad \text{with}\quad \mathfrak{c}_i:= \frac{d_i^2\mathfrak{b}_i}{\widetilde{\sigma}_i^2} .
\end{equation}


\end{lemma}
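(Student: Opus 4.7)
Working in the rotated basis of \Cref{rmk_keymodification}, the difference $\wt H(z) - H(z) = P_r C P_r^\top$ is a rank-$r$ perturbation, where $P_r \in \R^{(p+n)\times r}$ collects the first $r$ standard basis vectors of $\R^{p+n}$ and $C := \diag(c_1,\ldots,c_r)$ with $c_i := \sigma_i^{-1}-\wt\sigma_i^{-1} = d_i/[(1+d_i)\sigma_i]$. The Woodbury identity then gives
\begin{equation*}
\wt G(z) \;=\; G(z) \;-\; G(z)\,P_r\,A(z)^{-1}\,P_r^\top G(z), \qquad A(z) \;:=\; C^{-1} + P_r^\top G(z) P_r,
\end{equation*}
which is the main tool for transferring anisotropic estimates from $G$ to $\wt G$. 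Analogous formulas in the $p \times p$ block give a similar Woodbury identity for $\wt{\mathsf G}_1(z)$ in terms of $\mathsf G_1(z)$.

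\textbf{The three routine parts.} For the sticking bound \eqref{eq_eigenvaluesticking}, the outside-spectrum local law \eqref{eq:aniso_out} yields $\|A(z)^{-1}\| = \OO(1)$ at every bulk point of the spectrum of $\cal Q_1$ away from the singular set $\{z : m(z) = -\wt\sigma_i^{-1}\}_{i=1}^r$, so the non-outlier poles of $\wt G$ coincide with those of $G$ up to the $\OO_\prec(n^{-1})$ shift produced by the rank-$r$ correction, exactly as in the master-equation proofs of \cite{MR3704770, DRMTA}. The delocalization bound \eqref{eq_eigenvectordelocalization} follows from the standard spectral estimate $|\avg{\bm u_{i+r},\bv}|^2 \le \eta\,\im \bv^\top \wt{\mathsf G}_1(\wt\lambda_{i+r}+\ii\eta)\bv$ at $\eta = n^{-1+\tau}$, combined with the anisotropic local law for $\wt{\mathsf G}_1$ obtained from the Woodbury expansion. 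For \eqref{eq_eigenvectordelocalization_at0} with $c_n>1$, the spectral decomposition of $\wt{\mathsf G}_1$ near $z=0$ gives $\bv^\top \wt{\mathsf G}_1(z) \bv = -z^{-1}\|U_0^\top\bv\|_2^2 + R(z)$ with $R$ regular at $0$, so the matching with the deterministic limit from \eqref{eq:aniso0} forces $\|U_0^\top \bv\|_2^2 = \bv^\top(1+m(0)\Sigma)^{-1}\bv + \OO_\prec(n^{-1/2}) = \OO_\prec(1)$, using \eqref{eq:denominator}.

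\textbf{The key estimate \eqref{eq_intermediateresult}.} Writing $\wt\bu_k := V^\top \bm u_k$, we have $\avg{\bm u_i,\bm v_j} = (\wt\bu_i)_j$. Cauchy's formula combined with the block representation \eqref{aniso} yields
\begin{equation*}
|(\wt\bu_i)_j|^2 \;=\; -\frac{1}{2\pi\ii}\oint_{\Gamma_i}\frac{[\wt G(z)]_{jj}}{z\,\sigma_j}\,\dd z,
\end{equation*}
where $\Gamma_i$ is a circle of constant radius around $\fa_i$ enclosing only $\wt\lambda_i$ among the spectrum of $\wt{\cal Q}_1$ (legitimate by \eqref{eq:outlier_location} and item (iv) of \Cref{main_assumption}). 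Plugging the Woodbury expansion into the integral, the $G_{jj}$ contribution is analytic inside $\Gamma_i$ and vanishes. For the remaining $r \times r$ correction, \eqref{eq:aniso_out} gives $A(z)_{kk} = c_k^{-1} - \sigma_k/(1+m(z)\sigma_k) + \OO_\prec(n^{-1/2})$ and $A(z)_{kl} = \OO_\prec(n^{-1/2})$ for $k \neq l$, uniformly on $\Gamma_i$. Only the $(i,i)$ entry of the limit vanishes at $z=\fa_i$, with a simple zero since $m(\fa_i) = -\wt\sigma_i^{-1}$, while the other diagonals remain bounded away from $0$ by the distinct-spike assumption. A Schur-complement expansion then gives $(A(z)^{-1})_{ii} = A_{ii}(z)^{-1} + \OO_\prec(n^{-1})$ and $(A(z)^{-1})_{kl} = \OO_\prec(n^{-1/2})$ for $(k,l)\ne(i,i)$ on $\Gamma_i$. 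The only residue inside $\Gamma_i$ comes from $G_{ji}(z)A_{ii}(z)^{-1}G_{ij}(z)$; evaluating it using
\begin{equation*}
\partial_z A_{ii}(z)\big|_{z=\fa_i} \;=\; \frac{\sigma_i^2\,m'(\fa_i)}{(1+m(\fa_i)\sigma_i)^2} \;=\; \frac{\wt\sigma_i^2}{d_i^2\,h'(-\wt\sigma_i^{-1})} \;=\; \frac{\wt\sigma_i^2}{d_i^2\,\fa_i\,\fb_i}
\end{equation*}
reproduces the leading coefficient $\fc_i/\sigma_j = d_i^2\fb_i/(\sigma_j\wt\sigma_i^2)$.

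\textbf{Main obstacle.} The delicate point is obtaining the $\OO_\prec(n^{-3/2})$ remainder rather than the naive $\OO_\prec(n^{-1})$, which matches the size of the leading term itself since $G_{ij}(\fa_i)G_{ji}(\fa_i) \prec n^{-1}$ for $i\ne j$. The extra factor of $n^{-1/2}$ must be extracted from two sources: (a) the $\OO_\prec(n^{-1})$ diagonal correction to $A(z)^{-1}_{ii}$, which, when multiplied by the $\OO_\prec(n^{-1/2})$ anisotropic product $G_{ij}G_{ji}$ along $\Gamma_i$, contributes $\OO_\prec(n^{-3/2})$; and (b) the $\OO_\prec(n^{-1/2})$ off-diagonal entries of $A(z)^{-1}$, which combine with $G_{jk}G_{li}$ (for indices with $k,l \leq r$ and either $k \ne i$ or $l \ne i$), each factor of size $n^{-1/2}$, to produce the same order. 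Executing this requires a second-order Schur expansion of $A^{-1}$ together with uniform anisotropic control of the auxiliary resolvent entries along $\Gamma_i$, both supplied by \eqref{eq:aniso_out}.
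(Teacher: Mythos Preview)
Your proof is correct, and for the key estimate \eqref{eq_intermediateresult} it takes a genuinely different route from the paper. The paper introduces an auxiliary $\delta$-perturbation of the $j$-th population eigenvalue, which enlarges the rank-$r$ perturbation to an $(r+1)\times(r+1)$ one including the index $j$; it then writes $|\langle\bm u_i,\bm v_j\rangle|^2$ as the $\delta\to 0$ limit of a contour integral involving the $(r+1,r+1)$ entry of an $(r+1)\times(r+1)$ matrix inverse, Taylor-expands that inverse to third order around its deterministic limit, and shows the first two orders are holomorphic inside $\Gamma_i$ while the third order produces the leading term. Your approach is more direct: you keep $j$ as an external index, use the rank-$r$ Woodbury identity for $\wt G$ versus $G$, and extract the single residue of $G_{ji}A_{ii}^{-1}G_{ij}$ inside $\Gamma_i$ via the Schur complement. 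This avoids both the $\delta$-regularisation and the third-order expansion, at the cost of having to track that the random zero $z_i^*$ of $A_{ii}$ differs from $\fa_i$ by $\OO_\prec(n^{-1/2})$; replacing $G_{ji}(z_i^*)$, $A_{ii}'(z_i^*)$, $z_i^*$ by their values at $\fa_i$ then gives the stated leading term with the correct $\OO_\prec(n^{-3/2})$ remainder, exactly as your ``main obstacle'' paragraph indicates. One phrasing to sharpen: when you write ``the only residue inside $\Gamma_i$ comes from $G_{ji}A_{ii}^{-1}G_{ij}$'', note that the remainder $(A^{-1})_{ii}-A_{ii}^{-1}$ is \emph{not} analytic inside $\Gamma_i$ (its pole sits at the zero of $\det A$, not of $A_{ii}$); the point is rather that this remainder is $\OO_\prec(n^{-1})$ uniformly on $\Gamma_i$, so its contour integral against $G_{ji}G_{ij}$ is $\OO_\prec(n^{-2})$ by the trivial length bound, which is what you actually need.
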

\begin{proof}
The results (\ref{eq_eigenvaluesticking}) and (\ref{eq_eigenvectordelocalization}) have been proved in Theorems 3.7 and 3.14 of \cite{ding2021spiked} for $i \leq \tau p$ with $\tau\in (0,1)$ being a small constant. The reason for considering a subset of indices is that only the edge regularity condition around $\lambda_+=a_1$ was assumed in \cite{ding2021spiked}. However, with the stronger regularity conditions in \eqref{eq:edgeregular} and \eqref{eq:bulkregular}, we can extend (\ref{eq_eigenvaluesticking}) and (\ref{eq_eigenvectordelocalization}) to all $ i \in \qq{\sfK-r}$ following the proof in \cite{ding2021spiked}.
We omit the details. As for \eqref{eq_eigenvectordelocalization_at0}, we have the trivial bound
$${\|U_0^\top \bv \|_2^2}\le \eta\im [\bv^\top \wt{\sigG}_1(\ii \eta) \bv],$$
which follows from the spectral decomposition of $\wt{\sigG}_1$.
The RHS can be written as 
$$\bv^\top \wt{\sigG}_1(\ii \eta) \bv = \left(\bv^\top\Sigma^{-1/2},\mathbf 0_n^\top\right) \frac{\wt\sigG(\ii \eta)}{\ii \eta}\begin{pmatrix}
     \Sigma^{-1/2}\bv\\ \mathbf 0_n 
\end{pmatrix}.$$
Then, \eqref{eq_eigenvectordelocalization_at0} follows immediately from the local law on $\wt\sigG$ in \eqref{eq:aniso0} by taking $\eta=(\log n)^{-1}$.

It remains to prove (\ref{eq_intermediateresult}). The proof relies on a representation of $\left| \langle \bm{u}_i, \bm{v}_j \rangle\right|^2$ in terms of contour integrals of the resolvent $G$ (see e.g., \cite{bloemendal2016principal,bun2018optimal,DRMTA,ding2021spiked}).
We only sketch the proof since many arguments below are similar to those in \cite{bun2018optimal,DRMTA}. 
Let  $\{\bu_i\}_{i=1}^p=\{V^\top\bm{u}_i\}_{i=1}^p$ be the left singular eigenvectors of $\Lambda^{1/2}V^\top X$. 
Due to the relation
\begin{equation}\label{eq_keyconnection}
\avg{\bm{v}_j, \bm{u}_i}=(\bm{v}_j^\top V) \bu_i=\eb_j^\top \bu_i,  
\end{equation}
it suffices to focus on the entries $\bu_i(j)$ of $\bu_i$ in the following proof. 
Let $\Gamma_i$ denote the boundary of $B_\rho(\fa_i)$, which represents a disk of radius $\rho$ centered at \smash{$\fa_i$}. Here, we choose $\rho$ to be sufficiently small so that $B_{2\rho}(\fa_i)$ does not contain $\lambda_+$ or any other outlier $\fa_k$, $k\in \qq{r}\setminus\{i\}$. Then, by \eqref{eq:outlier_location} and item (iv) of Assumption \ref{main_assumption}, we know that with high probability, $\Gamma_i$ only encloses $\wt\lambda_i$ and no other eigenvalues of $G(z)$. By applying Cauchy's integral formula, we obtain that 
\be\label{eq:uij_perturb} |\bu_i(j)|^2=-\frac{1}{2\pi \ii}\oint_{\Gamma_i}(\wt{\cal G}_1)_{jj}(z)\dd z=-\lim_{\delta\downarrow 0} \frac{1}{2\pi \ii}\oint_{\Gamma_i}(\wt{\cal G}_1^\delta)_{jj}(z)\dd z,
\ee
where $\wt{\cal G}_1^\delta$ is defined by introducing a perturbation to the $j$-th eigenvalue of $\Lambda$: 
$$ \wt{\cal G}_1^\delta(z):=\big(\Lambda_\delta^{1/2} V^\top X X^\top V \Lambda_\delta^{1/2}-z\big)^{-1},\quad \text{with}\quad \Lambda_\delta:=\Lambda+\delta \wt\sigma_j \mathbf e_j \mathbf e_j^\top. $$
To compute the right-hand side (RHS) of \eqref{eq:uij_perturb}, we utilize the Woodbury matrix identity to express $\wt{\cal G}^\delta_1$ in terms of $\cal G_1$. Using the fact that $\cal G_1$ contains no pole inside $\Gamma_i$, we then obtain the following expression:
\begin{equation}\label{eq:outuij}
\langle \bu_i, \eb_j \rangle^2=\lim_{\delta \downarrow 0} \frac{1+\delta}{\delta^2} \frac{1}{2 \pi \ri} \oint_{\Gamma_i} \left[ \mathbf{D}_{r,j}(\delta)^{-1}+1+z\mathbf{E}_{r,j}^\top \mathcal{G}_1(z) \mathbf{E}_{r,j} \right]_{r+1,r+1}^{-1} \frac{\dd z}{z}. 
\end{equation}
where we introduce the matrices
\begin{equation*}
\mathbf{E}_{r,j}:=(\eb_1, \cdots, \eb_r,  \eb_{j}) \in \mathbb{R}^{p \times (r+1)}, \quad  \mathbf{D}_{r,j}(\delta):=\operatorname{diag}\left\{d_1, d_2, \cdots, d_r, \delta \right\} \in \mathbb{R}^{(r+1) \times (r+1)}.
\end{equation*}
For a detailed derivation of \eqref{eq:outuij}, we refer readers to Lemma 5.7 and equations (7.5) and (7.6) of \cite{DRMTA}.



Denote $\Lambda_{0,j}:=\operatorname{diag}\{\sigma_1, \cdots, \sigma_r, \sigma_j \}$, and  
\begin{equation}\label{eq_deltaz}
\Delta_j(z):=-\Upsilon_{j}(z)-z\mathbf{E}_{r,j}^\top \mathcal{G}_1(z) \mathbf{E}_{r,j}, \quad \Upsilon_{j}(z):=\left(1+m(z) \Lambda_{0,j}\right)^{-1}.
\end{equation}   
By the local law \eqref{eq:aniso_out} and the estimate on $\im m(z)$ in \eqref{eq:Imm}, we have that uniformly in $z\in \Gamma_i$,
\be\label{eq:bddDeltaj}
\left\|\Delta_j(z)\right\|\prec n^{-1/2}.
\ee
Denote $\Xi_j(z):=\mathbf{D}_{r,j}^{-1}+1-\Upsilon_j(z).$ By item (iv) of \Cref{main_assumption} and the choice of $\rho$, we have that $\|\Xi_j(z)^{-1}\|\lesssim 1$ uniformly in $z\in \Gamma_i$. Then, applying Taylor expansion to 
$$\left( \mathbf{D}_{r,j}^{-1}+1 +z\mathbf{E}_{r,j}^\top \mathcal{G}_1(z) \mathbf{E}_{r,j} \right)^{-1}=\left( \Xi_j(z)-\Delta_j(z) \right)^{-1}$$ 
till order three, we can expand \eqref{eq:outuij} as
\begin{equation*}
 \langle \bu_i, \eb_j \rangle^2=s_1+s_2+s_3+s_4, 
\end{equation*}   
where $s_i$, $i=1,2,3,4,$ are defined as 
\begin{align*}
& s_1:= \lim_{\delta \downarrow 0} \frac{1+\delta}{\delta^2} \frac{1}{2 \pi \ri} \oint_{\Gamma_i} \frac{1}{1+\delta^{-1}-(1+m(z) \sigma_j)^{-1}}  \frac{\dd z}{z}, \\
& s_2:=\lim_{\delta \downarrow 0} \frac{1+\delta}{\delta^2} \frac{1}{2 \pi \ri} \oint_{\Gamma_i} \frac{\left( \Delta(z) \right)_{r+1,r+1} }{[1+\delta^{-1}-(1+m(z) \sigma_j)^{-1}]^2} \frac{\dd z}{z}, \\
& s_3:=\lim_{\delta \downarrow 0} \frac{1+\delta}{\delta^2} \frac{1}{2 \pi \ri} \oint_{\Gamma_i} \left[ \Xi_j^{-1}(z) \Delta(z) \Xi_j^{-1}(z) \Delta(z)  \Xi_j^{-1}(z) \right]_{r+1,r+1} \frac{\dd z}{z}, \\
& s_4:=\lim_{\delta \downarrow 0} \frac{1+\delta}{\delta^2} \frac{1}{2 \pi \ri} \oint_{\Gamma_i} \left[\left( \Xi_j^{-1}(z) \Delta(z)\right)^3 \left( \Xi_j(z)-\Delta(z) \right)^{-1} \right]_{r+1,r+1} \frac{\dd z}{z}.
\end{align*}
Using \eqref{eq:bddDeltaj}, it is easy to see that 
\begin{equation*}
|s_4| \prec n^{-3/2}. 
\end{equation*} 
For $s_1$ and $s_2,$ by following a similar argument as in \cite[Proposition 7.2]{DRMTA} or \cite[Lemma 4.31]{bun2018optimal}, we can show that for sufficiently small $\delta,$ the functions $z^{-1}$, $[1+\delta^{-1}-(1+m(z) \sigma_j)^{-1}]^{-1}$, and $(\Delta(z))_{r+1,r+1}$ are all holomorphic on and inside $\Gamma_i$ with high probability. Hence, applying Cauchy's integral theorem, we get that 
\begin{equation}\label{eq:s1s2}
s_1=s_2=0\quad \text{with high probability}.
\end{equation}

To estimate the leading term $s_3$, we employ a modified proof strategy similar to that of \cite[Lemma 4.32]{bun2018optimal}. By definition, we can write that 
\begin{align*}
\left[ \Xi_j^{-1}(z) \Delta(z) \Xi_j^{-1}(z) \Delta(z)  \Xi_j^{-1}(z) \right]_{r+1,r+1}= \frac{1}{ \left[ 1+\delta^{-1}-(1+m(z) \sigma_j)^{-1} \right]^2  } \sum_{k=1}^{r+1} \Delta_{r+1, k} \Delta_{k, r+1} \Xi_j(k,k)^{-1},
\end{align*}
where we have introduced the abbreviation $\Delta \equiv \Delta(z)$ and used $\Xi_j(k,k)$ to denote the $k$-th diagonal entry of $\Xi_j(z)$. We decompose the sum into a diagonal part with $k=r+1$ and an off-diagonal part with $k\in \qq{r}$:
$$ \sum_{k=1}^{r+1} \Delta_{r+1, k} \Delta_{k, r+1} \Xi_j(k,k)^{-1} =\mathsf{E}_d+\mathsf{E}_o , $$
where $\mathsf{E}_d$ and $\mathsf{E}_o$ are defined as
\begin{align*}
& \mathsf{E}_d:= \Delta^2_{r+1, r+1} \Xi_j(r+1,r+1), \quad  \mathsf{E}_o:=z^2 \sum_{k=1}^r \frac{ (\eb_{j}^\top \mathcal{G}_1(z) \eb_k)(\eb_{k}^\top \mathcal{G}_1(z) \eb_{j}) }{  1+d_k^{-1}-(1+m(z) \sigma_k)^{-1} }. 
\end{align*} 
By the discussion above \eqref{eq:s1s2}, we have that for sufficiently small $\delta$, $\mathsf{E}_d$ is holomorphic on and inside $\Gamma_i$ with high probability. This implies that 
\begin{equation*}
\frac{1}{2\pi \ii}\oint_{\Gamma_i} \frac{\mathsf{E}_d}{ \left[ 1+\delta^{-1}-(1+m(z) \sigma_j)^{-1} \right]^2} \frac{\dd z}{z}=0 \quad \text{with high probability}. 
\end{equation*}
For the off-diagonal part $\mathsf{E}_o$, using (\ref{aniso}), we can rewrite it as
\begin{equation*}
\mathsf{E}_o= \frac{1}{\sigma_j}\sum_{k=1}^{r} \frac{1}{\sigma_k} \frac{G_{jk}(z) G_{kj}(z)}{ 1+d_k^{-1}-(1+m(z) \sigma_k)^{-1}}.
\end{equation*}

Under item (iv) of Assumption \ref{main_assumption}, using the definition of $m(z)$ in \eqref{eq_defnmc} and the definition of $\fa_i$ in (\ref{eq_importantdefinition}), we find that for $k\in \qq{r}\setminus \{i\}$,
$$1+d_k^{-1}-(1+m(\fa_i) \sigma_k)^{-1}=1+d_k^{-1}-\left(1- {\sigma_k}/\wt\sigma_i\right)^{-1}\gtrsim 1.$$
Consequently, due to the continuity of $m(z)$ near $\fa_i$, we can choose $\rho$ sufficiently small such that $1+d_k^{-1}-(1+m(z) \sigma_k)^{-1}$ has no zero on and inside $\Gamma_i$. Then, by Cauchy's integral theorem, we have that for $k\in \qq{r}\setminus \{i\}$,
\begin{equation*}
\oint_{\Gamma_i} \frac{G_{jk}(z) G_{kj}(z)}{ \left[ 1+\delta^{-1}-(1+m(z) \sigma_j)^{-1} \right]^2 \left[1+d_k^{-1}-(1+m(z) \sigma_k)^{-1}\right]}\frac{\dd z}{z}=0
\end{equation*}
with high probability. Finally, we only need to consider the $k=i$ term in $\mathsf E_o$. Applying the parametrization $z=h(\zeta),$ we obtain that with high probability, 
\begin{align*}
\frac{1+\delta}{\delta^2}\frac{1}{2\pi \ii}\oint_{\Gamma_i} \frac{\mathsf{E}_o}{ \left[ 1+\delta^{-1}-(1+m(z) \sigma_j)^{-1} \right]^2} \frac{\dd z}{z}&=\frac{(1+\delta)d_i}{2\pi \ii \widetilde{\sigma}_i \sigma_j \sigma_i }\oint_{\gamma_i} \frac{ h'(\zeta) G_{ji}(h (\zeta)) G_{ij} (h (\zeta))}{ h(\zeta) \left[ 1+\delta-\delta(1+\zeta \sigma_j)^{-1} \right]^2} \frac{1+\zeta \sigma_i }{\zeta+\widetilde{\sigma}_i^{-1}} \dd \zeta, \\
&=  (1+\delta)\frac{d_i^2\fb_i}{\widetilde{\sigma}_i^2 \sigma_j}   \frac{ G_{ji}(\fa_i) G_{ij} (\fa_i)}{\left[1+\delta-\delta (1-\sigma_j/\widetilde{\sigma}_i)^{-1}\right]^2},
\end{align*}
where we used the relation $m(h(\zeta))=\zeta$ as in (\ref{eq_defnmc}) in the first step, and the residual theorem at $\zeta=-\wt\sigma_i^{-1}$ in the second step. Finally, taking $\delta\to 0$ and using the dominated convergence theorem, we conclude that
\begin{equation*}
s_3= \frac{d_i^2\fb_i}{\widetilde{\sigma}_i^2 \sigma_j}  G_{ji}(\fa_i) G_{ij} (\fa_i),
\end{equation*} 
which completes the proof of \eqref{eq_intermediateresult}.
\end{proof}

\subsection{Local laws for the rectangular matrix Dyson Brownian motion}\label{sec: iso of Wt} 


In this subsection, we consider the rectangular matrix DBM of the {\bf non-spiked} model:
$$ W_0(t):=W_0+\sqrt{t} X^G,\quad \text{with}\quad W_0:=\Lambda_0^{1/2}V^\top X,$$
where $X^G$ is defined above \eqref{defn_wt}. In this subsection, we present the local laws for its resolvent, which will be utilized in our proof. From these local laws, we can also derive the local laws for the resolvent of the (spiked) rectangular matrix DBM defined in (\ref{defn_wt}) and (\ref{eq_qt}).  

Let us begin by introducing some notations. We define $\Lambda_t:=\Lambda_0+t$ and $m_t(z):=m(z,\Lambda_t)$ as given by \eqref{eq_defnmc}. In other words, $m_t(z)$ is the unique solution to the equation 
\be\label{koux}
z=- \frac{1}{m_t}+\frac1n\tr\frac{\Lambda_t}{1+m_t  \Lambda_t} 
\ee
subject to the condition that $\im m_t(z)\ge 0$ whenever $\im z\ge 0$. Subsequently, we denote the density function associated with $m_t$ as $\varrho_t$, and refer to the spectral edges and quantiles of $\varrho_t$ as $a_k(t)$ and $\gamma_k(t)$, following the definitions in \Cref{lem_property} and in \Cref{defn_generaleigenvaluelocation}. 
Using the stability of the self-consistent equation \eqref{koux} established in \cite{MR3704770}, it is not hard to check that \be\label{eq:perturbrho}
\sup_{x\in \R} |\varrho_t(x)-\varrho(x)|=\oo(1)
\ee
when $t=\oo(1)$.  
Then, we define $\cal G_{1,t}$, $\cal G_{2,t}$, $G_t$, and $\Pi_t$ as described in \Cref{rmk_keymodification}, with the replacements of $W_0$, $\Lambda_0$, and $m(z)$ by $W_0(t)$, $\Lambda_t$, and $m_t(z)$, respectively.  
Let $\{\lambda_k(t)\}_{k=1}^p$ and $\{\bu_k(t)\}_{k=1}^p$ denote the eigenvalues and corresponding eigenvectors of $W_0(t)W_0(t)^\top$. 
Finally, we introduce the error control parameter
\begin{equation*} 
  \Psi_t(z):= \sqrt{\frac{\im m_t(z)}{n\eta}}+\frac{1}{n\eta},
\end{equation*}
Now, we are ready to state the main result of this subsection.   


\begin{lemma} \label{sare} 
Suppose (i)--(iii) of Assumption \ref{main_assumption} hold. Fix a time $T \asymp n^{-1/3+\mathsf{c}}$ for some constant $ \mathsf{c} \in (0,1/6).$  The following estimates hold uniformly in $z\in  \mathbf{D}$ and $ t\in [0,T]$.
\begin{enumerate}
\item The support of $\varrho_t$ has the same number of connected components as $\varrho_t$, and the spectral edges $a_k(t)$ of $\varrho_t$ are perturbations of those of $\varrho$: $ \max_{k=1}^{2q}|a_k(t)-a_k|=\oo(1).$ In particular, it implies that $|\lambda_{\pm,t}-\lambda_{\pm}|=\oo(1)$, where $\lambda_{+,t}:=a_1(t)$ and $\lambda_{-,t}:=a_{2q}(t)$ denote the rightmost and leftmost spectral edges of $\varrho_t$, respectively.   
The number of quantiles within each component remains unchanged, that is, $n_{k}(t)$ defined as in \eqref{Nk} is constant in $t$.

\item  For any deterministic unit vectors $\bu, \bv  \in \mathbb{R}^{p+n}$, we have the anisotropic local law (recall \Cref{lem_standardaniso})
 \be\label{TAZ}
  \left| \bu^\top{G}_{t}(z) \bv -    \bu^\top \Pi_t(z) \bv  
  \right|\prec \Psi_t(z).
 \ee
For $\sm_{n,t}:=n^{-1}\tr \cal G_{2,t}$, the following averaged local law holds:
 \begin{equation}\label{eq_averagerightblock}
|\sm_{n,t}(z)-m_t(z)|\prec \frac{1}{n \eta}. 
 \end{equation}
 
\item For $\gamma_j(t) \in [a_{2k}(t),a_{2k-1}(t)]$, we have that (recall \Cref{lem_rigidity})
  \be\label{TAZ2}
 \left|\lambda_j(t)-\gamma_j(t)\right|\prec [(n_{k}(t)+1-j)\wedge (j-n_{k-1}(t))]^{-1/3}n^{-{2}/{3}}.
 \ee

\item For $k \in \qq{\sfK}$ and any deterministic unit vector $\bv\in \R^{p}$, we have that (recall \Cref{lem_bulkproperty})
  \be\label{TAZ3}
\left|\avg{\bu_k(t),\bv}\right|^2 \prec n^{-1}. 
  \ee
  \end{enumerate}
  \end{lemma}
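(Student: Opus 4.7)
My plan is to reduce the lemma to a standard local-law statement by finding an equivalent representation of $W_0(t)W_0(t)^\top$ as a sample covariance matrix with the effective population covariance $\Lambda_t$. Specifically, I would introduce the augmented data matrix $\tilde X := \begin{pmatrix} X \\ X^G \end{pmatrix}\in\R^{2p\times n}$, whose entries are independent, centered, have variance $n^{-1}$ and possess arbitrarily high moments (by item (ii) of \Cref{main_assumption} for $X$ and by Gaussianity for $X^G$), together with the $p\times 2p$ matrix $B := (\Lambda_0^{1/2}V^\top,\ \sqrt{t}\,I_p)$. Then $W_0(t) = B\tilde X$ and, crucially, $BB^\top = \Lambda_0 + tI_p = \Lambda_t$. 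Thus $W_0(t)W_0(t)^\top$ is a ``reduced'' sample covariance matrix whose self-consistent equation is determined solely by $BB^\top$, which yields exactly \eqref{koux}. Applying the anisotropic and averaged local laws in the style of \cite{MR3704770} to this augmented linearized system (with the obvious modifications in the Schur complement step to accommodate the $B$-factor) produces \eqref{TAZ} and \eqref{eq_averagerightblock}, with $m(z)$ replaced by $m_t(z)$ and $\Pi$ by $\Pi_t$.

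For Part (i), I would run a perturbative/stability analysis of \eqref{koux}. Since $\Lambda_t-\Lambda_0 = tI_p$ has operator norm $t=o(1)$ uniformly for $t\in[0,T]$, the map $h_t$ is a uniformly $C^2$-small perturbation of $h$. Under the edge-regularity condition \eqref{eq:edgeregular}, each edge $a_k$ is a non-degenerate critical point of $h$, so the implicit function theorem (applied simultaneously at the pairs $(a_k,b_k)$) shows that the edges $a_k(t),b_k(t)$ exist, are unique in a small neighborhood, and satisfy $\max_k|a_k(t)-a_k|+\max_k|b_k(t)-b_k|=O(t)=o(1)$. This simultaneously gives that the number of connected components is preserved, and combined with preservation of $\int\varrho_t = 1$ and continuity of the bulk density (from \eqref{eq:perturbrho}), that $n_k(t)=n_k$ for all $k$. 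Parts (iii) and (iv) then follow from Part (ii) by standard arguments: rigidity \eqref{TAZ2} is a consequence of \eqref{eq_averagerightblock} together with the edge/bulk regularity of $\varrho_t$ (inherited from that of $\varrho$ via \eqref{eq:perturbrho} and Part (i)) through the Helffer--Sj\"ostrand integration method, just as in \cite[Theorem 3.12]{MR3704770}; and delocalization \eqref{TAZ3} follows by choosing $z=\gamma_k(t)+\ii\eta$ with $\eta=n^{-1+\tau}$ and using
\[
|\langle \bu_k(t),\bv\rangle|^2 \leq \eta\,\im\big(\bv^\top \cal G_{1,t}(z)\bv\big) \prec \eta\cdot\big(|\bv^\top \Pi_{t,1}(z)\bv| + \Psi_t(z)\big) \prec n^{-1},
\]
where $\Pi_{t,1}$ denotes the upper-left block of $\Pi_t$, bounded by \eqref{eq:denominator}.

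The main obstacle is in Part (ii): although the data matrix $\tilde X$ has independent entries with the right covariance and high moments, its rows are \emph{not} identically distributed (Gaussian in rows $p+1,\ldots,2p$, general in rows $1,\ldots,p$), so \cite{MR3704770} does not apply verbatim. However, the arguments there---the Schur complement identity, fluctuation averaging, and the stability analysis of the self-consistent equation---rely only on independence and moment estimates for the entries, not on identical distribution, so the extension to the present setup is expected to go through with only bookkeeping changes. A secondary point requiring care is that $B$ is not square and $\tilde X$ is longer than usual in the row direction, so the Ward identity in the Schur complement has to be applied in the ``effective'' form $BB^\top=\Lambda_t$; this is precisely the point that makes $m_t$ (rather than some other limit) emerge as the correct Stieltjes transform.
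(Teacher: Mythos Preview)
Your approach coincides with the paper's and is correct. Both represent $W_0(t)=B\tilde X$ with $\tilde X=\begin{pmatrix}X\\ X^G\end{pmatrix}$ and $BB^\top=\Lambda_t$, and then invoke \cite{MR3704770} for the local laws, rigidity, and delocalization at each fixed $t$.

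The one point worth correcting is your ``main obstacle'': the worry that the rows of $\tilde X$ are not identically distributed is not a genuine issue. The local laws of \cite{MR3704770} already cover data matrices with merely independent (not i.i.d.) entries satisfying uniform moment bounds, together with a rectangular population factor. The paper makes this fit transparent by factoring your $B$ as $\Lambda_t^{1/2}\mathbf U$ with the semi-orthogonal matrix
\[
\mathbf U=\bigl((\Lambda_0/\Lambda_t)^{1/2}V^\top,\ (t/\Lambda_t)^{1/2}I_p\bigr),
\]
so that $W_0(t)W_0(t)^\top$ is literally the model treated in \cite{MR3704770}, and Theorems~3.6, 3.7, and 3.12 there apply verbatim for each fixed $t$; uniformity over $t\in[0,T]$ then follows from a standard $\epsilon$-net. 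Your treatments of parts (i), (iii), and (iv) match the paper's (brief) justifications.
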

\begin{proof}
Under the edge regularity condition \eqref{eq:edgeregular}, it is easy to see that $a_k(t)$ and $n_k(t)$ are continuous in $t$ when $t=\oo(1)$. This establishes part (i) of the lemma, taking into account the fact that $n_k(t)$ takes integer values. For the other parts, we notice that $W_0(t)$ can be expressed as 
    \begin{equation}\label{eq_wtgeneralform}
W_0(t)= \Lambda_t^{1/2} \mathbf U  \begin{pmatrix}
X  \\
 X^G  
\end{pmatrix},
\end{equation}
where $\mathbf{U} \in \mathbb{R}^{p \times 2p}$ is a semi-orthogonal matrix defined as
\begin{equation*}
\mathbf{U}:=  \begin{pmatrix}
(\Lambda_0/\Lambda_t)^{1/2} V^\top,   (t/\Lambda_t)^{1/2}I_p 
\end{pmatrix} . 
\end{equation*}
This is the random matrix model studied in \cite{MR3704770}. Furthermore, \eqref{eq:perturbrho} and part (i) together validate the regularity conditions in (\ref{eq:edgeregular}) and \eqref{eq:bulkregular} for $\Lambda_t$. Then, for each fixed $t$, (ii) is proved in Theorems 3.6 and 3.7 of \cite{MR3704770}, (iii) is proved in Theorem 3.12 of \cite{MR3704770}, and (iv) follows easily from (ii) and (iii) as explained, for example, in \cite[Theorem 2.8]{isotropic}. 
Finally, by utilizing a standard $\epsilon$-net argument, we can extend these results uniformly to all $t\in [0,T]$.
\end{proof}

\begin{remark}\label{rmk_m1m2law}
The estimate (\ref{eq_averagerightblock}) provides an averaged local law for $\sm_{n,t}.$ In the following proof, we will also need an averaged local law for $\underline \sm_{n,t}(z):=p^{-1}\tr \cal G_{1,t}$, whose classical limit is given by
$$ m_{1,t}(z):= -\frac{1}{p}\tr\frac{1}{z(1+m_{t}(z)\Lambda_t)}.$$
From \eqref{koux}, we can observe that 
\begin{equation}\label{eq:m1tz}
m_{1,t}(z)=\frac{c_n^{-1}-1}{z}+c_n^{-1} m_t(z). 
\end{equation}
On the other hand, since $W_0(t)W_0(t)^\top$ share the non-zero same eigenvalues as $W_0(t)^\top W_0(t)$, we have a similar relation between $\underline \sm_{n,t}(z)$ and $\sm_{n,t}(z)$: 
\begin{equation}\label{eq:m1tz2}
\underline\sm_{n,t}(z)=\frac{c_n^{-1}-1}{z}+c_n^{-1} \sm_{n,t}(z). 
\end{equation}
Under these two equations, we can directly deduce from \eqref{eq_averagerightblock} that 
\be\label{eq_average2}
|\underline\sm_{n,t}(z)-m_{1,t}(z)|\prec (n\eta)^{-1}.
\ee
\end{remark} 

Next, we consider the rectangular matrix DBM of the {\bf spiked} model:
$$ W(t):=W+\sqrt{t} X^G,\quad \text{with}\quad W:=\Lambda^{1/2}V^\top X,$$
where $\Lambda$ contains a non-spiked component $\Lambda_0$ and $r$ spikes as in \eqref{eq_defnspikes}. Let $\wt\Lambda_t:=\Lambda+t$, and we again denote $\Lambda_t=\Lambda_0+t$ and $m_t(z)=m(z,\Lambda_t)$. 
Then, we define \smash{$\wt G_t$ and $\wt \Pi_t$} as described in \Cref{rmk_keymodification}, with the replacements of $W_0$, $\Lambda_0$, and $m(z)$ by $W(t)$, \smash{$\wt\Lambda_t$}, and $m_t(z)$, respectively.   
Let \smash{$\{\wt\lambda_k(t)\}_{k=1}^p$} denote the eigenvalues of $W(t)W(t)^\top$. We define a smaller spectral parameter domain than \eqref{eq_generalspectraldomain} for arbitrarily small constants $c,\tau\in (0,1)$:
\begin{equation}\label{eq_generalspectraldomain_small}
\mathbf{D}' \equiv \mathbf{D}'(c, \tau, t, n):=\left\{z=E+\ri \eta \in \mathbb{C}_+: \lambda_- - c\leq E \leq \lambda_+ +c, n^{-1+\tau} \leq \eta \leq \tau^{-1}    \right\},
\end{equation}
We show that a similar local law as in \eqref{TAZ} holds for $\wt G_t-\wt\Pi_t$.
\begin{lemma} \label{sare2}
Suppose Assumption \ref{main_assumption} holds. Fix a time $T \asymp n^{-1/3+\mathsf{c}}$ for some constant $ \mathsf{c} \in (0,1/6).$  Then, there exists a constant $c>0$ such that the following local law holds uniformly for $z\in  \mathbf{D}'$ and $ t\in [0,T]$,  considering any deterministic unit vectors $\bu, \bv  \in \mathbb{R}^{p+n}$:
 \be\label{wtTAZ}
  \big| \bu^\top\wt{G}_{t}(z) \bv -    \bu^\top \wt\Pi_t(z) \bv  
  \big|\prec \Psi_t(z).
 \ee
Moreover, for $ i \in \qq{\sfK-r}$, we have the eigenvalue sticking estimate:
\begin{equation}\label{eq_eigenvaluesticking2}
|\wt\lambda_{i+r}(t)-\lambda_i(t)|\prec n^{-1}.
\end{equation}
\end{lemma}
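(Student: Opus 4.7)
The strategy is to derive the anisotropic local law \eqref{wtTAZ} for the spiked model from the non-spiked local law \eqref{TAZ} via a finite-rank perturbation argument, and then deduce the sticking estimate \eqref{eq_eigenvaluesticking2} by a standard interlacing plus resolvent comparison argument. The key observation is that $W(t)$ differs from $W_0(t)$ only through the replacement of $\Lambda_0$ by $\Lambda$, and since the spikes are a rank-$r$ perturbation, the linearized block matrix $\wt H_t$ (defined analogously to \eqref{aniso} with $\Lambda_0$ replaced by $\wt\Lambda_t$ and the obvious modification of the off-diagonal block so that $\wt H_t^{-1}$ gives $\wt G_t$) satisfies $\wt H_t = H_t + \mathbf E_r D_r \mathbf E_r^\top$, where $\mathbf E_r = (\mathbf e_1,\ldots,\mathbf e_r) \in \R^{(p+n)\times r}$ and $D_r = \operatorname{diag}\{\Lambda_t^{-1} - \wt\Lambda_t^{-1}\}$ restricted to $\qq{r}$ is a deterministic diagonal matrix with entries of order one (by item (iv) of \Cref{main_assumption}).

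The plan is to apply the Woodbury identity,
\begin{equation*}
\wt G_t(z) = G_t(z) - G_t(z) \mathbf E_r \big(D_r^{-1} + \mathbf E_r^\top G_t(z) \mathbf E_r\big)^{-1} \mathbf E_r^\top G_t(z),
\end{equation*}
and control each factor using \eqref{TAZ}. First, $\mathbf E_r^\top G_t(z) \mathbf E_r = \mathbf E_r^\top \Pi_t(z) \mathbf E_r + O_\prec(\Psi_t(z))$, whose deterministic limit is the diagonal matrix with entries $-\sigma_i/(1+m_t(z)\sigma_i)$, $i\in\qq{r}$. Under item (iv) of \Cref{main_assumption} together with the perturbation estimate \eqref{eq:perturbrho}, a direct computation shows that the singularities of $(D_r^{-1} + \mathbf E_r^\top \Pi_t \mathbf E_r)^{-1}$ occur precisely at the outlier locations $\mathfrak a_i(t)$, each of which lies strictly above $\lambda_{+,t}$ by a gap $\gtrsim \varpi$. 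Thus, choosing $c>0$ small enough, the $r\times r$ matrix $D_r^{-1} + \mathbf E_r^\top G_t \mathbf E_r$ has a uniformly bounded inverse for $z\in \mathbf D'$ and $t\in [0,T]$. Inserting this into the Woodbury formula, the leading deterministic term matches $\bu^\top(\wt\Pi_t(z)-\Pi_t(z))\bv$ by an algebraic identity coming from the self-consistent equation \eqref{koux}, and the stochastic error is bounded by $O_\prec(\Psi_t(z))$ using \eqref{TAZ} applied to $\bu^\top G_t \mathbf E_r$ and $\mathbf E_r^\top G_t \bv$ (noting that $\mathbf E_r$ has rank $r=O(1)$). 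This proves \eqref{wtTAZ}.

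For the sticking estimate \eqref{eq_eigenvaluesticking2}, Weyl's interlacing applied to the rank-$r$ perturbation $\wt\Lambda_t - \Lambda_t$ gives immediately $\lambda_{i+r}(t) \leq \wt\lambda_{i+r}(t) \leq \lambda_i(t)$, so combined with \eqref{TAZ2} one obtains the crude bound $|\wt\lambda_{i+r}(t)-\lambda_i(t)|\prec n^{-2/3}$. To improve this to $n^{-1}$, I would follow the approach of \cite{ding2021spiked}, using the averaged local law \eqref{eq_averagerightblock} together with its spiked counterpart obtained from \eqref{wtTAZ} via Woodbury: the difference $\sm_{n,t} - \wt{\sm}_{n,t}$ between the normalized traces is exactly a trace of the low-rank correction, which is $O_\prec(n^{-1}\eta^{-1})$ by the bound established in Step 2. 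A standard Helffer--Sjöstrand argument comparing the two Stieltjes transforms on a contour of height $\eta \asymp n^{-1+\e}$ then yields the desired $n^{-1}$ sticking estimate uniformly in $i\in \qq{\sfK-r}$.

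The main obstacle is Step 2: verifying uniformly in $z \in \mathbf D'$ and $t \in [0,T]$ that the $r\times r$ inverse $(D_r^{-1} + \mathbf E_r^\top G_t \mathbf E_r)^{-1}$ is bounded. This requires a quantitative gap estimate between the outlier locations $\mathfrak a_i(t)$ and $\lambda_{+,t} + c$, which is where item (iv) of \Cref{main_assumption} (encoding the supercritical BBP regime) is used in an essential way. Once this invertibility is established, the rest of the argument proceeds by the routine combination of anisotropic local laws, trace identities, and contour integration.
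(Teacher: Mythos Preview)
Your overall strategy---derive \eqref{wtTAZ} from \eqref{TAZ} via a finite-rank Woodbury argument, then deduce \eqref{eq_eigenvaluesticking2} by the resolvent-comparison method of \cite{ding2021spiked}---is exactly what the paper's brief proof points to. The paper just says that restricting to $\mathbf D'$ keeps $z$ at distance $\Omega(1)$ from the outlier locations, so the spiked local law follows from the non-spiked one, and that \eqref{eq_eigenvaluesticking2} is proved identically to \eqref{eq_eigenvaluesticking}.

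There is, however, a technical inaccuracy in your setup. For $t>0$ the identity $\wt H_t = H_t + \mathbf E_r D_r \mathbf E_r^\top$ is false: the off-diagonal block of $H_t$ is $Y_{0,t}=\Lambda_t^{-1/2}W_0(t)=\Lambda_t^{-1/2}\bigl(\Lambda_0^{1/2}V^\top X+\sqrt{t}\,X^G\bigr)$, while that of $\wt H_t$ is $\wt Y_t=\wt\Lambda_t^{-1/2}W(t)=\wt\Lambda_t^{-1/2}\bigl(\Lambda^{1/2}V^\top X+\sqrt{t}\,X^G\bigr)$. These differ in their first $r$ rows (in particular, the $\sqrt{t}\,X^G$ term is scaled by $\Lambda_t^{-1/2}$ versus $\wt\Lambda_t^{-1/2}$), so $\wt H_t-H_t$ is not purely diagonal. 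It is still supported on the first $r$ rows and columns of the $(p+n)\times(p+n)$ block matrix and hence has rank at most $2r$, so your Woodbury argument goes through with the obvious modification---you invert a $2r\times 2r$ matrix rather than an $r\times r$ one, and the invertibility on $\mathbf D'$ is established exactly as you describe (the poles of the deterministic limit occur at the outlier locations, which are separated from $\lambda_{+,t}+c$ by a gap of order $\varpi$ under item (iv) of \Cref{main_assumption}). Only at $t=0$ do both off-diagonal blocks reduce to $V^\top X$ and your rank-$r$ diagonal form become correct.
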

\begin{proof}
For the proof of \eqref{wtTAZ}, the only concern arises for $z$ around the $r$ outlier locations. However, by confining ourselves to a narrower spectral domain $\mathbf{D}'$ around $\supp(\varrho_t)$, we ensure that $z$ stays at a distance of order $\Omega(1)$ from the outliers. As a result, \eqref{wtTAZ} is essentially a consequence of the local law \eqref{TAZ}. The proof of \eqref{eq_eigenvaluesticking2} is the same as that of \eqref{eq_eigenvaluesticking}. We omit the details.
\end{proof}

\section{Proof of the main results} 
 
\subsection{Proof of the results in Section \ref{sec_statisticsmainresults}}

In this subsection, we provide the proofs of Theorem  \ref{thm_shrinkerestimate}, \Cref{coro_simplifiedformula}, and \Cref{coro_lowbound} using the local laws presented in Appendix \ref{appendix_preliminary} and \Cref{corollary_que}. 

\begin{proof}[\bf{Proof of Theorem \ref{thm_shrinkerestimate}}]
Applying \Cref{corollary_que} with $\omega_j=\ell(\sigma_j)$ and noticing $p^{-1}\sum_{j=1}^{r} \ell(\sigma_j) \phi(\bm{v}_j, \bm{v}_j, \gamma_{i-r}) =\OO(p^{-1}),$ we can deduce that  
$$ \P\left(\left| \bm{u}_i^\top \ell(\Sigma) \bm{u}_i- \vartheta(\ell,\gamma_{i-r}) \right|>n^{-\fd/4}\right)\le n^{-\fd/2},\quad i\in\qq{r+1,\sfK}, $$
for a constant $\fd>0$. By combining this estimate with the trivial bound $\left| \bm{u}_i^\top \ell(\Sigma) \bm{u}_i- \vartheta(\ell,\gamma_{i-r}) \right|\lesssim 1$, we conclude \eqref{eq_nonspikedshrinkeranalytical}. 
To show \eqref{eq_nonspikedshrinkeranalytical0}, let $\Gamma_0\equiv\Gamma_0(n)$ be the contour centered at $0$ with radius $\rho_n=(\log n)^{-1}/2$. By the rigidity estimate \eqref{rigidity2}, the eigenvalue sticking \eqref{eq_eigenvaluesticking}, and the lower bound \eqref{eq:lambda-}, we know that with high probability, $\Gamma_0$ only encloses the $p-n$ zero eigenvalues of $\cal Q_1$. 
Then, with the spectral decomposition of $\wt\sigG_1$ (recall \eqref{eq_originalG}), we obtain that with high probability,
\begin{align*}
    \tr\left[U_0^\top \ell(\Sigma) U_0\right] = -\frac{1}{2\pi \ri}\oint_{\Gamma_0} \tr\big[\wt\sigG_1(z)\ell(\Sigma)\big] \dd z.
\end{align*} 
Applying the anisotropic law in \Cref{lem_standardaniso0} and using Cauchy’s integral theorem at $z=0$, we obtain that 
\begin{align*}
     \tr\left[U_0^\top \ell(\Sigma) U_0\right] &= \frac{1}{2\pi \ri}\oint_{\Gamma_0} \tr\left[(1+m(z)\Sigma)^{-1}\ell(\Sigma)\right]\frac{\dd z}{z} + \OO_\prec(n^{-1/2}) \\
     &= \sum_{j=1}^p \frac{\ell(\wt\sigma_j)}{1+m(0)\wt\sigma_j}+ \OO_\prec(n^{-1/2}) .
\end{align*}
This yields \eqref{eq_nonspikedshrinkeranalytical0} by noticing that
\begin{align*}
    \vartheta(\ell,0) = \frac{1}{p-n}\sum_{j=r+1}^p \frac{\ell(\wt\sigma_j)}{1+m(0)\wt\sigma_j}=\frac{1}{p-n}\sum_{j=1}^p \frac{\ell(\wt\sigma_j)}{1+m(0)\wt\sigma_j}+\OO(n^{-1}).
\end{align*}

Now, we focus on the proof of \eqref{eq_outliercase}. 
Recall the decomposition \eqref{eq_decomposition}: 
\begin{align}\label{eq_twoequationsseperate}
\bm{u}_i^\top \ell(\Sigma) \bm{u}_i &=\sum_{j=1}^r \ell(\widetilde{\sigma}_j) |\avg{\bm{u}_i, \bm{v}_j}|^2 +\sum_{j=r+1}^p \ell(\widetilde{\sigma}_j) |\avg{\bm{u}_i, \bm{v}_j}|^2:=\mathsf{w}_{1i}+\mathsf{w}_{2i},\quad i \in \qq{r}.   
\end{align}
Using \eqref{eq:outlier_proj}, we obtain that
\begin{equation}\label{w1i}
\mathsf{w}_{1i}=\frac{\ell(\widetilde{\sigma}_i)}{\widetilde{\sigma}_i} \mathfrak{b}_i+\OO_{\prec}( n^{-1/2}). 
\end{equation}
The remaining part of the proof is devoted to estimating $\mathsf{w}_{2i}.$ Since $\widetilde{\sigma}_j=\sigma_j$ for $j \geq r$, by (\ref{eq_intermediateresult}), we have  
\begin{align}
\mathsf{w}_{2i}&=\mathfrak{c}_i \sum_{j=r+1}^p \frac{\ell(\sigma_j)}{\sigma_j} G_{ij}(\mathfrak{a}_i) G_{ji} (\mathfrak{a}_i)+\OO_{\prec}(n^{-1/2})\nonumber \\   
&=\mathfrak{c}_i \sum_{j=1}^p \frac{\ell(\sigma_j)}{\sigma_j} G_{ij}(\mathfrak{a}_i) G_{ji} (\mathfrak{a}_i) - \mathfrak{c}_i \sum_{j=1}^r \frac{\ell(\sigma_j)}{\sigma_j} G_{ij}(\mathfrak{a}_i) G_{ji} (\mathfrak{a}_i) +\OO_{\prec}(n^{-1/2}).\nonumber
\end{align}
Applying \eqref{eq:aniso_out} to estimate the second sum on the RHS and recalling the notations in \Cref{rmk_keymodification}, we can rewrite the above equation as 
\begin{equation}\label{eq_w2iform}
\mathsf{w}_{2i}=\mathfrak{c}_i \mathfrak{a}_i^2 \sigma_i \sum_{j=1}^p \ell(\sigma_j) (\mathcal{G}_1)_{ij}(\mathfrak{a}_i) (\mathcal{G}_1)_{ji} (\mathfrak{a}_i)- \frac{\mathfrak{c}_i  \sigma_i\ell(\sigma_i)}{[1+m(\mathfrak{a}_i) \sigma_i]^2} +\OO_{\prec}(n^{-1/2}). 
\end{equation}

It remains to estimate the quantity
\begin{equation}\label{eq_defLikeycalculations}
\mathcal{L}_i=\sum_{j=1}^p \ell(\sigma_j) (\mathcal{G}_1)_{ij}(\mathfrak{a}_i) (\mathcal{G}_1)_{ji} (\mathfrak{a}_i).
\end{equation}
For a small $\mathsf{t}>0,$ we define a new resolvent:
\begin{equation}\label{eq:Gstz}
\mathcal{G}(\mathsf{t},z):=\left(\Lambda_0^{1/2}V^\top X X^\top V \Lambda_0^{1/2}-z-\mathsf{t} L\right)^{-1},  
\end{equation} 
where $L$ is a diagonal matrix defined as $L=\operatorname{diag}\{\ell(\sigma_1), \cdots, \ell(\sigma_p)\}.$ 
Under this definition, we notice that 
\begin{equation}\label{eq_Lireducedform}
\mathcal{L}_i= \left. \frac{\partial \mathcal{G}_{ii}(\st,\fa_i)}{\partial \mathsf{t}} \right|_{\mathsf{t}=0} .
\end{equation}
To estimate \eqref{eq_Lireducedform}, it suffices to establish a local law on $\mathcal{G}_{ii}(\mathsf{t},\fa_i)$ for any small $\mathsf{t}>0$. We rewrite $\mathcal{G}(\mathsf{t},\fa_i)$ as 
\begin{align*}
\mathcal{G}(\mathsf{t},\fa_i)&=A(\mathsf{t})^{-1/2}\mathfrak{G}(\mathsf{t},\fa_i) A(\mathsf{t})^{-1/2},
\end{align*}
where $A(\mathsf{t}):=I+\mathsf{t}L/\fa_i$ and 
$$\mathfrak{G}(\mathsf{t},z):=\left( \Lambda(\mathsf{t})^{1/2} V^\top X X^\top V \Lambda(\mathsf{t})^{1/2}-z \right)^{-1}\quad \text{with}\quad \Lambda(\mathsf{t}):= A(\mathsf{t})^{-1} \Lambda_0 .$$
Now, define $\widetilde m_\st(z,\fa_i)\equiv m(z,\Lambda(\st))$ as in (\ref{eq_selfconsistent_mt}) with $x=\fa_i$. 
As $\mathsf{t}\to 0$, we can utilize the local law \eqref{eq:aniso_out} to $\mathfrak{G}(\mathsf{t},z)$ at $z=\fa_i$, and substitute $\Lambda_0$ and $m(z)$ with $\Lambda(\sf{t})$ and $\widetilde m_\st(z,\fa_i)$, respectively. This leads to that  
\begin{align}\label{eq_keycalculations}
\mathcal{G}_{ii}(\st,\fa_i)& =- \frac{A_{ii}(\st)^{-1}}{\fa_i [1+\widetilde m_\st(\fa_i,\fa_i) \Lambda_{ii}(\st)]}+\OO_{\prec}(n^{-1/2})
\end{align}
for all $\st=\oo(1)$. 
We now take $\st=n^{-1/4}$. Using the eigenvalue rigidity \eqref{rigidity2} for $\lambda_1$ and the fact that $\fa_i$ is an outlier of $\varrho$, we get from the definition \eqref{eq:Gstz} that $ \|\mathcal{G}(\st,\fa_i)\|\lesssim 1$ with high probability. With this bound, we readily derive that with high probability,
\be\label{eq_keycalculations2}
\cal L_i - \frac{\mathcal{G}_{ii}(\st,\fa_i)-\mathcal{G}_{ii}(0,\fa_i)}{\st}=\OO(\st).
\ee
Combining \eqref{eq_Lireducedform}, \eqref{eq_keycalculations} and \eqref{eq_keycalculations2}, we obtain that 
\begin{align*}
\mathcal{L}_i&=\left. - \frac{\partial}{\partial \st}\frac{A_{ii}(\st)^{-1}}{\fa_i [1+\widetilde m_\st(\fa_i,\fa_i) \Lambda_{ii}(\st)]}\right|_{\st=0}+\OO_\prec (\st+n^{-1/2}/\st )\\
&=\frac{1}{[1+m(\mathfrak{a}_i) \sigma_i]^2} \left( \frac{\dot m_0(\mathfrak{a}_i)\sigma_i}{\mathfrak{a}_i}+\frac{\ell(\sigma_i)}{\mathfrak{a}_i^2} \right) +\OO_{\prec} (n^{-1/4} ).
\end{align*} 
Plugging it back into (\ref{eq_w2iform}) gives an estimate for $\mathsf{w}_{2i}$. Combining this estimate with \eqref{w1i}, we obtain that 
\begin{align*}
    \bm{u}_i^\top \ell(\Sigma) \bm{u}_i = \mathfrak{b}_i\left(\frac{\ell(\widetilde{\sigma}_i)}{\widetilde{\sigma}_i} +\mathfrak{a}_i \dot m_0(\mathfrak{a}_i)\right)+\OO_{\prec} ( n^{-1/4} ),
\end{align*}
where we used that $\mathfrak{c}_i= {d_i^2\mathfrak{b}_i}/{\widetilde{\sigma}_i^2} $ and $m(\fa_i)=-\wt\sigma_i^{-1}$. This concludes \eqref{eq_outliercase}.
\end{proof}

\begin{proof}[\bf Proof of Corollary \ref{coro_simplifiedformula}]  
By \Cref{thm_shrinkerestimate}, it suffices to simplify the formulas of $\vartheta(x)\equiv \vartheta(\ell,x)$ and $\psi_i\equiv \psi_i(\ell)$ when $\ell(x)=x$. When $x=0$, we notice that 
\begin{equation*}
\vartheta(0)=\frac{1}{p (1-c_n^{-1})} \sum_{j=r+1}^p \frac{\wt\sigma_j}{1+m(0) \wt\sigma_j}=\frac{1}{c_n-1} \frac{1}{n} \sum_{j=1}^p \frac{\sigma_j}{1+m(0) \sigma_j} + \OO(n^{-1}).
\end{equation*}
By \eqref{eq_defnmc}, we have the identity
\begin{equation*}
\frac{1}{m(0)}=\frac{1}{n} \sum_{j=1}^p \frac{\sigma_j}{1+m(0) \sigma_j}.
\end{equation*}
This proves \eqref{eq_nonspikeduniform0}. 
Next, when $i\in\qq{r+1,\sfK}$, we have $\gamma_{i-r}>0$ and
\begin{equation}\label{eq_genegenralgenral}
\vartheta(\gamma_{i-r})=\frac{c_n}{p \gamma_{i-r}} \sum_{j=r+1}^p \frac{\wt\sigma^2_j}{|1+m(\gamma_{i-r}) \wt\sigma_j|^2}= \frac{1}{n\gamma_{i-r}}\sum_{j=1}^p \frac{\sigma^2_j}{|1+m(\gamma_{i-r}) \sigma_j|^2}+ \OO(n^{-1}).  
\end{equation}
By taking the imaginary parts of both sides of (\ref{eq_defnmc}), we obtain that
\begin{equation*}
\frac{1}{n}\sum_{j=1}^p \frac{\sigma^2_j}{|1+m(\gamma_{i-r}) \sigma_j|^2}=\frac{1}{|m(\gamma_{i-r})|^2}. 
\end{equation*}
Plugging it into \eqref{eq_genegenralgenral} concludes the proof of \eqref{eq_nonspikeduniform}.
It remains to prove \eqref{eq_spikedform}. 
We write $\psi_i$ as
\begin{equation}\label{eq_psi_asymp}
    \psi_i = \mathfrak{b}_i\big(1+\mathfrak{a}_i  \dot m_0(\mathfrak{a}_i)\big). 
\end{equation}
By setting $\ell(\sigma_i)=\sigma_i$ and $z=x=\fa_i$, we can simplify \eqref{eq_formofm0} using the first equation in \eqref{eq_m'm0}, which yields that 
\begin{equation*}
    \dot m_0(\mathfrak{a}_i) = \frac{1}{\mathfrak{a}_i}\left(\frac{m'(\mathfrak{a}_i)}{|m(\mathfrak{a}_i)|^2}-1\right).
\end{equation*}
Substituting this expression into \eqref{eq_psi_asymp} concludes \eqref{eq_spikedform}.
\end{proof}
\begin{proof}[\bf Proof of Corollary \ref{coro_lowbound}]
This corollary follows directly from Lemma \ref{lem_explicityformula}, \Cref{lem_lossdecomposition}, and Theorem \ref{thm_shrinkerestimate}. 
\end{proof}

    

\subsection{Proof of the results in Section \ref{sec_mainresultque}}\label{appendix_proof_ev}



First, we provide the proof of \Cref{Thm: EE} by using \Cref{lem: moment flow} and \Cref{lemma_universal}. 

\begin{proof}[\bf Proof of Theorem \ref{Thm: EE}] 
Recall that for $D_0=\Lambda-\ft$ and $D_t=D_0+t$, we choose their non-spiked parts as
$D_{00}=\Lambda_0-\ft$ and $D_{t0}=D_{00}+t$, respectively. Then, we define $m_t=m(z,D_{t0})$ as in \eqref{koux} and denote the density function associated with $m_t$ as $\varrho_t$. The quantiles $\gamma_k(t)$ of  $\varrho_t$ are defined as in \Cref{defn_generaleigenvaluelocation}. Given that $\ft=\oo(1)$, it is straightforward to show that the eigenvalues of $D_{00}$ and $D_0$ satisfy (iii) and (iv) in \Cref{main_assumption}. (In particular, the regularity conditions in (\ref{eq:edgeregular}) and \eqref{eq:bulkregular} can be verified using \eqref{eq:perturbrho} and part (i) of \Cref{sare}.) Then, combining \Cref{lem: moment flow} and \Cref{lemma_universal}, we obtain that 
\be\label{eq:midstep_ft}
\sqrt{p} 
\begin{pmatrix}
\xi_1 \avg{\bm{v},\bm{u}_{i_1+r}} \\
 \vdots \\
\xi_L \avg{\bm{v}, \bm{u}_{i_L+r}}
\end{pmatrix}
\simeq \mathcal{N}(\bm{0}, \Xi_L(\ft)) ,
\ee
where $\Xi_L(\ft)$ is defined in \eqref{eq:XiL} with $\bv=V^\top \bm{v}$ (recall that $\bu_i=V^\top \bm{u}_i$). Since $D_{\ft}=\Lambda$, by comparing equations \eqref{koux} and \eqref{eq_defnmc}, we see that $\varrho_{\ft}=\varrho$. Hence, we have  $\gamma_{i_k}(\ft)=\gamma_{i_k}$ for $k\in \qq{L}$. Moreover, using the stability of the self-consistent equation \eqref{eq_defnmc} (see Appendix A of \cite{MR3704770}), it is easy to check that \be\label{eq:stabm}
m(x,D_{00})=m(x)+\OO(\sqrt{\ft}).
\ee
Then, using the definition \eqref{defvarphi1}, we conclude that 
$$\|\Xi_L(\ft)-\Xi_L\| \lesssim \sqrt{\ft}.$$ 
This establishes \eqref{eq:conv_indis2} together with \eqref{eq:midstep_ft}.
\end{proof}
 
Recall that the proof of \Cref{lem: moment flow} is based on \Cref{main result: Jia} and \Cref{thm_localdensityestimate0}. Using these two results, we can show that the LHS of \eqref{eq_lemma41equation} converge to the RHS also in the sense of convergence of moments. Then, by applying \Cref{lemma_universal} and following the above proof of Theorem \ref{Thm: EE}, we can deduce that  \eqref{eq:conv_indis2} holds in terms of matching moments, that is, for any fixed $\mathbf k =(k_1,\ldots, k_L)\in \N^{L}$, there exists a constant $c>0$ such that 
\be\label{eq:conv_indis3}
\mathbb E\prod_{j=1}^L \left(p|\avg{\bm{v},\bm{u}_{i_j+r}}|^2\right)^{k_j} =  \prod_{j=1}^L \E\left( |\cal N_j|^{2k_j}\right)+\OO(n^{-c}).
\ee
Now, we proceed to establish \Cref{corollary_que} by utilizing this estimate. 
Our approach follows a similar argument employed for Wigner matrices in \cite{benigni2020eigenvectors,MR3606475}.

%



\begin{proof}[\bf Proof of  \Cref{corollary_que}] 
Due to (\ref{eq_keyconnection}), it suffices to prove the following estimate for $\bu_i$:
\begin{equation}\label{eq:weakQUE2}
\P\bigg( \Big|\sum_{j=1}^{p} w_j \left|\bu_i(j) \right|^2-p^{-1}\sum_{j=1}^{p} w_j \phi_j(\gamma_{i-r})  \Big|>\e \bigg) \leq  n^{-\mathfrak d}/ \e^{2},
\end{equation}
where the function $\phi_j(x)$ is defined as
\begin{equation*}
\phi_j(x):=\phi(\bm{v}_j, \bm{v}_j, x)=\frac{c_n \wt\sigma_j}{x|1+m(x)\wt\sigma_j|^2},\quad x>0. 
\end{equation*}
Applying Markov's inequality, we can derive \eqref{eq:weakQUE2} from the following estimate: 
\be\label{eq:2moment_est}
p^{-2} \E\Big|\sum_{j=1}^{p} w_j \left(p\left|\bu_i(j) \right|^2- \phi_j(\gamma_{i-r})\right) \Big|^2 \le n^{-\fd}. 
 \ee
We observe that the LHS of \eqref{eq:2moment_est} can be bounded by  
\be\label{eq:twoparts}
\max_{j\ne k\in \qq{p}} \E \left(p\left|\bu_i(j) \right|^2- \phi_j(\gamma_{i-r})\right)\left(p\left|\bu_i(k) \right|^2- \phi_k(\gamma_{i-r})\right)+p^{-1}\max_{j=1}^{p} \E\left(p\left|\bu_i(j) \right|^2- \phi_j(\gamma_{i-r})\right)^2. 
\ee
Using \eqref{eq:conv_indis3} with $\bm{v}=\bm{v}_j$ and $k_j\in \{1,2\}$, we can conclude that 
\be\label{eq:secondmomentui}
p\E  \left|\bu_i(j) \right|^2 =\phi_j(\gamma_{i-r})+\OO(n^{-c}),\quad p^2 \E  \left|\bu_i(j) \right|^4=3\phi_j(\gamma_{i-r})^2+\OO(n^{-c}).
\ee
With these two estimates, we can bound the second term in \eqref{eq:twoparts} as 
\be\label{eq:4thmoment}
p^{-1} \E\left(p\left|\bu_i(j) \right|^2- \phi_j(\gamma_{i-r})\right)^2=\OO(p^{-1}),\quad  j \in \qq{p}.
\ee
It remains to control the first term in \eqref{eq:twoparts}. 
We now apply \eqref{eq:conv_indis3} with $L=1$, $i_1=i-r$, $k_1=2$, and $\bm{v}=(\xi_1\bm{v}_j + \xi_2\bm{v}_k)/\sqrt{2}$, where $\xi_1$ and $\xi_2$ are two uniformly random signs independent of $X$. This gives that
\begin{align*}
&p^2 \E \left|\bu_i(j) \right|^4+ p^2 \E \left|\bu_i(k) \right|^4 + 6p^2\E\left( \left|\bu_i(j) \right|^2 \left|\bu_i(k) \right|^2\right) =3\left(\phi_j(\gamma_{i-r})^2 + \phi_k(\gamma_{i-r})^2 + 2 \phi_j(\gamma_{i-r})\phi_k(\gamma_{i-r})\right).
\end{align*}
Together with \eqref{eq:secondmomentui}, this equation implies that 
\be\label{eq:2covariance}
p^2\E\left( \left|\bu_i(j) \right|^2 \left|\bu_i(k) \right|^2\right)=\phi_j(\gamma_{i-r})\phi_k(\gamma_{i-r})+\OO(n^{-c}).
\ee
Combining \eqref{eq:secondmomentui} and \eqref{eq:2covariance}, we conclude that   
\be\label{eq:2moment_match}
 \E \left(p\left|\bu_i(j) \right|^2- \phi_j(\gamma_{i-r})\right)\left(p\left|\bu_i(k) \right|^2- \phi_k(\gamma_{i-r})\right) \lesssim n^{-c}.
\ee
Plugging \eqref{eq:4thmoment} and \eqref{eq:2moment_match} into \eqref{eq:twoparts}, we obtain \eqref{eq:2moment_est}, which further concludes \eqref{eq:weakQUE2}.
\end{proof}

\subsection{Proof of the results in Section \ref{sec_statisticalestimation}}

\begin{proof}[\bf Proof of Lemma \ref{lem_usefulquantitiesconsistentestimator}]
The estimate \eqref{eq_consistentestimatorsigma} can be derived from \cite[Section 5]{NEK} or \cite[Section 3]{KV}, along with the eigenvalue sticking estimate (\ref{eq_eigenvaluesticking}). 
The first estimate in (\ref{eq_consistenestimatorotherquantities}) follows directly from \eqref{eq:outlier_location}.

To prove the second estimate in (\ref{eq_consistenestimatorotherquantities}), we apply the inverse function theorem to (\ref{eq_defnmc}) and use that $m(\mathfrak{a}_i)=-\widetilde{\sigma}_i^{-1}$, which yields the relation
\be\label{eq:m'a0}
h'(-\widetilde{\sigma}_i^{-1}) =\left[m'(\mathfrak{a}_i)\right]^{-1}.
\ee
Next, considering the contour $\Gamma_i$ defined below \eqref{eq_keyconnection}, we apply Cauchy's integral formula to obtain that
\be\label{eq:m'a} 
m'(\fa_i)= \frac{1}{2\pi\ri}\oint_{\Gamma_i}\frac{m(z)}{(z-\fa_i)^2}\dd z = \frac{1}{2\pi\ri}\oint_{\Gamma_i}\frac{\sm_n(z)}{(z-\fa_i)^2}\dd z  +\OO_\prec(n^{-1/2})=\sm_n'(\mathfrak{a}_i)+\OO_\prec(n^{-1/2}).
\ee
In the second step, we utilized the local law \eqref{eq:aniso_out}, and in the third step, we used the fact that, with high probability, $\sm_n$ has no pole inside $\Gamma_i$ due to the rigidity estimate \eqref{rigidity2}. 
Since $r$ is fixed, using \eqref{eq:outlier_location} and \eqref{eq_eigenvaluesticking}, we get that
\begin{equation}\label{eq:m'a1} 
\sm_n'(\mathfrak{a}_i)= \frac{1}{n} \sum_{j=1}^n \frac{1}{(\lambda_j -{\mathfrak{a}}_i)^2} = \frac{1}{n} \sum_{j=r+1}^n \frac{1}{(\wt \lambda_j -{\mathfrak{a}}_i)^2} +\OO_\prec(n^{-1}) =\wh m_i' +\OO_{\prec}(n^{-1/2}). 
\end{equation} 
Plugging \eqref{eq:m'a1} and \eqref{eq:m'a} into \eqref{eq:m'a0} concludes the second estimate in (\ref{eq_consistenestimatorotherquantities}).
With a similar argument, employing \eqref{eq:aniso_out} and \eqref{eq:outlier_location}, we can deduce that
\be\label{eq:mai}
-\wt\sigma_i^{-1}=m(\fa_i)=\sm_n(\fa_i)+\OO_\prec(n^{-1/2})=\wh m_i  +\OO_{\prec}(n^{-1/2}).
\ee
This verifies the third estimate in (\ref{eq_consistenestimatorotherquantities}). 

For the last estimate in (\ref{eq_consistenestimatorotherquantities}), we first notice that by \eqref{eq_evassumptionone} and \eqref{eq_consistentestimatorsigma}, 
\be\label{eq:k+e}K_\e^+=p-\oo(p)\ee
for small enough $\e>0$. Since $m$ is a real increasing function outside $\supp(\varrho)$, we have
$b_1= m(\lambda_+)  < m(\fa_i) <0.$ Combining this with condition \eqref{eq_outlierassumption}, we obtain $\min_{j=r+1}^p|1+m(\fa_i)\sigma_j| \gtrsim 1,$
which, together with \eqref{eq:mai}, implies 
$\min_{j=r+1}^p|1+\wh m_i\sigma_j| \gtrsim 1$ with high probability. 
Then, from \eqref{eq_consistentestimatorsigma}, we see that \be\label{eq:k-e}K_{\e}^-=\oo(p).\ee 
On the other hand, by employing the weak convergence in \eqref{eq_consistentestimatorsigma}, along with \eqref{eq:denominator} and \eqref{eq:mai}--\eqref{eq:k-e}, we obtain that for small $\delta>0$,
\begin{align}
\frac{1}{n} \sum_{j=1}^p \frac{\ell(\sigma_j)\sigma_j}{[1+m(\fa_i)\sigma_j]^2}&= \frac{1}{n}\sum_{j=r+1}^{\lceil(1-\delta) p\rceil} \frac{\ell(\sigma_j)\sigma_j}{(1+m(\fa_i) \sigma_j)^2+\delta}+\OO(\delta)\nonumber\\
&=\frac{1}{n}\sum_{j=r+1}^{\lceil(1-\delta) p\rceil} \frac{\ell(\wh \sigma_j)\wh \sigma_j}{(1+\wh m_i \wh\sigma_j)^2+\delta} +\oo_\P(1)+\OO(\delta)\nonumber\\
&=\frac{1}{n}\sum_{j=(r+1)\vee K_\e^-}^{p\wedge K_\e^+}  \frac{\ell(\wh \sigma_j)\wh \sigma_j}{(1+\wh m_i \wh\sigma_j)^2+\delta} +\oo_\P(1)+\OO(\delta) \nonumber\\
&=\frac{1}{n}\sum_{j=(r+1)\vee K_\e^-}^{p\wedge K_\e^+}  \frac{\ell(\wh \sigma_j)\wh \sigma_j}{(1+\wh m_i \wh\sigma_j)^2} +\oo_\P(1)+\OO(\delta).\nonumber
\end{align} 
As $\delta$ is arbitrary, we conclude from this estimate that 
\be\label{eq:consist_hard}
\frac{1}{n} \sum_{j=1}^p \frac{\ell(\sigma_j)\sigma_j}{[1+m(\fa_i)\sigma_j]^2}=\frac{1}{n}\sum_{j=(r+1)\vee K_\e^-}^{p\wedge K_\e^+}  \frac{\ell(\wh \sigma_j)\wh \sigma_j}{(1+\wh m_i \wh\sigma_j)^2} +\oo_\P(1) .
\ee
Now, plugging \eqref{eq:outlier_location}, \eqref{eq:m'a}--\eqref{eq:mai}, and \eqref{eq:consist_hard} into \eqref{eq_formofm0}, we obtain that  
\be\label{eq:dotm0}
\dot m_0(\fa_i)=\frac{m'(\fa_i)}{n\fa_i} \sum_{j=1}^p \frac{\ell(\sigma_j)\sigma_j}{[1+m(\fa_i)\sigma_j]^2}=\frac{\wh m_i'}{n\wh \fa_i}\sum_{j=(r+1)\vee K_\e^-}^{p\wedge K_\e^+}  \frac{\ell(\wh \sigma_j)\wh \sigma_j}{(1+\wh m_i \wh\sigma_j)^2} +\oo_\P(1) .
\ee
This concludes the last estimate in \eqref{eq_consistenestimatorotherquantities}.
\end{proof}

\begin{proof}[\bf Proof of Theorem \ref{thm_consistentestimators}] 
The estimate (\ref{eq_spikedshrinkerestimationandconvergence}) follows immediately from Lemma \ref{lem_usefulquantitiesconsistentestimator}. For the estimate (\ref{eq_nonspikedshrinkerestimationandconvergence}), utilizing the eigenvalue rigidity estimate \eqref{rigidity2} and the eigenvalue sticking estimate \eqref{eq_eigenvaluesticking}, we obtain that 
\be\label{eq:est222}|\wt\lambda_i-\gamma_{i-r}|\prec n^{-2/3},\quad i \in \qq{r+1,\sfK}.\ee
Using \eqref{eq_eigenvaluesticking} again, we can derive the following expression for $\wh m(\wt\lambda_i)$ when $\eta=n^{-1/2}$:
\begin{align}
    \wh m(\wt\lambda_i) 
    &= \frac{1}{n}  \sum_{j=r+1:|j-i|\ge n^{1/4}}^n \frac{1}{\wt{\lambda}_j-\wt\lambda_i-\mathrm{i} \eta}+ \frac{1}{n}  \sum_{j=r+1:|j-i|< n^{1/4}}^n \frac{1}{\wt{\lambda}_j-\wt\lambda_i-\mathrm{i} \eta} \nonumber\\
    &=\frac{1}{n}  \sum_{j=1:|j+r-i|\ge n^{1/4}}^{n-r} \frac{1}{{\lambda}_j-\wt\lambda_i-\mathrm{i} \eta}+ \frac{1}{n}  \sum_{j=1:|j+r-i|< n^{1/4}}^{n-r} \frac{1}{{\lambda}_j-\wt\lambda_i-\mathrm{i} \eta}+\OO_\prec(n^{-1/4}) \nonumber\\
    &=\frac{1}{n}  \sum_{j=1}^{n-r} \frac{1}{{\lambda}_j-\wt\lambda_i-\mathrm{i} \eta} +\OO_\prec(n^{-1/4}) = \sm_n(\wt\lambda_i + \ii \eta)+\OO_\prec(n^{-1/4}).\label{eq:est111}
\end{align} 
In the second step, we utilize the trivial bound $|{\lambda}_j-\wt\lambda_i-\mathrm{i} \eta|^{-1}\le \eta^{-1}$ when $|j+r -i|< n^{1/4}$, and when $|j+r -i|\ge n^{1/4}$, we have \smash{$|{\lambda}_j-\wt\lambda_i|\gtrsim n^{-1/4}$} with high probability due to the eigenvalue rigidity \eqref{rigidity2}.
When combined with the averaged local law \eqref{eq:aver}, the two estimates \eqref{eq:est222} and \eqref{eq:est111} imply that 
\be\label{eq:whmlambda} \wh m(\wt\lambda_i) = m(\wt\lambda_i + \ii \eta) +\OO_\prec(n^{-1/4}) = m(\gamma_{i-r}) +\OO_\prec(n^{-1/4}),\ee
where in the second step we used the following bound for any constant $C>0$:
\be\label{eq:changem}
|m(z_1)-m(z_2)|\lesssim |z_1-z_2|^{1/2} \quad \forall \ \ z_1,z_2\in \C_+, \ C^{-1}\le |z_1|,|z_2|\le C.
\ee
(This estimate can be proven by employing the Stieltjes transform form of $m(z)$ and the square root behaviors of $\varrho$ around the spectral edges.) Similarly, using \Cref{lem_standardaniso0}, we can derive that
\be\label{eq:whmlambda0} \wh m_0 = \re m(\ii \eta) +\OO_\prec(n^{-1/2}) = m(0) +\OO_\prec(n^{-1/2}).\ee
With the estimates \eqref{eq:est222}, \eqref{eq:whmlambda}, and \eqref{eq:whmlambda0} at hand, the proof of \eqref{eq_nonspikedshrinkerestimationandconvergence} is then similar to that of the last estimate in \eqref{eq_consistenestimatorotherquantities}, and we omit the details. 
\end{proof}

\begin{proof}[\bf Proof of \Cref{rem_otherestimators}]
The estimate \eqref{eq_nonspikedshrinkerestimationandconvergence1} follows from \eqref{eq_consistenestimatorotherquantities} and \eqref{eq:m'a}--\eqref{eq:mai}. The estimate \eqref{eq_nonspikedshrinkerestimationandconvergence2} can be derived from \eqref{eq:est222}, \eqref{eq:whmlambda}, and \eqref{eq:whmlambda0}.
\end{proof}

\section{Eigenvector moment flow and proof of Lemma \ref{lem: dyn}}\label{appendix_flowequation}

In this section, we study the dynamics of the eigenvalues and eigenvectors of the rectangular DBM and establish Lemma \ref{lem: dyn}. We first introduce some new notations. We denote the processes of eigenvalues and eigenvectors as $\bm{\lambda}(\cdot) \deq (\bm{\lambda}(t))_{t \geq 0}$ and $U(\cdot)=(U(t))_{t \geq 0}$, respectively, where $\bm{\lambda}(t)=(\lambda_1(t),\ldots, \lambda_p(t))$ and $U(t)=(\bu_1(t),\ldots, \bu_p(t))$. We further denote by {$\E^{U(0)}_{\bm{\lambda}(\cdot)} (\cdot)$} the expectation with respect to the process $U(\cdot)$ conditioned on the eigenvalue process $\bm{\lambda}(\cdot)$ and the initial state $U(0)$ of the eigenvector process. 
Let $\mathsf{L}(t)$ be the generator associated with the process $U(t)$ (or equivalently the dynamics given by (\ref{dyson_evector})) conditioned on $\bm{\lambda}(\cdot)$ and $U(0)$. In other words, for any smooth functions $f: \mathbb{R}^{p^2} \rightarrow \mathbb{R}$, we have the equation
\begin{equation}\label{eq_generatordefinition}
\frac{\dd }{\dd t} \E_{\bm{\lambda}(\cdot)}^{U(0)} f \pb{U(t)}  \;=\; \E_{\bm{\lambda}(\cdot)}^{U(0)} (\mathsf{L}(t) f)\pb{U(t)}\, ,
\end{equation}
where $(\mathsf{L}(t) f)(U(t))$ represents the action of the generator on the function $f$ evaluated at $U(t)$.
Finally, we define the $\bm{\lambda}(\cdot)$-measurable $p \times p$ matrices $\Upsilon(t)=(\Upsilon_{kl}(t))$ with entries introduced in \eqref{eq_Upsilonkldefinition}. 
\begin{lemma}\label{eq_generatorlemma}
The generator $\mathsf{L}(t)$ associated with the eigenvector flow \eqref{dyson_evector} conditioned on $\bm{\lambda}(\cdot)$ and $U(0)$ is 
\begin{equation}\label{eq_ltfinalform}
\mathsf{L}(t)=\sum_{k,l = 1}^p \frac{1}{2} \Upsilon_{kl}(t) (\mathsf X_{kl})^2\,,
\end{equation}
where $\mathsf X_{kl}$ is defined as
\begin{equation}\label{eq_defnX}
\mathsf X_{kl} \;\deq\; 
\sum_{j = 1}^p \pbb{\bu_k(j) \frac{\partial}{\partial \bu_l(j)} - \bu_l(j) \frac{\partial}{\partial  \bu_k(j)}}\,.
\end{equation}
\end{lemma}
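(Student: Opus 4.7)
The plan is to derive the generator directly from the SDE \eqref{dyson_evector} via It\^o's formula, taking advantage of the fact that the eigenvalues $\bm\lambda(\cdot)$ are treated as a fixed (deterministic in the conditional sense) input. For any smooth test function $f$ on $\mathbb R^{p^2}$, I will compute $\dd f(U(t))$ by It\^o's formula, isolate the finite-variation ($\dd t$) component, and match it term-by-term with $\tfrac12\sum_{k,l}\Upsilon_{kl}(t)(\mathsf X_{kl})^2 f(U(t))$.

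First, from \eqref{dyson_evector}, the drift part of $\dd\bu_k(j)$ contributes to the generator the term
\[
-\frac{1}{2n}\sum_{l:\,l\not\sim k}\frac{\lambda_k+\lambda_l}{(\lambda_k-\lambda_l)^2}\,\bu_k(j)\,\partial_{\bu_k(j)}f,
\]
summed over $k,j$. Next, using that $(B_{ij})_{i,j\in\qq{p}}$ are independent standard Brownian motions, I will compute the quadratic covariation $\dd [\bu_k(j),\bu_{k'}(j')]$ from the martingale part of \eqref{dyson_evector}. Setting $A_{kl}:=\sqrt{\lambda_k}B_{lk}+\sqrt{\lambda_l}B_{kl}$ for $k\not\sim l$, a short computation gives
\[
\dd[A_{kl},A_{k'l'}]= (\lambda_k+\lambda_l)\bigl(\delta_{kk'}\delta_{ll'}+\delta_{kl'}\delta_{lk'}\bigr)\,\dd t,
\]
where the cross term comes from the identity $\sqrt{\lambda_k\lambda_{l'}}+\sqrt{\lambda_l\lambda_{k'}}=\lambda_k+\lambda_l$ when $k=l'$ and $l=k'$. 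Summing over $l,l'$ and using the orthogonality of eigenvectors (or rather, keeping the expressions in terms of $\bu_l(j)\bu_{l'}(j')$) then yields
\[
\dd[\bu_k(j),\bu_k(j')]=\frac{1}{n}\sum_{l:\,l\not\sim k}\frac{\lambda_k+\lambda_l}{(\lambda_k-\lambda_l)^2}\bu_l(j)\bu_l(j')\,\dd t
\]
for the diagonal case $k=k'$, and
\[
\dd[\bu_k(j),\bu_{k'}(j')]=-\frac{1}{n}\cdot\mathbf 1_{k\not\sim k'}\cdot\frac{\lambda_k+\lambda_{k'}}{(\lambda_k-\lambda_{k'})^2}\bu_{k'}(j)\bu_k(j')\,\dd t
\]
for $k\ne k'$. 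Feeding these into It\^o's formula, the generator reads
\[
\mathsf L(t)f=\sum_{k,l}\Upsilon_{kl}(t)\,\mathcal A_{kl}f,
\]
where
\[
\mathcal A_{kl}f:=\sum_{j,j'}\bu_l(j)\bu_l(j')\partial_{\bu_k(j)}\partial_{\bu_k(j')}f-\sum_j \bu_k(j)\partial_{\bu_k(j)}f-\sum_{j,j'}\bu_l(j)\bu_k(j')\partial_{\bu_k(j)}\partial_{\bu_l(j')}f.
\]

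To conclude, I will expand $(\mathsf X_{kl})^2 f$ from the definition \eqref{eq_defnX}, being careful with the chain rule on products of coordinates $\bu_k(j')$. A direct calculation for $k\ne l$ gives
\begin{align*}
(\mathsf X_{kl})^2 f=&\sum_{j,j'}\bu_k(j)\bu_k(j')\partial_{\bu_l(j)}\partial_{\bu_l(j')}f+\sum_{j,j'}\bu_l(j)\bu_l(j')\partial_{\bu_k(j)}\partial_{\bu_k(j')}f\\
&-\sum_j\bu_k(j)\partial_{\bu_k(j)}f-\sum_j\bu_l(j)\partial_{\bu_l(j)}f-2\sum_{j,j'}\bu_l(j)\bu_k(j')\partial_{\bu_k(j)}\partial_{\bu_l(j')}f.
\end{align*}
Using the symmetry $\Upsilon_{kl}=\Upsilon_{lk}$ and $(\mathsf X_{kl})^2=(\mathsf X_{lk})^2$, the summation $\tfrac12\sum_{k,l}\Upsilon_{kl}(\mathsf X_{kl})^2 f$ pairs $(k,l)$ with $(l,k)$, which exactly reproduces $\sum_{k,l}\Upsilon_{kl}\mathcal A_{kl}f$ above. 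The main bookkeeping obstacle will be these symmetrization and relabelling steps, together with verifying that the indicator $\mathbf 1_{k\not\sim l}$ built into $\Upsilon_{kl}$ is consistent with the $l\not\sim k$ restriction in the sums from \eqref{dyson_evector} (in particular, $\mathsf X_{kk}=0$ and $\Upsilon_{kk}=0$ so diagonal $k=l$ terms vanish automatically, and pairs $k,l\ge \mathsf K+1$ trivially drop out since then $\lambda_k=\lambda_l=0$ is excluded by $\Upsilon$). Once this algebra is in place, the identification $\mathsf L(t)=\tfrac12\sum_{k,l}\Upsilon_{kl}(t)(\mathsf X_{kl})^2$ follows.
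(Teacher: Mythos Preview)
Your proposal is correct and follows exactly the approach the paper indicates: apply It\^o's formula to the eigenvector SDE \eqref{dyson_evector}, compute the drift and quadratic-covariation contributions, and match against the expansion of $(\mathsf X_{kl})^2$ using the symmetry $\Upsilon_{kl}=\Upsilon_{lk}$. The paper itself omits all details and simply cites the analogous Lemma~2.4 of \cite{MR3606475}; your write-up supplies precisely those omitted computations.
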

\begin{proof}
This result can be proved in the same way as Lemma 2.4 in \cite{MR3606475}, employing (\ref{dyson_evector}) and It\^o's formula. The details are omitted.
\end{proof}


Now, we will follow the argument presented in \cite[Section 3.1]{MR3606475} to show that $\mathsf{L}(t)$ acting on $f_t(\xi)$ in \eqref{eq_momentflowflowequation2} can be interpreted as a multi-particle random walk in random environments described by the DBM of eigenvalues. This establishes the desired \emph{eigenvector moment flow} in \Cref{lem: dyn}. Now, we will derive a more general form of EMF that encompasses \Cref{lem: dyn} as a special case. 
Let $\b v_1, \dots, \b v_M$ be a deterministic sequence of orthonormal vectors in $\R^p$, where $M$ is a fixed integer. For $(k,i) \in \qq{p} \times \qq{M},$ we define the variables $z_k^i \;\deq\; \sqrt{p} \scalar{\b v_i}{\b u_k}$ and abbreviate $Z = (z_k^i \col (k,i) \in \qq{p} \times \qq{M})$. We further define the collection of ``particle configurations"
\begin{equation}\label{eq_sndefinition}
\cal S_n \;\deq\; \hbb{\xi \in \Z^{\qq{p} \times \qq{M}} \col \sum_{i = 1}^M \xi_k^i \in 2 \Z \txt{ for all $k \in \qq{p}$}}\,.
\end{equation}
The special basis elements $e_k^i$, $(k,i) \in \qq{p} \times \qq{M},$ for the configurations in $\mathcal{S}_n$ are defined as $(e_k^i)_l^j \deq \delta_{kl} \delta_{ij}$.
For each $\xi\in\mathcal{S}_n$, we assign a polynomial $P_\xi$ of the variables in $Z$, defined as follows:
\begin{equation}\label{eq_polynomialdefinition}
P_\xi \;\deq\; \prod_{k = 1}^p \prod_{i = 1}^M (z_k^i)^{\xi_k^i}
\end{equation}
if $\xi_k^i \geq 0$ for all $(k,i)\in \qq{p} \times \qq{M}$, and $P_\xi = 0$ otherwise. With the above notations, we can derive the following key result through direct calculations. 

\begin{lemma} \label{lem:jemf}
For $k,l \in \qq{p}$ and $i \in \qq{M}$, define the operator $A_{kl}^i \col \cal S_n \to \cal S_n$ through
\begin{equation}\label{eq_operatorA}
A_{kl}^i \xi \;\deq\; \xi - e_k^i + e_l^i\,.
\end{equation}
Then, we have that 
\begin{align} \label{L_Pxi}
\mathsf{L}(t) P_\xi
\;=\; &\sum_{k,l} \Upsilon_{kl}(t)  \sum_i \qB{\xi_k^i (\xi_k^i - 1) P_{A_{kl}^i A_{kl}^i\xi}  - \xi_k^i (\xi_l^i + 1) P_\xi} \nonumber \\
+& \sum_{k,l} \Upsilon_{kl}(t) \sum_{i \neq j} \qB{ \xi_k^i \xi_k^j P_{A_{kl}^i A_{kl}^j \xi}
- \xi_k^i \xi_l^j P_{A_{kl}^i A_{lk}^j \xi}}\,.
\end{align}
\end{lemma}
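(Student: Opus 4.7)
The plan is to establish \eqref{L_Pxi} by a direct computation: apply the first-order derivation $\mathsf{X}_{kl}$ twice to $P_\xi$, then symmetrize in $(k,l)$. I would first observe that, by the chain rule together with the orthonormality of $\{\bv_i\}$ and the definition $z_m^i = \sqrt{p}\sum_j \bv_i(j)\bu_m(j)$, one has the elementary identity
\begin{equation*}
\mathsf{X}_{kl}\, z_m^i \;=\; \delta_{lm}\, z_k^i - \delta_{km}\, z_l^i.
\end{equation*}
Since $\mathsf{X}_{kl}$ is a derivation and the factors of $P_\xi = \prod_{m,i}(z_m^i)^{\xi_m^i}$ not indexed by $m\in\{k,l\}$ are annihilated, the Leibniz rule yields
\begin{equation*}
\mathsf{X}_{kl} P_\xi \;=\; \sum_i \bigl[-\xi_k^i\, P_{A_{kl}^i \xi} \;+\; \xi_l^i\, P_{A_{lk}^i \xi}\bigr].
\end{equation*}

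The second step is to apply $\mathsf{X}_{kl}$ once more to each of the two families above, splitting the inner summation index into a diagonal part ($j = i$) and an off-diagonal part ($j \neq i$). For the diagonal contributions I use the key identities $A_{kl}^i A_{lk}^i \xi = A_{lk}^i A_{kl}^i \xi = \xi$, since one move immediately cancels the other; these account for the $P_\xi$ coefficients in \eqref{L_Pxi}, while the remaining $j = i$ contributions produce the ``double-move'' terms $P_{A_{kl}^i A_{kl}^i \xi}$ and $P_{A_{lk}^i A_{lk}^i \xi}$. For the off-diagonal part, I use that $A_{\bullet\bullet}^i$ and $A_{\bullet\bullet}^j$ commute when $i \neq j$ (they act on independent colors) and that $\xi_k^j, \xi_l^j$ are unchanged by operations of color $i$; this produces four cross-terms of the shape $\pm \xi_{\bullet}^i \xi_{\bullet}^j\, P_{\cdot}$, with signs dictated by the one-step formula.

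Finally, I would multiply by $\tfrac{1}{2}\Upsilon_{kl}(t)$ and sum over $k,l \in \qq{p}$. Using the symmetry $\Upsilon_{kl} = \Upsilon_{lk}$, the involution $(k,l) \leftrightarrow (l,k)$ pairs each diagonal term with its mirror partner---for example, $\xi_k^i(\xi_k^i - 1)\, P_{A_{kl}^i A_{kl}^i \xi}$ with $\xi_l^i(\xi_l^i - 1)\, P_{A_{lk}^i A_{lk}^i \xi}$, and $-\xi_k^i(\xi_l^i + 1)\, P_\xi$ with $-\xi_l^i(\xi_k^i + 1)\, P_\xi$---and analogously identifies the first cross-term with the fourth, and the second with the third. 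This symmetrization absorbs the prefactor $\tfrac{1}{2}$ and leaves precisely the right-hand side of \eqref{L_Pxi}. The argument is conceptually routine but combinatorially dense: the main friction lies in the second application of $\mathsf{X}_{kl}$, where one must carefully track the eight monomials produced along with their signs and color-orderings. Since operators of distinct colors commute, the final identifications are unambiguous and no estimates beyond the derivation property are needed.
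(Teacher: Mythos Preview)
Your proposal is correct and follows essentially the same approach as the paper: both compute $(\mathsf X_{kl})^2 P_\xi$ directly via the chain rule (the paper records the equivalent identity $\sum_j \bu_k(j)\partial_{\bu_l(j)} = \sum_i z_k^i \partial_{z_l^i}$ in place of your $\mathsf X_{kl}z_m^i = \delta_{lm}z_k^i - \delta_{km}z_l^i$), separate the diagonal $(j=i)$ and off-diagonal $(j\ne i)$ contributions, and then symmetrize in $(k,l)$ using $\Upsilon_{kl}=\Upsilon_{lk}$ to obtain \eqref{L_Pxi}. Your two-step application of $\mathsf X_{kl}$ is a minor organizational variant of the paper's one-shot computation of $(\mathsf X_{kl})^2 P_\xi$.
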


\begin{proof}
For any differentiable function $f(Z)$ of $Z$, by applying the chain rule to $z_k^i=\sqrt{p} \sum_{j=1}^p \b v_i(j) \b u_k(j)$, we obtain that 
\begin{equation}\label{eq_chain}
\sum_{j = 1}^p \bu_k(j) \frac{\partial}{\partial \bu_l(j)} f \;=\; \sum_{i = 1}^M z_k^i \frac{\partial}{\partial z_l^i} f\,.
\end{equation}
On the other hand, using the definition in (\ref{eq_polynomialdefinition}), we obtain that  
\begin{equation}\label{eq_pderivative}
z_k^i \frac{\partial}{\partial z_l^i} P_\xi \;=\; \xi_l^i P_{\xi + e_k^i - e_l^i}\,.
\end{equation}
Now, using the above two identities, we can derive the following equation for $k \not\sim l$:
\begin{align*}
(\mathsf X_{kl})^2 P_\xi \;=\;& \sum_i \Bigl[\xi_k^i (\xi_k^i - 1) P_{\xi + 2 e_l^i - 2 e_k^i} + \xi_l^i (\xi_l^i - 1) P_{\xi + 2e_k^i - 2e_l^i}
- \pb{\xi_k^i (\xi_l^i + 1) + \xi_l^i (\xi_k^i + 1)} P_\xi \Bigr]
\\
+& \sum_{i \neq j} \Bigl[ \xi_l^i \xi_l^j P_{\xi + e_k^i + e_k^j - e_l^i - e_l^j}+
\xi_k^i \xi_k^j P_{\xi + e_l^i + e_l^j - e_k^i - e_k^j}
- \xi_l^i \xi_k^j P_{\xi + e_k^i + e_l^j - e_l^i - e_k^j}
- \xi_k^i \xi_l^j P_{\xi + e_l^i + e_k^j - e_k^i - e_l^j}
\Bigr]\,.
\end{align*}
Finally, utilizing the symmetry $\Upsilon_{kl}(t)=\Upsilon_{lk}(t)$, this equation yields that
\begin{align*}
\mathsf{L}(t) P_\xi \;=\; &\sum_{k,l} \Upsilon_{kl}(t) \sum_i \qB{\xi_k^i (\xi_k^i - 1) P_{\xi + 2 e_l^i - 2 e_k^i}  - \xi_k^i (\xi_l^i + 1) P_\xi}
\\
+ &\sum_{k,l} \Upsilon_{kl}(t) \sum_{i \neq j} \qB{ \xi_k^i \xi_k^j P_{\xi + e_l^i + e_l^j - e_k^i - e_k^j}
- \xi_k^i \xi_l^j P_{\xi + e_l^i + e_k^j - e_k^i - e_l^j}}\,.
\end{align*}
We then complete the proof by substituting the definition (\ref{eq_operatorA}) into this equation. 
\end{proof}
\begin{remark}\label{rem:color}
In light of Lemma \ref{lem:jemf}, the generator $\mathsf{L}(t)$ acting on $P_\xi$ can interpreted as a colored multi-particle random walk in random environments. In this process, particles move on the lattice $\qq{p}$, with each particle carrying a color from the set $\qq{M}$. The particles at each site in $\qq{p}$ are unordered.
The particle configuration is encoded by $\xi \in \cal S_n$, where $\xi_k^i$ represents the number of particles of color $i$ at site $k$. The operator $A_{kl}^i$ describes the jump of a particle with color $i$ from site $k$ to site $l$. 
Thus, on the RHS of \eqref{L_Pxi}, the first term corresponds to moving two particles of the same color from site $k$ to site $l$, the third term represents moving two particles of different colors from site $k$ to site $l$, and the last term involves exchanging two particles of different colors between site $k$ and site $l$. The second term on the right-hand side of \eqref{L_Pxi} ensures that $\mathsf L$ acts as the generator of a continuous-time Markov process. Note that under the dynamics described by \eqref{L_Pxi}, every site $k$ always contains an even number of particles, and the total number of particles of a given color remains conserved. 
By applying the techniques developed in \Cref{appendix_maximumestimation} below, we can extend our analysis to the more intricate case of a colored EMF, leading to the derivation of the more general results presented in \eqref{eq_coloredresult}. However, in the interest of simplicity, we do not delve into these details within the scope of this paper and defer their exploration to future work.

\end{remark}


\begin{proof}[\bf Proof of \Cref{lem: dyn}] 
Note that $f(\xi)$ can be written as 
$
f_t(\xi) \;\deq\; \E_{\bm{\lambda}(\cdot)}^{U(0)} Q_\xi(U(t)) \,,
$
where $Q_\xi$ is defined as
\begin{equation*}
Q_\xi \;\deq\; P_\xi \prod_{k = i_0}^{\sfK} \frac{1}{(2\xi(k)-1)!!} 
,\quad  \text{with}\quad P_\xi \;=\; \prod_{k = i_0}^{\sfK} z_k(t)^{2 \xi(k)}\,.
\end{equation*} 
Then, \Cref{lem: dyn} is a simple consequence of Lemmas \ref{eq_generatorlemma} and \ref{lem:jemf} by taking $M=1$ and $\xi_k^1=2\xi(k)$.  
\end{proof}

\section{Proof of Theorem \ref{main result: Jia}}\label{appendix_maximumestimation}


This section is dedicated to proving a key technical result of this paper---Theorem \ref{main result: Jia}. As demonstrated in Lemma \ref{lem: dyn} and \Cref{appendix_flowequation}, the EMF $f_t(\xi)$ can be viewed as a multi-particle random walk in a random environment with generator \eqref{eq_ltfinalform}. 
Through a careful analysis of the EMF, we will show that $f_t(\xi)$ relaxes to the ``equilibrium state" $g_t(\xi,\e_1)$ on the time scale $t\gg n^{-1/3}$. Towards the end of this section, we will also briefly discuss how to extend our arguments to the scenario where the eigenvectors are projected onto multiple distinct directions.

The following proposition is an analog of Proposition 3.2 in \cite{MR3606475}. Recall that a measure $\pi$ on the configuration space is said to be \emph{reversible} with respect to a generator $\mathsf{L}$ if, for any functions $\mathrm{f}$ and $\mathrm{g}$, the following equality holds:
\begin{equation*}
\sum_{\xi} \pi(\xi) \mathrm{g}(\xi) \mathsf{L} \mathrm{f}(\xi)=\sum_{\xi} \pi(\xi) \mathrm{f}(\xi)\mathsf{L} \mathrm{g}(\xi).
\end{equation*} 
We then define the Dirichlet form with respect to the reversible measure $\pi$ as
\begin{equation*} 
\mathsf{D}^{\pi}(\mathrm{f})=-\sum_{\xi} \pi(\xi) \mathrm{f}(\xi) \mathsf{L} \mathrm{f}(\xi). 
\end{equation*}
\begin{proposition}[Proposition 3.2 of \cite{MR3606475}]\label{prop_revibility} Define the measure $\pi$ on the configuration space by assigning the following weight to each $\xi=(\xi_1, \cdots, \xi_p)$: 
\begin{equation}\label{eq_defnpixi}
\pi(\xi)= \prod_{i=1}^p \mathfrak{L}(\xi_i), \quad \mathfrak{L}(k):=\prod_{i=1}^k (1-(2i)^{-1}).
\end{equation}
Then, $\pi$ is a reversible measure for $\mathscr B$ defined in (\ref{eq_generatorb}), yielding that
$$ -\sum_{\xi} \pi(\xi) \mathrm{g}(\xi) \mathscr B\mathrm{f}(\xi)=\frac{1}{2}\sum_{\xi}\pi(\xi)\sum_{l \not\sim k} \Upsilon_{kl}\xi(k) (1 + 2 \xi(l)) \pb{\mathrm f(\xi^{k \to l}) - \mathrm f(\xi)}\pb{\mathrm g(\xi^{k \to l}) - \mathrm g(\xi)}.$$
In addition, the reversibility of $\pi$ remains valid for the generator $\mathscr B'$ obtained by replacing $\Upsilon$ in $\mathscr B$ with any other symmetric matrix $\Upsilon'$, i.e., $\Upsilon_{kl}'=\Upsilon_{lk}'$. 
\end{proposition}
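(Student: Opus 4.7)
\textbf{Proof proposal for Proposition \ref{prop_revibility}.}

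The plan is to verify the detailed balance condition for $\mathscr{B}$ with respect to $\pi$, and then derive the stated Dirichlet form identity by a standard symmetrization argument. Throughout, I use the fact that $\Upsilon_{kl} = \Upsilon_{lk}$ and that $\xi^{k\to l}$ reverses to $\xi$ via a jump from $l$ to $k$.

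First I would compute the ratio $\pi(\xi^{k\to l})/\pi(\xi)$. Since $\xi^{k\to l}$ differs from $\xi$ only at sites $k$ and $l$ with $\xi^{k\to l}(k)=\xi(k)-1$ and $\xi^{k\to l}(l)=\xi(l)+1$ (on the event $\xi(k)\ge 1$), the product structure of $\pi$ gives
\begin{equation*}
\frac{\pi(\xi^{k\to l})}{\pi(\xi)} \;=\; \frac{\mathfrak L(\xi(k)-1)}{\mathfrak L(\xi(k))}\cdot \frac{\mathfrak L(\xi(l)+1)}{\mathfrak L(\xi(l))} \;=\; \frac{2\xi(k)}{2\xi(k)-1}\cdot \frac{2\xi(l)+1}{2\xi(l)+2},
\end{equation*}
using the telescoping identities $\mathfrak L(m)/\mathfrak L(m-1)=1-(2m)^{-1}$. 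Multiplying by the outgoing rate $c(\xi,\xi^{k\to l}):= \Upsilon_{kl}\,\xi(k)(1+2\xi(l))$ from $\mathscr B$, a short algebraic simplification yields
\begin{equation*}
\pi(\xi)\,c(\xi,\xi^{k\to l}) \;=\; \pi(\xi^{k\to l})\,\Upsilon_{lk}\,(\xi(l)+1)(2\xi(k)-1) \;=\; \pi(\xi^{k\to l})\,c(\xi^{k\to l},\xi),
\end{equation*}
where in the last equality I use that the transition $\xi^{k\to l}\to (\xi^{k\to l})^{l\to k}=\xi$ has rate $\Upsilon_{lk}\cdot \xi^{k\to l}(l)(1+2\xi^{k\to l}(k))=\Upsilon_{lk}(\xi(l)+1)(2\xi(k)-1)$. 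This is precisely the detailed balance condition between $\xi$ and $\xi^{k\to l}$, and it establishes reversibility of $\pi$ with respect to $\mathscr B$.

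Next I would derive the Dirichlet form identity. Writing
\begin{equation*}
-\sum_{\xi}\pi(\xi)g(\xi)\mathscr B f(\xi) \;=\; -\frac{1}{2}\sum_{\xi}\pi(\xi)g(\xi)\sum_{l\not\sim k}\Upsilon_{kl}\,2\xi(k)(1+2\xi(l))\bigl(f(\xi^{k\to l})-f(\xi)\bigr),
\end{equation*}
I would split the inner sum into the two terms $f(\xi^{k\to l})$ and $-f(\xi)$, then in the first term substitute $\eta=\xi^{k\to l}$ as a new summation variable. Detailed balance converts $\pi(\xi)g(\xi)\cdot c(\xi,\xi^{k\to l})$ into $\pi(\eta)g(\xi)\cdot c(\eta,\eta^{l\to k})$, after which relabeling the dummy pair $(k,l)\leftrightarrow (l,k)$ and using $\Upsilon_{kl}=\Upsilon_{lk}$ produces exactly the symmetric expression in the statement. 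Finally, since the derivation only used symmetry of the coefficient matrix, the same argument applies verbatim with $\Upsilon$ replaced by an arbitrary symmetric $\Upsilon'$, giving the last claim.

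The only nontrivial point is the algebraic verification of detailed balance, which reduces to the identity
\begin{equation*}
\xi(k)(2\xi(l)+1) \;=\; \frac{2\xi(k)(2\xi(l)+1)(\xi(l)+1)(2\xi(k)-1)}{(2\xi(k)-1)(2\xi(l)+2)},
\end{equation*}
an immediate cancellation. I expect no substantial obstacle beyond bookkeeping; the definition of $\mathfrak L$ is precisely tuned so that the ratio $\pi(\xi^{k\to l})/\pi(\xi)$ cancels the asymmetry between the rates $\xi(k)(1+2\xi(l))$ and $(\xi(l)+1)(2\xi(k)-1)$.
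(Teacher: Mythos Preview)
Your argument is correct: the detailed balance computation and the subsequent symmetrization are exactly the standard proof of this fact. Note, however, that the paper does not give its own proof of this proposition; it simply cites it as Proposition~3.2 of \cite{MR3606475}. Your write-up is essentially the argument from that reference, so there is nothing to compare beyond observing that you have reproduced the expected proof.
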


Without loss of generality, in the following proof, we will focus on the case where
\be\label{eq_simplification}
i_0=1\quad \text{and}\quad c_n<1. 
\ee
In simpler terms, we assume that our model is free from outliers or trivial eigenvalues at 0. 
In fact, under \Cref{main_assumption}, the outlier eigenvalues $\wt\lambda_i$, $i\in \qq{r}$, and the trivial eigenvalues at 0 are away from \smash{$\{\wt\lambda_i\}_{i=i_0}^{\sfK}$} with a distance of order 1. Leveraging the finite speed of propagation estimate established in Lemma C.14 of \cite{9779233}, we can show that the influence of the eigenvalues/eigenvectors corresponding to the outliers and the trivial zero eigenvalues on $f_t(\xi)$ is exponentially small. Consequently, the proof of the general case without assuming \eqref{eq_simplemodel} can be concluded by following the arguments outlined in Remarks B.15 and C.2 of \cite{9779233}.

Given a small constant $\delta>0,$ we introduce non-negative cutoff functions $\theta_{-,\delta}$ supported on $[\lambda_-/3, \infty)$ and  ${\theta}_{+,\delta}$ supported on $(-\infty, 3 \lambda_+]$, satisfying the following properties:
\begin{equation}\label{eq_propertyonestart}
\theta_{-,\delta}(x)=
\begin{cases}
0, &  x<\lambda_-/3 \\
1, & x\geq 2\lambda_-/3
\end{cases},\qquad \theta_{+,\delta}=
\begin{cases}
0, & x>3 \lambda_+ \\
1, & x \leq 2 \lambda_+
\end{cases};
\end{equation}
the derivatives of $\theta_{-}$ and $\theta_{+}$ satisfy that for all $x \in \mathbb{R}$,
\begin{equation} \label{eq_propertyoneend}
\begin{aligned}
 0\le \theta_{-,\delta}'(x)\lesssim n^{\delta}\theta_{-,\delta}(x)+\exp(-n^{\delta/2}), \quad    |\theta_{-,\delta}''(x)|\lesssim n^{2\delta}\theta_{-,\delta}(x)+\exp(-n^{\delta/2}),\\
 0\le -\theta_{+,\delta}'(x)\lesssim n^{\delta}\theta_{+,\delta}(x)+\exp(-n^{\delta/2}), \quad    |\theta_{+,\delta}''(x)|\lesssim n^{2\delta}\theta_{+,\delta}(x)+\exp(-n^{\delta/2}). 
\end{aligned}
\end{equation}
As an example, we can consider the function $\theta_{-,\delta}$ defined as
$$ \theta_{-,\delta}(x)=\frac{1}{2}\exp\left( n^\delta(x-\lambda_-/2)\right) \quad \text{for}\quad \lambda_-/2-n^{-\delta/3} \le x \le \lambda_-/2.$$
Then, we properly choose the value of $\theta_{-,\delta}$ on $[\lambda_-/3,\lambda_-/2-n^{-\delta/3}]$ and $[\lambda_-/2,2\lambda_-/3]$ such that \eqref{eq_propertyonestart} and \eqref{eq_propertyoneend} hold. We can choose the function $\theta_{+,\delta}$ in a similar way. We are now prepared to prove Theorem \ref{main result: Jia}.


\begin{proof}[\bf Proof of Theorem \ref{main result: Jia}]

Let $\e>0$ be a sufficiently small constant. We choose $g_t(\xi,\e)$ as defined in (\ref{eq_onecolorgt}), along with the cutoff functions $\theta_{\pm,\e}$. Next, we introduce the function 
  \begin{equation}\label{eq_Ft}
  F_t\equiv F_t(L,\mathsf q, \e):=   \theta_{-,\e}(\lambda_{\mathsf K}(t))\theta_{+,\e}(\lambda_1(t))\sum_{|\xi|=L}  \pi (\xi) \wt f_t(\xi,\e)^{\mathsf{q}},
 \end{equation}
  where $\mathsf{q} \in 2\N$ is an even integer, $\pi$ is defined in (\ref{eq_defnpixi}), and 
  \begin{equation*}
  \wt f_t(\xi,\e): = f_t (\xi)-g_t(\xi,\e) . 
  \end{equation*}
To conclude the proof, it suffices to show that for any fixed $K\in \N$ and $\mathsf{p}\in 2\N$, there exist constants $0<\e<\tilde \e$ such that 
the following estimate holds with high probability:
\be\label{xiangqilai}
\sup_{t\in [T_1/2, T_2]}F_t(K,\mathsf p,\tilde \e)\lesssim n^{-\e \mathsf{p}+K} . 
\ee 
Given constants $0<\e'<\e$, $L\in \N$, and $\mathsf q\in 2\N$, we define the function $\cal A_t(L,\mathsf q, \e,\e'):= \log (F_t(L,\mathsf q, \e)+n^{-\e' \mathsf{q}+L})$. 
Then, the estimate (\ref{xiangqilai}) can be derived from the following technical lemma on $\cal A_t$, the proof of which is deferred to Section \ref{sec_keycontrollemmasection}. 





\begin{lemma}\label{lem_keycontrollemma} 
Fix any $L\in \N$. Under the assumptions of Theorem \ref{main result: Jia}, suppose there exist constants $\tilde \e>0$, $\e\in (0,\min\{\tilde \e, 1/12\})$, and a fixed integer $\mathsf p_0\in 2\N$, such that \eqref{xiangqilai} holds with high probability, uniformly in $t \in [T_1/2, T_2]$, for $K= L-1$ and any fixed even integer $\mathsf p\ge \mathsf p_0$. Then, there exists a constant $\e'' \in (0,\min\{\e,3\mathsf c/8\})$ and a fixed $\mathsf q_0\in 2\N$ such that the function $\cal A_t(L,\mathsf q, \e,\e')$ satisfies the following SDE for any fixed $0<\e'\le \e''$ and $\mathsf q\ge \mathsf q_0$:
\begin{equation}\label{eq:A_sde}
\rd  \cal A_t(L,\mathsf q, \e,\e')= S(t) \dd t +  \sigma (t)\rd B(t),
\end{equation}
where $B(t)$ denotes a standard Brownian motion, $S(t)$ can be expressed as
\begin{equation*}
S(t)=-C(t) \frac{F_t}{F_t+n^{-\e' \mathsf{q}+L}} - s_+(t) + \cal E(t), 
\end{equation*}
and the following estimates hold with high probability, uniformly in $t\in [T_1/2, T_2]$: $\sigma(t)=\OO(n^{-1/2+2\e})$, $C(t)\gtrsim n^{1/3-\mathsf c/4}$, $s_+(t)\ge 0$, and $\cal E(t)=\OO(n^{2\e})$.
\end{lemma}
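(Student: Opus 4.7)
\noindent\textbf{Plan for the proof of Lemma \ref{lem_keycontrollemma}.} The plan is to apply Itô's formula to $\cal A_t(L,\mathsf q,\e,\e') = \log(F_t + n^{-\e'\mathsf q+L})$ and to identify a strictly negative drift term of size $\gtrsim n^{1/3-\mathsf c/4}\cdot F_t/(F_t+n^{-\e'\mathsf q+L})$, while controlling all remaining contributions as either non-positive ($-s_+(t)$), admissible errors ($\cal E(t)=\OO(n^{2\e})$), or a diffusion coefficient of size $\OO(n^{-1/2+2\e})$. The differential of $F_t$ splits naturally according to the dependencies of its various factors: the moment-flow equation \eqref{moment_flow} governs the evolution of $f_t(\xi)$ conditional on the eigenvalue process, whereas $g_t(\xi,\e)$ and the cutoffs $\theta_{\pm,\e}(\lambda_{\cdot}(t))$ evolve through the DBM \eqref{sLa}, contributing both a drift and a martingale part via Itô. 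Writing $\wt f_t = f_t - g_t$, the key identity is
\begin{equation*}
\partial_t \wt f_t(\xi) = \mathscr B(t)\wt f_t(\xi) + \bigl[\mathscr B(t) g_t(\xi,\e) - \partial_t g_t(\xi,\e)\bigr],
\end{equation*}
where the bracketed remainder will be shown to be small because $g_t$ is, to leading order, an ``equilibrium'' for $\mathscr B(t)$.

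\noindent The heart of the argument is the negative-drift term. Differentiating $F_t$ in the variable $\wt f_t$ gives a factor $\mathsf q\,\pi(\xi)\wt f_t^{\,\mathsf q-1}\mathscr B(t)\wt f_t$, and summing over $\xi$ with the reversible measure $\pi$ from \Cref{prop_revibility} yields
\begin{equation*}
\mathsf q\sum_{|\xi|=L}\pi(\xi)\wt f_t^{\,\mathsf q-1}\mathscr B(t)\wt f_t = -\frac{\mathsf q}{2}\sum_{|\xi|=L}\pi(\xi)\sum_{l\not\sim k}\Upsilon_{kl}(t)\,\xi(k)(1+2\xi(l))\bigl[\wt f_t(\xi^{k\to l})-\wt f_t(\xi)\bigr]\bigl[\wt f_t^{\,\mathsf q-1}(\xi^{k\to l})-\wt f_t^{\,\mathsf q-1}(\xi)\bigr],
\end{equation*}
which, by the elementary inequality $(a-b)(a^{\mathsf q-1}-b^{\mathsf q-1})\ge c_{\mathsf q}(a^{\mathsf q/2}-b^{\mathsf q/2})^2$, is bounded above by $-c_{\mathsf q}\,\mathsf D^{\pi}(\wt f_t^{\,\mathsf q/2})$. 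The Dirichlet form will be lower bounded via a Nash/Poincaré-type inequality for the configuration space: because the entries of $\Upsilon(t)$ near the edge are of order $\Upsilon_{kl}(t)\asymp n^{-1}(\lambda_k(t)-\lambda_l(t))^{-2}\asymp n^{1/3}$ for neighbouring edge indices with high probability (thanks to the rigidity estimate \eqref{TAZ2}), this produces a quantitative relaxation rate $C(t)\gtrsim n^{1/3-\mathsf c/4}$ multiplying $F_t$, which after dividing by $F_t+n^{-\e'\mathsf q+L}$ yields exactly the claimed drift $-C(t)\,F_t/(F_t+n^{-\e'\mathsf q+L})$. The technical slack $n^{-\mathsf c/4}$ absorbs minor losses from replacing random $\Upsilon_{kl}(t)$ by their deterministic counterparts on the rigidity event.

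\noindent The error term $\cal E(t)$ collects the remaining contributions: (a) the mismatch $\mathscr B(t)g_t-\partial_t g_t$, which by the product structure $g_t(\xi,\e)=\prod_k\phi(\lambda_k(t),\e)^{\xi(k)}$ reduces to discrete second differences of $\phi$ evaluated at neighbouring eigenvalues, estimated via the bound \eqref{xiaoy} together with rigidity; (b) boundary contributions coming from derivatives of the cutoffs $\theta_{\pm,\e}$, which by \eqref{eq_propertyoneend} and the rigidity of $\lambda_1(t)$ and $\lambda_{\sfK}(t)$ are exponentially small in $n^{\e/2}$; (c) lower-order polynomial cross-terms in $\wt f_t^{\,\mathsf q-1}\cdot(\text{error})$ arising when expanding; these will be controlled by a Young/Hölder-type splitting, with the small factor absorbed into the admissible drift $-s_+(t)$ and the residual bounded by invoking the inductive hypothesis \eqref{xiangqilai} at level $K=L-1$ for a sufficiently large power $\mathsf p$. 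This last step is precisely where the induction on $L$ enters, and it fixes the thresholds $\mathsf q_0$ and $\e''$; in particular $\e''<3\mathsf c/8$ is needed to ensure the induction gain beats the $n^{-\mathsf c/4}$ slack in $C(t)$.

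\noindent Finally, the martingale term $\sigma(t)\dd B(t)$ has quadratic variation
\begin{equation*}
\dd\langle \cal A\rangle_t = \frac{\dd\langle F\rangle_t}{(F_t+n^{-\e'\mathsf q+L})^2},\qquad \dd\langle F\rangle_t = \sum_{k=1}^p \Bigl(\partial_{\lambda_k}F_t\Bigr)^2 \cdot \frac{4\lambda_k(t)}{n}\,\dd t,
\end{equation*}
from \eqref{sLa}, and each derivative $\partial_{\lambda_k}F_t$ picks up either a logarithmic derivative of $\phi(\lambda_k,\e)$ of size $\OO(n^\e)$ from \eqref{xiaoy} or an exponentially small derivative of the cutoff; squaring and dividing by $(F_t+n^{-\e'\mathsf q+L})^2$ yields $\sigma(t)^2=\OO(n^{-1+4\e})$ as required. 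The main obstacle I anticipate is the relaxation estimate for the Dirichlet form with the correct rate $n^{1/3-\mathsf c/4}$: one must show a configuration-space Poincaré inequality that is uniform over the rigidity event and gains the full edge scaling, which is where the argument is most delicate and where the hypothesis $\e<1/12$ together with the time scale $t\asymp n^{-1/3+\mathsf c}$ is used to ensure that $\Upsilon_{kl}(t)$ concentrates on its deterministic profile with a window compatible with \eqref{TAZ2}.
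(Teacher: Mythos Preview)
Your scaffolding (It\^o on $\cal A_t$, handling the cutoffs via \eqref{eq_propertyoneend}, bounding the martingale via \eqref{xiaoy}) matches the paper, but the central step---producing the negative drift $-C(t)F_t/(F_t+n^{-\e'\mathsf q+L})$ with $C(t)\gtrsim n^{1/3-\mathsf c/4}$---contains a genuine gap.

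You propose to bound $\mathsf q\sum_\xi\pi(\xi)\wt f_t^{\,\mathsf q-1}\mathscr B(t)\wt f_t\le -c_{\mathsf q}\,\mathsf D^\pi(\wt f_t^{\,\mathsf q/2})$ via reversibility and then lower bound the Dirichlet form by $C(t)F_t$ through a Nash/Poincar\'e inequality. This step fails: a Poincar\'e inequality controls $\mathsf D^\pi(h)$ by the \emph{variance} of $h$, not by $\|h\|_{L^2(\pi)}^2$. Nothing in your argument rules out $\wt f_t$ being (nearly) constant in $\xi$, in which case $\mathsf D^\pi(\wt f_t^{\,\mathsf q/2})\approx 0$ while $F_t$ is large. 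In the null case $g\equiv 1$ this obstruction can be sidestepped by a maximum principle, but here $g_t(\xi,\e)=\prod_k\phi(\lambda_k,\e)^{\xi(k)}$ genuinely depends on $\xi$, and no configuration-space spectral inequality alone distinguishes $\wt f_t\equiv c$ from $\wt f_t\equiv 0$.

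The paper's route is completely different, and the induction hypothesis at level $L-1$ is used for the \emph{main} drift rather than merely for error terms. After introducing a smoothing scale $\eta=n^{-2/3+\mathsf c/2}$ and splitting $\Upsilon_{kl}$ into long- and short-range parts via $\mathrm T_{kl}=(\lambda_k-\lambda_l)^2/[(\lambda_k-\lambda_l)^2+\eta^2]$, the dominant long-range piece is \emph{not} symmetrized into a Dirichlet form. Instead the explicit formula \eqref{eq:ftxikl} for $f(\xi^{k\to l})$ is used to extract a factor $p|\langle\bv,\bu_l\rangle|^2$; summing $\sum_l(\lambda_k+\lambda_l)|\langle\bv,\bu_l\rangle|^2/[(\lambda_k-\lambda_l)^2+\eta^2]$ rebuilds $2\lambda_k\eta^{-1}\im(\bv^\top\cal G_{1,t}(\lambda_k+\ii\eta)\bv)$, and the anisotropic local law \eqref{TAZ} gives
\[
\sum_{l}\Upsilon_{kl}\mathrm T_{kl}\,\xi(k)(1+2\xi(l))\bigl[f(\xi^{k\to l})-f(\xi)\bigr]\ \approx\ \xi(k)\,\frac{c_n\lambda_k\im m_{1,t}(z_k)}{\eta}\bigl[\varphi_t(z_k)\,f(\xi-\b e_k)-f(\xi)\bigr].
\]
Now the level-$(L{-}1)$ induction gives $f(\xi-\b e_k)=g(\xi-\b e_k)+\OO_\prec(n^{-\e})$, and since $\varphi_t(z_k)=\phi(\lambda_k,\e)+\OO(n^{-\e})$ the bracket becomes $g(\xi)-f(\xi)+\OO_\prec(n^{-\e}+n^{-3\mathsf c/8})=-\wt f(\xi)+\OO_\prec(\cdot)$. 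Multiplying by $\wt f(\xi)^{\mathsf q-1}$ and using $\im m_{1,t}(z_k)/\eta\gtrsim\eta^{-1/2}=n^{1/3-\mathsf c/4}$ (from \eqref{eq:Imm}) yields the claimed drift; reversibility (\Cref{prop_revibility}) is invoked only for the short-range remainder. In short: both the rate $n^{1/3-\mathsf c/4}$ and the fact that the drift points toward $g_t$ come from the local law combined with the $L{-}1$ induction, not from any spectral gap of the particle dynamics.
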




We now proceed to prove (\ref{xiangqilai}) by utilizing Lemma \ref{lem_keycontrollemma}. First, we note that \eqref{xiangqilai} trivially holds for $K=0$ since $F_t(0,\mathsf p,\e_0)=0$. Now, suppose \eqref{xiangqilai} holds for $K=L-1$ and any fixed even integer $\mathsf p\ge \mathsf p_0$. Then, by applying \Cref{lem_keycontrollemma}, we can deduce that the equation \eqref{eq:A_sde} holds for some constant $\e'>0$ and $\mathsf q\ge \mathsf q_0$. 
By applying the Burkholder-Davis-Gundy inequality to the diffusion term in \eqref{eq:A_sde}, we obtain that the following event holds with high probability:
\be\label{eq:event_integration0}
\left\{\cal A_t  \le  {\cal A}_{s} -\al \int_{s}^t \frac{F_{t'}  }{F_{t'} + n^{-\e' \mathsf{q}+L}}\rd t' + \OO\left(n^{-{1}/{3}+2\e+\mathsf c }\right) \   \forall t,s\in [T_1/2, T_2]\right\},
\ee
where $\al=cn^{1/3-\mathsf c/4}$ for a small constant $c>0$. Note that, according to the definition of $\cal A_t$, we have the deterministic rough bound $\cal A_{s}\le C\log n$ for a constant $C>0$. Therefore, the above event implies that
\be\label{eq:event_integration}
\left\{  {\cal A}_t  \le \min ( {\cal A}_{T_1/2}, C \log n)-\al \int_{T_1/2}^t \frac{F_s }{F_s + n^{-\e' \mathsf{q}+L}} \rd s +\oo(1) \ \forall t\in [T_1/2, T_2]\right\}
\ee 
holds with high probability. Now, we define the following stopping time (with the convention $\inf \emptyset=T_2$):
$$\tilde t:=(T_1/2)\vee \inf\{ t\in [T_1/2, T_2]:\cal A_t\le \log(2n^{-\e'\mathsf{q}+L})\}.$$ 
Noticing that ${F_t }/(F_t + n^{-\e' \mathsf{q}+L})\ge 1/2$ for $t\in [T_1/2,\tilde t]$, on the event \eqref{eq:event_integration}, we have  
\begin{equation*}
    \cal A_{\tilde t}\le \min(\cal A_{T_1/2},C\log n)-\frac{\alpha}{2} (\tilde t -T_1/2) + \oo(1)\quad \text{w.h.p.}
\end{equation*}
Since $\alpha T_1/2\gtrsim n^{3\mathsf c/4}\gg \log n$, this estimate implies that 
\be\label{eq:wtt}
\tilde t \le T_1 \quad \text{w.h.p.} 
\ee
Now, we apply this fact to \eqref{eq:event_integration0} with $s=\tilde t$ and conclude that 
\begin{equation}\label{eq_boundattransform}
\sup_{t\in [T_1, T_2]} {\cal A}_t \le {\cal A}_{\tilde t} + 1 \quad \text{w.h.p.} 
\end{equation}
Together with the definition of $F_t$, this implies that with high probability,  
$\sup_{t\in [T_1, T_2]}F_t \le (2e-1) n^{-\e' \mathsf{q}+L},$
which completes the induction argument and establishes \eqref{xiangqilai}. 

Now, given \eqref{xiangqilai} with $K=L$ and $0<\e<\tilde \e$, we choose $\e_1=\tilde \e$ and $\e_0=\e/2$ and let $\mathsf p$ be sufficiently large so that $-\e \mathsf{p}+L< -\e_0\mathsf p$. 
This allows us to obtain the following inequality:
$$F_t(L,\mathsf p, \e_1)=   \theta_{-,\e_1}(\lambda_{\mathsf K}(t))\theta_{+,\e_1}(\lambda_1(t))\sum_{|\xi|=L}  \pi (\xi) \wt f_t(\xi,\e_1)^{\mathsf{p}} \ll n^{-\e_0\mathsf p} \quad \text{w.h.p.}$$
By utilizing the rigidity estimate \eqref{TAZ2} for $\lambda_1(t)$ and $\lambda_{\sfK}(t)$, and considering the definitions of $\theta_{\pm,\e_1}$, we can deduce that $\theta_{-,\e_1}(\lambda_{\mathsf K}(t))=\theta_{+,\e_1}(\lambda_1(t))=1$ with high probability. Consequently, taking into account the fact that $\pi (\xi)$ are all of order 1, the above estimate implies that 
$$\wt f_t(\xi,\e_1)^{\mathsf{p}}\le n^{-\e_0\mathsf p} \quad \text{w.h.p.}$$
for any configuration $\xi$ with $|\xi|=L$. This concludes the proof of \Cref{main result: Jia}.

\end{proof}

%
%
\subsection{Proof of Lemma \ref{lem_keycontrollemma}}\label{sec_keycontrollemmasection}

In this subsection, we present the proof of Lemma \ref{lem_keycontrollemma}. By It\^o's formula, we have 
\[
d{\cal A}_t = \frac{\rd F_t}{F_t+n^{-\e'\mathsf{q}+L}} - \frac{\rd [F]_t}{2(F_t+n^{-\e'\mathsf{q}+L})^2},
\]
where the second term on the RHS is negative. Hence, it suffices to prove that the following equation holds for $t \in [T_1/2, T_2]$: 
\be\label{cl1}
\frac{\rd F_t}{F_t+n^{-\e' \mathsf{q}+L}}= -  C(t)\frac{F_t  }{F_t+n^{-\e' \mathsf{q} +L}} \dd t - s_+(t)\rd t+\cal E(t)\rd t+ \sigma(t)\rd B(t), 
\ee
where $s_+(t)\ge 0$ and $C(t)$, $\cal E(t)$, $\sigma(t)$ satisfy the desired estimates stated in \Cref{lem_keycontrollemma}. 


For simplicity of notation, in the following proof, we will abbreviate $F \equiv F_t$, $f \equiv f_t$, $g \equiv g_t$, $\wt f\equiv \wt f_t$, $ \lambda_k \equiv \lambda_k(t)$, $\bu_k \equiv \bu_k(t)$, $\phi(\lambda_k(t),\e)\equiv \phi(\lambda_k)$, $\theta_-\equiv \theta_{-,\e}(\lambda_{\mathsf K})$, $\theta_+\equiv \theta_{+,\e}(\lambda_{1})$, and $\sm_{n}\equiv \sm_{n,t}$, $\underline\sm_{n}\equiv \underline\sm_{n,t}$ (where we adopt the notations in \Cref{sec: iso of Wt} with $\Lambda_0=D_{00}$ and $\Lambda_t=D_{00}+t$). Using \eqref{sLa} and It\^{o}'s formula, we get that 
\begin{align}\label{eq_sde1}
&\rd \phi(\lambda_k )= \phi'(\lambda_k)\rd \lambda_k+ \phi''(\lambda_k)\frac{2\lambda_k}{n} \rd t,\\
&\rd g(\xi)  = \sum_k \Big( \partial_{\lambda_k} g(\xi) \Big) \rd \lambda_k+ \sum_k\Big( \partial^2_{\lambda_k} g(\xi) \Big)\frac{2\lambda_k}{n}\rd  t,\label{eq_sde2}\\
&\rd \theta_{-}= \theta_{-}' \rd \lambda_{\mathsf K}+ \theta_{-}'' \frac{2\lambda_{\mathsf K}}{n}\rd t, \quad 
\rd  \theta_{+}= \theta_{+}' \rd \lambda_1+  \theta_{+}'' \frac{2\lambda_1}{n}\rd t. \label{eq_sde3}
\end{align}
Note that by the definition of $g$ in (\ref{eq_onecolorgt}) and the estimate \eqref{xiaoy}, we have   
 \begin{equation}\label{rough_control}
  \theta_{-}\theta_{+} \partial_{\lambda_k} g(\xi) =\OO(n^{\e}), \quad    \theta_{-}\theta_{+} \partial^2_{\lambda_k} g(\xi) =\OO(n^{2\e}).
   \end{equation}
Now, applying It\^{o}'s formula to $F$ defined in (\ref{eq_Ft}) and using (\ref{eq_sde1})--(\ref{eq_sde3}), we can derive that 
\begin{align}
\rd F =     &
      \sum_{\xi } \pi (\xi)
    \wt  f(\xi)   ^{\mathsf{q} } 
   \left( \theta_{-} \theta_{+}'  \rd \lambda_1+\theta_{-} \theta_{+} ''   \frac{2\lambda_1}{n}\rd t
   +  \theta_{-}'\theta_{+}  \rd \lambda_{\mathsf K} 
   + \theta_{-}''    \theta_{+}  \frac{2\lambda_{\mathsf K}}{n} \rd t \right) \label{eq_controlone}
    \\
& + \theta_{-}\theta_{+} 
   \sum_{\xi } \pi (\xi)
   \left(\mathsf{q}  \wt  f(\xi)   ^{\mathsf{q}-1 }(\rd f(\xi)-\rd g(\xi))+  \mathsf{q}(\mathsf{q}-1)  \wt  f(\xi)   ^{\mathsf{q}-2 } \sum_{k}\big(\partial_{\lambda_k} g(\xi) \big)^2\frac{2\lambda_k}{n}\rd t \right)  \label{eq_controltwo}
   \\\label{gongzuo}
 & - \sum_{\xi  } \pi (\xi)
   \left(\mathsf{q}  \wt  f(\xi)   ^{\mathsf{q}-1 } \right)\left(\partial_{\lambda_{\mathsf K}} g(\xi)  \theta_{-}' \theta_{+} \frac{4\lambda_{\mathsf K}}{n}+
 \partial_{\lambda_1} g(\xi) \theta_{-}\theta_{+}' \frac{4\lambda_1}{n}  \right)\rd t.  
   \end{align}
Here, we have used the convention that $\sum_{\xi }\equiv \sum_{\xi: |\xi|=L}.$ Next, we will control these terms one by one. 

First, considering the term (\ref{eq_controlone}), we utilize (\ref{eq_propertyonestart}) and (\ref{eq_propertyoneend}) with $\delta=\e$, along with the SDE \eqref{sLa}, to obtain that  
 \begin{equation}\label{eq_keyexamplecontrolexampleone}
 \begin{aligned}
\theta_{-}\theta_{+} '  \rd \lambda_1+\theta_{-}\theta_{+}'' \frac{2\lambda_1}{n}\rd t
= &~ \theta_{-}\theta_{+}' 2 \sqrt{\lambda_1}\frac{ \rd B_{11}}{\sqrt{n}}+ \theta_{-}\theta_{+}' \bigg(1+\frac{1}{n}\sum_{k \neq 1}\frac{\lambda_1 +\lambda_k}{\lambda_1-\lambda_k}\bigg)\rd t\\
 &~ + \OO \left(n^{-1+2\e} \theta_{-}\theta_{+} +\exp(-n^{ \e/2})\right) \rd t  .
 \end{aligned}
 \end{equation}
It then follows from \eqref{eq_keyexamplecontrolexampleone} and the bound \eqref{eq_propertyoneend} that    
 \begin{equation}\label{eq:term11}
 \sum_{\xi} \frac{\pi (\xi)
 \wt  f(\xi)   ^{\mathsf{q}}}{F+n^{-\e' \mathsf{q}+L}} \left( \theta_{-}\theta_{+} '  \rd \lambda_1+\theta_{-}\theta_{+}'' \frac{2\lambda_1}{n}\rd t\right)
   =\OO(n^{-1+2\e})\rd t+ \OO(n^{-1/2+\e})\rd B_{11}-s_+^{(1)}(t)\rd t,
 \end{equation}
where $s_+^{(1)}(t)$ represents a positive term arising from the second term on the RHS of \eqref{eq_keyexamplecontrolexampleone}. Using a similar argument, we can also show that 
\begin{align*}
 &\sum_{\xi} \frac{\pi (\xi)
 \wt  f(\xi)   ^{\mathsf{q}}}{F+n^{-\e' \mathsf{q}+L}} \left( \theta_{-}'\theta_{+}  \rd \lambda_{\mathsf K} 
   + \theta_{-}''\theta_{+}  \frac{2\lambda_{\mathsf{K}}}{n} \rd t\right)\\
  & =\OO(n^{-1+2\e})\rd t+ \OO(n^{-1/2+\e})\rd B_{\sfK\sfK} 
   +\frac{\theta_{-}'\theta_{+}\sum_{\xi} \pi (\xi)
 \wt  f(\xi)^{\mathsf{q}}}{F+n^{-\e' \mathsf{q}+L}}\bigg(1+\frac{1}{n}\sum_{k \neq \sfK}\frac{\lambda_\sfK +\lambda_k}{\lambda_\sfK-\lambda_k}\bigg)\rd t\\
 & =\OO(n^{-1+2\e})\rd t+ \OO(n^{-1/2+\e})\rd B_{\sfK\sfK} -s_+^{(2)}(t)\rd t
   +\left( 1+\frac{p-\sfK}{n}\right)\frac{\theta_{-}'\theta_{+}\sum_{\xi} \pi (\xi)
 \wt  f(\xi)^{\mathsf{q}} }{F+n^{-\e' \mathsf{q}+L}}\rd t,
 \end{align*}
 where $s_+^{(2)}(t)$ represents another positive term.
 We can exploit the rigidity of $\lambda_{\sfK}$ given by \eqref{TAZ2} and the fact that $|\lambda_{-,t}-\lambda_-|=\oo(1)$ (as stated in (i) of \Cref{sare}) to conclude that $\theta_{-}'=0$ with high probability, uniformly in $t\in [T_1/2,T_2]$. Consequently, we obtain a bound similar to \eqref{eq:term11}: 
\begin{equation}\label{eq:term22}
 \sum_{\xi} \frac{\pi (\xi)
 \wt  f(\xi)   ^{\mathsf{q}}}{F+n^{-\e' \mathsf{q}+L}} \left( \theta_{-}'\theta_{+}  \rd \lambda_{\mathsf K} 
   + \theta_{-}''\theta_{+}  \frac{2\lambda_{\mathsf{K}}}{n} \rd t\right)
   =\cal E^{(2)}(t)\rd t+ \OO(n^{-1/2+\e})\rd B_{\sfK\sfK}-s_+^{(2)}(t)\rd t,
 \end{equation}
where $\cal E^{(2)}(t)$ is a variable satisfying $\cal E^{(2)}(t)=\OO(n^{-1+2\e})$ with high probability, uniformly in $t\in [T_1/2,T_2]$.
 
 
Next, we control \eqref{gongzuo}. For any $0<l<\mathsf{q}$, we have the trivial bound 
\be\label{eq_trivialf}
|\wt  f(\xi)|^{\mathsf{q}-l}\lesssim n^{l\e}\wt  f(\xi)^\mathsf{q}+n^{-(\mathsf{q}-l)\e}. 
\ee
Utilizing \eqref{eq_trivialf} and the estimates \eqref{rough_control} and \eqref{eq_propertyoneend}, we get that  
 \begin{equation}\label{eq:term33}
  \sum_{\xi  } \frac{\pi (\xi)
   \mathsf q\wt  f(\xi)   ^{\mathsf{q}-1 } }{F+n^{-\e'\mathsf{q} +L}} \left(\partial_{\lambda_{\mathsf K}} g(\xi)  \theta_{-}' \theta_{+} \frac{4\lambda_{\mathsf K}}{n}+
\partial_{\lambda_1} g(\xi) \theta_{+}' \theta_{-} \frac{4\lambda_1}{n}  \right) \rd t 
=\OO(n^{-1+3\epsilon})\rd t.
 \end{equation}
Similarly, the second term of \eqref{eq_controltwo} can be bounded by
 \begin{equation}\label{eq:term44}
 \mathsf{q}(\mathsf{q}-1) \theta_{-}\theta_{+} 
   \sum_{\xi } \frac{\pi (\xi) \wt  f(\xi)   ^{\mathsf{q}-2 }}{F+n^{-\e' \mathsf{q}+L}}  \sum_{k}\left(\partial_{\lambda_k} g(\xi) \right)^2\frac{2\lambda_k}{n}\rd t =\OO(n^{-1+4\epsilon})\rd t. 
\end{equation}
Combing \eqref{eq:term11}, \eqref{eq:term22}, \eqref{eq:term33}, and \eqref{eq:term44}, we obtain that
\be
\begin{aligned}\label{tuodi}
  \frac{\rd F}{F+n^{-\e' \mathsf{q}+L}} 
  = &~  \frac{\mathsf{q} \theta_{-}\theta_{+} }{F+n^{-\e' \mathsf{q}+L}}
   \sum_{\xi } \pi (\xi) \wt  f(\xi)^{\mathsf{q}-1 }(\rd f(\xi)-\rd g(\xi)) 
   \\
  &~ + \cal E_1(t)\rd t+ \OO(n^{-1/2+\e})\rd B_{11}+ \OO(n^{-1/2+\e})\rd B_{\mathsf{K} \mathsf{K}}-(s_+^{(1)}(t)+s_+^{(2)}(t))\rd t,
\end{aligned}
\ee
where $\cal E_1(t)$ is a variable satisfying $\cal E_1(t)=\OO(n^{-1+4\e})$ with high probability, uniformly in $t\in [T_1/2,T_2]$.

Now, to conclude (\ref{cl1}), it remains to estimate the first term on the RHS of \eqref{tuodi}.  
By (\ref{moment_flow}), we express the term as 
\begin{align}\label{eq_changechangechange}
   \theta_{-}\theta_{+}  \sum_{\xi } \pi (\xi)
  \wt  f(\xi)   ^{\mathsf{q}-1 } \rd f(\xi)= 
  \theta_{-}\theta_{+}  \sum_{\xi } \pi (\xi)  \wt  f(\xi)   ^{\mathsf{q}-1 }    \sum_{k \not\sim l} \Upsilon_{kl}(t) \, \xi(k) (1 + 2 \xi(l)) \pb{f(\xi^{k \to l}) - f(\xi)}\dd t.
   \end{align}
On the other hand, using \eqref{sLa} and (\ref{eq_sde2}), we obtain:
    \begin{align}
  & \theta_{-}\theta_{+}  \sum_{\xi} \pi (\xi)
  \wt  f(\xi)   ^{\mathsf{q}-1 } \rd g(\xi)=\theta_{-}\theta_{+}  \sum_{\xi } \pi (\xi) 
  \wt  f(\xi)   ^{\mathsf{q}-1} \label{eq_changechangechange2}\\ 
  &\quad \times \sum_{k}   \bigg[ \partial_{\lambda_k} g(\xi)  
  \sum_{l:l \not\sim k} \frac{ \lambda_k + \lambda_l}{n(\lambda_k-\lambda_l)} \rd t + \partial_{\lambda_k} g(\xi)\dd t
  +  \partial^2_{\lambda_k} g(\xi)\frac{2\lambda_k}{n}\rd   t
  +  (\partial_{\lambda_k} g(\xi))  2\sqrt \lambda_k \frac{\rd B_{kk}}{\sqrt n}   
\bigg]. \nonumber
   \end{align}
The diffusion part of the above equation has the same law as  
$$\theta_{-}\theta_{+}  \bigg[\sum_k \bigg| \sum_{\xi}\Big(\pi(\xi) \wt  f(\xi)   ^{\mathsf{q}-1 }  \left( \partial_{\lambda_k} g(\xi) \right)  \frac{2\sqrt \lambda_k}{\sqrt n}  \Big)\bigg|^2\bigg]^{1/2}\rd B,$$
where $B$ is a standard Brownian motion.
Note that the coefficient satisfies that
 \begin{align}\label{eq:control_coe}
  \theta_{-}\theta_{+}  \bigg[\sum_k \bigg| \sum_{\xi}\pi(\xi) \wt  f(\xi)   ^{\mathsf{q}-1 }  \left( \partial_{\lambda_k} g(\xi) \right)  \frac{2\sqrt \lambda_k}{\sqrt n}  \bigg|^2\bigg]^{1/2}= \OO (  n^{-1/2+\e}) \cdot  \theta_{-}\theta_{+} \sum_{\xi }  \pi(\xi)| \wt  f(\xi)|   ^{\mathsf q-1 }.
   \end{align}
Here, we have used (\ref{rough_control}) and the facts that $\pi(\xi)\asymp 1$ and $\partial_{\lambda_k} g(\xi)$ is non-zero for at most $L$ many $k$'s. Plugging \eqref{eq_changechangechange}--\eqref{eq:control_coe} into (\ref{tuodi}) and using \eqref{eq_trivialf} and \eqref{rough_control}, we can deduce that
  \begin{align}\label{332}
  \frac{\rd F}{F+n^{-\e' \mathsf{q}+L}}    = &~\sum_{\xi}\frac{\mathsf q\theta_{-}\theta_{+} \pi(\xi)\wt  f(\xi)   ^{\mathsf{q}-1 } }{F+n^{-\e' \mathsf{q}+L}}
  \sum_{k \not\sim l}   \Bigg( \Upsilon_{kl}  \,  \xi(k) (1 + 2 \xi(l)) \pb{f(\xi^{k \to l}) - f(\xi)}   - \frac{ (\lambda_k +\lambda_l)\partial_{\lambda_k} g(\xi) }{n(\lambda_k-\lambda_l)} 
  \Bigg)
   \rd t \nonumber \\
  &~ +\cal E_2(t)\rd t+\OO(n^{-1/2+2\e})\rd \wt B  -(s_+^{(1)}(t)+s_+^{(2)}(t))\rd t,
   \end{align}
where $\widetilde{B}$ is another standard Brownian motion and $\cal E_2(t)$ is a variable satisfying $\cal E_2(t)=\OO(n^{2\e})$ with high probability, uniformly in $t\in [T_1/2,T_2]$.

Taking $\eta=n^{-2/3+\mathsf c/2}$, we define 
$$\mathrm{T}_{kl}\equiv \mathrm T_{kl}(\eta) \deq \mathbf 1(k\not\sim l)\frac{(\lambda_k- \lambda_l)^2}{(\lambda_k- \lambda_l)^2 + \eta^2},\quad \mathrm{T}_{kl}^c:=\mathbf 1(k\not\sim l)-\mathrm{T}_{kl}.$$
Then, we decompose the first term on the RHS of (\ref{332}) as 
\begin{equation}\label{eq_importantdecomposition}
\mathsf q(F+n^{-\e' \mathsf{q}+L})^{-1}\sum_{\xi }\pi(\xi) \wt  f(\xi)^{\mathsf{q}-1 } \left(A_1+A_2+A_3\right)  \rd t,
\end{equation}
 where $A_1$, $A_2$, and $A_3$ are defined as 
  \begin{align}
 & A_1:= \theta_{-}\theta_{+} 
  \sum_{k \not\sim l}  \Upsilon_{kl}  \mathrm{T}_{kl}  \xi(k) (1 + 2 \xi(l)) \pb{f(\xi^{k \to l}) - f(\xi)}, \label{eq_termonecontrol} \\
 & A_2:= -\theta_{-}\theta_{+} 
  \sum_{k \not\sim l}  \mathrm{T}_{kl} \frac{(\lambda_k+\lambda_l)  \partial_{\lambda_k} g(\xi) }{n(\lambda_k-\lambda_l)},
\label{eq_termtwocontrol} \\
&  A_3:=  \theta_{-}\theta_{+} 
  \sum_{k \not\sim l}   \mathrm{T}_{kl}^c \left[\Upsilon_{kl} \xi(k) (1 + 2 \xi(l)) \pb{f(\xi^{k \to l}) - f(\xi)}- \frac{(\lambda_k+\lambda_l) \partial_{\lambda_k} g(\xi) }{n(\lambda_k-\lambda_l)} \right] \label{eq_termthreecontrol}.
\end{align}
The remaining part of the proof focuses on controlling the above three terms one by one. Specifically, we will demonstrate that $A_1$ contributes to the dominant drift term in \eqref{cl1}.


\medskip
\noindent{\bf The term $A_2$}. 
We start with the term $A_2$:
\begin{align*}
A_2&=\theta_{-} \theta_{+} \sum_{k \not\sim l} \frac{\partial_{\lambda_k} g(\xi)}{n} \frac{\lambda_l^2-\lambda_k^2}{(\lambda_l-\lambda_k)^2+\eta^2}=\theta_{-} \theta_{+}\left[\sum_{k} 2c_n\lambda_k \partial_{\lambda_k} g(\xi) \cdot \re \underline\sm_{n} (\lambda_k + \ii\eta)+ \sum_{k \not\sim l} \mathrm{T}_{kl}\frac{\partial_{\lambda_k} g(\xi)}{n}\right],
\end{align*}
where in the second step we used the spectral decomposition of $\underline\sm_{n}$:
\begin{equation*}
\re \underline\sm_{n}(\lambda_k+\ri \eta)=\re\frac{1}{p} \sum_{l}\frac{1}{\lambda_l-(\lambda_k+\ii\eta)} =\frac{1}{p} \sum_{l:l \not\sim k} \frac{\lambda_l-\lambda_k}{(\lambda_l-\lambda_k)^2+\eta^2}.
\end{equation*}
Since $\partial_{\lambda_k} g(\xi)$ is non-zero for at most $L$ many $k$'s, using (\ref{rough_control}), the rigidity of $\lambda_k$ in \eqref{TAZ2}, the trivial bound $\mathrm{T}_{kl}<1$ and the local law \eqref{TAZ}, we obtain that $A_2=\OO(n^\e)$ with high probability. Together with \eqref{eq_trivialf}, it implies that 
\be\label{shsan}
    \frac{\mathsf q }{F+n^{-\e' \mathsf{q}+L}}\sum_{\xi}\pi(\xi) \wt  f(\xi)   ^{\mathsf{q}-1 }A_2   =\OO(n^{2\e}  )\quad \text{w.h.p.}
\ee


\noindent{\bf The term $A_3$}. For the term $A_3$, we will establish that 
  \be\label{AB2}
   \frac{\mathsf q}{F+N^{-\e' \mathsf{q}+L}}\sum_{\xi}\pi(\xi) \wt  f(\xi)   ^{\mathsf{q}-1 }  A_3 = \cal E_3(t)-s_+^{(3)}(t),
\ee
where $s_+^{(3)}(t)\ge 0$ and $\cal E_3(t)$ represents a variable that satisfies $\cal E_3(t)=\OO(n^{-2/3+4\e+\mathsf c})$ with high probability, uniformly in $t\in [T_1/2,T_2]$. By utilizing \Cref{prop_revibility}, we can express the LHS of \eqref{AB2} as 
\begin{align} 
  \frac{\mathsf q\theta_{-}\theta_{+} }{F+n^{-\e' \mathsf{q}+L}}\sum_{\xi }\pi(\xi) \bigg[& \frac{1}{2}\sum_{k \not\sim l}  \mathrm{T}_{kl}^c \Upsilon_{kl} \xi(k) (1 + 2 \xi(l)) \pb{f(\xi^{k \to l}) - f(\xi)} \pb{\wt f(\xi)^{\mathsf{q}-1}-\wt f(\xi^{k \to l})^{\mathsf{q}-1}} \nonumber \\
 &  -\wt f(\xi)^{\mathsf{q}-1} \sum_{k \not\sim l}  \mathrm{T}_{kl}^c \frac{ (\lambda_k+\lambda_l) \partial_{\lambda_k} g(\xi) }{n(\lambda_k-\lambda_l)} \bigg]. \label{hha3}
  \end{align}
We proceed to control the two terms in \eqref{hha3}. To simplify the presentation, for $k\not\sim l$ and a configuration $\xi$, we define the equivalence class of configurations $[\xi]\equiv [\xi]_{k,l}$ as follows: $\xi'\in [\xi]$ if and only if $|\xi'|=|\xi|$ and $  \supp (\xi'-\xi)\subset \{\b e_k, \b e_l\}$. 
Given $[\xi]$ and $0\le \ell \le \xi(k)+\xi(l)$, we define $\xi^\ell\in [\xi]$ to be the configuration with $\xi^\ell(k)=\ell$. 
Moreover, for any $\ell\in \qq{\xi(k)+\xi(l)}$, we define the following functions: 
\begin{align*}
 h_{\xi^\ell, k,l } &:=\pi(\xi^\ell) \xi^\ell(k) (1+2\xi^\ell(l))=\pi(\xi^{\ell-1}) \xi^{\ell-1}(l) (1+2\xi^{\ell-1}(k)), \\
 \quad \wh h_{\xi^\ell, k, l}&:= \pi(\xi^\ell) \xi ^\ell(k)=\pi(\xi^{\hat \ell}) \xi ^{\hat \ell}(l), \quad \text{with}\quad \hat \ell=\xi(k)+\xi(l)-\ell.
\end{align*}
Then, using the definition  (\ref{eq_onecolorgt}) and the anti-symmetry of the coefficient ${(\lambda_k+\lambda_l)}/{(\lambda_k-\lambda_l)}$, we can further rewrite (\ref{hha3}) as 
\begin{align}  
\frac{\mathsf q\theta_{-}\theta_{+} }{F+n^{-\e' \mathsf{q}+L}}&\sum_{k\not\sim  l} \frac12 \mathrm{T}^c_{kl}
\sum_{[\xi]}
 \sum_{\ell=1}^{ \xi(k)+\xi(l)} \bigg[ \Upsilon_{kl} h_{\xi^\ell, k,l}   \big(f(\xi^{\ell-1})- f(\xi^{\ell})\big)
\big( \wt f(\xi^\ell)^{\mathsf{q}-1}-\wt f(\xi^{\ell-1})^{\mathsf{q}-1}\big) \nonumber\\
&- \wh h_{\xi^\ell, k, l}\frac{ \lambda_k+\lambda_l }{n(\lambda_k -\lambda_l)}  
\left(g(\xi^{ \ell })\frac{ \phi'(\lambda_k,\e)}{\phi(\lambda_k,\e)}\wt  f(\xi^{\ell})^{\mathsf{q}-1}
-g(\xi^{\hat \ell })\frac{ \phi'(\lambda_l,\e)}{\phi(\lambda_l,\e)}\wt f(\xi^{\hat\ell})^{\mathsf{q}-1}\right)
 \bigg]. \label{hha4}
 \end{align}
Using the definitions of $\theta_{\pm}$ and  \eqref{xiaoy}, we obtain that
\begin{align*}
 \theta_{-}\theta_{+} g(\xi^{ \ell })\frac{ \phi'(\lambda_k,\e)}{\phi(\lambda_k,\e)}
 &=  \theta_{-}\theta_{+} \frac{g(\xi^{ \ell })}{\phi(\lambda_k,\e)}\frac{ \phi (\lambda_k,\e)-\phi (\lambda_l,\e)}{ \lambda_k-\lambda_l} +\OO\left(n^{2\e}(\lambda_{k}-\lambda_l)\right) \\
 &=
 \theta_{-}\theta_{+} g(\xi^{\hat \ell })\frac{  \phi'(\lambda_l,\e)}{ \phi(\lambda_l,\e)}+ \OO\left(n^{2\e}(\lambda_{k}-\lambda_l)\right) . 
\end{align*}  
By substituting this estimate into (\ref{hha4}) and using \eqref{eq_trivialf} again, we deduce that 
\begin{align*}
\eqref{hha4}=  \frac{\mathsf q\theta_{-}\theta_{+} }{F+n^{-\e' \mathsf{q}+L}} &\sum_{k\not\sim  l} \frac12\mathrm{T}^c_{kl}\Upsilon_{kl} 
\sum_{[\xi]}
 \sum_{\ell=1}^{ \xi(k)+\xi(l)} 
 \bigg[ h_{\xi^\ell, k,l}   \big(f(\xi^{\ell-1})- f(\xi^{\ell})\big)
\big( \wt f(\xi^\ell)^{\mathsf{q}-1}-\wt f(\xi^{\ell-1})^{\mathsf{q}-1}\big) \\
& - 2\wh h_{\xi^\ell, k, l} \frac{g(\xi^{ \ell })}{\phi(\lambda_k,\e)} \big(\phi(\lambda_k,\e)-\phi(\lambda_l,\e)\big)
\big( \wt  f(\xi^{\ell})^{\mathsf{q}-1}
 - \wt f(\xi^{\hat\ell})^{\mathsf{q}-1} \big)\bigg]
+\OO(n^{-1+3\e}).
\end{align*}
Note \smash{$ \wt  f(\xi^{\ell})^{\mathsf{q}-1}- \wt f(\xi^{\hat\ell})^{\mathsf{q}-1}$} can be expressed as a telescoping sum of terms of the form \smash{$\wt f(\xi^{\ell'})^{\mathsf{q}-1} - \wt f(\xi^{ \ell'- 1})^{\mathsf{q}-1} $} 
for some $\ell'$ between $\ell$ and $\hat \ell$.  Therefore, there exist a sequence of deterministic coefficients $c_{\xi,\ell, k,l}$ of order $\OO(1)$ such that we can rewrite the above equation as  
\begin{align}
\eqref{hha4}= \frac{\mathsf q\theta_{-}\theta_{+} }{F+n^{-\e' \mathsf{q}+L}}
&\sum_{k\not\sim  l}  \frac12\mathrm{T}^c_{kl} \Upsilon_{kl}
\sum_{[\xi]}
\sum_{\ell=1}^{\xi(k)+\xi(l)} h_{\xi^\ell, k,l}  
  \Big[ \big(f(\xi^{\ell-1})- f(\xi^{\ell})\big)+c_{\xi,\ell, k,l}\frac{g(\xi^{ \ell })}{\phi(\lambda_k,\e)}\big(\phi(\lambda_l,\e)-\phi(\lambda_k,\e)\big)\Big] \nonumber \\
&  \times \left( \wt f(\xi^\ell)^{\mathsf q-1}-\wt f(\xi^{\ell-1})^{\mathsf{q}-1}\right)+\OO(n^{-1+3\e})\nonumber \\
=\frac{\mathsf q\theta_{-}\theta_{+} }{F+n^{-\e' \mathsf{q}+L}}
&\sum_{k\not\sim  l}  \frac{1}{2}\mathrm{T}^c_{kl}\Upsilon_{kl}
\sum_{[\xi]} \sum_{\ell=1}^{\xi(k)+\xi(l)} h_{\xi^\ell, k,l}  
  \Big[ \big(f(\xi^{\ell-1})- f(\xi^{\ell})\big)+c_{\xi,\ell, k,l}\big(g(\xi^{\ell-1})-g(\xi^{\ell})\big)\Big] \nonumber \\
& \times \left( \wt f(\xi^\ell)^{\mathsf q-1}-\wt f(\xi^{\ell-1})^{\mathsf q-1}\right)+\OO(n^{-1+3\e}).\label{eq_finalreducedform}
\end{align} 
By definition, we have $f(\xi^{\ell-1})- f(\xi^{\ell}) =g(\xi^{\ell-1})-g(\xi^{\ell})+ \wt f(\xi^{\ell-1})-\wt f(\xi^{\ell}).$
Observe that if $\abs{ f(\xi^{\ell-1})-  f(\xi^{\ell})}\ge \abs{ g(\xi^{\ell-1})-g(\xi^{\ell})}\log n$, then the corresponding term in (\ref{eq_finalreducedform}) is negative, i.e.,
$$  \Big[ \big(f(\xi^{\ell-1})- f(\xi^{\ell})\big)+c_{\xi,\ell, k,l}\big(g(\xi^{\ell-1})-g(\xi^{\ell})\big)\Big] \left( \wt f(\xi^\ell)^{\mathsf q-1}-\wt f(\xi^{\ell-1})^{\mathsf q-1}\right) < 0. $$
Otherwise, based on \eqref{rough_control}, we can get that
\begin{align}
& \theta_{-}\theta_{+} \Upsilon_{kl}\Big[ \big(f(\xi^{\ell-1})- f(\xi^{\ell})\big)+c_{\xi,\ell, k,l}\big(g(\xi^{\ell-1})-g(\xi^{\ell})\big)\Big] \left( \wt f(\xi^\ell)^{\mathsf q-1}-\wt f(\xi^{\ell-1})^{\mathsf q-1}\right)  \nonumber \\
&=\OO\left(n^{-1+2\e}(\log n)^2\right)\cdot \theta_{-}\theta_{+} \left( \wt f(\xi^\ell)^{\mathsf{q}-2}+\wt f(\xi^{\ell-1})^{\mathsf{q}-2}\right). \label{eq_whygtrandom}
  \end{align}
(We remark that this is the step where it is convenient to define $g_t$ in terms of $\lambda_k(t)$ as in \eqref{eq_onecolorgt}.) 
By combining these facts, we find that 
$$\eqref{eq_finalreducedform} = \cal E_3(t)-s_+^{(3)}(t),$$
where $s_+^{(3)}(t)\ge 0$ and the variable $\cal E_3(t)$ satisfies the following bound: 
\begin{align}\label{shsan2}
|\cal E_3(t)|&\lesssim  n^{-1+3\e}+  \frac{\mathsf q\theta_{-}\theta_{+} }{F+n^{-\e' \mathsf{q}+L}}
\sum_{\xi} \pi(\xi)\wt f(\xi)^{\mathsf{q}-2} \frac{n^{2\e}(\log n)^2}{n}\sum_{k\not\sim l} \frac{\xi(k)\eta^2}{(\lambda_k-\lambda_l)^2+\eta^2} \nonumber\\
&= n^{-1+3\e}+ \frac{\mathsf q\theta_{-}\theta_{+} }{F+n^{-\e' \mathsf{q}+L}}
\sum_{\xi} \pi(\xi) \wt f(\xi)^{\mathsf{q}-2}  n^{2\e}(\log n)^2 \sum_k \xi(k)\eta\im \underline\sm_{n}(\lambda_k + \ii \eta)\nonumber\\
&\lesssim n^{-1+3\e} +  n^{4\e}(\log n)^2\eta  \le  n^{-2/3+4\e+\mathsf c}\quad \text{w.h.p.}
\end{align}
Here, in the second step, we used the spectral decomposition of $\underline\sm_{n}$, and in the third step, we applied the local law \eqref{TAZ} to $\underline\sm_{n}(\lambda_k + \ii \eta)$ along with the rigidity of $\lambda_k$ given by \eqref{TAZ2}. This concludes \eqref{AB2}.



\medskip
\noindent{\bf The term $A_1$}. 
Finally, we prove the following inequality for sufficiently large $\mathsf q$:
   \be\label{AB1}
     \frac{\mathsf q}{F+n^{-\e' \mathsf{q}+L}}\sum_{\xi}\pi(\xi) \wt  f(\xi)   ^{\mathsf{q}-1 } A_1  \le    - C(t) \frac{ F}{F+n^{-\e' \mathsf{q}+L}} +\OO_\prec(1), \ee
     where $C(t)$ is a positive variable satisfying $C(t)\gtrsim \eta^{-1/2}$ with high probability. With the definition of $A_1$ in (\ref{eq_termonecontrol}), we can express the LHS of \eqref{AB1} as follows: 
\begin{align}
   &~\frac{\mathsf q}{F+n^{-\e' \mathsf{q}+L}}\sum_{\xi}\pi(\xi) \wt  f(\xi)   ^{\mathsf{q}-1 } A_1\nonumber\\
   =&~\frac{\mathsf q\theta_{-}\theta_{+}}{2n(F+n^{-\e'\mathsf{q}+L})} 
 \sum_{\xi}\pi(\xi) \wt  f(\xi)   ^{\mathsf{q}-1 } \sum_{k \not\sim l} \frac{(\lambda_k+\lambda_l)\xi(k) (1 + 2 \xi(l))}{(\lambda_k-\lambda_l)^2+\eta^2}   \big(f(\xi^{k \to l}) - f(\xi)\big). \label{eq_finalparttobeestimated}  
\end{align}
Recalling the definitions of $f$ and $\xi^{k\to l}$ in \eqref{eq_momentflowflowequation2} and \eqref{eq_simplifiedcomeandgo}, we can express $ f(\xi^{k \to l}) $ as 
\begin{align}\label{eq:ftxikl}
 f(\xi^{k \to l})  = \E\bigg[ \frac{p|\avg{\bv,\bu_l}|^2}{1+2\xi(l)}
 \prod_{j }\frac{(p|\bv^\top \bu_j|^{2})^{\xi(j)-\mathbf{1}_{j=k}}}{ a_{\xi(j)-\mathbf{1}_{j=k}}}\bigg| \cal F_t\bigg],
\end{align}
where $a_n:=(2n-1)!!$ for $n\in \N$. Utilizing the spectral decomposition of $\cal G_{1,t}(z)= (W(t)W(t)^\top-z)^{-1}$ with $z=z_k:=\lambda_k+\ri \eta$, we have
\begin{align*}
\sum_{l: l \not\sim k } \frac{ (\lambda_k+\lambda_l)|\avg{\bv, \bu_l}|^2}{(\lambda_l-\lambda_k)^2+\eta^2} =&~ \sum_{l} \frac{2\lambda_k|\avg{\bv, \bu_l}|^2}{(\lambda_l-\lambda_k)^2+\eta^2} -\frac{ 2\lambda_k  |\avg{\bv, \bu_k}|^2}{ \eta^2}+\sum_{l: l \not\sim k } \frac{ (\lambda_l-\lambda_k)|\avg{\bv, \bu_l}|^2}{(\lambda_l-\lambda_k)^2+\eta^2}\\
=&~\frac{2\lambda_k}\eta\im \left(\bv^\top \cal G_{1,t}(z_k) \bv\right)+\re \left(\bv^\top \cal G_{1,t}(z_k) \bv\right) +\OO_\prec (n^{-1}\eta^{-2}) \\
=&~ \frac{2\lambda_k}\eta\im \left(\bv^\top \wt\Pi_t(z_k)\bv\right)+\OO_\prec \left(\eta^{-1}\Psi_t(z_k)\right) .
\end{align*} 
Here, in the second step, we applied the delocalization estimate \eqref{TAZ3} to $|\avg{\bv, \bu_k}|^2$, and in the third step, we applied the local law \eqref{TAZ} to $\bv^\top \cal G_{1,t}(z_k) \bv$ with $\wt\Pi_t(z):= - [z(1+m_t(z))\Lambda_t]^{-1}$. 
Similarly, by observing that $\xi(l)$ is nonzero for at most $L$ many $l$'s, and using the spectral decomposition of $\underline\sm_{n}$ and the local law \eqref{eq_average2}, we obtain that 
\begin{equation*}
\frac1p \sum_{l: l \not\sim k } \frac{(\lambda_k+\lambda_l)(1+2\xi(l))}{(\lambda_l-\lambda_k)^2+\eta^2} = \frac{2\lambda_k}{\eta} \im m_{1,t}(z_k) + \OO_\prec(n^{-1}\eta^{-2}).
\end{equation*}
Plugging the above three equations into \eqref{eq_finalparttobeestimated} and applying the delocalization estimate \eqref{TAZ3} to the factors $p|\avg{\bv,\bu_j}|^2$ in $f(\xi)$ and \eqref{eq:ftxikl}, we obtain that 
  \begin{align}
   (\ref{eq_finalparttobeestimated})=&~ \frac{c_n \mathsf q\theta_{-}\theta_{+} }{F+n^{-\e' \mathsf{q}+L}}  \sum_{\xi } \pi (\xi)  \wt f(\xi)^{\mathsf{q}-1}
\sum_{k  }\frac{\lambda_k\xi(k)}{\eta} \nonumber
\\
&\times  \left[   \im \left(\bv^\top \wt\Pi_t(z_k) \bv\right)f(\xi -{\bf e}_k)-   \im m_{1,t} (z_k)f(\xi)+\OO_\prec \left(\Psi_t(z_k)\right) \right]. \label{woB}
  \end{align}
By the induction hypothesis, when $K=L-1$, the estimate \eqref{xiangqilai} holds with high probability for large even integer $\mathsf p\ge \mathsf p_0$,  implying that 
\be\label{eq:adddd1}
|\wt f(\xi-\mathbf e_k)| \lesssim n^{-\e +(L-1)/\mathsf p}\quad \text{w.h.p.} \ \Rightarrow \ |\wt f(\xi-\mathbf e_k)| \prec n^{-\e}.
\ee
On the other hand, by \eqref{eq:Imm} and \eqref{eq:m1tz}, we have 
\be\label{eq:adddd1.5} \im m_{1,t}(z_k) \asymp \im m_t(z_k)\gtrsim \min\left\{\sqrt{\kappa_k+\eta},\frac{\eta}{\sqrt{\kappa_k+\eta}}\right\}\gtrsim \sqrt{\eta}\quad \text{w.h.p.},
\ee
where $\kappa_k$ is defined as $\kappa_k:=\dist(\lambda_k,\supp(\varrho_t))$, and we have used the fact that $\kappa_k\prec n^{-2/3}$ according to the rigidity estimate \eqref{TAZ2}. With \eqref{eq:adddd1.5}, for $\eta=n^{-2/3+\mathsf c/2}$, we obtain that 
\be\label{eq:adddd2} \frac{\Psi_t(z_k)}{\im m_{1,t}(z_k)} \lesssim \left(\frac{1}{n\eta\im m_{1,t}(z_k)}\right)^{1/2}+ \frac{1}{n\eta \im m_{1,t}(z_k)} \lesssim \left(\frac{1}{n\eta^{3/2}}\right)^{1/2}+ \frac{1}{n\eta^{3/2}}\lesssim n^{-3\mathsf c/8} .
\ee
With the estimates \eqref{eq:adddd1} and \eqref{eq:adddd2}, we can simplify \eqref{woB} as follows:
   \begin{align}\label{woB2}
 (\ref{eq_finalparttobeestimated})= \frac{c_n\mathsf q\theta_{-}\theta_{+} }{F+n^{-\e' \mathsf{q}+L}}  &\sum_{\xi} \pi (\xi)\wt f(\xi)^{\mathsf{q}-1} 
\sum_{k  }\lambda_k\xi(k)  \frac{\im m_{1,t} (z_k)}{\eta}\nonumber\\
&\times   \Big(  \varphi_t (z_k)g(\xi -{\bf e}_k) - f(\xi)+\OO_\prec\big(n^{-\e}+n^{-3\mathsf c/8}\big)\Big) 
    ,
     \end{align}
 where $\varphi_t(z_k)\equiv \varphi_t(\bv,\bv,z_k)={\im  (\bv^\top \wt \Pi_t(z_k) \bv) }/{\im m_{1,t}(z_k) }$ as defined in \eqref{eq_varphitz}. By using \eqref{eq:denominator}, \eqref{eq:stabm},  \eqref{eq:changem}, and the definition of $\phi(\lambda_k,\e)$, we can check that 
\begin{align}  \varphi_t (\bv,\bv,z_k)- \phi(\lambda_k,\e)&= \left[\varphi_t (\bv,\bv,z_k)- \varphi_0(\bv,\bv,\lambda_k)\right]+\left[\varphi_0(\bv,\bv,\lambda_k)-\phi(\lambda_k,\e)\right]\nonumber\\
&\lesssim \sqrt{t+\eta}+n^{-\e}\le 2 n^{-\e},\label{eq:addd3}
\end{align}
where in the last step, we used the fact that $\sqrt{t+\eta}\lesssim n^{-1/6+\mathsf c/2}+n^{-1/3+\mathsf c/4}\le n^{-\e}$ for $\mathsf c<1/6$ and $\e<1/12$.
Plugging \eqref{eq:addd3} into \eqref{woB2}, and in light of the definition of $g(\xi)$ in \eqref{eq_onecolorgt}, we find that 
       \begin{align*}
&(\ref{eq_finalparttobeestimated})=\frac{c_n\mathsf q\theta_{-}\theta_{+} }{F+n^{-\e' \mathsf{q}+L}}  \sum_{\xi} \pi (\xi)  
\sum_{k  }\lambda_k \xi(k) 
 \frac{\im m_{1,t} (z_k)}{\eta} \left(  g(\xi ) - f(\xi)+\OO_\prec\big(n^{-\e}+n^{-3\mathsf c/8}\big) \right)    \wt f(\xi)^{\mathsf{q}-1}  \\
&=\frac{-c_n\mathsf q\theta_{-}\theta_{+} }{F+n^{-\e' \mathsf{q}+L}}  \sum_{\xi} \pi (\xi)  
\sum_{k  }\lambda_k \xi(k) 
 \frac{\im m_{1,t} (z_k)}{\eta} \left[ \wt f(\xi)^{\mathsf{q}}+\OO_\prec\left(n^{-\tau} \big(\wt f(\xi)^{\mathsf{q}}+n^{-\mathsf q\delta}\big)\right) \right]    \\
 &\le \frac{-c\eta^{-1/2}F}{F+n^{-\e' \mathsf{q}+L}}  + \OO_\prec\left(\frac{n^{-\mathsf q\delta-\tau+L}\eta^{-1}}{F+n^{-\e' \mathsf{q}+L}}\right)=\frac{-c\eta^{-1/2}F}{F+n^{-\e' \mathsf{q}+L}}  + \OO_\prec\left(1\right)  
\end{align*}
for a constant $c>0$. In the second step, we used \eqref{eq_trivialf} with $\e$ replaced by a constant $\e'<\delta < \min\{\e,3\mathsf c/8\}$, and we denote $\tau:=\min\{\e,3\mathsf c/8\}-\delta$; in the third step, we used $\sqrt{\eta}\lesssim \im m_{1,t} (z_k)\lesssim 1 $ by \eqref{eq:adddd1.5} and \eqref{eq:Imm}; in the last step, we choose $\mathsf q$ to be sufficiently large (depending on $\delta-\e'$) such that $n^{-\mathsf q\delta-\tau}\eta^{-1} \le n^{-\e' \mathsf{q}}$. This proves \eqref{AB1} and completes the proof of Lemma \ref{lem_keycontrollemma}.

\section{Proof of the results in Section \ref{subsec_functionalrepresentation}}\label{appendix_finalone}

\subsection{Proof of Lemma \ref{7.2}}\label{sec_proofof72}
 
In this subsection, we prove Lemma \ref{7.2} using the strategies outlined in \cite{bloemendal2016principal,knowles2013eigenvector}. A key component of the proof relies on the level repulsion estimate. In the existing literature, the level repulsion estimate for sample covariance matrices has only been established around the edge \cite{bloemendal2016principal} or inside the bulk via the universality of bulk eigenvalue statistics \cite{PY}. However, for our specific applications, we require the level repulsion estimate to hold over the entire spectrum. 
Recall the definition of $\cal W_t$ from \eqref{eq_mathcalwt}, and denote the eigenvalues of $\mathcal{Q}_t=\mathcal{W}_t \mathcal{W}_t^\top$ as $\{\bar \lambda_k(t)\}$. Given an interval $I\subset \R_+$, we define the counting function
\[ \overline {\mathsf{N}}_t(I)=|\{i \in \qq{1, \mathsf{K}}: \bar\lambda_i(t) \in I\}|.\] 
Similarly, we define the counting function ${\mathsf{N}}_t$ for the eigenvalues $\{ \lambda_k(t)\}$ of the matrix DBM defined in \eqref{eq_qt}:
\[  {\mathsf{N}}_t(I)=|\{i \in \qq{1, \mathsf{K}}:  \lambda_i(t) \in I\}|.\]
Recall that $\gamma_k(t)$ represents the quantiles of $\varrho_t$ as defined below \eqref{koux}, and $\Delta_k$ is defined in \eqref{eq_mathsfKdefinition}. 
 
 
  \begin{proposition}[Level repulsion] \label{prop: level repulsion} 
  Under the setting of \Cref{lemma_universal}, consider a fixed $t\asymp n^{-1/3+\mathsf c}$. There exists a small constant $\delta_0\in (0,2/3)$ such that the following statements hold. Define the disjoint union of intervals $I_{\delta_0}^+(t)$ as
\[I_{\delta_0}^+(t):=\cup_{k=1}^q[a_{2k}(t)-n^{-\delta_0},a_{2k-1}(t)+n^{-\delta_0}],\]
  and define $k_E:=\mathrm{arg\,min}\{|\gamma_k(t)-E|:k\in \qq{\sfK}\}$ for $E\in I_{\delta_0}^+$.    
  Then, for every constant $\delta\in (0, \delta_0],$ we can find a constant $\epsilon \equiv \epsilon(\delta)>0$ such that 
\begin{equation} \label{level repulsion}
\P \left( {\mathsf{N}}_t\left([E-\Delta_{k_E} n^{-\delta}, E+\Delta_{k_E} n^{-\delta}] \right)\geq 2\right) \leq n^{-\delta-\epsilon}, \quad \forall E\in I_{\delta_0}^+.
\end{equation}
The same estimate also applies to $\overline {\mathsf N}_t$.
\end{proposition}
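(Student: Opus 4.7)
The plan is to combine the anisotropic local law \eqref{TAZ} at an intermediate spectral scale with the Gaussian regularization provided by the DBM component of $Q(t)$, yielding the level repulsion for $Q(t)$, and then to transfer the estimate to the static matrix $\mathcal{Q}_t$ via the Green's function comparison developed in Section \ref{sec_prooflemma410}. This strategy unifies and extends the edge-type arguments of \cite{bloemendal2016principal} and the bulk-type arguments related to \cite{PY} to cover the entire spectral window $E\in I^+_{\delta_0}(t)$, including the transition regimes between bulk components and between bulk and edges.

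First, for $Q(t)$, I would bound $\P(\mathsf N_t(I)\ge 2)$ via the second factorial moment $\mathbb{E}[\mathsf N_t(I)(\mathsf N_t(I)-1)]$ with $I:=[E-\Delta_{k_E}n^{-\delta},E+\Delta_{k_E}n^{-\delta}]$. Using the Helffer-Sjöstrand formula, this pair-count is expressed as a double integral of products $\im G_{t,ii}(z)\,\im G_{t,jj}(z')$ against an almost-analytic extension of a smooth indicator of $I$, with $z,z'$ lying at height $\tilde\eta:=\Delta_{k_E} n^{-\delta-\epsilon_0}$ above the real line. Since $\tilde\eta\ge n^{-1+\tau}$ whenever $\delta<2/3$, the local law \eqref{TAZ} applies at this scale, yielding $\im G_{t,ii}(z) = \im m_t(z) + O_\prec(\Psi_t(z))$; combined with \eqref{eq:Imm} and the regularity conditions \eqref{eq:edgeregular}--\eqref{eq:bulkregular}, this gives the uniform bound $\im m_t(E+i\tilde\eta)\lesssim \sqrt{\tilde\eta+\kappa}$ where $\kappa:=\dist(E,\supp\varrho_t)$. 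These inputs alone yield only the naive estimate $n^{-2\delta}$; the missing $n^{-\epsilon}$ gain is then recovered from the Gaussian component of $W(t)$. Conditioning on $W(0)$ and applying standard arguments for the rectangular Bru-Wishart diffusion, the conditional joint eigenvalue density of $Q(t)$ carries the Vandermonde factor $\prod_{i<j}|\lambda_i-\lambda_j|$; integrating this density over a pair of eigenvalues in $I$ produces an extra repulsion factor of order $\Delta_{k_E}n^{-\delta}$ beyond the mean-field pair count, which yields $\mathbb{E}[\mathsf N_t(I)(\mathsf N_t(I)-1)]\lesssim n^{-\delta-\epsilon'}$ for some $\epsilon'>0$, and Markov's inequality concludes \eqref{level repulsion} for $Q(t)$.

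For the static matrix $\mathcal{Q}_t$, which lacks a Gaussian DBM component, I would transfer the repulsion estimate from $Q(t)$ via a Green's function comparison of the form established in \Cref{nanzi}, applied to a smooth observable encoding the pair-count $\mathsf N_t(I)(\mathsf N_t(I)-1)$. The vanishing third-moment condition in Assumption \ref{main_assumption}(ii) ensures that the comparison has error $O(n^{-\nu})$ for some constant $\nu>0$, which can be absorbed into the target $n^{-\delta-\epsilon}$ bound by choosing $\epsilon_0$, and hence $\epsilon$, small enough. An alternative direct resolvent route, bypassing Gaussian divisibility altogether, would use two-scale cancellations in $\mathbb{E}|G_{ij}(z)|^2$ in the spirit of \cite{PY}; this is conceptually cleaner but technically more delicate to implement uniformly from the deep bulk to the spectral edges.

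The main obstacle will be achieving uniformity in $E\in I^+_{\delta_0}(t)$, especially in the transition regime of width $n^{-\delta_0}$ around the spectral edges, where $\im m_t(E+i\tilde\eta)$ interpolates between $\Theta(\sqrt{\tilde\eta})$ at the edges and $\Theta(1)$ deep in the bulk, and where the local scale $\Delta_{k_E}$ varies from $n^{-2/3}$ down to $n^{-1}$. Uniformity is handled by a grid argument on $E$ with spacing $n^{-C}$ for a large constant $C$, together with the rigidity \eqref{TAZ2} which controls $\mathsf N_t(\cdot)$ between grid points; the resulting polylogarithmic loss is absorbed into the $n^{-\epsilon}$ slack by taking $\epsilon_0$ sufficiently small relative to $\epsilon$. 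A subtler point is the behavior of $k_E$ across the boundary between two connected bulk components (recall \Cref{lem_property}), which is handled via the convention $k_E=\arg\min_k|\gamma_k(t)-E|$ and the edge separation provided by \eqref{eq:edgeregular}.
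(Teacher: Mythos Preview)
Your overall architecture---extract level repulsion for $Q(t)$ from its Gaussian DBM component, then push it to $\mathcal{Q}_t$ by a Green's function comparison---is the same as the paper's. But the mechanism you describe for the first step contains a real gap.

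You write that the Helffer--Sj\"ostrand representation plus the anisotropic local law \eqref{TAZ} already yields the ``naive estimate'' $\mathbb{E}[\mathsf N_t(I)(\mathsf N_t(I)-1)]\lesssim n^{-2\delta}$, after which the Vandermonde factor supplies an extra $n^{-\epsilon}$. This is inconsistent on two counts. First, the local law controls the one-point function (i.e.\ $\im m_{n,t}$ or individual $\im G_{ii}$) but gives no control on the two-point correlation needed for the pair count; at the relevant scale $\tilde\eta\ll \Delta_{k_E}$ the averaged local law error $\OO_\prec((n\eta)^{-1})$ is of order at least one, so $N_t(I)$ is only bounded by $\OO_\prec(1)$ and the second factorial moment is not small. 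Second, if you \emph{could} get $n^{-2\delta}$, you would already be done: the target is $n^{-\delta-\epsilon}$ with some $\epsilon>0$, and $n^{-2\delta}\le n^{-\delta-\epsilon}$ for any $\epsilon<\delta$, so no further Vandermonde gain would be required. The role of the Gaussian component is therefore not to \emph{correct} a local-law bound but to produce the repulsion outright.

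The paper obtains the repulsion for $Q(t)$ differently from your unified conditional-density proposal. In the bulk (and slightly into the intermediate regime) it invokes the DBM coupling/homogenization machinery of \cite{landon2017convergence} and \cite{benigni2022optimal}: one couples the eigenvalue flow of $Q(t)$ to that of a purely Gaussian ensemble over the short time $t$, transfers the known Wegner-type estimate from the Gaussian side, and uses rigidity \eqref{TAZ2} as the sole local-law input. Near the edges it follows \cite{bloemendal2016principal}, first proving \eqref{level repulsion} for Gaussian $X$ by manipulating the explicit Laguerre $\beta$-ensemble density \`a la \cite{Bourgade_edge_Beta}, and then upgrading to general $X$ by a Green's function comparison. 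Your ``conditional Vandermonde'' idea is close in spirit to the edge argument, but executing it uniformly across the whole spectrum would require uniform pointwise control of the Bru--Wishart transition kernel on short times, which is substantially harder than the coupling route and is not what the cited references actually do in the bulk.

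For $\overline{\mathsf N}_t$, the paper does not compare $\mathcal Q_t$ with $Q(t)$ via \Cref{nanzi} but rather observes that \eqref{level repulsion} for $Q(t)$ in particular covers the case of Gaussian $X$, i.e.\ the matrix $\mathcal Q_t^G=D_t^{1/2}V^\top X^G (X^G)^\top V D_t^{1/2}$, and then runs a standard Lindeberg-type resolvent comparison between $\mathcal Q_t$ and $\mathcal Q_t^G$ as in \cite[Lemma 6.5]{bloemendal2016principal}. Since the observable here is a smoothed trace (not a quadratic form in a fixed vector), the novel interpolation of \Cref{nanzi} is unnecessary; the ordinary four-moment comparison suffices.
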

\begin{proof}
An analogous level repulsion estimate inside the bulk has been established for the matrix DBM of Wigner ensembles in \cite[Theorem 5.1]{landon2017convergence} and \cite[Lemma B.1]{benigni2022optimal}, with the key inputs being the local laws and the eigenvalue rigidity estimate. With the help of our Lemmas \ref{sare} and \ref{sare2}, we can readily extend the arguments from \cite{landon2017convergence,benigni2022optimal} to our rectangular matrix DBM $Q(t)$ defined in \eqref{eq_qt}. Specifically, by following the reasoning presented in \cite[Theorem 5.1]{landon2017convergence}, we can establish that \eqref{level repulsion} holds for $E\in \cup_{k=1}^q [a_{2k}+\kappa,a_{2k-1}-\kappa]$, where $\kappa$ is an arbitrarily small positive constant.
Furthermore, as explained in the proof of \cite[Lemma B.1]{benigni2022optimal}, the argument in \cite[Theorem 5.1]{landon2017convergence} can be extended a little bit to show that there exists a sufficiently small constant $\e>0$ so that \eqref{level repulsion} holds for $E\in \cup_{k=1}^q [a_{2k}+n^{-\e},a_{2k-1}-n^{-\e}]$. Since the extension from Wigner ensembles to sample covariance ensembles is standard, we omit the complete details here. 

To establish \eqref{level repulsion} for $E$ around the spectral edges, specifically for $E\in \cup_{k=1}^q ([a_{2k}-n^{-\delta_0},a_{2k}+n^{-\e}]\cup [a_{2k-1}-n^{-\e},a_{2k-1}+n^{-\delta_0}])$, we can refer to \cite[Proposition 6.3]{bloemendal2016principal}. Although this result is stated for the special case with $\Lambda_0=I$ in \cite{bloemendal2016principal}, the same argument can be extended to our more general setting. More precisely, as explained in Lemma 6.4 of \cite{bloemendal2016principal}, we can show that \eqref{level repulsion} holds for the Gaussian ensemble when $X$ is a Gaussian random matrix. 
(The proof of \cite[Lemma 6.4]{bloemendal2016principal} relies on an analysis of the joint eigenvalue probability density, employing a method developed in the proof of \cite[Theorem 3.2]{Bourgade_edge_Beta}. While \cite[Theorem 3.2]{Bourgade_edge_Beta} is presented for the Gaussian $\beta$ ensemble, the same proof carries over almost verbatim to our setting, where the eigenvalues follow a Laguerre $\beta$ ensemble.)  
Next, as explained in the proof of \cite[Lemma 6.5]{bloemendal2016principal}, we can establish the level repulsion estimate \eqref{level repulsion} for $Q(t)$ through a Green's function comparison argument between the case with Gaussian $X$ and that with a non-Gaussian $X$ in $W(0)$. Once again, since this extension is standard and straightforward, we omit the complete details here. This concludes the proof of \eqref{level repulsion} for $E$ near the spectral edges.


By \eqref{level repulsion}, we know that the level repulsion estimate applies to the eigenvalues of $\cal Q^G_t:=\cal W^G_t(W^G_t)^\top$ with \smash{$W^G_t:= D_t^{1/2} V^\top X^G$}. 
To further prove this estimate for $\overline {\mathsf N}_t$, we can once again employ the Green's function comparison argument between $\cal Q^G_t$ and $\cal Q_t$, as explained in Lemma 6.5 of \cite{bloemendal2016principal}. We omit the details here. 
\end{proof}



With Proposition \ref{prop: level repulsion}, we now proceed to complete the proof of Lemma \ref{7.2} by employing the method developed in \cite{bloemendal2016principal, knowles2013eigenvector}.


\begin{proof}[\bf Proof of Lemma \ref{7.2}] 
We only present the proof for $Q(t)$, while the proof for $\cal Q_t$ is the same. To simplify the presentation, we prove Lemma \ref{7.2} in the special case where $\theta=\theta(p |\avg{\bv, \bu_{i+r}(t)}|^2)$. For brevity, we will use the abbreviations $\bu_i \equiv \bu_i(t)$, $Q \equiv Q(t)$, and \smash{$\wt{\mathcal{G}}_1(z) \equiv \wt{\mathcal{G}}_{1,t}(z) := (Q(t)-z)^{-1}$} for a fixed $t\asymp n^{-1/3+\mathsf c}$. Recall the definition of $\Delta_i$ in (\ref{eq_mathsfKdefinition}) and the definition of $\eta_i$ in \eqref{eq_keyquantitiesdefinition}.
We now present the first ingredient of the proof. 

\begin{lemma}\label{lem_expansionchacteristic} 
Under the assumptions of Lemma \ref{7.2}, there exists a small constant $\e>0$ such that the following estimate holds for sufficiently small constants $\delta_1\equiv \delta_1(\e)$, $\delta_2\equiv \delta_2(\e,\delta_1)$, and $\nu>0$: 
\begin{equation*}
\mathbb{E} \theta(p |\avg{\bv, \bu_{i+r}(t)}|^2)=\E \theta \left( \frac{p}{\pi} \int_{I_i} \im \big[\bv^\top \wt{\mathcal{G}}_1 (E+\ii \eta_i ) \bv \big] \chi_i(E) \dd E \right)+\OO(n^{-\nu}), \quad i\in \qq{1,\sfK-r},
\end{equation*}
where $I_i$ is defined in \eqref{eq:Ik} and  $\chi_i(E):=\mathbf{1}(\lambda_{i+1}<E_i^- \leq \lambda_i)$. Here, $E_i^{-}=E - n^{\delta_1} \eta_i$ is defined in \eqref{eq_keyquantitiesdefinition}, and we adopt the convention $\lambda_{\sfK+1}= 0$.    
\end{lemma}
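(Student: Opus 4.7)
The plan is to prove the lemma via the standard spectral decomposition plus Poisson-kernel approximation strategy used in \cite{bloemendal2016principal, knowles2013eigenvector}, combined with the level repulsion estimate of \Cref{prop: level repulsion} and the eigenvalue rigidity/eigenvector delocalization from \Cref{sare}. The starting point is the identity
\begin{equation*}
\frac{1}{\pi}\im\!\bigl[\bv^\top \wt{\mathcal{G}}_1(E+\ii\eta_i)\bv\bigr] \;=\; \sum_{k=1}^{p}|\avg{\bv,\bu_k(t)}|^2\,\mathcal P_{\eta_i}(\lambda_k(t)-E),
\end{equation*}
where $\mathcal P_\eta(x):=\eta/[\pi(x^2+\eta^2)]$ is the Poisson kernel. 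Integrating against the indicator $\chi_i(E)$ over $I_i$ gives, for each $k$, a weight $\omega_k^{(i)}:=\int_{I_i}\chi_i(E)\mathcal P_{\eta_i}(\lambda_k-E)\dd E$. The strategy is to show that, with high probability, $\omega_{i+r}^{(i)}=1+\oo(n^{-\nu'})$ while $\omega_k^{(i)}=\OO(n^{-\nu'})$ for $k\ne i+r$, for some constant $\nu'>0$.

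First, I would fix $\delta_1\ll\e$ and $\delta_2\ll\delta_1$ and introduce the ``good event'' $\Omega$ on which (a) the rigidity bound \eqref{TAZ2} holds so that $|\lambda_{i+r}(t)-\gamma_{i+r}(t)|\le n^{\delta_2/2}\Delta_i$, placing $\lambda_{i+r}(t)$ well inside $I_i$; (b) the level repulsion estimate \eqref{level repulsion} at scale $\Delta_i n^{-\delta_1/2}$ rules out two eigenvalues lying within distance $\Delta_i n^{-\delta_1/2}$ of $\lambda_{i+r}(t)$; and (c) the delocalization estimate \eqref{TAZ3} gives $p|\avg{\bv,\bu_k(t)}|^2\prec 1$ uniformly in $k$. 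On $\Omega$, the indicator $\chi_i(E)$, as $E$ traverses $I_i$, isolates an open sub-interval whose left endpoint is $\lambda_{i+r}(t)$ (up to the shift $n^{\delta_1}\eta_i=n^{-\e}\Delta_i\ll n^{\delta_2}\Delta_i$), and the whole integration window lies at distance $\gtrsim \Delta_i n^{-\delta_1/2}$ from every other eigenvalue. A direct Poisson computation then yields $\omega_{i+r}^{(i)}=1-\OO(n^{-(\delta_1-\delta_2)}/\text{const})$ and $\omega_k^{(i)}=\OO(\eta_i/\dist(\lambda_k,I_i))$ for $k\ne i+r$. Summing these tail contributions, and using \eqref{TAZ2}--\eqref{TAZ3}, I expect to get a total error of size $n^{-c}$ with $c>0$ on $\Omega$. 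On the complement $\Omega^c$, both $p|\avg{\bv,\bu_{i+r}(t)}|^2$ and the resolvent integral are bounded by $p\cdot\|\bv\|^2=p$ deterministically, so the polynomial-growth assumption on $\theta$ and the fact that $\P(\Omega^c)\le n^{-D}$ for any $D$ (except on the level-repulsion event, which has probability $\le n^{-\delta_1-\e'}$ but contributes only a power of $p$) will absorb the exceptional contribution after choosing the parameters $\e,\delta_1,\delta_2$ in the right order.

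At this point the identity to be proved reads
\begin{equation*}
\E\theta(p|\avg{\bv,\bu_{i+r}(t)}|^2)=\E\theta\!\Bigl(\tfrac{p}{\pi}\!\int_{I_i}\!\im[\bv^\top\wt{\mathcal G}_1(E+\ii\eta_i)\bv]\,\chi_i(E)\dd E\Bigr)+\OO(n^{-\nu})
\end{equation*}
up to terms of the form $\E|\theta(X+Y)-\theta(X)|$ with $|Y|\prec n^{-c}$ and $|X|\prec 1$ on $\Omega$. Using the mean-value theorem together with the polynomial bound on $\theta'$, this is $\OO(n^{-c})$, and the contribution from $\Omega^c$ is handled via the polynomial growth of $\theta$ and Cauchy--Schwarz against the probability bound on $\Omega^c$. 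Choosing $\nu<c$ then gives the claim.

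The main technical obstacle is the level-repulsion part: without \Cref{prop: level repulsion}, two nearby eigenvalues could both be captured by $\chi_i(E)$, which would replace $p|\avg{\bv,\bu_{i+r}(t)}|^2$ by a sum of at least two squared projections and destroy the isolation. The precise choice of the hierarchy $\delta_2\ll\delta_1\ll\e$ is dictated by this: $\delta_1$ must be large enough that $n^{\delta_1}\eta_i$ dominates the rigidity fluctuation $n^{\delta_2/2}\Delta_i$, yet small enough that $\P(\text{two eigenvalues within }n^{-\delta_1}\Delta_i)\le n^{-\delta_1-\e'}$ is genuinely summable after combining with the $p$-factors. The same hierarchy ensures the Poisson-tail bounds in step two remain polynomially small. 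Extending to general smooth $\theta$ depending on several coordinates is routine: one applies the one-dimensional argument coordinate by coordinate, using the $\|\mathbf k\|_1\le 5$ smoothness to control the resulting multi-variable Taylor remainders.
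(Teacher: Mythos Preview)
Your proposal is correct and follows essentially the same approach as the paper, which simply defers to \cite[Lemma 3.1]{knowles2013eigenvector}, \cite[Lemma 7.1]{bloemendal2016principal}, and \cite[Lemma 3.2]{ding2019singular}, noting only that the level repulsion estimate of \Cref{prop: level repulsion} is now available over the entire spectrum rather than merely at the edge or in the bulk. One small imprecision: on the level-repulsion bad event you should not use the crude deterministic bound $p$ but rather the delocalization $p|\avg{\bv,\bu_k(t)}|^2\prec 1$ (which still holds with overwhelming probability), so that this event contributes $\OO(n^{-\delta_1-\e'+o(1)})$ rather than a power of $p$ times $n^{-\delta_1-\e'}$.
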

\begin{proof}
Our proof follows a similar approach to that of \cite[Lemma 3.1]{knowles2013eigenvector}, \cite[Lemma 7.1]{bloemendal2016principal}, and \cite[Lemma 3.2]{ding2019singular}. The only difference is that we utilize the level repulsion estimate from \Cref{prop: level repulsion} over the entire spectrum, whereas the aforementioned works rely on the level repulsion estimate either near the edges or inside the bulks. We omit the detailed exposition here. 
\end{proof}

For the second ingredient of the proof, we show that the indicator function $\chi_i(E)$ can be approximated by the smooth functions $\mathfrak{q}_i(x)$ and $\mathfrak{f}_i(x)$ defined in (\ref{eq_smoothqfunction}) and (\ref{eq_sequecenoffunction}) up to a negligible error. 

\begin{lemma}\label{lem_smoothapproximation} 
Under the assumptions of Lemma \ref{7.2}, there exists a small constant $\e>0$ such that the following estimate holds for sufficiently small constants $\delta_1\equiv \delta_1(\e)$, $\delta_2\equiv \delta_2(\e,\delta_1)$, and $\nu>0$: 
\begin{equation*}
\E \theta \left( \frac{p}{\pi} \int_{I_i} \im \big[\bv^\top \wt{\mathcal{G}}_1 (E+\ii \eta_i ) \bv \big] \chi_i(E) \dd E \right)=
\mathbb{E} \theta \left( \frac{p}{\pi} \int_{I_i} \im \big[\bv^\top \wt{\mathcal{G}}_1 (E+\ii \eta_i ) \bv \big]\mathfrak{q}_i(\tr \mathfrak{f}_i(Q)) \dd E \right) +\OO(n^{-\nu}),
\end{equation*} 
where $i\in \qq{1,\sfK-r}$.
\end{lemma}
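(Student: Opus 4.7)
The strategy is to show that the step function $\chi_i(E)$ and the smooth cutoff $\mathfrak{q}_i(\tr \mathfrak{f}_i(Q))$ coincide pointwise on all of $I_i$ except on a random ``bad'' $E$-set whose Lebesgue measure is small in expectation, and that the integrand difference on this bad set contributes only a negligible amount. On a high-probability event $\Omega_1$ combining the rigidity \eqref{TAZ2} with slack factor $n^{\e/4}$ and the anisotropic local law \eqref{wtTAZ} (yielding $\sup_{E\in I_i}|\im[\bv^\top \wt{\cal G}_1(E+\ii\eta_i)\bv]|\lesssim n^{\e/4}$), every eigenvalue of $Q$ lies strictly below $E^+(\e)=\lambda_++2n^{-2/3+\e}$, so the right endpoint of $\mathfrak{f}_i$ is inactive and $\tr\mathfrak{f}_i(Q)=\sum_j \mathfrak{f}_i(\lambda_j)$ depends only on eigenvalues near the left endpoint $E_i^-=E-n^{\delta_1}\eta_i$.

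The pointwise identity driving the proof is this: if no eigenvalue lies in the transition window $W(E):=[E_i^--\tilde\eta_i,E_i^-+\tilde\eta_i]$, then each $\mathfrak{f}_i(\lambda_j)\in\{0,1\}$, so $\tr\mathfrak{f}_i(Q)$ equals the integer $M(E):=\#\{j:\lambda_j>E_i^-\}$. Rigidity forces $M(E)$ to lie in a bounded neighborhood of $i$ for $E\in I_i$, and $M(E)=i$ is by definition equivalent to $\chi_i(E)=1$. Since $\mathfrak{q}_i$ equals $\mathbf 1(k=i)$ on the integers $k$ close to $i$, we conclude $\chi_i(E)=\mathfrak{q}_i(\tr\mathfrak{f}_i(Q))$ pointwise on $I_i\setminus\mathcal{E}$, where $\mathcal{E}:=\{E\in I_i:\spec(Q)\cap W(E)\neq\emptyset\}$.

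To handle $\mathcal{E}$, I bound the integrand difference by $2 n^{\e/4}$ on $\Omega_1$ and reduce to estimating $\E|\mathcal{E}|$. By Fubini and Markov, $\E|\mathcal{E}|\le |I_i|\sup_{E\in I_i}\E\,\mathsf{N}_t(W(E))$, and using the averaged local law \eqref{eq_averagerightblock} together with the square-root behavior of $\varrho_t$ (so that $\varrho_t(E_i^-)\lesssim n^{-1/3}$ at the edge and $\varrho_t(E_i^-)\lesssim 1$ in the bulk, with a matching gain against $\Delta_i$ at the edge) gives $\E\,\mathsf{N}_t(W(E))\lesssim n\tilde\eta_i\cdot\varrho_t(E_i^-)\lesssim n^{-3\e}$ uniformly in $E\in I_i$. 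Letting $A$ and $B$ denote the two integrals in the lemma, one obtains
\begin{equation*}
\E\left[|A-B|\,\mathbf 1_{\Omega_1}\right]\;\lesssim\;p\,n^{\e/4}\cdot\E|\mathcal{E}|\;\lesssim\; p\,\Delta_i\cdot n^{\delta_2-11\e/4}\;\lesssim\; n^{1/3+\delta_2-11\e/4},
\end{equation*}
which is $\OO(n^{-\nu})$ for any $\delta_2$ sufficiently small compared to $\e$, and for a suitable $\nu>0$.

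Finally, I upgrade the bound on $|A-B|$ to one on $|\E\theta(A)-\E\theta(B)|$ using the polynomial growth of $\theta$ and its derivatives: on $\Omega_1$ the quantities $A$ and $B$ are each $\OO(n^{\e/4})$, so the mean value theorem gives $|\theta(A)-\theta(B)|\lesssim n^{C\e}|A-B|$, while on $\Omega_1^c$ (of probability $\OO(n^{-D})$ for any $D$) the deterministic bound $|A|,|B|\lesssim p\,|I_i|\,\eta_i^{-1}\lesssim n^C$ combined with the polynomial growth of $\theta$ absorbs the loss. The main technical obstacle is the edge regime, where $\Delta_i\asymp n^{-2/3}$ is largest and $p\Delta_i\asymp n^{1/3}$ provides the weakest prefactor control: it is here that one needs the sharp square-root bound on the edge density of $\varrho_t$ (rather than the crude $\|\varrho_t\|_\infty$) to compensate the large $\Delta_i$, and it is here that \Cref{prop: level repulsion} is needed as a backup via the parameter hierarchy $\nu\ll\delta_2\ll\delta_1\ll\e$.
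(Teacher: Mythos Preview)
There is a quantitative gap in your argument at the edge. Your displayed final bound
\[
\E\bigl[|A-B|\,\mathbf 1_{\Omega_1}\bigr]\;\lesssim\; n^{1/3+\delta_2-11\e/4}
\]
is \emph{not} $\OO(n^{-\nu})$ for small $\e,\delta_2$: the exponent $1/3+\delta_2-11\e/4$ is positive whenever $\e<4/33$, so the bound diverges. The culprit is the crude estimate $\sup_{E\in I_i}|\im[\bv^\top\wt{\cal G}_1(E+\ii\eta_i)\bv]|\lesssim n^{\e/4}$, which discards a factor of $n^{-1/3}$ near the edge. Reading off from the anisotropic local law \eqref{wtTAZ} together with $\im m_t(E+\ii\eta_i)\asymp\sqrt{\kappa_i+\eta_i}$, one obtains the sharper bound $|\im[\bv^\top\wt{\cal G}_1\bv]|\lesssim \hat k^{1/3}n^{-1/3+2\e}$, where $\hat k$ is the distance-to-edge index for $i$. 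Plugging this into your estimate gives $|A-B|\lesssim p\cdot\hat k^{1/3}n^{-1/3+2\e}\cdot n^{\delta_2+\e'}\tilde\eta_i \lesssim n^{\delta_2+\e'-\e}$, which \emph{does} close for $\delta_2<\e$. So your route can be salvaged, but only once you track the square-root suppression of the integrand itself, not merely of $\varrho_t$ in the $\E\mathsf N_t(W(E))$ step. Your final paragraph alludes to the right ingredient but never redoes the computation, and the phrase ``level repulsion as a backup via the parameter hierarchy'' is not an argument.

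For comparison, the paper's proof proceeds in two steps rather than one. First it rewrites $\chi_i(E)=\mathfrak q_i\bigl[\tr\chi_E(Q)\bigr]$ with $\chi_E=\mathbf 1_{[E_i^-,E^+]}$; this is exact on the high-probability rigidity event, since $\tr\chi_E(Q)$ is an integer and $\mathfrak q_i$ agrees with $\mathbf 1_{\{i\}}$ on integers. Only then does it pass from the sharp indicator $\chi_E$ to its mollification $\mathfrak f_i$, and that replacement is handled via the level-repulsion estimate of Proposition~\ref{prop: level repulsion}, following \cite{ding2019singular}. Thus the paper genuinely invokes level repulsion here, whereas your (corrected) argument would bypass it in favour of the sharp edge bound on the resolvent.
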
 
\begin{proof}
Using the rigidity of eigenvalues in \eqref{TAZ2}, we can obtain that with high probability,
\begin{align*}
p\int_{I_i}  \im \big[\bv^\top \wt{\mathcal{G}}_1 (E+\ii \eta_i) \bv \big] \chi_i(E) \dd E & =p \int_{I_i} \im \big[\bv^\top \wt{\mathcal{G}}_1 (E+\ii \eta_i) \bv \big] \mathbf{1}(\mathsf{N}([E^-_i, E^+])=i)  \dd E  \\
&=p \int_{I_i}  \im \big[\bv^\top \wt{\mathcal{G}}_1 (E+\ii \eta_i) \bv \big] \mathfrak{q}_i[\tr (\chi_E(Q)) ]\dd E, 
\end{align*}
where $E^+$ is defined in \eqref{eq_keyquantitiesdefinition} and we denote $\chi_E(x):=\mathbf{1}(x\in [E_i^- , E^+])$. Next, by utilizing Proposition \ref{prop: level repulsion} and following the proof of (4.11) in \cite{ding2019singular}, we can show that 
\begin{equation*}
\E \theta \left( \frac{p}{\pi} \int_{I_i} \im \big[\bv^\top \wt{\mathcal{G}}_1 (E+\ii \eta_i) \bv \big] \mathfrak{q}_i[\tr (\chi_E(Q)) ] \dd E \right)=
\mathbb{E} \theta \left( \frac{p}{\pi} \int_{I_i} \im \big[\bv^\top \wt{\mathcal{G}}_1 (E+\ii \eta_i) \bv \big] \mathfrak{q}_i(\tr \mathfrak{f}_i(Q)) \dd E \right) +\OO(n^{-\nu}).
\end{equation*} 
Combining the above two equations completes the proof.
\end{proof}

We observe that Lemmas \ref{lem_expansionchacteristic} and \ref{lem_smoothapproximation} together establish Lemma \ref{7.2} when $L=1$. The proof for the general case of $L>1$ is analogous, as explained in Section 4 of \cite{knowles2013eigenvector}. We omit the details here.
\end{proof}

\subsection{Proof of Lemma \ref{nanzi}}\label{sec_prooflemma410}

In this subsection, we prove Lemma \ref{nanzi} using a continuous comparison argument developed in \cite[Section 7]{MR3704770}. 
Recall the sequence of interpolating matrices $W^s_t\in \mathbb{R}^{p\times n}$ defined in \eqref{eq_dtusx}. For the sake of simplicity, we denote \smash{$Y^s\equiv Y^s_t:=D_t^{-1/2}W^s_t$}, i.e., 
\begin{equation*}
Y^s_{i\mu}=
\sum_{j=1}^{2p} 
U^1_{ij}\chi^s_{j\mu} \mathsf{X}_{j\mu}
+ \sum_{j=1}^{2p} 
U^0_{ij} (1-\chi^s_{j\mu})\mathsf{X}_{j\mu}, \quad i\in \mathcal{I}_1, \ \mu\in\mathcal{I}_2, 
\end{equation*}
where $\{\chi^s_{j\mu}\}_{j\in\qq{2p},\mu\in\qq{n}}$ are i.i.d.~Bernoulli$(s)$ random variables.
Alternatively, we can express $Y^s$ as
\begin{equation}\label{defZth1}
Y^s=\sum_{j=1}^{2p} \sum_{\mu=1}^{n} Z^s_{(j\mu)}, \quad \text{with}\quad Z^s_{(j\mu)}:=  \mathsf{X}_{j\mu} \Delta^s_{(j\mu)},\quad 
\Delta^s_{(j\mu)}:=\chi^s_{j\mu} \left( U^1 \mathbf{e}_{j\mu} \right)
+ (1-\chi^s_{j\mu})\left(U^0 \mathbf{e}_{j\mu} \right).
\end{equation} 
Here, $\mathbf{e}_{j\mu}:=\mathbf e_j \mathbf e_\mu^\top$ represents a $2p \times n$ matrix with only one non-zero entry at the $(j,\mu)$-th position. 
Furthermore, given a matrix $\Gamma \in \mathbb{R}^{p\times n}$, we define 
\begin{equation}\label{eq_interpolation} Y^{s,\Gamma}_{(j \mu)} :=\sum_{(i, \nu)\ne (j, \mu)}Z_{(i \nu)}^s+\Gamma.
\end{equation}
In particular, using this notation, we have $Y^s-Y^{s, \mathbf{0}}_{(j \mu)} =  Z_{(j \mu)}^s$.
Similar to \eqref{eq_defnfinalG} and \eqref{aniso}, we will use the following linearized resolvent:
\begin{equation}\label{eq_generallinearmatrix}
G^s_t(z)  :=\begin{pmatrix}
-D_t^{-1} & Y^s \\
(Y^s)^\top &-z
\end{pmatrix} ^{-1}=\begin{pmatrix}
z D_t^{1/2} {\cal G}_{1,t}^s(z) D_t^{1/2} & D_t Y^s \mathcal{G}^s_{2,t} \\
\mathcal{G}^s_{2,t} (Y^s)^\top D_t  & \mathcal{G}^s_{2,t}
\end{pmatrix},
\end{equation}
where $ \cal G_{1,t}^s:=(D_t^{1/2}Y^s(Y^s)^\top D_t^{1/2} -z)^{-1}$ and $\mathcal{G}_{2,t}^s:=((Y^s)^\top D_t Y^s-z)^{-1}$. 
For $s\in \{0,1\}$, the local laws for the resolvent $G^s$ have already been established in \Cref{lem_standardaniso} (when $s=0$) and Lemma \ref{sare2} (when $s=1$). These local laws can also be extended to all $s\in (0,1)$, as summarized in the following lemma. 

 
\begin{lemma}\label{lem_locallaw} 
Suppose the assumptions of Lemma \ref{nanzi} hold. Define $\wt\Pi_t$ and $\Psi_t$ as in \Cref{sec: iso of Wt} with $\Lambda=D_0$ and $\Lambda_0=D_{00}$. Then, there exists a constant $c>0$ such that the following local law holds uniformly in $z\in  \mathbf{D}'(c,\tau, t,n )$ for any constant $\tau>0$ and deterministic unit vectors $\bu, \bv  \in \mathbb{R}^{p+n}$: 
	\begin{equation}
	\left| \mathbf{u}^\top G^s_t(z) \mathbf{v}-\mathbf{u}^\top \wt\Pi_{t}(z) \mathbf{v} \right| \prec \Psi_t(z).\label{eq_localGs}
\end{equation}
\end{lemma}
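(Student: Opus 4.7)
The plan is to extend the endpoint local laws (established at $s=0$ by \Cref{lem_standardaniso} and at $s=1$ by \Cref{sare2}) to all $s \in [0,1]$ by exploiting the fact that the interpolating matrix $Y^s$ retains the correct sample-covariance-type column structure for every $s$. First, I would observe that $Y^s$ has mutually independent columns, since both $\chi^s$ and $\mathsf{X}$ have columns that are independent across $\mu$; and that each column has covariance exactly $n^{-1}I_p$. Indeed, using $\chi^s_{j\mu}(1-\chi^s_{j\mu}) = 0$, together with $\mathbb{E}\chi^s_{j\mu}=s$, $\mathbb{E}\mathsf{X}_{j\mu}^2 = n^{-1}$, and the semi-orthogonality identities $U^0(U^0)^\top = U^1(U^1)^\top = I_p$ (which follow from $V^\top V = I_p$ and $D_0/D_t + t/D_t = I_p$), a direct computation gives
\begin{equation*}
\mathbb{E}[Y^s_{i\mu} Y^s_{k\mu}] = n^{-1}\bigl[s\,(U^1(U^1)^\top)_{ik} + (1-s)(U^0(U^0)^\top)_{ik}\bigr] = n^{-1}\delta_{ik}.
\end{equation*}
Consequently, $W^s_t = D_t^{1/2} Y^s$ has i.i.d.~columns with target covariance $n^{-1}D_t$, which is precisely the structure that produces the deterministic limit $\wt\Pi_t$ encoded through $m_t$.

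Second, I would verify the entry-wise moment conditions. Since $U^0$ and $U^1$ have entries of order one (using \Cref{main_assumption} (iii)--(iv) together with $t=\oo(1)$, so that $D_t^{-1}$ is uniformly bounded), each $Y^s_{i\mu}$ is a bounded-coefficient linear combination of the i.i.d.~family $\{\mathsf{X}_{j\mu}\}_j$. Hence arbitrary moment bounds on $\sqrt{n}\,Y^s_{i\mu}$ follow from those on $\sqrt{n}\,\mathsf{X}_{j\mu}$, and moreover $\mathbb{E}(Y^s_{i\mu})^3 = 0$ because $\mathbb{E}\mathsf{X}_{j\mu}^3 = 0$ together with the independence of the $\mathsf{X}_{j\mu}$ across $j$ kills all multi-index cross terms.

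Third, I would invoke the anisotropic local law framework of \cite{MR3704770,AOIT_2020} in the form required here: for sample covariance matrices built from i.i.d.~columns of prescribed covariance whose entries have all moments and vanishing third moment. Applied to $W^s_t$ with effective population covariance $D_t$, restricted to the spectral domain $\mathbf{D}'(c,\tau,t,n)$ with $c$ small enough that $\mathbf{D}'$ stays at distance of order one from every outlier location of $D_t$, this produces \eqref{eq_localGs} at each fixed $s$. To upgrade the estimate to a bound uniform in $s \in [0,1]$, I would apply the pointwise statement on a polynomially fine grid of $s$-values and then use a coupling-based Lipschitz (or Hölder) control of the increments, obtained by representing $\chi^s_{j\mu} = \mathbf{1}(u_{j\mu}\le s)$ through auxiliary i.i.d.~uniform random variables $u_{j\mu}$.

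The main obstacle is that the classical local law proofs assume $X$ has i.i.d.~entries both across and within columns, whereas the entries of $Y^s$ within a single column are correlated through the shared Bernoulli mask and the shared columns of $U^0,U^1$. Overcoming this requires revisiting the cumulant/resolvent-expansion argument of \cite[Theorem 3.6]{MR3704770} and checking that only column-wise independence together with the exact column covariance and the entry-moment bounds are used at each step---so that replacing ``i.i.d.~entries'' with ``i.i.d.~columns whose entries are bounded linear combinations of i.i.d.~variables with vanishing third moment'' does not break any step. This is the level of generality at which the isotropic framework of \cite{AOIT_2020} operates, and I expect the adaptation to $Y^s$ to be essentially bookkeeping once the column structure in the first paragraph is in place.
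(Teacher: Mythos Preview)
Your approach differs substantially from the paper's, and while your covariance computation is correct and is in fact the key structural input, the plan to invoke the local-law framework directly has a real gap.

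The paper does not try to fit $Y^s$ into the hypotheses of any existing local law. Instead it runs the self-consistent comparison of \cite[Section~7]{MR3704770}: with $F^{\bv,\mathsf q}(Y^s)$ as in \eqref{eq_FVQdefinition}, one writes $\E F^{\bv,\mathsf q}(Y^1)-\E F^{\bv,\mathsf q}(Y^0)$ as an $s$-integral of a telescoping sum over $(j,\mu)$, Taylor-expands in each $Z^s_{(j\mu)}$, and shows the result is bounded by $(n^{24\delta}\Psi_t)^{\mathsf q}+|\E F^{\bv,\mathsf q}(Y^s)|$. A Gr\"onwall argument (combined with a bootstrap induction on the spectral scale $\im z$) then propagates the local law from $s=0$ to all $s\in[0,1]$. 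The second-order terms in the Taylor expansion cancel \emph{exactly} because $\sum_{j,\mu}\big(P^{U^0}_{j\mu}\big)^2=\sum_{j,\mu}\big(P^{U^1}_{j\mu}\big)^2$, and this identity is precisely your observation $U^0(U^0)^\top=U^1(U^1)^\top=I_p$. So you have isolated the right structural fact; the paper simply exploits it through comparison rather than by re-deriving the local law, and in particular does not use the smallness condition \eqref{tianya} at this stage.

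The gap in your plan is the assertion that the proof of \cite[Theorem~3.6]{MR3704770} carries over with only column-wise independence. That proof requires the form $T\tilde X$ with \emph{deterministic} $T$ and i.i.d.\ $\tilde X$; no such representation of $Y^s$ exists, because the Bernoulli mask is random and enters nonlinearly. Conditioning on $\chi^s$ gives $Y^s_{\cdot,\mu}=A_\mu\mathsf X_{\cdot,\mu}$ with $A_\mu=U^0(I-D_\mu)+U^1 D_\mu$ and $D_\mu=\diag(\chi^s_{\cdot,\mu})$, so that
\[
A_\mu A_\mu^\top \;=\; U^0(I-D_\mu)(U^0)^\top+U^1 D_\mu(U^1)^\top,
\]
which is generically $\ne I_p$ and varies across $\mu$. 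Thus even conditionally you are outside the sample-covariance framework of \cite{MR3704770,AOIT_2020}, and ``checking that only column-wise independence is used'' is not bookkeeping but a genuinely new local-law statement. Your uniformity-in-$s$ step is also shakier than suggested: under the coupling $\chi^s_{j\mu}=\mathbf 1(u_{j\mu}\le s)$ the process $s\mapsto Y^s$ is pure-jump with $2pn$ jumps on $[0,1]$, and each jump perturbs the resolvent at scale $\eta^{-2}$ times the perturbation size, so a pathwise Lipschitz bound between grid points at small $\eta$ does not close. The paper's moment-plus-Gr\"onwall route avoids both issues.
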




We now prove this lemma using the continuous comparison argument presented in \cite{MR3704770,10.1214/19-EJP381}. In the interest of brevity, we will provide a sketch of the key points, highlighting the main differences in the argument. For a more comprehensive understanding, readers can refer to Section 6 of \cite{10.1214/19-EJP381}. 


\begin{proof}[\bf Proof of Lemma \ref{lem_locallaw}]
For any $\eta\ge N^{-1+\tau}$ and a sufficiently small constant $\delta\in (0,1/100)$, we define
\begin{equation}\label{eq_comp_eta}
\eta_l:=\eta N^{\delta l} \text{ for } \ l=0,...,K-1,\ \ \ \eta_K:=1,
\end{equation}
where $K\equiv K(\eta):=\max\left\{l\in\mathbb N: \eta N^{\delta(l-1)}<1\right\}.$ Since $z\mapsto G_t^s(z)- \wt\Pi_t(z)$ is Lipschitz continuous in $\mathbf D'\equiv  \mathbf{D}'(c,\tau, t,n )$ with Lipschitz constant of order $\OO(\eta^{-2})=\OO(n^2)$, it suffices to show that \eqref{eq_localGs} holds for all $z$ in a discrete but suitably dense subset ${\mathbf S} \subset \mathbf D'$. Specifically, we choose $\mathbf S$ to be an $n^{-10}$-net of $\mathbf D'$ such that $ |\mathbf S |\lesssim n^{20}$ and
\[E+\ii\eta\in\mathbf S\Rightarrow E+\ii\eta_l\in\mathbf S\text{ for }l=1,\ldots,K(\eta).\]
Similar to \cite[Section 6]{10.1214/19-EJP381}, our proof relies on an induction argument based on two scale-dependent properties, denoted as \textbf{($\b A_d$)} and \textbf{($\b C_d$)}, formulated on the subsets
${\b	S}_d  \deq \hb{z \in {\b S} \col \im z \geq n^{-\delta d}},$ $d\in \N$.
\begin{itemize}
\item[\textbf{($\b A_d$)}]
For all $z \in {\b S}_d$, we have that for any deterministic unit vectors $\bu, \bv  \in \mathbb{R}^{p+n}$,
\begin{equation} \label{bound_assumption}
\big| \mathbf{u}^\top G^s_t(z) \mathbf{v}-\mathbf{u}^\top \wt\Pi_{t}(z) \mathbf{v} \big| \prec 1.  
\end{equation}
\item[\textbf{($\b C_d$)}]
For all $z \in {\b S}_d$, we have that for any deterministic unit vectors $\bu, \bv  \in \mathbb{R}^{p+n}$,
\begin{equation} \label{bound_goal}
\big| \mathbf{u}^\top G^s_t(z) \mathbf{v}-\mathbf{u}^\top \wt\Pi_{t}(z) \mathbf{v} \big| \prec n^{24 \delta} \Psi(z).
\end{equation}
 \end{itemize}
It is clear that property ($\mathbf{C}_d$) implies property ($\mathbf{A}_d$) if $\delta$ is chosen sufficiently small (depending on $\tau$) such that $n^{24 \delta} \Psi(z)\ll 1$. On the other hand, if we can show that property ($\mathbf{A}_{d-1}$) implies property ($\mathbf{C}_d$) for all $d \leq \delta^{-1}$, then we can conclude that (\ref{bound_goal}) holds for all $z \in \mathbf{S}$ by induction on $d$. By polarization, it suffices to prove that when property (${\b A}_{d-1}$) holds, we have
\begin{equation}\label{goal_ani2}
\big| \mathbf{v}^\top G^s_t(z) \mathbf{v}-\mathbf{v}^\top \wt\Pi_{t}(z) \mathbf{v} \big| \prec n^{24 \delta} \Psi(z),
\end{equation}
for all $z\in \mathbf S_d $ and any deterministic unit vector $ \mathbf v\in{\mathbb C}^{p+n}$. In fact, we can derive the more general bound \eqref{bound_goal} by applying (\ref{goal_ani2}) to the vectors $\mathbf u + \mathbf v$ and $\mathbf u + \ri\mathbf v$, respectively. 

By utilizing Markov's inequality, showing \eqref{goal_ani2} requires proving that for any fixed large $\mathsf{q} \in 2\mathbb{N}$ and $d \leq \delta^{-1},$ there exists a constant $C>0$ such that 
\be\label{benben}
|\E F^{\bv,  \, \mathsf{q}} (Y^s)|\le C(n^{24\delta}\Psi)^{\mathsf{q}}
 \ee
uniformly for all $z\in \mathbf S_d$. 
Here, $ F^{\bv, \mathsf{q}}  (Y^s)$ is defined as 
\begin{equation}\label{eq_FVQdefinition}
F^{\bv,  \,\mathsf{q}}  (Y^s) \equiv F^{\bv,  \,\mathsf{q}}  (Y^s, z):=  \big| \bv^\top G_t^s \bv -\bv^\top \wt\Pi_t \bv\big| ^{\mathsf{q}}.  
\end{equation}
To establish (\ref{benben}), we follow the arguments presented in Sections 6.1 and 6.2 of \cite{10.1214/19-EJP381}. First, we see that \eqref{eq_localGs} and consequently (\ref{benben}) hold when $s=0$ by \Cref{sare2}. Second, similar to \cite[Lemma 7.9]{MR3704770}, we can employ the fundamental theorem of calculus to obtain that 
 \begin{equation}\label{eq_calculusoffundemantales}
\E F^{\bv, \mathsf{q}}(Y^1) - \E F^{\bv, \mathsf{q}}(Y^0) \;=\; \int_0^1 \dd s \, \sum_{j=1}^{2p} \sum_{\mu=1}^{n} \qB{ \E F^{\bv, \mathsf{q}} \pB{Y^{s,Z^{1}_{(j\mu)}}_{(j \mu)}} - \E F^{\bv, \mathsf{q}} \pB{Y^{s,Z^{0}_{(j\mu)}}_{(j \mu)}}}\,,
\end{equation} 
where we use the notation introduced in (\ref{eq_interpolation}) with $\Gamma=Z^{0}_{(j\mu)}$ or $Z^{1}_{(j\mu)}$. 
Under the property (${\b A}_{d-1}$), we claim the following estimate:  
\be\label{yanglao}
 \sum_{j=1}^{2p} \sum_{\mu=1}^n 
 \qB{ \E F^{\bv,\mathsf{q}} \pB{Y^{s,Z^{1}_{(j\mu)}}_{(j \mu)}}
  - \E F^{\bv,\mathsf{q}} \pB{Y^{s,Z^{0}_{(j\mu)}}_{(j \mu)}}}
 \lesssim (n^{24\delta}\Psi)^{\mathsf{q}}+ | \E  F^{\bv, \mathsf{q}} (Y^s)|.
\ee
By substituting this estimate into \eqref{eq_calculusoffundemantales} and applying a standard Gr\"{o}nwall's argument, we conclude (\ref{benben}). 

The remainder of the proof focuses on establishing (\ref{yanglao}).  To simplify the presentation, until the end of this proof, we slightly abuse the notation and define
\begin{equation}\label{eq_GYdefinition}
 G(Y) \equiv G(Y,z)  :=\begin{pmatrix}
-D_t^{-1} & Y \\
Y^\top &-z
\end{pmatrix} ^{-1}
\end{equation}  
for given random matrix $Y$. 
By the definition \eqref{eq_interpolation}, for any given $\Gamma,\Gamma' \in \mathbb{R}^{p\times n}$ and $M\in \N$, we have the following resolvent expansion:
\be
\begin{aligned}\label{6j8}
G\left(Y^{s,\Gamma'}_{(j \mu)}\right)-G\left(Y^{s, \Gamma}_{(j \mu)}\right) = &~\sum_{k=1}^M G\left(Y^{s, \Gamma}_{(j \mu)}\right)\left[\begin{pmatrix}
    0 & \Gamma-\Gamma'\\ (\Gamma-\Gamma')^\top &0
\end{pmatrix}G\left(Y^{s, \Gamma}_{(j \mu)}\right)\right]^k \\
&~+ G\left(Y^{s, \Gamma'}_{(j \mu)}\right)\left[\begin{pmatrix}
    0 & \Gamma-\Gamma'\\ (\Gamma-\Gamma')^\top &0
\end{pmatrix}G\left(Y^{s, \Gamma}_{(j \mu)}\right)\right]^{M+1}. 
\end{aligned}
\ee
By employing this expansion with $\Gamma=Z_{(j\mu)}^s$ and choosing a sufficiently large $M$, we can utilize property $({\bf A}_{d-1})$ for $G_t^s$ and the rough bound 
$\|G(Y^{s,\Gamma'}_{(j \mu)})\|\prec \eta^{-1}$ to verify the following inequalities for any deterministic unit vectors $\bu, \bv \in \mathbb{R}^{p+n}$:
\be\label{zayy}
 \left| \bu^\top G\pB{Y^{s,Z^{1}_{(j\mu)}}_{(j \mu)}} \bv \right|\prec 1, \quad \left| \bu^\top G\pB{Y^{s,Z^{0}_{(j\mu)}}_{(j \mu)}} \bv \right|\prec 1, \quad   \left| \bu^\top G\pB{Y^{s, \mathbf{0}}_{(j \mu)}} \bv \right|\prec 1.
\ee
To simplify the notation, for fixed $s\in (0,1)$, we further define
\begin{equation} \label{def_fch}
f _{(j \mu)}(\lambda_0,  \lambda_1 ) \deq 
F^{\bv, \mathsf{q}}
\left(Y^{s, \lambda_0 U^0 {\bf e}_{j \mu}+ \lambda_1 U^1 {\bf e}_{j \mu}}_{(j \mu)}\right)\, , 
 \end{equation}
 and denote its derivatives as 
 $$ f_{(j \mu)}^{(k,l)}=\partial_{\lambda_0}^k \partial_{\lambda_1}^l f _{(j \mu)}(\lambda_0,  \lambda_1 ),\quad k,l\in \N.$$
By applying the Taylor expansion and using the estimate \eqref{zayy}, we derive that 
 \begin{align}
&  \E F^{\bv,\mathsf{q}} \pB{Y^{s,Z^{0}_{(j\mu)}}_{(j \mu)}} 
- \E F^{\bv,\mathsf{q}} \pB{Y^{s,\mathbf{0}}_{(j \mu)}}= \sum_{\ell = 2}^{4 \mathsf{q}} \frac{1}{\ell!} \E f_{(j \mu)}^{(\ell,0)}(0,0) \E (\mathsf{X}_{j \mu} )^\ell + \OO_\prec(\Psi^\mathsf{q}), \label{haunting}\\
& \E F^{\bv,\mathsf{q}} \pB{Y^{s,Z^{1}_{(j\mu)}}_{(j \mu)}} - \E F^{\bv,\mathsf{q}} \pB{Y^{s,\mathbf{0}}_{(j \mu)}}
  \;=\;  \sum_{\ell = 2}^{4 \mathsf{q}} \frac{1}{\ell!} \E f_{(j \mu)}^{(0,\ell)}(0,0) \E (\mathsf{X}_{j \mu} )^\ell + \OO_\prec(\Psi^\mathsf{q}),\label{haunting2}
  \end{align}
  where we have utilized the mean zero condition $\mathbb{E}\mathsf{X}_{j \mu}=0$ and the independence between $Y^{s, \b0}_{(j \mu)}$ and $\mathsf{X}_{j \mu} $. Now, to establish \eqref{yanglao}, we only need to prove that 
\be\label{luo}
 \sum_{j,\mu} \E (\mathsf{X}_{j\mu})^\ell  \left( \E f_{(j \mu)}^{(\ell,0)}(0,0)- \E f_{(j \mu)}^{(0, \ell)}(0,0)\right) \lesssim (n^{24\delta}\Psi)^\mathsf{q} + | \E  F^{\bv, \mathsf{q}} (Y^s)|.  
\ee
  
Similar to \eqref{haunting} and \eqref{haunting2}, we can utilize the Taylor expansion and the estimate \eqref{zayy} to derive that 
 \begin{align}\label{fei}
 & \E f_{(j \mu)}^{(\sk,\wt \sk)}(0,0)= \E f_{(j \mu)}^{(\sk,\wt \sk)}\left(\chi^s_{j\mu} \mathsf{X}_{j \mu}, (1-\chi^s_{j\mu}) \mathsf{X}_{j \mu}\right)
 \\\nonumber
& -s \sum_{l = 2}^{4\mathsf{q}-\sk-\wt \sk} \frac{1}{l !}  \E f_{(j \mu)}^{(\sk+l,\wt \sk )}(0,0)
 \E (\mathsf{X}_{j \mu} )^{l} 
-(1-s) \sum_{\wt l = 2}^{4\mathsf{q}-\sk-\wt \sk} \frac{1}{\wt l !} \E f_{(j \mu)}^{(\sk,\wt \sk+\wt l)}(0,0) 
 \E (\mathsf{X}_{j \mu} )^{\wt l}
 +\OO_\prec (\Psi^{\mathsf{q}-\sk-\wt \sk}) 
\end{align}
 for any fixed $\sk$ and $\wt\sk$. In the derivation, we also used the fact that $\chi^s_{j\mu}(1-\chi^s_{j\mu})=0$. By repeatedly applying \eqref{fei}, we can further obtain that 
\begin{align*}
& \E f_{(j \mu)}^{(\ell,0)}(0,0)=\sum_{\sk, \wt \sk: \sk+\wt \sk=0}^{4\mathsf{q}-\ell} C_{(j\mu)}^{\sk, \wt \sk} \sum_{j,\mu}\E f_{(j \mu)}^{(\ell+\sk, \wt \sk)}\left(\chi^s_{j\mu} \mathsf{X}_{j \mu}, (1-\chi^s_{j\mu}) \mathsf{X}_{j \mu}\right)+\OO_\prec(\Psi^{\mathsf{q}-\ell}),\\
& \E f_{(j \mu)}^{(0,\ell)}(0,0)=\sum_{\sk, \wt \sk: \sk+\wt \sk=0}^{4\mathsf{q}-\ell} C_{(j\mu)}^{\sk, \wt \sk} \sum_{j,\mu}\E f_{(j \mu)}^{(\sk, \ell+\wt \sk)}\left(\chi^s_{j\mu} \mathsf{X}_{j \mu}, (1-\chi^s_{j\mu}) \mathsf{X}_{j \mu}\right)+\OO_\prec(\Psi^{\mathsf{q}-\ell}),
\end{align*}
where $C_{(j\mu)}^{ \sk , \wt \sk}$ represents deterministic coefficients satisfying that $C_{(j\mu)}^{0,0}=1$, 
\be\label{curG}
C_{(j\mu)}^{ \sk , \wt \sk}=\OO(n^{-{(\sk+\wt \sk)}/2}) ,\quad \text{and}\quad   \sk=1 \ \ \text{or}\ \ \wt \sk=1 \implies C_{(j\mu)}^{ \sk , \wt \sk}=0.  
\ee
Now, to conclude \eqref{luo}, it suffices to control $\E f_{(j \mu)}^{(\ell+\sk, \wt \sk)}\left(\chi^s_{j\mu} \mathsf{X}_{j \mu}, (1-\chi^s_{j\mu}) \mathsf{X}_{j \mu}\right)-\E f_{(j \mu)}^{(\sk, \ell+\wt \sk)}\left(\chi^s_{j\mu} \mathsf{X}_{j \mu}, (1-\chi^s_{j\mu})\mathsf{X}_{j \mu}\right)$. 
For the terms in (\ref{fei}), we express them as  
\be\label{dong}
 \E f_{(j \mu)}^{(\sk,\wt \sk)}\left(\chi^s_{j\mu} \mathsf{X}_{j \mu}, (1-\chi^s_{j\mu}) \mathsf{X}_{j \mu}\right)
=  \E \Big[
 \Big(P^{U^0}_{ j\mu}\Big)^\sk
 \Big(P^{U^1}_{ j\mu}\Big)^{\wt \sk}F^{\bv,\mathsf{q}} \Big]\left(Y^{s}\right),
\ee
where we adopt the following notation given any $p\times 2p$ matrix $U$:   
\begin{equation}\label{eq_operatoroperator}
 P^{U }_{ j\mu}:=(U^\top \nabla_Y)_{j\mu}= \sum_{i=1}^p{U}_{ij}\frac{\partial}{\partial Y^s_{i\mu}} .
\end{equation}
Note that $P^{U }_{ j\mu}$ acting on any resolvent entry yields
\be\label{Pujmu}
P^{U}_ {j\mu} G_{\fa \fb}= -G_{\fa \bu_j}G_{\mu \fb}- G_{\fa \mu}G_{\bu_j \fb},\quad \fa,\fb\in \cal I,
\ee
where $\bu_j=U\mathbf e_j$ represents the $j$-th column vector of $U$. With the above notations, we observe that to establish \eqref{luo},  
it is sufficient to prove that for any $\ell,$ $\sk$,  $\wt\sk$ satisfying $\sk\ne 1$, $\wt\sk\ne 1$, $\ell\ge 2$, and $\ell+\sk+\wt \sk \le 4\mathsf{q},$ 
\begin{align}\label{fouling}
&n^{-(\sk +\wt \sk+\ell)/2} \sum_{j,\mu}  \cal C_{(j\mu)}^\ell \E \Big\{  \Big[\Big(P^{U^0}_{ j\mu} \Big)^{ \ell+\sk} \Big(P^{U^1}_{ j\mu} \Big)^{\wt \sk} F^{\bv,\,\mathsf{q}}\Big]\left(Y^{s}\right) -  \Big[\Big(P^{U^0}_{ j\mu} \Big)^{\sk} \Big(P^{U^1}_{ j\mu} \Big)^{ \ell+  \wt \sk}F^{\bv,\,\mathsf{q}}\Big]\left(Y^{s}\right) \Big\}
\\\nonumber
& \lesssim (n^{24\delta}\Psi)^\mathsf{q}+ | \E  F^{\bv, \mathsf{q}} (Y^s)|. 
\end{align}
Here, we introduce the notation $\cal C_{(j\mu)}^\ell := n^{\ell/2}\E \left(\mathsf{X}_{j\mu}\right)^\ell$, which is of order $\OO(1)$ by \eqref{eq_momentassumption}. 
The remaining part of the proof is dedicated to verifying (\ref{fouling}). To accomplish this, we divide the proof into two cases.

\medskip
\noindent{\bf Case 1: $\ell+\sk+\wt \sk=2.$} In this case, we must have $\ell=2, \sk=\wt \sk=0$. Using the fact that $U^0 (U^0)^{\top}=U^1(U^1)^{\top}=I$, we can easily check that 
$$
\sum_{j,\mu}\left(P^{U^1}_{ j\mu} \right)^{2} F^{\bv,\,\mathsf{q}} (Y^s)
  =\sum_{j,\mu}\left(P^{U^0}_{ j\mu} \right)^{2}F^{\bv,\,\mathsf{q}}(Y^s)=\sum_{i\mu }\frac{\partial^2}{\partial (Y^s_{i\mu})^2} F^{\bv,\,\mathsf{q}}(Y^s).
$$ 
This equation clearly implies \eqref{fouling} when $\ell+\sk+\wt \sk=2.$  

\medskip
\noindent{\bf Case 2: $3\le \ell+\sk+\wt \sk \le 4\mathsf q.$} 
We denote $\wt U=U^1-U^0.$ It is evident that in order to establish  (\ref{fouling}), it suffices to prove the following estimate for every $(k_1,k_2)\in \N^2$ satisfying $2\le k_1+k_2 = \sk+\wt\sk+\ell-1\le 4\mathsf q-1$: 
\begin{align}\label{fouling2}
& n^{-(k_1+k_2+1)/2} 
  \sum_{j,\mu} \cal C_{(j\mu)}^\ell    \E \Big|
\Big[\Big(P^{U^0}_{ j\mu} \Big)^{k_1}
\Big(P^{U^1}_{ j\mu} \Big)^{k_2} 
  P^{\wt U}_{ j\mu}  
  F^{\bv,\,\mathsf{q}} \Big]\pB{Y^{s} } \Big| \lesssim (n^{24\delta}\Psi)^\mathsf{q} + | \E  F^{\bv, \mathsf{q}} (Y^s)|. 
\end{align}
For the proof of (\ref{fouling2}), we need to delve into the detailed structure of the derivatives. To this end, in view of (\ref{eq_operatoroperator}) and \eqref{Pujmu}, we introduce the following algebraic objects, which are employed in \cite{MR3704770,10.1214/19-EJP381}. 

\begin{definition}[Words]\label{defn_words}  
Given $j\in \mathcal I_1$ and $\mu\in \mathcal I_2$, let $\mathcal W$ be the set consisting of words of even length formed from the four letters \smash{$\{\mathbf j_0, \mathbf j_1, \wt{\mathbf j}, \bm{\mu}\}$}. The length of a word $w\in\sW$ is denoted by $2{\bm l}(w) \in 2\mathbb N$. We use bold symbols to represent the letters in words. For instance, 
\be\label{eq_word}
w=\mathbf t_1\mathbf s_2\mathbf t_2\mathbf s_3\cdots\mathbf t_l\mathbf s_{l+1}
\ee
denotes a word of length $2l$. Define $\sW_l:=\{w\in \mathcal W: {\bm l}(w)=l\}$ as the set of words of length $2l$ such that the following property holds for a word as in \eqref{eq_word}: 
$$\mathbf t_{i}\mathbf s_{i+1}\in \{\mathbf j_0\bm{\mu},\bm{\mu}\mathbf j_0,\mathbf j_1\bm{\mu},\bm{\mu}\mathbf j_1, \wt{\mathbf j}\bm{\mu},\bm{\mu}\wt{\mathbf j}\},\quad i\in\qq{l}.$$
Next, we assign a value $[\cdot]$ to each letter as follows: $[\mathbf j_0]=\bu_j^0$, $[\mathbf j_1]=\bu_j^1$, $[\wt{\mathbf j}]=\wt\bu_j$,  $[\bm {\mu}]:=\mathbf e_\mu$, where $\bu_j^0:=U^0\mathbf e_j$, $\bu_j^1:=U^1\mathbf e_j$, \smash{$\wt\bu_j:=\wt U\mathbf e_j/\|\wt U\mathbf e_j\|_2$}. 
It is important to distinguish between the abstract letter and its corresponding value, which is treated as a (generalized) summation index. For the word $w$ in \eqref{eq_word} and vectors $\bu,\bv\in \C^{p+n}$, we assign a random variable $A_{\mathbf u, \mathbf v, j, \mu}(w)$ as follows: if ${\bm l}(w)=0$, we define
 $$A_{\bu,\bv, j, \mu}(w):= \big( G(Y^s)-\wt\Pi_t\big)_{\bu\bv};$$
if ${\bm l}(w)\ge 1$, we define
 \begin{equation}\label{eq_comp_A(W)}
 A_{\bu,\bv, j, \mu}(w):=G_{\bu[\mathbf t_1]}(Y^s) G_{[\mathbf s_2][\mathbf t_2]}(Y^s)\cdots G_{[\mathbf s_l][\mathbf t_l]} (Y^s)G_{[\mathbf s_{l+1}]\bv}(Y^s).
 \end{equation}
Finally, given any $w\in \cal W_l$ of the form \eqref{eq_word}, we denote $\mathfrak n_0(w):=\#\{i \in \qq{l}:\mathbf t_{i}= \mathbf j_0 \ \text{or}\ \mathbf s_{i+1} =\mathbf j_0\}$, $\mathfrak n_1(w):=\#\{ i \in \qq{l}:\mathbf t_{i}= \mathbf j_1 \ \text{or}\ \mathbf s_{i+1} =\mathbf j_1\}$, and $\wt{\mathfrak n}(w):=\#\{i \in \qq{l}:\mathbf t_{i}=\wt{\mathbf j} \ \text{or}\ \mathbf s_{i+1} =\wt{\mathbf j} \}$.
\end{definition}

From \eqref{6j8} and \eqref{Pujmu}, we observe that the above notations are defined such that for every $l \in \mathbb{N}$, the following equality holds:
\[\Big(P^{U^0}_{ j\mu} \Big)^{l} \big( G(Y^s)-\wt\Pi_t \big)_{\bu\bv}=(-1)^l l!\sum_{w\in \mathcal W_l: \mathfrak n_0(w)=l} A_{\bu, \bv, j, \mu}(w).\]
A similar equation holds if we replace $0$ with $1$. By denoting $\al_j:=\|\wt U\mathbf e_j\|_2$, we have that
\[ P^{\wt U}_{ j\mu} \big( G(Y^s)-\wt\Pi_t \big)_{\bu\bv}=- \al_j \sum_{w\in \{\wt{\mathbf j}\bm{\mu},\bm{\mu}\wt{\mathbf j}\}} A_{\bu, \bv, j, \mu}(w).\]
Using the above two identities, we can derive that
\begin{align}
    \Big(P^{U^0}_{ j\mu} \Big)^{k_1}
\Big(P^{U^1}_{ j\mu} \Big)^{k_2} 
  P^{\wt U}_{ j\mu}  
  F^{\bv,\,\mathsf{q}}(Y^s)&=  \sum_{\substack{l_1,\ldots,l_{\mathsf q} \ge 0:\\ l_1+\cdots+l_{\mathsf q}=k_1+k_2+1}}\sum_{w_1\in \cal W_{l_1},\ldots, w_{\mathsf q}\in \cal W_{l_{\mathsf q}}}^* c(w_1,\ldots, w_{\mathsf q}) \al_j \nonumber\\
&\times \prod_{\mathsf{t}=1}^{\mathsf{q}/2} \left[A_{\bv, \bv, j, \mu}(w_\mathsf{t})\overline{A_{\bv, \bv, j, \mu}(w_{\mathsf{t}+\mathsf{q}/2})} \right].\label{eq_longproduct}
\end{align}
Here, $c(w_1,\ldots, w_{\mathsf q})$ represents certain deterministic coefficients of order $\OO(1)$, and $\sum_{w_1\in \cal W_{l_1},\ldots, w_{\mathsf q}\in \cal W_{l_{\mathsf q}}}^*$ denotes the summation over a sequence of words $w_1,\ldots, w_{\mathsf q}$ satisfying the following conditions:
\be\label{eq_words_structure}
\sum_{\mathsf t=1}^{\mathsf q} \mathfrak n_0(w_{\mathsf t})=k_1, \quad \sum_{\mathsf t=1}^{\mathsf q} \mathfrak n_1(w_{\mathsf t})=k_2, \quad \sum_{\mathsf t=1}^{\mathsf q} \wt{\mathfrak n}(w_{\mathsf t})=1.
\ee
Without loss of generality, suppose there are $\mathsf p$ non-zero length words in \eqref{eq_longproduct} for some $1\le \mathsf{p} \le \mathsf q\wedge (k_1 +k_2+1)$, and these words are $w_1,\ldots, w_{\mathsf p}$. Then, we can identify and separate the words of length 0 from the product in \eqref{eq_longproduct}, denoted as $w_0$. Now, to show (\ref{fouling2}), it suffices to prove that for any $1\le \mathsf{p} \le \mathsf q\wedge (k_1 +k_2+1)$ and a sequence of words satisfying \eqref{eq_words_structure}, the following estimate holds:
 \begin{align}\label{eq_hahhahahah}
n^{-(k_1+k_2+1)/2} 
  \sum_{j,\mu} \al_j 
  \E\left|A_{\bv,\bv,j, \mu}(w_0)^{\mathsf{q} - \mathsf{p}} \prod_{r = 1}^{\mathsf{p}} A_{\bv,\bv,j, \mu}(w_r)\right| 
\lesssim (n^{24\delta}\Psi)^{\mathsf{q}}  + | \E  F^{\bv, \mathsf{q}} (Y^s)|.
 \end{align}

The proof of (\ref{eq_hahhahahah}) follows a similar approach to that of (6.32) in \cite{10.1214/19-EJP381}.
As we assume $\mathbb{E} \mathsf{X}^3_{i \mu}=0$ according to (ii) of \Cref{main_assumption}, we only need to consider the case when $\ell \geq 4$. (It is in this particular case that the vanishing third-moment condition is relevant and utilized.) 
For $j\in \cal I_1$ and $\mu \in \cal I_2$, we define
$$
  \mathcal{R}_j:= \big|G_{\bv\bu^0_ j}\big|+\big|G^s_{\bv\bu^1_ j}\big|+\big|G_{\bv_\alpha \wt\bu_j}\big|,\quad  \mathcal{R}_\mu:= \big|G_{\bv\mu}\big|.$$
Similar to Lemma 6.16 in \cite{10.1214/19-EJP381}, we can establish the following rough bound using property ($\mathbf{A}_{d-1}$):
  \begin{equation*}
  |A_{\bv,\bv, i, \mu}(w)|\prec n^{2\delta({\bm l}(w)+1)},
  \end{equation*}
Additionally, we can get the following bounds: for ${\bm l}(w)\ge 2$, 
  \begin{equation*}
  |A_{\bv,\bv,  i, \mu}(w)|\prec(\mathcal R_i^2+\mathcal R_\mu^2)n^{2\delta({\bm l}(w)-1)},
  \end{equation*}
and for ${\bm l}(w)=1$, 
  \begin{equation*}
  |A_{\bv,\bv, i, \mu}(w)|\prec \mathcal R_i\mathcal R_\mu.
  \end{equation*}
With these estimates, we can follow the arguments presented between (6.39) and (6.42) in \cite{10.1214/19-EJP381} to complete the proof of (\ref{eq_hahhahahah}). (It is worth noting that for this proof, the condition \eqref{tianya}, which implies $\al_j \le C n^{-\mathsf{c}'}$, is {\bf not} necessary.)
Since the argument is almost the same, we omit the details here. This concludes the proof of \Cref{lem_locallaw}.  
\end{proof}

With the above results, we proceed to complete the proof of Lemma \ref{nanzi} by using the following lemma. 

\begin{lemma}\label{lem_reducedusingfc} 
Under the assumptions of Lemma \ref{nanzi}, the following estimate holds for every $s \in [0,1]$: 
\begin{align*}
	&\E \theta \left( \mathcal U_{i_1}(\mathsf Q_t^s),  \ldots, \mathcal U_{i_L}(\mathsf Q_t^s)\right)=\E \theta \left( \overline{\mathcal U}_{i_1}(\mathsf Q_t^s),  \ldots, \overline{\mathcal U}_{i_L}(\mathsf Q_t^s)\right) +\OO( n^{-\nu}),
\end{align*}
for a constant $\nu>0$, where $\overline{\mathcal U}_{k}(\mathsf Q_t^s)$, $k \in \qq{\sfK-r},$ are defined as
 $$ \overline{\mathcal U}_{k}(\mathsf Q_t^s) :=\frac{p}{\pi}\int_{I_{k}} \left[\im x_k^s(E) \right]	\mathfrak q_{k}\left(y^s_{k}(E) \right)\dd E$$
Here, the functions $y^s_{k}(E)$, $k \in \qq{\sfK-r},$ are defined as
	\begin{align} 
	y^s_{i_k}(E) &\;\deq\; \frac 1{2\pi}\int_{\R^2}
	\ii \sigma \mathfrak{f}_{i_k}''(e)  \chi(\sigma)  \tr \mathcal{G}_{2,t}^s(e + \ii \sigma)\,  \indb{\abs{\sigma} \geq \tilde \eta_k n^{-C \e}} \,\dd e \, \dd \sigma
	\nonumber \\ \label{defy428}
	&\qquad \;+\; 
	\frac 1{2\pi}\int_{\R^2}
	\pb{ \ii \mathfrak{f}_{i_k}(e) \chi'(\sigma)- \sigma \mathfrak{f}_{i_k}'(e)\chi'(\sigma)} \tr \mathcal{G}_{2,t}^s(e + \ii \sigma) \, \dd e \, \dd \sigma\, ,
	\end{align}
	where $\wt \eta_k$ is defined in (\ref{eq_keyquantitiesdefinition}), $C>0$ is an absolute constant, and $\chi$ is a smooth cutoff function with support in $[-1,1]$, satisfying $\chi(\sigma) = 1$ for $|\sigma|\le 1/2$, and having bounded derivatives up to arbitrarily high order.

\end{lemma}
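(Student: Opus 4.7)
The plan is to derive this lemma as a direct consequence of the Helffer--Sjöstrand representation combined with the averaged local law from \Cref{lem_locallaw}. Writing $\tilde{\mathfrak f}_k(e+\ii\sigma):=(\mathfrak f_k(e)+\ii\sigma \mathfrak f_k'(e))\chi(\sigma)$ for the standard quasi-analytic extension of $\mathfrak f_k$, a direct calculation of $\bar\partial \tilde{\mathfrak f}_k$ and an application of Cauchy--Green to each eigenvalue $\lambda_j(\mathsf Q_t^s)$ yield
\begin{equation*}
\tr \mathfrak f_k(\mathsf Q_t^s)=\frac{1}{2\pi}\int_{\R^2}\bigl[\ii\sigma \mathfrak f_k''(e)\chi(\sigma)+\ii\mathfrak f_k(e)\chi'(\sigma)-\sigma \mathfrak f_k'(e)\chi'(\sigma)\bigr]\tr\mathcal G_{2,t}^s(e+\ii\sigma)\dd e\dd\sigma,
\end{equation*}
where we used that $\mathfrak f_k$ is supported away from $0$ (so zero eigenvalues of $\mathsf Q_t^s$ contribute nothing and the non-zero eigenvalues are shared between $\mathsf Q_t^s$ and $(Y^s)^\top D_t Y^s$). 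Comparing with \eqref{defy428}, the difference is precisely the truncated piece
\begin{equation*}
R_k^s:=\frac{1}{2\pi}\int_{\R^2}\ii\sigma\, \mathfrak f_k''(e)\chi(\sigma)\tr\mathcal G_{2,t}^s(e+\ii\sigma)\mathbf 1\bigl(|\sigma|<\tilde\eta_k n^{-C\e}\bigr)\dd e\dd\sigma.
\end{equation*}

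The heart of the matter is to bound $|R_k^s|$ by $n^{-\nu}$ with high probability for a suitable $\nu>0$. Using the symmetry $\overline{\tr\mathcal G_{2,t}^s(e+\ii\sigma)}=\tr\mathcal G_{2,t}^s(e-\ii\sigma)$, the inner $\sigma$--integral reduces to
\begin{equation*}
\int_{|\sigma|<\tilde\eta_k n^{-C\e}} \ii\sigma\,\tr\mathcal G_{2,t}^s(e+\ii\sigma)\dd\sigma=-2\int_0^{\tilde\eta_k n^{-C\e}}\sigma\,\im\tr\mathcal G_{2,t}^s(e+\ii\sigma)\dd\sigma.
\end{equation*}
Since $\sigma\mapsto \sigma\, \im\tr\mathcal G_{2,t}^s(e+\ii\sigma)$ is monotone non-decreasing in $\sigma$, the integral is bounded by $\sigma_0^2\, \im\tr\mathcal G_{2,t}^s(e+\ii\sigma_0)$ with $\sigma_0:=\tilde\eta_k n^{-C\e}$. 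The scale $\sigma_0\gtrsim n^{-2/3-O(\e)}$ lies comfortably above the microscopic threshold $n^{-1+\tau}$, so \Cref{lem_locallaw} (together with the identity $\tr\mathcal G_{2,t}^s=n\,\sm^s_{n,t}$ and the boundedness of $\im m_t$) gives $\im\tr\mathcal G_{2,t}^s(e+\ii\sigma_0)\prec n$ uniformly for $e$ in the support of $\mathfrak f_k''$. Combining this with the bounds $|\mathfrak f_k''|\lesssim \tilde\eta_k^{-2}$ and $|\supp\mathfrak f_k''|\lesssim\tilde\eta_k$, I obtain
\begin{equation*}
|R_k^s|\prec\tilde\eta_k\cdot\tilde\eta_k^{-2}\cdot \sigma_0^2\cdot n\lesssim n\,\tilde\eta_k\, n^{-2C\e}\lesssim n^{1/3-2C\e},
\end{equation*}
so choosing $C$ sufficiently large (depending on $\e$) yields $|R_k^s|\prec n^{-\nu}$.

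Finally, I propagate this estimate from the trace to the full functional. Since $\mathfrak q_k$ has bounded first derivative (see \eqref{eq_smoothqfunction}), $|\mathfrak q_k(\tr\mathfrak f_k(\mathsf Q_t^s))-\mathfrak q_k(y_k^s(E))|\lesssim |R_k^s|\prec n^{-\nu}$ uniformly in $E\in I_{i_k}$, and integrating against $\im x_k^s(E)$ (which is $O(p)$-bounded on $I_{i_k}$ by delocalization \eqref{TAZ3} and rigidity \eqref{TAZ2}) gives $|\mathcal U_{i_k}(\mathsf Q_t^s)-\overline{\mathcal U}_{i_k}(\mathsf Q_t^s)|\prec n^{-\nu'}$ for some $\nu'>0$. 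A Taylor expansion of $\theta$, whose derivatives up to order $5$ have at most polynomial growth, together with the trivial $O(n^C)$ deterministic bound on $\mathcal U_{i_k}$ and $\overline{\mathcal U}_{i_k}$ to absorb the low-probability exceptional event, yields the claimed estimate. The main technical point will be (i) verifying that the effective spectral parameter $\sigma_0$ always stays in the range where \Cref{lem_locallaw} applies uniformly over $s\in[0,1]$, and (ii) ensuring the constants $\delta_1,\delta_2,C$ can be chosen consistently with the earlier choices made in \Cref{7.2} so that the approximation tolerances match throughout.
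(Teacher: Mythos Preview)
Your approach---Helffer--Sj\"ostrand representation plus the local law of \Cref{lem_locallaw}, with the monotonicity of $\sigma\mapsto\sigma\,\im\tr\mathcal G_{2,t}^s$ to handle the truncated region---is exactly the route the paper indicates (it simply cites the argument between (5.9)--(5.11) of \cite{knowles2013eigenvector}). One caveat to your point (i): the bound $\sigma_0\gtrsim n^{-2/3-O(\e)}$ holds only for edge indices; for bulk $i_k$ one has $\Delta_{i_k}\asymp n^{-1}$, so $\sigma_0=\tilde\eta_{i_k}n^{-C\e}\asymp n^{-1-(3+C)\e}$ falls below the threshold of \Cref{lem_locallaw}. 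The same monotonicity you already use fixes this: bound $\sigma\,\im\tr\mathcal G_{2,t}^s(e+\ii\sigma)$ by its value at $\sigma_1:=\max(\sigma_0,n^{-1+\tau})$ rather than at $\sigma_0$, and the resulting estimate $|R_k^s|\prec n^{-C\e+\tau}$ is even better.
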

\begin{proof}
	The proof follows the same lines of the arguments presented between (5.9) and (5.11) in \cite{knowles2013eigenvector}, utilizing Lemma \ref{lem_locallaw} and the Helffer-Sj{\" o}strand formula (see \cite[Proposition 1.13.4]{benaych2017advanced}). Further details are omitted.
	%
\end{proof}


\begin{proof}[\bf Proof of Lemma \ref{nanzi}]
For $s\in [0,1]$, we introduce the function 
\begin{equation}\label{eq_keyquantitytouse}
	F(Y^s)\equiv F(\mathsf Q^s_t):= \theta \left( \overline{\mathcal U}_{i_1}(\mathsf Q_t^s),  \ldots, \overline{\mathcal U}_{i_L}(\mathsf Q_t^s)\right) . 
	\end{equation} 
	According to Lemma \ref{lem_reducedusingfc}, it suffices to show that 
	\begin{equation*}
	\mathbb{E} F(Y^1)-\mathbb{E} F(Y^0)=\OO(n^{-\nu}) 
	\end{equation*}
 for a constant $\nu>0$. Similar to \eqref{eq_calculusoffundemantales}, we have 
	\begin{equation*}
	\mathbb{E} F(Y^1)-\mathbb{E} F(Y^0)=\int_0^1 \mathrm{d}s \sum_{j,   \mu } 
	\qB{ \E F  \pB{Y^{s,Z^{1}_{(j\mu)}}_{(j \mu)}}
		- \E F  \pB{Y^{s,Z^{0}_{(j\mu)}}_{(j \mu)}}}. 
	\end{equation*}
	Hence, it remains to prove 
	\begin{equation}\label{eq_teles}
		\sum_{j  , \mu } 
		\qB{ \E F  \pB{Y^{s,Z^{1}_{(j\mu)}}_{(j \mu)}}
			- \E F  \pB{Y^{s,Z^{0}_{(j\mu)}}_{(j \mu)}}} =\OO(n^{-\nu}),\quad s\in[0,1].
	\end{equation}
 The proof of this estimate is similar to the above proof of \Cref{lem_locallaw}. In fact, the proof here is slightly easier since we have the local law \eqref{eq_localGs} at hand, and there is no need to use an induction argument based on the two scale-dependent properties \textbf{($\b A_d$)} and \textbf{($\b C_d$)}. Furthermore, by employing the argument presented in the proof of \cite[Lemma 15.5]{Erds2017ADA}, we can deduce the following estimate for $G(Y^s)$ from \Cref{lem_locallaw} when the value of $\eta$ is less than $n^{-1}$: for any constant $\sigma>0$ and deterministic unit vectors $\bu,\bv\in \R^{p+n}$,
 \be\label{eq_local_smalleta}
 \sup_{\eta\ge n^{-1-\sigma}}\sup_{\lambda_- - c\le  E\le \lambda_+ + c} \big|(G_t^s-\wt\Pi_t)_{\bu\bv}(E+\ii \eta)\big|\prec n^{\sigma}.
 \ee

Notice that as $F^{\bv,\mathsf q}$ in \eqref{eq_FVQdefinition}, $F$ is also a function of the resolvent $G(Y^s)$. Similar to \eqref{def_fch}, we abbreviate 
 \begin{equation} \label{def_fch2}
f _{(j \mu)}(\lambda_0,  \lambda_1 ) \deq 
F\left(Y^{s, \lambda_0 U^0 {\bf e}_{j \mu}+ \lambda_1 U^1 {\bf e}_{j \mu}}_{(j \mu)}\right)\, , 
 \end{equation}
To analyze this further, we perform Taylor expansions of $f _{(j \mu)}(\mathsf X_{j\mu}, 0)$ and $f _{(j \mu)}(0,\mathsf X_{j\mu})$ around $f _{(j \mu)}(0,0)$. Then, we compare the two Taylor expansions and estimate their differences using the resolvent expansion \eqref{6j8}, the derivatives as in \eqref{Pujmu}, and the local laws \eqref{eq_localGs} and \eqref{eq_local_smalleta}. Following the argument below \eqref{def_fch} (or the argument in \cite[Section 16]{Erds2017ADA}), we can derive an estimate similar to \eqref{eq_hahhahahah}: 
\begin{align}\label{eq:f-f}
   \left| \E f _{(j \mu)}(\mathsf X_{j\mu}, 0)-\E f _{(j \mu)}(0,\mathsf X_{j\mu})\right| \lesssim  n^{-2}\sum_{j,\mu} \al_j \cal E_{j\mu} + n^{-1}.
\end{align}
Here, $\cal E_{j\mu}$ are positive variables of order $\OO(n^{C\e})$, where $C>0$ is an absolute constant that does no depend on $\e$. In the derivation, we used the vanishing third-moment condition, and the factor $n^{-2}$ arises from the fourth or higher-order moments of $\mathsf X_{j\mu}$. The factor $\al_j$ has the same origin as that in the LHS of \eqref{eq_hahhahahah}. We omit the details here. Since $\al_j \le C n^{-\mathsf{c}'}$ by the condition \eqref{tianya}, we deduce from \eqref{eq:f-f} that 
\begin{align*}
   \left| \E f _{(j \mu)}(\mathsf X_{j\mu}, 0)-\E f _{(j \mu)}(0,\mathsf X_{j\mu})\right| \lesssim  n^{-\mathsf c'+ C\e}.
\end{align*}
This concludes the proof by choosing $\e$ sufficiently small, depending on $\mathsf c'$ and $C$.  
\end{proof}

\end{document}